\documentclass{article}
\usepackage[english]{babel}
\usepackage{graphicx}
\usepackage{slashed}
\usepackage{stmaryrd}
\usepackage{amsmath,amsfonts,amssymb}

\usepackage{ntheorem}
\usepackage{dsfont}
\usepackage{verbatim}
\usepackage{float}
\usepackage{accents}
\usepackage{mathrsfs}
\usepackage{amsmath}
\usepackage{amssymb}
\usepackage{hyperref}
\usepackage{tikz}
\usepackage{pgfplots}
\pgfplotsset{compat=1.14}
\usetikzlibrary{patterns}
\usetikzlibrary{positioning,arrows,arrows.meta}
\usepackage{geometry}
\geometry{hmargin=2.5cm,vmargin=1.5cm}
\usepackage{enumitem}

\theoremseparator{.} 
\newtheorem{Th}{Theorem}[section]
\newtheorem{Def}[Th]{Definition}
\newtheorem{Rq}[Th]{Remark}
\newtheorem{Pro}[Th]{Proposition}
\newtheorem{Cor}[Th]{Corollary}
\newtheorem{Lem}[Th]{Lemma}
\theoremstyle{empty}
\newtheorem{refproof}{Proof}

\newcommand{\R}{\mathbb{R}}
\newcommand{\T}{\mathbf{T}}
\newcommand{\M}{\mathcal{M}}

\newcommand{\dr}{\mathrm{d}}

\newcommand{\N}{\mathcal{N}}
\newcommand{\n}{n^{\mathstrut}}
\newcommand{\uu}{\underline{u}}
\newcommand{\m}{\mu^{\mathstrut}}
\newcommand{\Pm}{\mathcal{P}}
\newcommand{\Rm}{\mathcal{R}}

\newcommand{\C}{\mathcal{P}}

\newenvironment{proof}{\noindent\textit{Proof.~}}{\hfill$\square$\bigbreak} 

\title{Decay estimates for the massless Vlasov equation \\ on Schwarzschild spacetimes}

\author{L\'eo Bigorgne\footnote{Department of Pure Mathematics and Mathematical Statistics, University of Cambridge, Cambridge.
{\em E-mail address:} {\tt lb847@cam.ac.uk}}}
\date{}
\begin{document}

\maketitle
    
\begin{abstract}
We consider solutions to the massless Vlasov equation on the domain of outer communications of the Schwarschild black hole. By adapting the $r^p$-weighted energy method of Dafermos and Rodnianski, used extensively in order to study wave equations, we prove superpolynomial decay for a non-degenerate energy flux of the Vlasov field $f$ through a well-chosen foliation. An essential step of this methodology consists in proving a non-degenerate integrated local energy decay. For this, we take in particular advantage of the red-shift effect near the event horizon. The trapping at the photon sphere requires however to lose an $\epsilon$ of integrability in the velocity variable. Pointwise decay estimates on the velocity average of $f$ are then obtained by functional inequalities, adapted to the study of Vlasov fields, which allow us to deal with the lack of a conservation law for the radial derivative.
\end{abstract}

    \tableofcontents

\section{Introduction and preliminaries}

The main goal of this article is to prove pointwise decay estimates for the velocity average of the solutions to the massless Vlasov equation in the exterior of a Schwarzschild spacetime $(\mathcal{M},g)$ of mass $M>0$. This region is covered by a coordinate system $(t,r,\theta, \varphi)$, where $(t,r) \in \R \times ]2M,+\infty[$ and $(\theta , \varphi)$ are spherical coordinates on the unit sphere $\mathbb{S}^2$, and the metric can be written
$$ g \hspace{1mm} = \hspace{1mm} - \left( 1-\frac{2M}{r} \right) \dr t^2+ \left( 1-\frac{2M}{r} \right)^{-1} \dr r^2+r^2 \dr \sigma^{\mathstrut}_{\mathbb{S}^2}, \qquad \dr \sigma^{\mathstrut}_{\mathbb{S}^2}= \dr \theta^2+\sin^2( \theta) \dr \varphi^2.$$
For the purpose of this paper, it will be more convenient to work with Regge-Wheeler coordinates $(t,r^*,\theta , \varphi )$, where
\begin{equation}\label{defrstar}
 r^* \hspace{1mm} := \hspace{1mm} r+2M \log (r-2M)-3M-2M \log M, \qquad r^* \in \R,
 \end{equation}
is the Regge-Wheeler tortoise coordinate and vanishes at the photon sphere $r=3M$, which contains trapped null geodesics. In this coordinate system, the metric takes the form
\begin{equation}\label{defgS}
g \hspace{1mm} = \hspace{1mm} - \left( 1-\frac{2M}{r} \right) \dr t^2+ \left( 1-\frac{2M}{r} \right) \dr r^{*2}+r^2 \dr \sigma^{\mathstrut}_{\mathbb{S}^2}.
\end{equation}
The Vlasov equation on a Schwarzschild background models the evolution of particles of mass $m \geq 0$ which do not self-interact. They are represented by the particle density $f$, which is a nonnegative function defined on a subset $\mathcal{P}$ of the cotangent bundle $T^{\star} \mathcal{M}$, sometimes abusively referred as the co-mass shell. In the exterior region $r>2M$ and in the coordinate system $(t,r^*,\theta , \varphi ,v_{r^*},v_{\theta}, v_{\varphi})$ of the co-mass shell induced by the Regge-Wheeler coordinates, the Vlasov equation for massless particles $m=0$ reads
\begin{equation}\label{Vlasovintro}
 -\frac{v_t}{1-\frac{2M}{r}} \partial_tf + \frac{v_{r^*}}{1-\frac{2M}{r}} \partial_{r^*}f+\frac{v_{\theta}}{r^2} \partial_{\theta}f + \frac{v_{\varphi}}{r^2 \sin^2 ( \theta )} \partial_{\varphi}f + \frac{r-3M}{r^4} |\slashed{v}|^2 \partial_{v_{r^*} }f+\frac{\cot (\theta) }{r^2 \sin^2 (\theta)}  v_{\varphi}^2 \partial_{v_{\theta} }f  \,  = \, 0 ,
 \end{equation}
where
\begin{equation*}
v_t \, = \, - \bigg| \,|v_{r^*}|^2 +\left( 1-\frac{2M}{r} \right)\! \frac{|\slashed{v}|^2}{r^2} \, \bigg|^{\frac{1}{2}}, \qquad \quad \slashed{v} := \sqrt{|v_{\theta}|^2+\frac{ |v_{\varphi}|^2}{\sin^2 (\theta )}},
\end{equation*}
and reflects that the particles are moving along null geodesics. In this article, we study solutions to massless Vlasov equation on the exterior region of Schwarzschild spacetime arising from sufficiently regular data prescribed on, say, the Cauchy hypersurface $\{t^* := t+2M\log (r-2M) =0 \}$. In particular, we will prove decay estimates for the energy flux of $f$ through a well-chosen foliation and for its velocity average. For instance, we will prove that any sufficiently regular solution $f$ to \eqref{Vlasovintro} satisfies, for any $R^* \in \R$ and $p\geq 0$,
$$ \forall \, (t,r^*,\omega ) \in \R \times \R \times \mathbb{S}^2, \quad r^* \geq R^*, \quad t^* \geq 0, \qquad  \int_{\C } |f| |v_t|^2 \dr \m_{\C} \Big\vert_{(t,r^*,\omega)} \, \leq \, \frac{C_{f,R^*,p}}{r^2(1+|t-r^*|)^p},$$
where the constant $C_{f,R^*,p}$ only depends on $M$, $R^*$, $p$ and a certain energy norm of $f(t^*=0)$. Before presenting our main results (see Theorem \ref{theorem}), we introduce the notations used all along this paper.

\subsection{The exterior of Schwarzschild spacetime}\label{subsec1}

The maximally extended Schwarzschild spacetime $(\M,g)$ is a time-oriented Lorentzian manifold solution to the vacuum Einstein equations $R_{\mu \nu}=0$.

\begin{figure}[H] 
\begin{center}
\begin{tikzpicture}[scale=0.4]
\node (I)    at ( 4,0)   {$\mathscr{D}$};
\node (I')   at (-4,0)   {$\mathscr{D}'$};
\node (II)  at (0, 2.5) {$\mathscr{B}$};
\node (II')   at (0,-2.5) {$\mathscr{B}'$};

\path  
  (I') +(90:4)  coordinate  (IItop)
       +(-90:4) coordinate (IIbot)
       +(0:4)   coordinate                  (IIright)
       +(180:4) coordinate (IIleft)
       ;
\draw (IIleft) -- (IItop) --(IIright) --(IIbot) --(IIleft) -- cycle;

\path 
   (I) +(90:4)  coordinate[label=90:$i^+$]  (Itop)
       +(-90:4) coordinate[label=-90:$i^-$]  (Ibot)
       +(180:4) coordinate (Ileft)
       +(0:4)   coordinate[label=0:$i^0$]  (Iright)
       ;
\draw (Ileft) -- 
          node[midway, above=1pt ]    {$\cal{H}^+$}          
      (Itop) --
          node[midway, above right] {$\cal{I}^+$}
      (Iright) -- 
          node[midway, below right] {$\cal{I}^-$}
      (Ibot) --          
          node[midway, below=1pt ]    {$\cal{H}^{\!-}$}    
      (Ileft) -- cycle;

\draw[decorate,decoration=zigzag] (IItop) -- (Itop)
      node[midway, above, inner sep=2mm] {$r=0$};

\draw[decorate,decoration=zigzag] (IIbot) -- (Ibot)
      node[midway, below, inner sep=2mm] {$r=0$};

\end{tikzpicture}
\end{center}
\caption{The Penrose diagram $\M/SO_3(\R)$ of the Schwarzschild spacetime.}\label{Penrose1}
\end{figure}
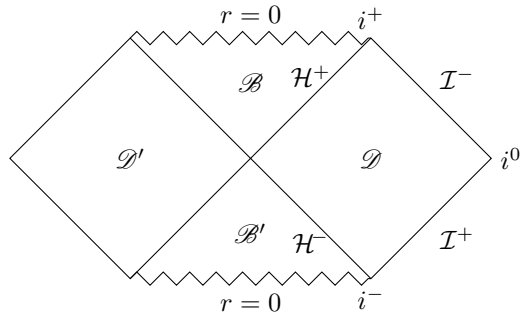

The region which we are interested in and represented by the coordinate system $(t,r>2M,\theta , \varphi)$, or alternatively by $(t,r^*,\theta , \varphi)$, is $\mathscr{D}$. The black hole $\mathscr{B}$, covered by the coordinate system $(t,0<r<2M,\theta , \varphi)$, is separated from the domain of outer communications by the event horizon $\mathcal{H}^+$, which is the null hypersurface $r=2M$. The region $\mathscr{D}'$ is a copy of $\mathscr{D}$ and $\mathscr{B}'$ is a white hole. Note that the time orientation can be chosen so that $\partial_t$ is future oriented for $r >2M$.

In this article, we will only work on the exterior region $\mathscr{D}$. It will be convenient to use the retarded and advanced Eddington-Finkelstein coordinates 
$$u := t-r^*, \qquad \underline{u} =t+r^*,$$
which turn out to be null since the metric takes the form
\begin{equation}\label{metricuuu}
g \hspace{1mm} = \hspace{1mm} -\left( 1-\frac{2M}{r} \right) \dr u \dr \underline{u} +r^2 \dr \sigma^{\mathstrut}_{ \mathbb{S}^2}.
\end{equation}
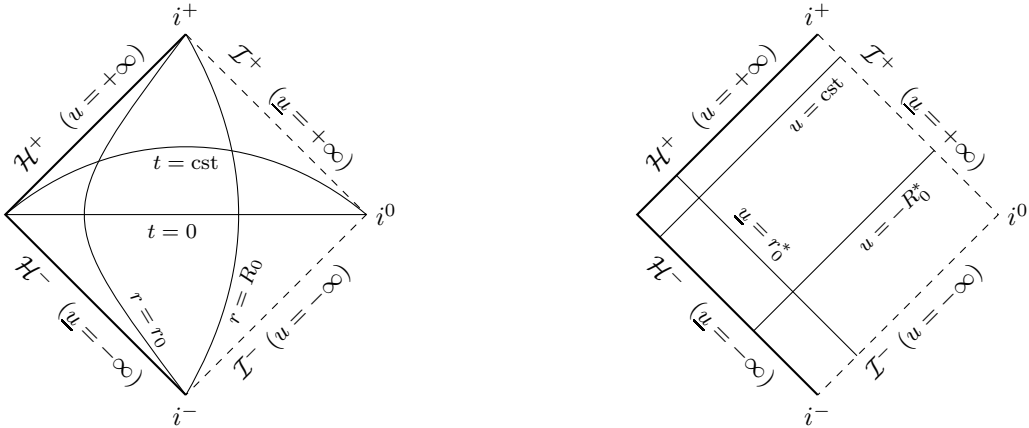
\begin{figure}[H] 
\begin{center}
\begin{tikzpicture}[scale=0.6]
\node (II)   at (-4,0)   {};
\path  
  (II) +(90:4)  coordinate[label=90:$i^+$]  (IItop)
       +(-90:4) coordinate[label=-90:$i^-$] (IIbot)
       +(180:4) coordinate (IIleft)
       +(0:4)   coordinate[label=0:$i^0$]  (IIright)
       ;
\draw[thick] (IIbot) --
          node[midway, below, sloped] {$\cal{H}^-$ \, \small{($\underline{u}=-\infty$)}}    
      (IIleft) --
        node[midway, above, sloped] {$\cal{H}^+$ \, \small{($u=+\infty$)}}
      (IItop) ;
\draw[dashed]  (IItop)--    
          node[midway, above, sloped] {$\cal{I}^+$ \, \small{($\underline{u}=+\infty$)}}
      (IIright) -- 
          node[midway, below, sloped] {$\cal{I}^-$  \small{($u=- \infty$)}}
      (IIbot) ;
\draw (-4,4) to [bend left] node[pos=0.8,below right=-3pt,sloped]{\footnotesize{$r=R_0$}}  (-4,-4);
\draw (-4,4) .. controls (-7,0)  .. node[near end, above right=0.2pt,sloped]{\footnotesize{ $r=r_0$}} (-4,-4) ;
\draw (-8,0) to [bend left=40] node[below]{\footnotesize{$t=\mathrm{cst}$}} (0,0);
\draw (-8,0) to node[below]{\footnotesize{$t=0$} \; \;} (0,0);

\node (I)   at (10,0)   {};
\path  
  (I) +(90:4)  coordinate[label=90:$i^+$]  (Itop)
       +(-90:4) coordinate[label=-90:$i^-$] (Ibot)
       +(180:4) coordinate (Ileft)
       +(0:4)   coordinate[label=0:$i^0$]  (Iright)
       ;
\draw[thick] (Ibot) --
          node[midway, below, sloped] {$\cal{H}^-$ \, \small{($\underline{u}=-\infty$)}}    
      (Ileft) --
        node[midway, above, sloped] {$\cal{H}^+$ \, \small{($u=+\infty$)}}
      (Itop) ;
\draw[dashed]  (Itop)--    
          node[midway, above, sloped] {$\cal{I}^+$ \, \small{($\underline{u}=+\infty$)}}
      (Iright) -- 
          node[midway, below, sloped] {$\cal{I}^-$  \small{($u=- \infty$)}}
      (Ibot) ;
\draw (12.58,1.42) to node[pos=0.3,below,sloped]{\footnotesize{$u=-R_0^*$}} (8.58,-2.58) ;
\draw (10.5,3.5) to node[pos=0.2,below,sloped]{\footnotesize{$u=\mathrm{cst}$}} (6.5,-0.5) ;
\draw (6.87,0.87) to node[ pos=0.4, above,sloped]{\footnotesize{$\uu=r^*_0$}}(10.87,-3.13) ;
\end{tikzpicture}
\caption{The level sets of the coordinates $t$, $r$, $u$ and $\uu$. }\label{fig20}
\end{center}
\end{figure}

\begin{Rq}\label{globcoord}
The Penrose diagram of Figure \ref{Penrose1} can be covered by global coordinates $(U,\underline{U})$. Apart from the usual degeneration of the spherical variables, $(U,\underline{U}, \theta ,\varphi)$ defines a global system of coordinates on $(\M,g)$, where, in the exterior region $r>2M$,
\begin{equation*}
 U = \arctan \left( - \exp \left(-\frac{u}{4M} \right) \right), \qquad \underline{U} = \arctan \left(  \exp \left(\frac{\underline{u}}{4M} \right) \right)
 \end{equation*}
and the future event horizon $\mathcal{H}^+$ corresponds to $U=0$ (or abusively to $u=+\infty$).
\end{Rq}

It is well-known, and it can be easily observed in \eqref{defgS}, that the exterior of Schwarzschild spacetime is static and spherically symmetric. More precisely, $\partial_t$ is a timelike Killing vector field and $(\Omega_1,\Omega_2,\Omega_3)$, where
\begin{equation}\label{defomega}
 \Omega_1 \hspace{1mm} := \hspace{1mm} - \sin \varphi \, \partial_{\theta}-\cot \theta \cos \varphi \, \partial_{\varphi} , \qquad \Omega_2 \hspace{1mm} := \hspace{1mm}  \cos \varphi \, \partial_{\theta}-\cot \theta \sin \varphi \, \partial_{\varphi} , \qquad \Omega_3 \hspace{1mm} := \hspace{1mm} \partial_{\varphi} ,
 \end{equation}
is a basis of Killing vector fields generating the $SO_3(\R)$-symmetry. 
\begin{Rq}\label{exprdtheta}
Note that $\partial_{\theta} = - \sin (\varphi) \Omega_1+\cos (\varphi) \Omega_2$ and $\frac{1}{\sin (\theta)} \partial_{\varphi} = -\frac{\cos (\varphi)}{\cos (\theta)} \Omega_1-\frac{\sin (\varphi)}{\cos (\theta ) } \Omega_2$.
\end{Rq}
Finally, because of the degeneracy of the spherical coordinates $(\theta,\varphi)$, we will sometimes need to use a different set of spherical variables $(\widetilde{\theta},\widetilde{\varphi})$, that we choose for simplicity so that
$$ \sin (\theta ) \cos (\varphi ) \, = \, \cos (\widetilde{\theta} ), \qquad  \sin (\theta ) \sin (\varphi ) \, = \, \sin (\widetilde{\theta} ) \cos (\widetilde{\varphi} ), \qquad \cos (\theta ) \, = \, \sin (\widetilde{\theta} ) \sin (\widetilde{\varphi} ).$$
More precisely, $(\widetilde{\theta} , \widetilde{\varphi})$ is obtained from $(\theta, \varphi)$ by applying the permutation $x\textrm{-axis} \rightarrow y\textrm{-axis}$, $y\textrm{-axis} \rightarrow z\textrm{-axis}$, $z\textrm{-axis} \rightarrow x\textrm{-axis}$ to the axis of the sphere. In particular, we have $\dr \m_{\mathbb{S}^2} =\dr \widetilde{\theta}^2 +\sin^2 (\widetilde{\theta} ) \dr \widetilde{\varphi}^2 $,
\begin{equation}\label{othersphecoord}
 \partial_{\widetilde{\theta}} \, = \,   - \sin (\widetilde{\varphi}) \Omega_2+\cos (\widetilde{\varphi}) \Omega_3 , \qquad \partial_{\widetilde{\varphi}} \, = \, \Omega_1 \qquad \qquad \text{and} \qquad \qquad \sin (\theta ) \leq \frac{1}{\sqrt{2}} \Leftrightarrow \sin (\widetilde{\theta}) \geq \frac{1}{\sqrt{2}}.
 \end{equation}
In order to completely cover $\mathbb{S}^2$ by these two spherical coordinate systems, we take $(\widetilde{\theta}, \widetilde{\varphi} ) \in ]0, \pi[ \times ]-\pi , \pi [$.

We now construct the foliation used in this article in order to study solutions to the massless Vlasov equation. Consider $\accentset{\circ}{\mathcal{S}}$ a spherically symmetric spacelike Cauchy hypersurface in $(\mathcal{M},g)$ crossing the event horizon to the future of the sphere of bifurcation $\mathcal{H}^+ \cap \mathcal{H}^-$ and terminating at spatial infinity $i^0$ or at future null infinity $\mathcal{I}^+$. Further, we define $\mathcal{S} := \accentset{\circ}{\mathcal{S}} \cap \mathscr{D}$ and we will study solutions to the massless Vlasov equation, arising from initial data given on $\mathcal{S}$, in $J^+(\mathcal{S} ) \cap \mathscr{D}$, where $J^+( \mathcal{S})$ is the causal future of $\mathcal{S}$. For this, we will use the foliation $(\Sigma_{\tau})_{\tau \in \R}$ defined as follows. We fix, for all this paper, a constant $R_0 >3M$ and we consider $t_0 \in \R$ such that $\{(t_0,R_0) \} \times \mathbb{S}^2 \cap \mathcal{S} \neq \emptyset$. We introduce $R_0^*:=r^*(R_0)$, $u_0:=t_0-R_0^*$ and, for any $\tau \in \R$,
$$ \mathcal{N}_{\tau} := \{ (t,r^*,\omega) \in \R \times \R \times \mathbb{S}^2 \, / \, t-r^* = \tau+ u_0 \, , \, r^* \geq R_0^* \} \cap J^+(\mathcal{S}) ,$$
where $\mathcal{N}_{\tau}$ is the piece of the outgoing null hypersurface $u=\tau+u_0$ located in the region $\{ r^* \geq R_0^* \}$ of the causal future of $\mathcal{S}$. Then, we define
\begin{itemize}
\item $\Sigma_0 := \left( \mathcal{S} \cap \{ r^* < R_0^* \} \right) \cup \mathcal{N}_0$.
\item $\Sigma_{\tau} := \varphi_{\tau} \left( \Sigma_0 \right)$ for all $\tau \geq 0$, where $\tau \mapsto \varphi_{\tau}$ is the flow generated by the static Killing vector field $\partial_t$. 
\item $\Sigma_{\tau} = \mathcal{N}_{\tau}$ for all $\tau <0$.
\end{itemize}
\begin{Rq}
For all $\tau \geq 0$, we have $\Sigma_{\tau}=\left(\varphi_{\tau} \left( \mathcal{S} \right) \cap \{ r^* < R_0^* \} \right) \cup \mathcal{N}_{\tau}$. 
\end{Rq}
For $- \infty \leq a < b \leq +\infty$, we also introduce
$$ \Rm_a^b \, := \, \bigsqcup_{a \leq \tau \leq b} \Sigma_{\tau}, \qquad \text{so that } \qquad J^+(\mathcal{S}) \cap \mathscr{D} \, = \, \Rm_{-\infty}^{+\infty} \, = \, \bigsqcup_{\tau \in \R} \Sigma_{\tau}.$$
These subsets are represented in the following Penrose diagram of the exterior of Schwarzschild spacetime.
\begin{figure}[H]
\begin{center}
\begin{tikzpicture}[scale=1]
\fill[color=gray!15] (-7.13,0.87) .. controls  (-6.13,-0.13) and   (-2.84,0).. (0,0)--(-4,4);

\node (II)   at (-4,0)   {};
\path  
  (II) +(90:4)  coordinate[label=90:$i^+$]  (IItop)
       +(180:4) coordinate (IIleft)
       +(0:4)   coordinate[label=0:$i^0$]  (IIright)
       ;
\draw[thick] (IIleft) -- 
        node[midway, above=1pt, sloped] {$\cal{H}^+$ \, \small{($u=+\infty$)}}
      (IItop);
 \draw[dashed]     (IItop)--    node[midway, above=1pt, sloped] {$\cal{I}^+$ \, \small{($\underline{u}=+\infty$)}}     (IIright);

\fill[color=gray!35] (-6.37,1.63) .. controls (-5.87,1.13)  and (-4,1.7)..  (-3,1.4)--(-2.2,2.2) -- (-2.88,2.88)-- (-3.29,2.47) .. controls (-4,3.1) and (-5.29,2.11)  ..  (-5.59,2.41)  ;
\draw[dashed] (0,0)--(-1,-1); 
\draw[thick] (-8,0)--(-7,-1); 
\draw[very thin,blue]  (-7.13,0.87) .. controls  (-6.13,-0.13) and (-2.84,0).. node[below]{\small{$\mathcal{S}$}}   (0,0)  ;
\draw (-6.37,1.63) .. controls (-5.87,1.13)  and (-4,1.7).. node[below]{\small{$\Sigma_{\tau}$}}  (-3,1.4)--(-2.2,2.2) node[pos=0.8,below=3pt]{\footnotesize{$\N_{\tau}$}};
\draw (-5.59,2.41) .. controls (-5.29,2.11)   and (-4,3.1) .. node[above]{\small{$\Sigma_{s}$}} (-3.29,2.47)--(-2.88,2.88) ;
\draw (-1.41,1.41) -- (-2.82,0) node[pos=0.5, right=1pt]{\footnotesize{$\N_{0}$}} ;
\draw (-0.8,0.8) -- (-1.6,0) node[pos=0.6, right=1pt]{\footnotesize{$\N_{\tau'}$}} ;

\draw (-4,2) node{\small{$\Rm_{\tau}^s$}};
\end{tikzpicture}
\caption{The foliation $(\Sigma_{\tau})_{\tau \in \R}$. In this diagram, $\tau' < 0 < \tau < s$. }\label{fig2}
\end{center}
\end{figure}
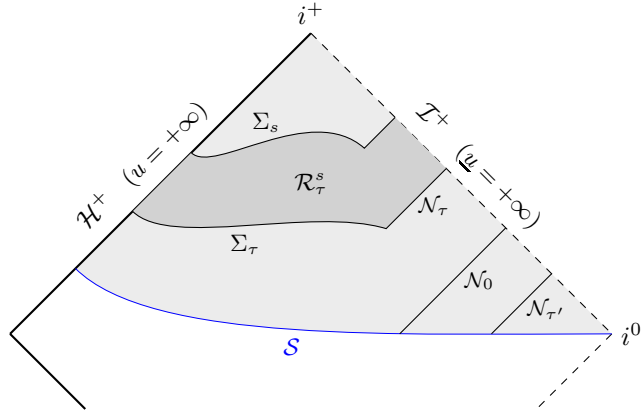

We now collect various formulas and properties for future reference.
\begin{alignat}{2}\label{defderiv}
\partial_{r^*} \, & = \, \left(1 - \frac{2M}{r} \right) \partial_r, \qquad && \partial_{\uu} \,  = \, \frac{1}{2} \left( \partial_t + \partial_{r^*} \right), \qquad \partial_u \, = \, \frac{1}{2} \left( \partial_t - \partial_{r^*} \right), \\
\dr r^* \, & = \, \frac{\dr r}{1 - \frac{2M}{r}} , \qquad  && \dr \uu \,  = \, \dr t + \dr r^* , \qquad \; \; \dr u \, = \, \dr t - \dr r^*. \label{defdiff}
\end{alignat}
We denote by $\dr \mu_{\Rm_a^b}$ the invariant volume form induced by $g$ on $\Rm_a^b$ and by $\dr \m_{\mathbb{S}^2} = \sin (\theta ) \dr \theta \wedge \dr \varphi$ the standard volume element of the unit sphere $\mathbb{S}^2$. Let $\n_{\Sigma_{\tau}}$ be the future oriented normal vector along $\Sigma_{\tau}$ such that $\n_{\Sigma_{\tau}} \vert_{\N_{\tau}} = \partial_{\uu}$ and $\n_{\Sigma_{\tau}}$ is unitary for $r < R_0$. Then, we denote by $\dr \m_{\Sigma_{\tau}}$ the induced volume form on the hypersurface\footnote{This means that $\dr \mu_{\Rm_{-\infty}^{+\infty}}=-g\big(\n_{\Sigma_{\tau}}, \cdot \big) \wedge \dr \m_{\Sigma_{\tau}}$.} $\Sigma_{\tau}$. The following results are proved in Appendix \ref{ape}.
\begin{Lem}\label{vol}
For any $- \infty \leq a < b \leq +\infty$,
$$ \dr \m_{\Rm_a^b} \, = \,  \left( 1 - \frac{2M}{r} \right)r^2  \dr t \wedge \dr r^* \wedge \dr \m_{\mathbb{S}^2} \, = \, r^2 \dr t \wedge \dr r \wedge \dr \m_{\mathbb{S}^2} .$$
There exists a strictly positive smooth function $\gamma : [2M,R_0] \rightarrow \R_+^*$ such that, for all $\tau \in \R$,
$$  \dr \m_{\Sigma_{\tau}}\Big\vert^{\mathstrut}_{\{2M < r < R_0 \}} \, = \,  \gamma (r) r^2 \dr r \wedge \dr \m_{\mathbb{S}^2}, \qquad \qquad \qquad \dr \m_{\Sigma_{\tau}}\Big\vert^{\mathstrut}_{\{ r \geq R_0 \}} \, = \, \dr \m_{\N_{\tau}} \, := \, r^2 \dr \uu \wedge \dr \m_{\mathbb{S}^2}.$$
There exist a strictly positive smooth function $\beta :[2M,R_0] \rightarrow \R_+^*$ such that
$$ \n_{\Sigma_{\tau}} \Big\vert^{\mathstrut}_{\{2M < r < R_0 \}} \, = \, \frac{1}{\beta(r)} \partial_{\uu} + \frac{\beta(r)}{1-\frac{2M}{r}} \partial_u, \qquad \qquad \qquad \n_{\Sigma_{\tau}} \Big\vert^{\mathstrut}_{\N_{\tau}} \, = \, \n_{\N_{\tau}} \, = \, \partial_{\uu} .$$
There exists a smooth function $\underline{U} : [2M, R_0 ] \rightarrow \R$ such that, for any $\tau \geq 0$, $\Sigma_{\tau} \cap \{ r < R_0 \}$ can be parameterized, in the Regge-Wheeler system of coordinates $(t,r^*,\theta, \varphi)$, by
$$ (r,\omega ) \in \, ]2M,R_0[ \times \mathbb{S}^2 \mapsto ( \underline{U}(r)-r^*+\tau , r^*, \omega),$$
where $r^*=r^*(r)$ is defined in \eqref{defrstar}. This implies in particular
$$ \forall \,  r^* < R_0^*,  \quad  |\tau - \underline{u} | \leq \| \underline{U} \|^{\mathstrut}_{L^{\infty}}, \qquad \forall \, r^* \geq R_0^*, \quad |\tau - u | \leq |u_0|$$
and that $\tau \sim t$ on the domains of bounded $r^*$. Moreover, there exists $\gamma_0 :[2M,+\infty[ \rightarrow \R_+^*$ such that
$$ \dr \m_{ \Rm_{-\infty}^{+\infty}}  \, = \, \gamma_0(r) \dr \tau \wedge \dr \m_{\Sigma_{\tau}}, \qquad \qquad \exists \, C \geq 1, \; \forall \, r \geq 2M, \qquad \frac{1}{C} \leq \gamma_0(r) \leq C.$$
\end{Lem}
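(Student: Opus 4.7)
The plan is to reduce all five assertions to a single graph parametrization of the hypersurfaces $\Sigma_\tau$, then derive the volume forms and normals by standard pseudo-Riemannian calculations, working in whichever chart remains regular at $r = 2M$.

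\emph{Graph parametrization and bulk form.} I would start with the parametrization claim. Because $\accentset{\circ}{\mathcal{S}}$ is spherically symmetric, spacelike, and crosses $\mathcal{H}^+$ transversally to the future of the bifurcation sphere, pass to advanced Eddington-Finkelstein coordinates $(\underline{u}, r, \omega)$, in which $g = -(1-2M/r)\, d\underline{u}^2 + 2\, d\underline{u}\, dr + r^2\, d\sigma_{\mathbb{S}^2}$ is smooth across the horizon and $\partial_t = \partial_{\underline{u}}$. Spherical symmetry forces the defining equation of $\accentset{\circ}{\mathcal{S}}$ to be $\omega$-independent, and the spacelike/transversality property turns it into a graph $\underline{u} = \underline{U}(r)$ of a smooth function via the implicit function theorem. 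Applying $\varphi_\tau$ translates $\underline{u}$ by $\tau$, giving $\Sigma_\tau \cap \{r < R_0\} = \{\underline{u} = \underline{U}(r) + \tau\}$, i.e.\ $t = \underline{U}(r) - r^* + \tau$ in Regge-Wheeler coordinates; the matching $\Sigma_\tau \cap \{r = R_0\} \subset \mathcal{N}_\tau$ at $\tau = 0$ fixes $\underline{U}(R_0) = t_0 + R_0^*$, and the $L^\infty$ bounds on $|\tau - \underline{u}|$ and $|\tau - u|$ together with $\tau \sim t$ on bounded $r^*$ are immediate consequences. For the bulk volume form, $\sqrt{|\det g|} = (1-2M/r)r^2\sin\theta$ in Regge-Wheeler coordinates, and $(1-2M/r)\, dr^* = dr$ converts between the two displayed expressions.

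\emph{Normal and induced volume forms.} The tangent space to $\Sigma_\tau \cap \{r < R_0\}$ is spanned by the angular vectors and $X = T'(r^*)\partial_t + \partial_{r^*}$ in Regge-Wheeler coordinates, where $T(r^*) := \underline{U}(r) - r^*$ so that $T'(r^*) = \underline{U}'(r)(1-2M/r) - 1$. Writing the most general spherically symmetric future-directed timelike vector as $n = a\partial_{\underline{u}} + b\partial_u$ with $a, b > 0$, the normalization $g(n,n) = -1$ reads $ab(1-2M/r) = 1$, which is automatically enforced by the parametrization $a = 1/\beta$, $b = \beta/(1-2M/r)$; the orthogonality $g(n, X) = 0$ then yields $\beta^2 = (1-2M/r)(1 - T'(r^*))/(1 + T'(r^*)) = (2 - (1-2M/r)\underline{U}'(r))/\underline{U}'(r)$, smooth and strictly positive on $[2M, R_0]$ since the spacelike character of $\accentset{\circ}{\mathcal{S}}$ is equivalent to $\underline{U}'(r) > 0$ together with $(1-2M/r)\underline{U}'(r) < 2$. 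The induced Riemannian volume form on $\Sigma_\tau \cap \{r < R_0\}$ then follows from the pullback metric, which is diagonal in $(r, \omega)$ with radial coefficient $g_\Sigma(X, X) = \underline{U}'(r)(2 - (1-2M/r)\underline{U}'(r))$, giving $\gamma(r)$ as the square root of that quantity. On $\mathcal{N}_\tau$, rewriting $d\mu_{\mathcal{R}} = \tfrac{1}{2}(1-2M/r)r^2\, du \wedge d\underline{u} \wedge d\mu_{\mathbb{S}^2}$ via $du \wedge d\underline{u} = 2\, dt \wedge dr^*$, and using $-g(\partial_{\underline{u}}, \cdot) = \tfrac{1}{2}(1-2M/r)\, du$ in the defining relation $d\mu_{\mathcal{R}} = -g(n,\cdot) \wedge d\mu_{\Sigma_\tau}$, gives directly $d\mu_{\mathcal{N}_\tau} = r^2\, d\underline{u} \wedge d\mu_{\mathbb{S}^2}$.

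\emph{Foliation measure.} For the coarea identity, describe $\tau$ as a function on $\mathcal{R}_{-\infty}^{+\infty}$: on $\{r < R_0\}$ it equals $\underline{u} - \underline{U}(r)$, so at fixed $\omega$ one has $d\tau \wedge dr = d\underline{u} \wedge dr = dt \wedge dr$, and $d\mu_{\mathcal{R}} = r^2\, dt \wedge dr \wedge d\mu_{\mathbb{S}^2} = (1/\gamma(r))\, d\tau \wedge d\mu_{\Sigma_\tau}$; on $\{r \geq R_0\}$ it equals $u - u_0$, so $d\tau \wedge d\underline{u} = du \wedge d\underline{u} = 2\, dt \wedge dr^*$ and $d\mu_{\mathcal{R}} = \tfrac{1}{2}(1-2M/r)\, d\tau \wedge d\mu_{\mathcal{N}_\tau}$. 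Setting $\gamma_0(r) := 1/\gamma(r)$ on $[2M, R_0)$ and $\gamma_0(r) := \tfrac{1}{2}(1-2M/r)$ on $[R_0, \infty)$ then yields a function of $r$ alone bounded above and below by positive constants on $[2M, \infty)$. The one technical point throughout is smoothness across $r = 2M$, where Regge-Wheeler coordinates degenerate; this is circumvented by performing the graph and normal computations in the advanced Eddington-Finkelstein chart, in which the metric and all objects above extend smoothly to the horizon, and then converting back to Regge-Wheeler coordinates on $r > 2M$.
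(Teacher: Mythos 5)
Your proof is correct and follows essentially the same route as the paper's Appendix A: pass to the advanced Eddington--Finkelstein chart to stay regular at the horizon, obtain the graph $\underline{u}=\underline{U}(r)$ via spherical symmetry and the implicit function theorem, and then read off the normals, induced volume forms, and the coarea factor $\gamma_0$. The only difference is cosmetic — you compute $\beta$ and $\gamma$ explicitly in terms of $\underline{U}'$ from the orthogonality and pullback-metric conditions, whereas the paper argues more abstractly (unitarity gives $\alpha\beta=1$ and future-orientation gives positivity, and $\gamma:=|g(\vec{n},\vec{n})|^{1/2}$ with the defining relation $\dr\m_{\Rm}=-g(\n_{\Sigma_\tau},\cdot)\wedge\dr\m_{\Sigma_\tau}$); both yield the same quantities.
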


\begin{Rq}
A possible and explicit choice for $\mathcal{S}$ is the hypersurface $\{ t^* =0 \}$, where $t^* := t+2M \log (r-2M)$. In that case, for all $\tau \geq 0$, we have $\Sigma_{\tau} = \left( \{ t^*=\tau \} \cap \{ r^* < R_0^* \} \right) \cup \mathcal{N}_{\tau}$, 
$$\dr \m_{\Sigma_{\tau}} \Big\vert^{\mathstrut}_{\{ r < R_0 \}} \, = \, \gamma(r) r^2 \dr r \wedge \dr \m_{\mathbb{S}^2}, \qquad \n_{\Sigma_{\tau}}\Big\vert^{\mathstrut}_{\{ r < R_0 \}} \, = \, \frac{1}{ \gamma(r)}\partial_{\uu} + \frac{\gamma(r)}{ \left(1-\frac{2M}{r}\right)} \partial_u, \qquad \gamma(r) \, := \, \left| 1+\frac{2M}{r} \right|^{\frac{1}{2}}.$$
\end{Rq}
We will sometimes need to consider pieces of the hypersurfaces of constant $\uu$. More precisely, let
$$\underline{\N}_{\underline{w}} \, := \, \{ (t,r^*,\omega) \in \R \times \R \times \mathbb{S}^2 \, / \, t+r^* = \underline{w} \} \cap J^+ (\mathcal{S}), \qquad \underline{w} \in \R,$$
and $\n_{\underline{\N}_{\underline{w}}} = \partial_u$ be a future oriented normal vector along $\underline{\N}_{\underline{w}}$. Then, the induced volume form is given by $\dr \m_{\underline{\N}_{\underline{w}}} = -r^2 \dr u \wedge \dr \m_{\mathbb{S}^2}$. We will also denote by $\n_{\mathcal{S}}$ the future oriented normal unit vector to $\mathcal{S}$ and by $\dr \m_{\mathcal{S}}$ the induced volume form on $\mathcal{S}$. 

Finally, in order to lighten the notations, we will throughout this article use the notation $A \lesssim B$ when there exists $C >0$ such that $A \leq C \cdot B$, with $C$ a constant depending only on $M$, the foliation $(\Sigma_{\tau})_{\tau \in \R}$ and the constants $2M <r_0 < r_1 < 3M$ introduced in Section \ref{sec2}. If $C$ depends also on a parameter $p$, we will write $A \lesssim^{\mathstrut}_p B$.
\subsection{Vlasov fields in the cotangent formulation}

Our presentation follows the one of \cite[Subsection $2.1$]{FJS3} in the special case of the exterior of Schwarzschild spacetime. For an introduction to the (co)tangent bundle formulation of Vlasov equations, we refer to \cite{SarbachSchw,Sarbach}. In the cotangent formulation, massless Vlasov fields are defined on the bundle of future light cones
\begin{equation}
 \mathcal{P} \hspace{1mm} := \hspace{1mm}  \displaystyle{\bigcup_{x \in \mathcal{M}}} \mathcal{P}_x, \qquad \mathcal{P}_x \hspace{1mm} := \hspace{1mm} \left\{(x,v) \, / \, v \in T^{\star}_{x}\mathcal{M} , \; \text{$g^{-1}_x(v,v)=0$ and $v$ future oriented}  \right\}.
 \end{equation}
 \begin{Rq}
Note that a $1$-form $v \in T^{\star}_x\M$ is said to be future oriented if and only if the vector field $g^{-1}_x(v, \cdot)$ is future oriented. In particular, if $v \in \mathcal{P}_x$, then $v \neq 0$.
\end{Rq}
In the exterior region $r>2M$, we can decompose any $1$-form $v$ as $v=v_t \dr t+v_{r^*} \dr r^*+v_{\theta} \dr \theta + v_{\varphi} \dr \varphi$, so that $(t,r^*,\theta , \varphi , v_t,v_{r^*},v_{\theta}, v_{\varphi})$ is a coordinate system on $T^{\star} \mathcal{M}$ which is induced by the Regge-Wheeler coordinates. If $g^{-1}(v,v)=0$ and $v$ is future oriented, we have 
\begin{equation}\label{defv0}
v_t \, = \, - \bigg| \,|v_{r^*}|^2 +\left( 1-\frac{2M}{r} \right)\! \frac{|\slashed{v}|^2}{r^2} \, \bigg|^{\frac{1}{2}}, \qquad \quad \slashed{v} := \sqrt{|v_{\theta}|^2+\frac{ |v_{\varphi}|^2}{\sin^2 (\theta )}},
\end{equation}
and $(t,r^*,\theta , \varphi , v_{r^*},v_{\theta}, v_{\varphi})$ are smooth coordinates on $\C$. In this coordinate system, denoted for convenience by $(x^0,x^1,x^2,x^3,v^1,v^2,v^3)$, the massless Vlasov equation takes the form\footnote{This formula holds in a more general setting (see \cite{FJS3}).}
$$\T (f) \hspace{1mm} := \hspace{1mm} v_{\alpha} g^{\alpha \beta} \partial_{x^{\beta}}f-\frac{1}{2} \partial_{x^i} g^{\alpha \beta} v_{\alpha} v_{\beta} \partial_{v_i} f \hspace{1mm} = \hspace{1mm} 0,$$
where $\T$ is the Liouville vector field. Using \eqref{defgS}, $\partial_{r^*}=\left(1-\frac{2M}{r} \right) \partial_r$ and \eqref{defv0}, so that $\frac{1}{2} \partial_{r^*} g^{\alpha \beta} v_{\alpha} v_{\beta} =\frac{r-3M}{r^4}|\slashed{v}|^2$, we obtain
\begin{equation}\label{defT}
\T \, = \, -\frac{v_t}{1-\frac{2M}{r}} \partial_t + \frac{v_{r^*}}{1-\frac{2M}{r}} \partial_{r^*}+\frac{v_{\theta}}{r^2} \partial_{\theta} + \frac{v_{\varphi}}{r^2 \sin^2 ( \theta )} \partial_{\varphi} + \frac{r-3M}{r^4} |\slashed{v}|^2 \partial_{v_{r^*} }+\frac{\cot (\theta) }{r^2 \sin^2 (\theta)}  v_{\varphi}^2 \partial_{v_{\theta} }.
\end{equation} 
Note that $v$ can also be decomposed on the basis $(\dr u, \dr \underline{u} , \dr \theta, \dr \varphi )$, so that
\begin{equation}\label{vu}
\hspace{-3mm} v = v_u \dr u +v_{\underline{u}} \dr \underline{u} +v_{\theta} \dr \theta +v_{\varphi} \dr v_{\varphi}, \qquad v_u = \frac{v_t-v_{r^*}}{2}, \quad v_{\underline{u}} = \frac{v_t+v_{r^*}}{2}, \quad 4|v_{\uu}||v_u|=\left( \! 1-\frac{2M}{r} \!\right) \! \frac{|\slashed{v}|^2}{r^2}.
\end{equation}
The isometries of the Schwarzschild spacetime generates symmetries for the Liouville vector field $\T$ as well as quantities preserved along its flow. For any (conformal) Killing vector field $X$, its complete lift $\widehat{X} \in T T^{\star} \M$ is tangent to $\C$ and commute with $\T$. Moreover, $v(X)$ is solution to the massless Vlasov equation. These results, which hold in a more general setting (see \cite[Proposition $1$]{SarbachSchw} and \cite[Appendix $C$]{FJS}), implies that
\begin{align}
\widehat{\Omega}_1 \hspace{1mm} & := \hspace{1mm} - \sin \varphi \, \partial_{\theta}-\cot \theta \cos \varphi \, \partial_{\varphi} - \cos \varphi \frac{v_{\varphi}}{\sin^2 \theta} \partial_{v_{\theta}}+\left( \cos \varphi \, v_{\theta}-\sin \varphi \cot \theta \, v_{\varphi} \right)\partial_{v_{\varphi}}  , \label{def1} \\
 \widehat{\Omega}_2 \hspace{1mm} & := \hspace{1mm}  \cos \varphi \, \partial_{\theta}-\cot \theta \sin \varphi \, \partial_{\varphi} - \sin \varphi \frac{v_{\varphi}}{\sin^2 \theta} \partial_{v_{\theta}}+\left( \sin \varphi \, v_{\theta}+\cos \varphi \cot \theta \, v_{\varphi} \right)\partial_{v_{\varphi}} , \label{def2} \\
\widehat{\Omega}_3 \hspace{1mm} & : = \hspace{1mm} \partial_{\varphi}, \qquad  \qquad  \widehat{\partial}_t \hspace{1mm}  := \hspace{1mm} \partial_t \label{def3} 
\end{align}
commute with $\T$ and that $v_t$ as well as $\slashed{v} = \left| v(\Omega_1)^1+v(\Omega_2)^2+v(\Omega_3)^2 \right|^{\frac{1}{2}}$ are preserved by the flow of $\T$. We summerize these properties, which can also be obtained by straightforward computations.
\begin{Lem}\label{Comuprop}
We have
$$ [\T , \partial_t] =0, \qquad [\T, \widehat{\Omega}_1]=0, \qquad [\T, \widehat{\Omega}_2]=0, \qquad [\T, \widehat{\Omega}_3]=0$$
as well as $\T(v_t)=0$ and, almost everywhere, $\T(\slashed{v})=0$.
\end{Lem}
\begin{Rq}
Note also that the vector field $S_v:=v_{r^*}\partial_{v^*}+v_{\theta} \partial_{v_{\theta}}+ v_{\varphi} \partial_{v_{\varphi}}$ satisfies $[\T, S_v ]=-\T$. We will not take advantage of it in this article but let us mention that it was crucially used in \cite[Lemma $6.1$]{rVP}.
\end{Rq}
If $I \in \llbracket 1,3 \rrbracket^{n}$ is a multi-index of length $|I|=n \geq 1$, we define $\widehat{\Omega}^I := \widehat{\Omega}_{I_1}\dots \widehat{\Omega}_{I_n}$. For convenience, we define the multi-index of length $|I|=0$ such that for any function $f$, $\widehat{\Omega}^I f=f$.

For any $- \infty \leq a < b \leq +\infty$, we define the following subset of $\C$,
\begin{align*}
 \widehat{\Rm}_a^b \,  := \, \{ (x,v) \in \C \, / \, x \in \Rm_a^b \}.
\end{align*}
Let $x$ be a point in the exterior of Schwarzschild spacetime. The inverve metric $g^{-1}$ induces the volume element $\dr \mu_{T^{\star}_x \M} = \sqrt{|\det g^{-1} |} \dr v_t \wedge \dr v_{r^*} \wedge \dr v_{\theta} \wedge \dr v_{\varphi}$ on $T^{\star}_x \M$. Since $\mathcal{P}_x$ is included in a level set of $q :v \mapsto \frac{1}{2}g^{-1}(v,v)$, the differential form $\dr q=-\frac{v_t}{1-\frac{2M}{r}} \dr v_t+\frac{v_{r^*}}{1-\frac{2M}{r}} \dr v_{r^*}+\frac{v_{\theta}}{r^2} \dr v_{\theta}+\frac{v_{\varphi}}{r^2 \sin^2 \theta} \dr v_{\varphi} $ is normal to $\C_x$. Hence,
$$ \dr \m_{\C_x}\, := \, \frac{\sqrt{|\det g^{-1} |}}{-\frac{v_t}{1-\frac{2M}{r}} } \dr v_{r^*} \wedge \dr v_{\theta} \wedge \dr v_{\varphi} \hspace{1mm} = \hspace{1mm} \frac{\dr v_{r^*} \wedge \dr v_{\theta} \wedge \dr v_{\varphi}}{r^2 |\sin \theta | |v_t|}  $$
is the unique volume form on $\C_x$ satisfying $\dr \mu_{T^{\star}_x \M} = \dr q \wedge \dr \mu_{\C_x}$. For any sufficiently regular function $f : \widehat{\Rm}_{-\infty}^{+\infty} \rightarrow \R$, we denote by $\int_{\C} f \dr \m_{\C}$ the function $x \mapsto \int_{\C_x} f \dr \m_{\C_x}$. Then, its energy momentum tensor $\mathbb{T}$ and particle current density $N$ are defined by
$$\mathbb{T}[f]_{\mu \nu} \, := \,  \int_{\C} f v_{\mu } v_{\nu} \dr \m_{\C} , \qquad N [f]_{\mu} \, := \, \int_{\C} f v_{\mu }  \dr \m_{\C} $$ 
and there holds
\begin{equation}\label{divN}
\nabla^{\mu} \mathbb{T}[f]_{\mu \nu} \, = \, \int_{\C} \T(f) v_{\nu} \dr \m_{\C}, \qquad \nabla^{\mu} N[f]_{\mu} \, = \, \int_{\C} \T(f) \dr \m_{\C},
\end{equation}
so that $N[f]$ is divergence free if $f$ is solution to the massless Vlasov equation. If $X=X^{\nu} \partial_{\nu}$ is a vector field, we denote by $v(X):=v_{\nu} X^{\nu}$ its image under $v_\mu \mathrm{d}x^\mu$. If $X$ is the red-shift vector field $\mathrm{N}$ introduced below or $\n_{\Sigma_{\tau}}$, we will rather write $v_{\mathrm{N}}:=v(\mathrm{N})$ and $  v \cdot \n_{\Sigma_{\tau}}:=v ( \n_{\Sigma_{\tau}} )$. Then, we have
$$ \nabla^{\mu} \left( \mathbb{T}[f]_{\mu \nu} X^{\nu} \right) \, = \, \nabla^{\mu} N \left[ f \, v (X) \right]_{\mu} \, = \,   \int_{\C} \big( \T(f) v(X)+ f \T \left( v(X) \right) \big) \dr \m_{\C}.$$
Thus, using the vector field $X$ as a multiplier corresponds to multiply the particle density $f$ by the weight $v(X)=v_{\nu} X^{\nu}$. In particular, if $X$ is the static Killing field $\partial_t$ and if $\T(f)=0$, one can derive conservation laws for $|v(\partial_t)|^a f=|v_t|^a f$, $a \in \R$. Next, we recall the dominant energy condition. If $f \geq 0$ and $X_1, \, X_2$ are two future directed causal vectors, then
\begin{equation*}
 \mathbb{T}[f](X_1,X_2) \, = \, \int_{\C} f \, v(X_1) v(X_2) \dr \m_{\C} \, \geq \, 0 .
\end{equation*}
This follows from the fact that $v$ is lightlike and future directed, so that $v(X_1) \leq 0$, with a strict inequality if $X_1$ is timelike. Finally, it will be convenient to use
\begin{equation}\label{defrho}
 \rho \big[ f \big] \, := \, \int_{\C} f \, |v \cdot \n_{\Sigma_{\tau}}| \, \dr \m_{\C},
\end{equation}
so that, if $X$ is a timelike future oriented vector field, $\rho [f|v(X)| ] = \mathbb{T}[f](X, \n_{\Sigma_{\tau}})$ is an energy density. In this paper, we will mainly be interested in\footnote{In the context of wave equations, this quantity is often denoted by $J^{\mathrm{N}}[f] \cdot \n_{\Sigma_{\tau}}$, where $J^{\mathrm{N}}[f]_{\mu} := \mathbb{T}[f]_{\mu \nu} \mathrm{N}^{\nu}$.} $\rho [f|v_{\mathrm{N}}^{\mathstrut}|]$.

\subsection{Statement of the main result}

In order to present the main result of this paper, we will use the notations $\lceil p \rceil := \min \{ n \in \mathbb{N} \, / \, n \geq p \}$, $u_- := \max(0,-u)$ and $\langle x \rangle := (1+|x|^2)^{\frac{1}{2}}$. Moreover, we will refer to an energy norm $\mathcal{E}^{p}_s[f]$ which is defined in Section \ref{sec7} (see \eqref{mathcalE}). It is the sum of weighted $L^{q_1}$ and $L^{q_2}$ norms of $\partial_t^n \widehat{\Omega}^I (f)_{\vert \mathcal{S}}$, with $n+|I| \leq 3$ and $3 < q_1 <q_2$.

\begin{Th}\label{theorem}
Let $p \in \R_+$, $s>1$ and $f : \widehat{\Rm}_{-\infty}^{+\infty} \rightarrow \R$ be a sufficiently regular solution to the massless Vlasov equation $\T(f)=0$. The following estimates holds.
\begin{itemize}
\item Integrated local energy decay. For all $\tau \geq 0$,
\begin{align*}
 \int_{0}^{\tau}& \! \int_{\Sigma_{\tau'}} \! \mathds{1}^{\mathstrut}_{r \leq R_0}  \int_{\C} |f| |v^{\mathstrut}_{\mathrm{N}}| |v \cdot \n_{\Sigma_{\tau'}} | \dr \m_{\C} \dr \m_{\Sigma_{\tau'}} \dr \tau' \\ & \qquad \quad  \lesssim^{\mathstrut}_s  \int_{\mathcal{S}}  \int_{\C}  |f|  |v^{\mathstrut}_{\mathrm{N}}| |v \cdot \n_{\mathcal{S}} | \dr \m_{\C} \dr \m_{\mathcal{S}}  +  \! \left| \int_{\mathcal{S}} \int_{\C} \left\langle r^2 \frac{|v_{\uu}|}{|v_t|} \right\rangle^{s-1} \langle v_t \rangle^{5(s -1)} |f|^s |v^{\mathstrut}_{\mathrm{N}}| |v \cdot \n_{\mathcal{S}} | \dr \m_{\C} \dr \m_{\mathcal{S}} \right|^{\frac{1}{s}}\!  . 
 \end{align*}
\item Boundedness of a $r$-weighted energy norm. For all $\tau \in \R$,
$$  \int_{\Sigma_{\tau}} \int_{\C}\bigg(1+ r^{p} \frac{| v_{\uu}|^{p/2}}{| v_t|^{p/2}}\bigg)|f| |v^{\mathstrut}_{\mathrm{N}}| |v \cdot \n_{\Sigma_{\tau}} | \dr \m_{\C} \dr \m_{\Sigma_{\tau}} \, \lesssim^{\mathstrut}_p \, \int_{\mathcal{S}} \int_{\C} \bigg(1+ r^{p} \frac{|v_{\uu}|^{p/2}}{|v_t|^{p/2}} \bigg)  |f| |v^{\mathstrut}_{\mathrm{N}}| |v \cdot \n_{\mathcal{S}} | \dr \m_{\C} \dr \m_{\mathcal{S}}.$$
\item Decay estimate for the energy flux. Assume further that $s^2 \leq 1+\langle 5p \rangle^{-2}$. Then, for all $\tau \in \R$,
\begin{align*}
 \int_{\Sigma_{\tau}}  \int_{\C} |f| |v^{\mathstrut}_{\mathrm{N}}| |v \cdot \n_{\Sigma_{\tau}} | \dr \m_{\C} \dr \m_{\Sigma_{\tau}} \,  \lesssim^{\mathstrut}_{p,s}  \frac{1}{(1+|\tau|)^p}\! \int_{\mathcal{S}} \int_{\C} \bigg( r^{p} \frac{|v_{\uu}|^{p/2}}{|v_t|^{p/2}}+(1+u_-)^p \bigg)  |f| |v^{\mathstrut}_{\mathrm{N}}| |v \cdot \n_{\mathcal{S}} | \dr \m_{\C} \dr \m_{\mathcal{S}} \quad  & \\
  + \frac{1}{(1+|\tau|)^p} \left| \int_{\mathcal{S}}  \int_{\C} \bigg( r^{p} \frac{|v_{\uu}|^{p/2}}{|v_t|^{p/2}}+(1+u_-)^p \bigg)  |f|^{s^{\lceil p \rceil}}\langle v_t\rangle^{5(s^{\lceil p \rceil}-1)}  |v^{\mathstrut}_{\mathrm{N}}| |v \cdot \n_{\mathcal{S}} | \dr \m_{\C} \dr \m_{\mathcal{S}} \right|^{ s^{- \lceil p \rceil}} \! \hspace{-1mm}. &
 \end{align*}
 \item Pointwise decay estimates for the velocity average of $f$. If $s^2 \leq 1+\langle 20(p+2)\rangle^{-2}$ and $\mathcal{E}^p_s[h] < +\infty$, we have for all $\tau \in \R$ and $(t,r^*,\omega) \in \Sigma_{\tau}$,
 $$ \int_{\C} |f| |v^{\mathstrut}_{\mathrm{N}}|^2 \dr \m_{\C}\Big\vert_{(t,r^*,\omega)} \, \lesssim^{\mathstrut}_{p,s}  \frac{\mathcal{E}^p_s[f]}{r^2 (1+|\tau|)^p}, \qquad \qquad \hspace{-0.2mm} \int_{\C}\frac{|v_{\uu}|^{p/2}}{|v_t|^{p/2}} |f| |v^{\mathstrut}_{\mathrm{N}}|^2 \dr \m_{\C}\Big\vert_{(t,r^*,\omega)} \, \lesssim^{\mathstrut}_{p,s}  \frac{\mathcal{E}^p_s[f]}{r^2 (1+|t+r^*|)^p}.$$ 
\end{itemize}
\end{Th}
\begin{Rq}
According to Lemma \ref{vol}, pointwise decay in $|\tau|$ corresponds to decay in $\min (|t-r^*|,|t+r^*|)$. Note also that using Hölder's inequality, we obtain for any $d \in [0,p]$,
$$ \int_{\C}\frac{|v_{\uu}|^{d/2}}{|v_t|^{d/2}} |f| |v^{\mathstrut}_{\mathrm{N}}|^2 \dr \m_{\C}\Big\vert_{(t,r^*,\omega)} \, \lesssim^{\mathstrut}_{p,s}  \frac{\mathcal{E}_s^p[f]}{r^2(1+|\tau|)^{p-d} (1+|t+r^*|)^{d}}.$$
Moreover, we can obtain through \eqref{vu}, which provides $\frac{|\slashed{v}|^2}{r^2} \leq |v_{\uu}||v^{\mathstrut}_{\mathrm{N}}|$, improved pointwise decay estimates for the angular component $\slashed{v}$ as well.
\end{Rq}
\begin{Rq}
Compare to $\int_{\mathcal{S}}  \int_{\C}  |f|  |v^{\mathstrut}_{\mathrm{N}}| |v \cdot \n_{\mathcal{S}} | \dr \m_{\C} \dr \m_{\mathcal{S}}$, the finiteness of the weighted $L^s$ norm on the right hand side of the integrated local energy decay estimate requires to assume more decay in $v$ on $f$ initially. This loss of integrability can however be arbitrary small. In contrast, no additional decay hypothesis in $r$ is in fact necessary. We refer to Remark \ref{Rqlabelplustard} for more details.

The same statement holds for the decay estimate of the energy flux so, in that sense, the rate of decay $(1+|\tau|)^p$ is optimal. The one of the pointwise decay estimates is not (see Remark \ref{Rqplustardaussi}).
\end{Rq}
\begin{Rq}
In view of the existence of steady states for the massive Vlasov equation on the exterior of a Schwarzschild black hole \cite{SarbachSchw}, no analogous result to Theorem \ref{theorem} can hold for massive particles.
\end{Rq}

\subsection{The red-shift effect}\label{redshiftsubsec}

Although $\partial_t$ is timelike in the region of outer communications $\mathscr{D}$, it becomes null on the event horizon. The consequence is that the energy density $\mathbb{T}[f](\partial_t, \n_{\Sigma_{\tau}})$ degenerates near $\mathcal{H}^+$. Even if we will work throughout this article with the Regge-Wheeler coordinates, it is convenient to use here the coordinate system $(\underline{u} , r , \theta , \varphi) \in \R \times \R_+^* \times ]0,\pi[ \times ]0,2 \pi [$, which covers the region $\mathscr{B} \cup \mathcal{H}^+ \cup \mathscr{D}$ of the maximally extended Schwarzschild spacetime $(\mathcal{M},g)$ (see the Penrose diagram of Figure \ref{Penrose1}). The metric takes the form
$$ g \, = \, -\left( 1 - \frac{2M}{r} \right) \dr \underline{u}^2+2 \dr \underline{u} \dr r+r^2 \dr \sigma^{\mathstrut}_{\mathbb{S}^2}$$
and is indeed regular on $\mathcal{H}^+$. In order to avoid any confusion, we denote by $\partial_{\underline{u}}'$ and $\partial_{r}'$ the differentiation with respect to $\underline{u}$ and $r$ in the coordinate system $(\underline{u} , r , \theta , \varphi)$. Then, in $\mathscr{D}$,
$$\partial_t \, = \,  \partial_{\underline{u}}'  , \qquad \partial_{r^*} \, = \, \partial_{\underline{u}}'+\left( 1 - \frac{2M}{r} \right) \partial_r', \qquad \partial_{\underline{u}} \, = \, \partial_{\underline{u}}'+\frac{1}{2}\left( 1 - \frac{2M}{r} \right) \partial_r', \qquad \partial_u \, = \, -\frac{1}{2}\left( 1 - \frac{2M}{r} \right) \partial_r' .$$
This implies that $\partial_{\underline{u}}$ (respectively $\dr \uu$) and, in particular, $ \frac{ 1}{1-\frac{2M}{r}} \partial_u$ (respectively $\left( 1-\frac{2M}{r} \right) \dr u$) can be extended as smooth vector fields (respectively $1$-forms) on $\mathscr{B} \cup \mathcal{H}^+ \cup \mathscr{D}$. Now, consider $(x,v) \in \mathcal{P}$ such that
$$ x=(t,r^*,\theta , \varphi), \quad v_{r^*}=-\frac{1}{2}\left( 1-\frac{2M}{r}\right), \quad v_\theta= v_\varphi=0.$$
In particular, we have $v_t=v_{r^*}$, $v_u=0$ and $v_{\uu}=-\frac{1}{2}\left( 1-\frac{2M}{r}\right)$. Consequently,
$$ v_t \, = \, g(\partial_{\uu}, \partial_{\uu}')\, = \, -\frac{1}{2}\left( 1-\frac{2M}{r}\right) \,  \xrightarrow[ r \to 2M ]{} \, 0, \qquad \qquad  v(\partial_r') \,  = \,  g(\partial_{\uu},\partial_r') \,  = \, 1,$$
so that $\mathbb{T}[f](\partial_t, \n_{\Sigma_{\tau}})$ does not uniformly control $\mathbb{T}[f](\partial_r', \n_{\Sigma_{\tau}})$ on $\mathscr{D}$, for a nonnegative function $f$. As it can be directly checked in \eqref{defv0}, the angular components cannot be uniformly controlled by $\mathbb{T}[f](\partial_t, \n_{\Sigma_{\tau}})$ either. The following result, implied by Lemma \ref{Lemannex}, suggests us to consider an other multiplier than $\partial_t$ in order to obtain a strong control of $f$ near the event horizon.
\begin{Lem}\label{controlcompo}
Let $2M < r_1 \leq R_0$ and $V$ be a $\varphi_t$-invariant future directed timelike vector field on the region $\{ 2M \leq r \leq r_1 \}$ of $\mathcal{H}^+ \cup \mathscr{D}$. Then, for any smooth vector fields $X_1, \, X_2$ and for all $\tau \geq 0$, there holds on the compact set $\Sigma_{\tau} \cap \{ 2M \leq r \leq r_1 \}$,
$$ \big| \mathbb{T}[f](X_1,X_2) \big| \, \leq \, C \cdot \big| \mathbb{T}[f](V,\n_{\Sigma_{\tau}}) \big|,$$
where $C$ depends on $\tau, \, V, \, X_1, \, X_2$ and $\mathcal{S}$.
\end{Lem}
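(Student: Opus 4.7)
The strategy is to reduce the inequality to a pointwise algebraic bound on the mass shell $\mathcal{P}_x$, and then integrate in $v$. I would first fix $x$ in the compact region and introduce a $g$-orthonormal frame $(e_0,e_1,e_2,e_3)$ at $x$ with $e_0 := V/\sqrt{-g(V,V)}$. Since $v \in \mathcal{P}_x$ is null and future-directed, its components $v_\alpha := v(e_\alpha)$ in the dual coframe satisfy $v_0 < 0$ and $v_0^2 = \sum_{i=1}^3 v_i^2$. For any vector $X = X^\alpha e_\alpha$ at $x$, the Euclidean Cauchy--Schwarz inequality then gives
$$|v(X)| \,\leq\, \sqrt{2\,(X^0)^2 + 2\sum_{i=1}^3 (X^i)^2}\;|v_0|,$$
while $|v(V)| = \sqrt{-g(V,V)}\,|v_0|$ and, for any future-directed timelike vector $W = W^\alpha e_\alpha$,
$$|v(W)| \,\geq\, \Bigl(W^0 - \sqrt{\textstyle\sum_i (W^i)^2}\Bigr)\,|v_0|,$$
the parenthesis being strictly positive by timelikeness. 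Applying the last bound to $W = \n_{\Sigma_\tau}$ and combining the three estimates yields
$$\bigl|v(X_1)\,v(X_2)\bigr| \,\leq\, K(x)\,|v(V)|\,|v(\n_{\Sigma_\tau})|,$$
where $K(x)$ is a continuous function of the frame components of $X_1, X_2, V$ and $\n_{\Sigma_\tau}$ at $x$.

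The next step is to show that $K(x)$ is uniformly bounded on $\Sigma_\tau \cap \{2M \leq r \leq r_1\}$. That set is compact, $X_1$ and $X_2$ are smooth, and $V$ is uniformly timelike by hypothesis; the delicate point is control of $\n_{\Sigma_\tau}$ up to the horizon. Lemma \ref{vol} supplies $\n_{\Sigma_\tau} = \beta^{-1}\partial_{\uu} + \beta(1-2M/r)^{-1}\partial_u$ on $\{2M < r < R_0\}$ with $g(\n_{\Sigma_\tau},\n_{\Sigma_\tau}) = -1$. Rewriting this field in the regular ingoing coordinates $(\uu,r,\theta,\varphi)$ via the identities $\partial_{\uu} = \partial_{\uu}' + \tfrac{1}{2}(1-2M/r)\partial_r'$ and $\partial_u = -\tfrac{1}{2}(1-2M/r)\partial_r'$ stated before the lemma produces a smooth extension across $\mathcal{H}^+$, still of unit length. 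Hence $\n_{\Sigma_\tau}$ is uniformly unit timelike on the compact region, and $C := \sup K(x) < \infty$, with the dependencies listed in the lemma (the hypersurface $\mathcal{S}$ enters through the shape of $\Sigma_\tau$ and therefore of $\n_{\Sigma_\tau}$).

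Finally, since $v$ is future-directed null and $V, \n_{\Sigma_\tau}$ are future-directed timelike, both $v(V)$ and $v(\n_{\Sigma_\tau})$ are nonpositive, so $v(V)\,v(\n_{\Sigma_\tau}) = |v(V)|\,|v(\n_{\Sigma_\tau})|$. Integrating the pointwise inequality against the nonnegative density $f$ over $\mathcal{P}_x$ with respect to $\dr\m_{\mathcal{P}_x}$,
$$\bigl|\mathbb{T}[f](X_1,X_2)\bigr| \,\leq\, \int_{\mathcal{P}_x} f\,|v(X_1)\,v(X_2)|\,\dr\m_{\mathcal{P}_x} \,\leq\, C\int_{\mathcal{P}_x} f\, v(V)\, v(\n_{\Sigma_\tau})\,\dr\m_{\mathcal{P}_x} \,=\, C\,\mathbb{T}[f](V,\n_{\Sigma_\tau}),$$
which is the claimed estimate.

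The main obstacle is the uniform control of $\n_{\Sigma_\tau}$ at the event horizon: in Regge-Wheeler coordinates the naive expression for $\n_{\Sigma_\tau}$ carries a factor $(1-2M/r)^{-1}$ that diverges as $r\to 2M$, so ensuring that $K(x)$ remains finite on the closed region including $\mathcal{H}^+$ requires passing to coordinates regular across the horizon, as in the preamble to the lemma. Once this is done, the algebraic/Cauchy--Schwarz step and the integration are routine.
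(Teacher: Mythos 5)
Your proof is correct. The paper derives this lemma from Lemma \ref{Lemannex} in Appendix B, whose proof is a soft compactness argument: one restricts to the compact bundle of null covectors normalized by $|v_1|^2+|v_2|^2+|v_3|^2=1$ over the compact base, observes that $-v(T)$ is continuous and strictly positive there (causal $v$ paired with a strictly timelike future-directed $T$), and takes a sup and an inf to get $|v(X)|\leq -D\,v(T)$; applying this once with $T=V$ and once with $T=\n_{\Sigma_{\tau}}$ and integrating against $f\geq 0$ gives the claim. You reach the same pointwise inequality by a different route: an explicit orthonormal-frame computation with $e_0=V/\sqrt{-g(V,V)}$, the null relation $v_0^2=\sum_i v_i^2$, and Cauchy--Schwarz, which produces a constructive constant $K(x)$ rather than one obtained by compactness of the normalized cone. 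Both arguments ultimately rest on the same two facts — uniform timelikeness of $V$ and $\n_{\Sigma_{\tau}}$ on the compact set, and $v(W)<0$ for null future-directed $v$ against timelike future-directed $W$ — so the approaches are equivalent in substance; your version has the merit of making explicit the regularity of $\n_{\Sigma_{\tau}}$ across $\mathcal{H}^+$ in the ingoing coordinates (the $(1-2M/r)^{-1}$ factor cancels against $\partial_u=-\tfrac{1}{2}(1-2M/r)\partial_r'$), a point the paper leaves implicit by citing Lemma \ref{vol}.
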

The goal is then to find a $\varphi_t$-invariant future directed timelike vector field $\mathrm{N}$ on $\mathcal{H}^+ \cup \mathscr{D}$, which satisfies $\mathrm{N}=\partial_t$ for $r \geq r_1>2M$, and such that we can control $\nabla^{\mu} \mathbb{T}[f]_{\mu \nu} \mathrm{N}^{\nu}$. As $\partial_{\uu}+\frac{1}{1-\frac{2M}{r}} \partial_u$ can be extended as a timelike vector field on $\mathscr{B} \cup \mathcal{H}^+ \cup \mathscr{D}$, which contains $\{ 2M \leq r \leq r_1 \}$, we have\footnote{In other words, near the event horizon, the velocity current $v$ should then rather be decomposed as $ v = v_{\uu}' \dr \uu+ v_r' \dr r+v_{\theta} \dr \theta +v_{\varphi} \dr \varphi$, where, in particular $\frac{2v_u}{1-\frac{2M}{r}}=  v_r'$.}
$$ \mathbb{T}[f](\mathrm{N},\n_{\Sigma_{\tau}}) \, = \, \int_{\C} f \, |v^{\mathstrut}_{\mathrm{N}}| |v \cdot \n_{\Sigma_{\tau}}| \dr \m_{\C}, \qquad \text{where} \quad v^{\mathstrut}_{\mathrm{N}} := v(N),  \quad |v^{\mathstrut}_{\mathrm{N}}|  \sim |v_{\uu}|+\frac{|v_u|}{1-\frac{2M}{r}} +\frac{|\slashed{v|}}{r} .$$
However, the conservation law given by $\nabla^{\mu} \mathbb{T}[f]_{\mu t}= \nabla_{\mu} N[ v_t f]=0$ merely gives us a uniform bound on $\int_{\Sigma_{\tau}} \mathbb{T}[f](\partial_t , \n_{\Sigma_{\tau}}) \dr \m_{\Sigma_{\tau}}$ (see Proposition \ref{divergence0}). In order to bound the energy norm of $f|v^{\mathstrut}_{\mathrm{N}}|$, or even $f|v^{\mathstrut}_{\mathrm{N}}|^a$, we take advantage of the redshift effect, which is in particular captured by the fact that such a timelike vector field $\mathrm{N}$ can be chosen so that
$$ \exists \, b>0, \quad \exists \, r_0 >2M, \quad \forall \, 2M < r \leq r_0, \qquad -\T \big( |v^{\mathstrut}_{\mathrm{N}}| \big) |v_t| \geq b |v^{\mathstrut}_{\mathrm{N}}|^2.$$
Hence, when we apply the divergence theorem to $\mathbb{T}[f]_{\mu \nu} \mathrm{N}^{\nu}$, the worst error terms have a good sign. It is well-known that a similar issue occurs for the wave equation on black hole spacetimes and it was understood by Dafermos-Rodnianski in \cite{redshift} that the red-shift property of the horizon is the key for proving a non-degenerate energy estimate in the Schwarzschild spacetime. In fact, the existence of such a strictly timelike vector field $\mathrm{N}$ is related to the strict positivity of the surface gravity $\kappa=\frac{1}{4M}$ on the Killing horizon $r=2M$. The surface gravity is defined such that the Killing vector field $\partial_{\uu}'$, which is null on $\mathcal{H}^+$ and equal to $\partial_t$ in $\mathscr{D}$, satisfies $\nabla_{\partial_{\uu}'} \partial_{\uu}'= \kappa \partial_{\uu}'$ on $\mathcal{H}^+$. We refer to \cite[Section $3$]{Volker} for the detailed construction of $\mathrm{N}$ and to \cite[Section $7$]{LecBH} for a more general result covering all classical sub-extremal black holes. The behaviour of the solutions to the wave equation on extremal black holes, for which the surface gravity vanishes, is quite different (see \cite{aretakis1,aretakis2}).

\subsection{Vector field methods}

In order to obtain decay estimates for solutions to wave equations, Klainerman developed in \cite{Kl85} what is now referred as the vector field method. The methodology has been recently adapted for Vlasov equations by Fajman-Joudioux-Smulevici \cite{FJS}. These approaches turned out to be applicable for non-linear equations and led in particular to the proof of the stability of Minkowski spacetime as a solution to Einstein equations \cite{CKM,LR10} or to Einstein-Vlasov system \cite{Taylor,FJS3,Lindblad,BFJS}. Instead of using, say, representation formula, these strategies rely on
\begin{enumerate}
\item a set of vector fields, used as commutators, which reflect the symmetries of the equation. In our case, we will use the complete lift of the Killing vector fields of the Schwarzschild spacetime, i.e. $\partial_t$, $\widehat{\Omega}_1$, $\widehat{\Omega}_2$ and $\widehat{\Omega}_3$, since they commute with the Vlasov operator $\T$.
\item Energy inequalities. Well-chosen vector fields are used as multipliers in order to prove boundedness for weighted $L^p$ norms of the solution studied and its derivatives. For Vlasov equations, as mentionned previously, using the vector field $X$ as a multiplier consists in multiplying $f$ by the weight $v(X)$ and then to apply the divergence theorem to $N[v(X) f]_{\mu}$. More generally, we will consider quantities of the form $|v(X)|^a |\partial_t^n \widehat{\Omega}^I f|$, with $a \in \R$. In particular, if $\T(f)=0$ and $X$ is conformal Killing, this gives a conservation law.
\item Weighted Sobolev embeddings in order to derive pointwise decay estimates for the solution. For the classical vector field method, it is crucial to use commutators or multipliers with weights in $t$ in order to obtain time decay.
\end{enumerate}
The main difficulty for adapting this strategy to the study of wave equations on black holes is related to the lack of symmetries of these Lorentzian manifolds compared to Minkowski spacetime. Nonetheless, pointwise decay estimates has been proved in the case of Schwarschild, and even slowly rotating Kerr spacetimes, by considering multipliers or commutators analogous to the Morawetz and the scaling vector fields of the Minkowski space \cite{redshift,BlueSter,Luk1,Luk2} (see also \cite{AB} for an extension of the vector field method relying on hidden symmetries). In order to control the error terms arising in the energy estimates from the failure of these vector fields, which carry weights in $t$, to be conformal Killing, a new feature is required compared to the classical method of Klainerman. More precisely, in all these works, an integrated energy decay estimate is proved and crucially used. In our case, for the massless Vlasov equation, it would be an inequality of the form
\begin{equation}\label{eq:ugent}
 \int_{\tau=-\infty}^{+\infty} \int_{\Sigma_{\tau}} \int_{\C} A(r) f |v^{\mathstrut}_{\mathrm{N}}| |v \cdot \n_{\Sigma_{\tau}} | \dr \m_{\Sigma_{\tau}} \dr \tau \, \leq \, D_0,
\end{equation}
where $D_0$ is a constant depending on the values of $f$ on the initial hypersurface $\mathcal{S}$ and $A$ is a bounded nonnegative function which can possibly vanish. These type of estimates are obtained by using multipliers $X$ generating a bulk integral with a nonnegative integrand and controllable boundary terms once the divergence theorem is applied to $T[ f]_{\mu \nu} X^{\nu}$. In order to prove a non-degenerate local integrated energy decay on Schwarzschild spacetimes, i.e. an inequality such as \eqref{eq:ugent} where $A$ only vanishes at spatial infinity, the main difficulty once the redshift effect is well understood is to overcome the problems related to the trapping at the photon sphere $r=3M$, where trapped null geodesics are orbiting. In the case of the wave equation, this can only be achieved by losing regularity \cite{Sbierski}. In fact, a loss of an $\epsilon$ of an angular derivative is sufficient \cite{BlueSoffer} and we prove in this paper a similar result for the massless Vlasov equation. More precisely, we obtain a non-degenerate local integrated energy estimate with a loss of an $\epsilon$ of integrability in $\slashed{v}$ and we prove that this loss is necessary (see Propositions \ref{ILED} and \ref{noILED}).

In \cite{rpDR}, Dafermos-Rodnianski presented a new approach for the study of wave equations which turns out to be more robust than the classical vector field method. One of the goal of this paper is to adapt it to massless Vlasov equations. It relies on
\begin{enumerate}
\item the boundedness of an energy norm. In this article, it will be $\int_{\Sigma_{\tau}} \! \int_{\C} f | v^{\mathstrut}_{\mathrm{N}} | |v \cdot \n_{\Sigma_{\tau}} | \dr \m_{\C} \dr \m_{\Sigma_{\tau}}$.
\item A non-degenerate integrated local energy decay. In our case, this would be an inequality such as \eqref{eq:ugent} and where $A(r)>0$.
\item A hierarchy of $r^p$-weighted energy estimates. For the wave equation, this is achieved by using $r^p \partial_{\uu}$, for $0 \leq p \leq 2$, as a multiplier. In this article, in order to derive superpolynomial decay for the massless Vlasov equation, we will consider the weights $r^p |v_{\uu}|^q$, with $0 \leq 2p \leq q$.
\end{enumerate}
Together, these three points are sufficient to derive decay in $\tau$ for, in our case, $\int_{\Sigma_{\tau}} \int_{\C} f | v^{\mathstrut}_{\mathrm{N}} | |v \cdot \n_{\Sigma_{\tau}}|  \dr \m_{\C} \dr \m_{\Sigma_{\tau}}$. The pointwise decay estimates are then obtained by applying the method to, say, $\partial_t^n \widehat{\Omega}^I f$ and then by using Sobolev embeddings. However, for Vlasov fields, there is an additional difficulty compared to the case of the wave equation since we do not have a conservation law for the radial derivative $\partial_{r}f$. Instead, we use the equation $\T(f)=0$ in order to estimate $\partial_{r} \int_{\C} \frac{|v_{r^*}|^2}{|v_t|^2} f |v_t||v \cdot \n_{\Sigma_{\tau}}| \dr \m_{\C}$ in $L^1(\Sigma_{\tau})$ and we control the non-degenerate energy density $\mathbb{T}[f](\mathrm{N},\mathrm{N})$ by higher moments of the solution since, for instance,
$$ \int_{\C}  f |v^{\mathstrut}_{\mathrm{N}}|^2 \dr \m_{\C} \, \lesssim \, \frac{1}{r^{\frac{3}{2}}} \left|\int_{\C} \frac{|v_{r^*}|^2}{|v_t|^2} \frac{|v_{\uu}|^2}{|v_t|^2} |f|^4 \big(1+|v_t|+|\slashed{v}| \big)^4 |v_t||v^{\mathstrut}_{\mathrm{N}}|^{10} \dr \m_{\C} \right|^{\frac{1}{4}}.$$
An important feature of this approach, which explains part of its robustness, is that no vector field with weights growing in time are required. Indeed, the decay in $t$ is retrieved by the energy decay.
\begin{Rq}
We emphasize that it is crucial, in order to apply the method of \cite{rpDR}, to choose a foliation adapted to the behaviour of massless particles such as the spacelike-null hypersurfaces $\Sigma_{\tau}$ considered in this article (see Figure \ref{fig2}). In fact, the flux through $\Sigma_{\tau}$ measures the energy that has not been radiated to null infinity or inside the black hole up to time $\tau \geq 0$. In contrast, the flux through the hypersurface $\{t = \tau \}$ is preserved. Since such an hypersurface intersects the event horizon at the bifurcation sphere $\mathcal{H}^+ \cap \mathcal{H}^-$ and terminates at spatial infinity (see Figure \ref{fig20}), its energy flux will measure all the energy radiated to null infinity and inside the black hole.
\end{Rq}

This method has been extensively used in order to study solutions to wave equations in various context.\\ It can be applied to a large class of spacetimes \cite{Moschidisrp}, including in particular the subextremal Kerr black holes $|a|<M$ (see \cite{Kerrdecay}). With a refinement of the approach, Angelopoulos-Aretakis-Gajic provided a characterization of all the solutions to the wave equation, on a large class of stationary spherically symmetric asymptotically flat spacetimes, which satisfy Price's law as a lower bound \cite{Dejanadv,Dejanann}. The $r^p$-weighted energy method is also substantially applied in the dynamical satibility of Kerr family of black holes as solutions to the Einstein equations. This includes the study of the Teukolsky equation \cite{PasqualottoTeuko,DHRTeuko,GiorgiTeuko2}, stability results for linearized gravity around a Schwarzschild or even a Kerr solution \cite{DHRlinstab,ThomasJohnson,ABBM} as well as the non-linear stability of the Schwarzschild family with respect to polarized axially symmetric perturbations \cite{KlSz}. Finally, let us also mention that the method has also been succesfully applied in order to study various non-linear problems on spacetimes close to the Minkowski space \cite{ShiwuYangmedia0,ShiwuYangwaveLarge,ShiwuYangMKG2,ShiwuYangMKG1,ShiwuYangsemi1,ShiwuYangYuMKG,JosephKeir} and on a fixed black hole background \cite{Pasqualottononlin}.

Very few are known about the asymptotic properties of solutions to the massless Vlasov equation on black hole spacetimes. To our knowledge, there is only one result, due to Andersson-Blue-Joudioux \cite{ABJ}, for slowly rotating Kerr spacetimes. They proved the boundedness of a weighted energy norm as well as an integrated energy decay estimate degenerating at the horizon and on a neighborhood of $r=3M$ but they do not obtain any pointwise decay estimate on the velocity average of the Vlasov field. The main goal of this paper is then to adapt the $r^p$-weighted energy method to massless Vlasov equation and to obtain pointwise decay estimates on $\int_{\C} f |v^{\mathstrut}_{\mathrm{N}}|^2 \dr \m_{\C}$. Since the techniques used in this paper for the study of massless Vlasov fields are compatible with the ones developed by Dafermos-Rodnianski for solutions to wave equations, we expect that they will be useful for studying non-linear equations such as the Einstein-Vlasov system. The problem of late time asymptotics for the massless Vlasov equation constitutes another possible application. How asymptotics along $\mathcal{S}$ (or along $\mathcal{I}^-$ and $\mathcal{H}^-$) translates into asymptotics along $\mathcal{I}^+$?

\subsection{Structure of the paper}

The starting point for the proof of Theorem \ref{theorem} consists in splitting the region $\Rm_{-\infty}^{+\infty}$ into two domains separated by $\N_0$, $\Rm^{+\infty}_0$, where $\tau \geq 0$, and $\Rm_{-\infty}^{0}$, corresponding to $\tau \leq 0$ and where the analysis is much easier. 

In the region $\tau \geq 0$, we consider the massless Vlasov equation \eqref{Vlasovintro} with initial data given on the hypersurface $\Sigma_0$. In Section \ref{sec2}, by taking advantage of the red-shift effect, we prove a non-degenerate energy estimate which allows us in particular to control the energy density $\rho [ |f| |v^{\mathstrut}_{\mathrm{N}}|]$ in $L^1 ( \Sigma_{\tau})$. Next, we derive in Section \ref{sec3} various integrated energy decay estimates, including the one stated in Theorem \ref{theorem}. In addition, we prove in Proposition \ref{noILED} that no integrated local energy decay statement can hold without a loss of integrability. In Section \ref{sec4}, we establish first a hierarchy of $r^p |v_{\uu}|^q$-weighted energy inequalities, leading to the boundedness of the $r$-weighted norm of Theorem \ref{theorem}. With the help of the results of Sections \ref{sec2}-\ref{sec3}, we then prove energy decay estimates for massless Vlasov fields. Section \ref{sec5} is devoted to $L^{\infty}-L^s$ estimates, with $1 \leq s < +\infty$, for solutions to $\T(f)=0$. In particular, this will allow us to derive pointwise decay estimates for $\rho [ |f| |v^{\mathstrut}_{\mathrm{N}}|]$ in the region $\tau \geq 0$.

The full treatment of the second region, located in $\{ r^* \geq R_0^*+t \}$ and corresponding to the exterior of a light cone, can be carried out independantly of the other domain and is done in Section \ref{sec6}. None of the difficulties related to the event horizon or the trapping appear here, which makes the energy decay estimates much easier to prove. Moreover, if the initial data $f\vert_{\mathcal{S}}$ is supported in $\{ r^* < R_0^* \}$, $f$ vanishes on $\Rm_{-\infty}^0$ and the analysis is reduced to the first domain. 

Finally, we prove Theorem \ref{theorem} in Section \ref{sec7}. We apply first the results obtained in Section \ref{sec6} in order to control the solution in the region $\tau \leq 0$, containing in particular the hypersurface $\Sigma_0$. This permits us to study the solution in the domain $\tau \geq 0$ by appealling to the results proved in Sections \ref{sec2}-\ref{sec5}.

\section{Non-degenerate energy inequality}\label{sec2}

As explained in Subsection \ref{redshiftsubsec}, $|v_t|$ does not uniformly control $|v(\partial_r')|$ and $\frac{|\slashed{v}|}{r}$ near the event horizon, and then on $\mathscr{D}$. Consequently, we cannot obtain a strong control in $L^1(\Sigma_{\tau})$ of a solution to $\T(f)=0$ from a direct application of the conservation law of Proposition \ref{divergence0}, proved below. The purpose of this section is then prove an energy inequality allowing us to propagate norms carrying the weight $v^{\mathstrut}_{\mathrm{N}}$, which satisfies
\begin{equation}\label{defvtbar}
|v^{\mathstrut}_{\mathrm{N}}| \, \lesssim \, |v_{\underline{u}}|+ \frac{|v_u|}{1-\frac{2M}{r}}+\frac{\slashed{v}}{r} \, \lesssim \, |v^{\mathstrut}_{\mathrm{N}}|,
\end{equation}
instead of $v_t$.
\begin{Pro}\label{energyredshift}
There exist $2M < r_0 < r_1< 3M$ and a $\varphi_t$-invariant future oriented timelike vector field $\mathrm{N}$ on $\mathcal{H}^+ \cup \mathscr{D}$ such that
\begin{enumerate}
\item $\exists \, b >0, \; \forall \, 2M < r \leq r_0, \quad -\T( |v^{\mathstrut}_{\mathrm{N}}| ) \geq b \, |v^{\mathstrut}_{\mathrm{N}}|^2$,
\item $\exists \, B >0, \; \forall \,r \geq r_0, \quad | \T(v^{\mathstrut}_{\mathrm{N}}  ) | \leq B \, |v_t|^2$,
\item $\forall \, r \geq r_1 , \quad \mathrm{N} = \partial_t$.
\end{enumerate}
For any sufficiently regular function $f : \widehat{\Rm}_0^{+\infty} \rightarrow \R$ and any $a \in \R_+$, we have, for all $0 \leq \tau_1 \leq \tau_2$,
$$ \int_{\Sigma_{\tau_2}} \rho \left[ |f| | v^{\mathstrut}_{\mathrm{N}} |^a \right] \dr \mu^{\mathstrut}_{\Sigma_{\tau_2}} \, \lesssim^{\mathstrut}_a \,  \int_{\Sigma_{\tau_1}} \rho \left[ |f| | v^{\mathstrut}_{\mathrm{N}} |^a \right] \dr \mu^{\mathstrut}_{\Sigma_{\tau_1}}   +\int_{\Rm^{\tau_2}_{\tau_1}} \int_{\C} \left| \T ( f) \right| | v^{\mathstrut}_{\mathrm{N}} |^a \, \dr \mu^{\mathstrut}_{\C} \dr \m_{\Rm^{\tau_2}_{\tau_1}}.$$
\end{Pro}
\begin{Rq}
There are several vector fields verifying the properties of Proposition \ref{energyredshift}. An explicit possible choice of such a vector field $\mathrm{N}$ is given in \eqref{redshiftdef}.
\end{Rq}
\begin{Rq}
The properties satisfied by $\mathrm{N}$ together with Lemma \ref{Lemannex} imply \eqref{defvtbar}. If $\mathrm{N}$ is defined by \eqref{redshiftdef}, then \eqref{defvtbar} can be obtained directly.
\end{Rq}

\subsection{Degenerate conservation law}

We derive here energy inequalities for $\rho[h]$ which, if $h \geq 0$ is a solution to the massless Vlasov equation, are related to the conservation of the number of particles. Combined with $\T(v_t)=0$, it implies estimates for the degenerate energy density $\rho [h |v_t|]$. 
\begin{Pro}\label{divergence0}
Let $h : \widehat{\Rm}_0^{+\infty} \rightarrow \R_+$ be a sufficiently regular nonnegative function. For all $0 \leq \tau_1 \leq \tau_2$,
$$  \int_{\Sigma_{\tau_2}} \rho \big[h \big]  \dr \mu^{\mathstrut}_{\Sigma_{\tau_2}} \, \leq \,  \int_{\Sigma_{\tau_1}} \rho \big[ h \big]  \dr \mu^{\mathstrut}_{\Sigma_{\tau_1}}  +\int_{\Rm^{\tau_2}_{\tau_1}} \int_{\C} \T ( h)  \, \dr \mu^{\mathstrut}_{\C} \dr \mu_{\Rm_{\tau_1}^{\tau_2}} .$$
\end{Pro}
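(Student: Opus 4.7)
The plan is to apply the divergence theorem to the particle current $N[h]_\mu = \int_{\mathcal{P}} h \, v_\mu \, \dr \mu_{\mathcal{P}}$ on the spacetime region $\mathcal{R}_{\tau_1}^{\tau_2}$, and to discard the boundary contributions coming from the horizon and from null infinity by exploiting the sign of $h$ together with the dominant energy condition.

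First I would rewrite $\rho[h]$ in terms of $N[h]$: since $v \in \mathcal{P}$ is future-directed null and $\n_{\Sigma_\tau}$ is future-directed causal, the pairing $v(\n_{\Sigma_\tau}) = v \cdot \n_{\Sigma_\tau}$ is nonpositive, so for $h \geq 0$ we have $\rho[h] = -\int_{\mathcal{P}} h \, v(\n_{\Sigma_\tau}) \, \dr \mu_{\mathcal{P}} = -N[h](\n_{\Sigma_\tau})$. Next, by the second identity of \eqref{divN}, $\nabla^\mu N[h]_\mu = \int_{\mathcal{P}} \T(h) \, \dr \mu_{\mathcal{P}}$. I would then apply Stokes' theorem on a truncated region $\mathcal{R}_{\tau_1}^{\tau_2} \cap \{\underline{u} \leq \underline{W}\}$ in order to avoid dealing with null infinity directly; the boundary of this region consists of $\Sigma_{\tau_2}$ (future), $\Sigma_{\tau_1}$ (past, traversed with opposite orientation), a finite piece $\mathcal{H}_{\tau_1,\tau_2}$ of the event horizon $\mathcal{H}^+$ (lying between the sphere where $\Sigma_{\tau_1}$ hits the horizon and the analogous sphere for $\Sigma_{\tau_2}$), and a finite piece $\underline{\mathcal{N}}_{\underline{W}} \cap \mathcal{R}_{\tau_1}^{\tau_2}$ of the incoming null hypersurface $\underline{u} = \underline{W}$, which will play the role of null infinity in the limit $\underline{W} \to +\infty$.

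The boundary terms along $\Sigma_{\tau_1}$ and $\Sigma_{\tau_2}$ produce, by Lemma \ref{vol} and the identification above, exactly $\int_{\Sigma_{\tau_i}} \rho[h] \, \dr \mu_{\Sigma_{\tau_i}}$ with the correct signs. For the horizon piece, the future-directed null normal can be taken proportional to $\partial_{\underline{u}}'$ (the generator of $\mathcal{H}^+$), and since $v$ is future-null and $h \geq 0$, the corresponding flux $-\int_{\mathcal{H}_{\tau_1,\tau_2}} \int_{\mathcal{P}} h \, v(\partial_{\underline{u}}') \, \dr \mu_{\mathcal{P}} \, \dr \mu_{\mathcal{H}}$ is nonnegative. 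The same sign consideration applies to the flux through $\underline{\mathcal{N}}_{\underline{W}}$, whose normal $\partial_u$ is future-null. Dropping these two nonnegative boundary contributions yields the desired inequality for the truncated region, and then letting $\underline{W} \to +\infty$ gives the statement, provided the flux through $\underline{\mathcal{N}}_{\underline{W}}$ tends to a well-defined limit (nonnegative) or simply that the remaining terms converge; this is where ``sufficiently regular'' is used, as one only needs enough integrability of $h$ near $\mathcal{I}^+$ to justify passing to the limit in the remaining finite-sign terms.

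The main obstacle is bookkeeping rather than analysis: one must carefully identify the induced volume forms and orientations on the various boundary pieces (in particular on the null pieces at the horizon and at $\underline{u}=\underline{W}$, where there is no canonical normalization), and check that the flux through each null boundary has the correct sign when multiplied by the outward orientation. Once the signs of the horizon and null-infinity fluxes are settled via the dominant energy condition, no further computation is needed, because the bulk term is given directly by \eqref{divN} and the volume identity of Lemma \ref{vol} matches $\dr \mu_{\mathcal{R}_{\tau_1}^{\tau_2}}$ with the iterated $\dr \tau \wedge \dr \mu_{\Sigma_\tau}$ integration used implicitly in the statement.
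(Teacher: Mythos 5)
Your overall strategy coincides with the paper's: apply the divergence theorem to $N[h]_{\mu}$, use \eqref{divN} for the bulk term, observe that the fluxes through the extra null boundary pieces are nonnegative because $h \geq 0$ and $v$ is future-directed null against future-directed causal normals, drop them, and pass to the limit. The one step that does not go through as written is your treatment of the event horizon. You work on $\Rm_{\tau_1}^{\tau_2} \cap \{\uu \leq \underline{W}\}$ and list a piece $\mathcal{H}_{\tau_1,\tau_2}$ of $\mathcal{H}^+$ among its boundary components, writing its flux as an integral of $h\,v(\partial_{\uu}')$ over the horizon. But $h$ is only defined on $\widehat{\Rm}_0^{+\infty}$, which lies over the open exterior $r>2M$; nothing in the hypotheses guarantees that $h$, or the current $N[h]$, extends to $\mathcal{H}^+$ (in the phase-space coordinates $(t,r^*,\theta,\varphi,v_{r^*},v_{\theta},v_{\varphi})$ used to define $\C$ and $\dr \m_{\C}$, the horizon sits at $r^*=-\infty$). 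The paper explicitly treats the existence of the horizon flux as conditional on $h$ being well defined there (see the remark following the proposition), so your boundary integral over $\mathcal{H}_{\tau_1,\tau_2}$ is not a priori meaningful.

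The repair is exactly the device you already use at null infinity, applied once more on the inner side: the paper truncates in $u$ as well, working on $\Rm_{\tau_1}^{\tau_2} \cap \{u \leq w\} \cap \{\uu \leq \underline{w}\}$ with $w$ finite, so that the inner null boundary is a piece of the outgoing cone $C_w=\{u=w\}$ lying strictly inside $\mathscr{D}$, with normal $\partial_{\uu}$, induced volume form $r^2 \dr \uu \wedge \dr \m_{\mathbb{S}^2}$, and nonnegative flux. After discarding the two nonnegative null fluxes one lets $w,\underline{w} \to +\infty$; monotone convergence handles the $\rho[h]$ terms (their integrands are nonnegative) and dominated convergence the bulk term, and at no point does one need to evaluate anything on $\mathcal{H}^+$ itself. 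With that modification your argument becomes the paper's proof.
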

\begin{proof}
Let $w > \tau_2+u_0$, $\underline{w} > w+2R_0^*$ and consider the domain $\Rm_{\tau_1}^{\tau_2} \cap \{ u \geq w \} \cap \{ \underline{u} \geq \underline{w} \}$. As suggested by Figures \ref{fig20}-\ref{fig2}, its boundary is composed by
\begin{itemize}
\item a piece of the hypersurfaces $\Sigma_{\tau_1}$ and $\Sigma_{\tau_2}$. Recall that $v \cdot \n_{\Sigma_{\tau}} \leq 0$.
\item A piece of the hypersurface $C_w:=\{u=w\}$ which admits $\n_{C_w}=\partial_{\uu}$ as normal vector and $ \dr \m_{C_w}=r^2 \dr \uu \wedge \dr \sigma^{\mathstrut}_{\mathbb{S}^2}$ as induced volume form. In particular, $v \cdot \n_{C_w} \leq 0$.
\item A piece of the hypersurface $\underline{\N}_{\underline{w}}$. Recall that $v \cdot \n_{\underline{\N}_{\underline{w}}}=v_u \leq 0$ and $\dr \m_{\underline{\N}_{\underline{w}}}=-r^2 \dr u \wedge \dr \sigma^{\mathstrut}_{\mathbb{S}^2}$.
\end{itemize} 
By applying the divergence theorem to $N[h]_{\mu}$ in the domain considered, we have, in view of \eqref{divN} and \eqref{defrho}, 
\begin{align}
\nonumber \int_{C_w} \! \mathds{1}_{\tau_1 \leq \tau \leq \tau_2} \! \int_{\C} h \, |v \cdot n^{\mathstrut}_{C_w}| \dr \m_{\C} \dr \mu^{\mathstrut}_{C_w}+ \int_{\Sigma_{\tau_2}} \! \mathds{1}_{u \leq w} \,  \mathds{1}_{\underline{u} \leq \underline{w}} \, \rho \left[h \right] \dr \mu^{\mathstrut}_{\Sigma_{\tau_2}} +\int_{\underline{\N}_{\underline{w}}} \! \mathds{1}_{\tau_1 \leq \tau \leq \tau_2} \! \int_{\C} h \, & |v \cdot n^{\mathstrut}_{\underline{\N}_{\underline{w}}}| \dr \m_{\C} \dr \mu^{\mathstrut}_{\underline{\N}_{\underline{w}}} \\ - \int_{\Sigma_{\tau_1}} \mathds{1}_{u \leq w} \,  \mathds{1}_{\underline{u} \leq \underline{w}} \, \rho \left[ h \right] \dr \mu^{\mathstrut}_{\Sigma_{\tau_1}} \, = \, \int_{\Rm^{\tau_2}_{\tau_1}} \mathds{1}_{u \leq w} \,  \mathds{1}_{\underline{u} \leq \underline{w}} \, \int_{\C} \T ( h)  \, \dr \mu^{\mathstrut}_{\C} \dr \mu_{\Rm_{\tau_1}^{\tau_2}} . & \label{eq:divh}
\end{align}
Since $h \geq 0$, we get
\begin{equation*}
\int_{\Sigma_{\tau_2}} \mathds{1}_{u \leq w} \,  \mathds{1}_{\underline{u} \leq \underline{w}} \, \rho \left[ h \right] \dr \mu^{\mathstrut}_{\Sigma_{\tau_2}} \leq   \int_{\Sigma_{\tau_1}} \mathds{1}_{u \leq w} \,  \mathds{1}_{\underline{u} \leq \underline{w}} \, \rho \left[ h \right] \dr \mu^{\mathstrut}_{\Sigma_{\tau_1}} + \int_{\Rm^{\tau_2}_{\tau_1}} \mathds{1}_{u \leq w} \,  \mathds{1}_{\underline{u} \leq \underline{w}} \, \int_{\C} \T ( h)  \, \dr \mu^{\mathstrut}_{\C} \dr \mu_{\Rm_{\tau_1}^{\tau_2}} 
\end{equation*}
and the result follows from the dominated convergence theorem.
\end{proof} 
\begin{Rq}\label{divergence0Rq}
Note that \eqref{eq:divh} and the dominated convergence theorem also gives us that, for all $0 \leq \tau_1 \leq \tau_2$,
$$ \sup_{\underline{w} \in \R} \int_{\underline{\N}_{\underline{w}}} \! \mathds{1}_{\tau_1 \leq \tau \leq \tau_2} \! \int_{\C} h \,  |v_u| \dr \m_{\C} \dr \mu^{\mathstrut}_{\underline{\N}_{\underline{w}}}  \, \leq \,  \int_{\Sigma_{\tau_1}} \rho \big[ h \big]  \dr \mu^{\mathstrut}_{\Sigma_{\tau_1}}  +\int_{\Rm^{\tau_2}_{\tau_1}} \int_{\C} \T ( h)  \, \dr \mu^{\mathstrut}_{\C} \dr \mu_{\Rm_{\tau_1}^{\tau_2}} .$$
This estimate will be useful in order to derive pointwise decay estimates since $|v \cdot \n_{\Sigma_{\tau}}|$ merely controls the component $v_{\uu}$ for $r \geq R_0$.
\end{Rq}
 \begin{Rq}
Although we will not use these properties in this article, it is interesting to remark that we also obtain from \eqref{eq:divh} that the following two limits exist,
\begin{align*}
\int_{\mathcal{H}^+} \mathds{1}_{\tau_1 \leq \tau \leq \tau_2} \int_{\C} h \, | v \cdot n^{\mathstrut}_{\mathcal{H}^+}| \dr \m_{\C} \dr \mu^{\mathstrut}_{\mathcal{H}^+} \, & := \, \lim_{u \to + \infty} \int_{\N_{u}'} \mathds{1}_{\tau_1 \leq \tau \leq \tau_2} \int_{\C} h \, |v_{\uu}| \dr \m_{\C} r^2  \dr \mu^{\mathstrut}_{\mathbb{S}^2} \dr \uu , \\
\int_{\mathcal{I}^+} \mathds{1}_{\tau_1 \leq u-u_0 \leq \tau_2}  \int_{\C} r^2 h \, |v_u| \dr \m_{\C}  \dr \m_{\mathcal{I}^+}  \, & := \, \lim_{\underline{u} \to + \infty} \int_{\underline{\N}_{\uu}} \mathds{1}_{\tau_1 \leq u-u_0 \leq \tau_2}  \int_{\C} r^2 h \, |v_u|  \dr \m_{\C}  \dr \mu^{\mathstrut}_{\mathbb{S}^2} \dr u  .
\end{align*}
\begin{itemize}
\item $\int_{\mathcal{H}^+} \mathds{1}_{\tau_1 \leq \tau \leq \tau_2} \int_{\C} h \, | v \cdot n^{\mathstrut}_{\mathcal{H}^+}| \dr \m_{\C} \dr \mu^{\mathstrut}_{\mathcal{H}^+}$ is the number of particles penetrating the black hole between the advanced times $\tau_1$ and $\tau_2$. If $h$ is well defined on $\mathcal{H}^+$ this quantity can be explicitly computed in the system of coordinate $(\uu, r , \theta , \varphi)$, which is regular on $\mathcal{H}^+$. 
\item $\int_{\mathcal{I}^+} \mathds{1}_{\tau_1 \leq u-u_0 \leq \tau_2}  \int_{\C} r^2 h \, |v_u| \dr \m_{\C}  \dr \m_{\mathcal{I}^+}$ corresponds to the number of particles reaching null infinity between retarded times $\tau_1$ and $\tau_2$. The set $\mathcal{I}^+$ is not part of the Schwarzschild spacetime but can be viewed as the level set $\uu = +\infty$ equipped with the volume form $\dr \m_{\mathcal{I}^+} = \dr u \wedge \dr \m_{\mathbb{S}^2}$ and having $\partial_u$ as a normal vector.
\end{itemize}
 Moreover, if $\T(h)=0$, the following conservation law holds 
$$\int_{\mathcal{H}^+}\! \mathds{1}_{\tau_1 \leq \tau \leq \tau_2}\! \int_{\C}\! h \, | v \cdot n^{\mathstrut}_{\mathcal{H}^+}| \dr \m_{\C} \dr \mu^{\mathstrut}_{\mathcal{H}^+}+ \! \int_{\Sigma_{\tau_2}}\! \rho \big[h \big]  \dr \mu^{\mathstrut}_{\Sigma_{\tau_2}}+\int_{\mathcal{I}^+} \! \mathds{1}_{\tau_1 \leq u-u_0 \leq \tau_2} \! \int_{\C} r^2 h \, |v_u| \dr \m_{\C}  \dr \m_{\mathcal{I}^+} = \!\int_{\Sigma_{\tau_1}}\! \rho \big[h \big]  \dr \mu^{\mathstrut}_{\Sigma_{\tau_1}}\!.$$
 \end{Rq}

\subsection{The red-shift effect}

In order to capture the red-shift effect at the event horizon and improve the control of the solutions in that region, we will take advantage of a combination of the nonnegative weights
$$ - \frac{2v_u}{1-\frac{2M}{r}} \, = \, -\frac{v_t-v_{r^*}}{1-\frac{2M}{r}} \qquad \text{and} \qquad |v_t|.$$
More precisely, consider a constant $2M<r_0<3M$ which will be fixed below, $r_0 < r_1 < 3M$ and a smooth cutoff function $\chi \in \mathcal{C}^{\infty}_c (\R,[0,1])$ such that $\chi_{ \vert ]-\infty , r_0]}=1$ and $\chi_{ \vert[r_1,+\infty[ }=0$. Then, define
\begin{equation}\label{redshiftdef}
\mathrm{N} \, := \, \frac{4 \chi(r)}{1-\frac{2M}{r}} \partial_u +16 \chi (r)\left( 1- \frac{2M}{r} \right) \! \partial_t+ \partial_t, \qquad v^{\mathstrut}_{\mathrm{N}}:=v(\mathrm{N})=  \frac{4 \chi(r)v_u}{1-\frac{2M}{r}}  +16 \chi (r)\left( 1- \frac{2M}{r} \right) \! v_t+ v_t ,
\end{equation}
which is indeed a timelike vector field on $\mathscr{D}$, equal to $\partial_t$ for $r \geq r_1$. Furthermore, using the notations of Subsection \ref{redshiftsubsec}, we have $\mathrm{N}= -\partial_r'+8\left( 1- \frac{2M}{r} \right) \! \partial_{\uu}'+\partial_{\uu}'$ for $r \leq r_0$ so it can be extended on, and beyond, $\mathcal{H}^+$ as a smooth timelike vector field. By construction, $\mathrm{N}$ and its extension are $\varphi_t$-invariant and future oriented.
\begin{Lem}\label{comparo}
The estimate \eqref{defvtbar} holds. Moreover, for all $2M<r < R_0$, we have $|v^{\mathstrut}_{\mathrm{N}}| \lesssim | v \cdot \n_{\Sigma_{\tau}} | \lesssim |v^{\mathstrut}_{\mathrm{N}}|$.
\end{Lem}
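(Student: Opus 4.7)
The plan is to reduce both bounds to a single comparison with the model quantity $|v_{\uu}|+\frac{|v_u|}{1-2M/r}+\frac{|\slashed{v}|}{r}$. The starting point is the null relation from \eqref{vu}, namely $4|v_u||v_{\uu}|=\left(1-\frac{2M}{r}\right)\frac{|\slashed{v}|^2}{r^2}$, combined with the fact that $v$ is past-directed as a covector, so $v_u,v_{\uu}\leq 0$ and $|v_t|=|v_u|+|v_{\uu}|$. From the null relation, AM-GM yields the absorbing inequality
\[
\frac{|\slashed{v}|}{r} \, = \, 2\sqrt{\frac{|v_u|}{1-\frac{2M}{r}}\cdot |v_{\uu}|} \, \leq \, \frac{|v_u|}{1-\frac{2M}{r}}+|v_{\uu}|,
\]
so the $|\slashed{v}|/r$ term in the model quantity is redundant, and it is enough to compare both $|v^{\mathstrut}_{\mathrm{N}}|$ and $|v\cdot \n_{\Sigma_{\tau}}|$ to $|v_{\uu}|+\frac{|v_u|}{1-\frac{2M}{r}}$.

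For \eqref{defvtbar}, I would plug $v_t=v_u+v_{\uu}$ into \eqref{redshiftdef} to obtain
\[
|v^{\mathstrut}_{\mathrm{N}}| \, = \, \frac{2\chi(r)}{1-\frac{2M}{r}}|v_u|+\Bigl(1+8\chi(r)\bigl(1-\tfrac{2M}{r}\bigr)\Bigr)\bigl(|v_u|+|v_{\uu}|\bigr),
\]
and split into three regions. For $r\leq r_0$ one has $\chi(r)=1$, and the first term dominates $\frac{|v_u|}{1-2M/r}$ while the second dominates $|v_{\uu}|$, giving both inequalities up to universal constants. For $r_0\leq r\leq r_1$ the factor $1-\frac{2M}{r}$ is bounded above and below by constants depending only on $r_0,r_1,M$, so $|v^{\mathstrut}_{\mathrm{N}}|\sim |v_t|\sim |v_u|+|v_{\uu}|\sim |v_{\uu}|+\frac{|v_u|}{1-\frac{2M}{r}}$. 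For $r\geq r_1$, $\chi=0$ and $|v^{\mathstrut}_{\mathrm{N}}|=|v_t|=|v_u|+|v_{\uu}|$; since $1-\frac{2M}{r}\geq 1-\frac{2M}{r_1}>0$, the same equivalence holds with implicit constant depending on $r_1$. Combining with the AM-GM bound above closes both directions of \eqref{defvtbar}.

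For the second statement, I use Lemma \ref{vol}, which gives on $\{2M<r<R_0\}$ the expression $\n_{\Sigma_{\tau}}=\frac{1}{\beta(r)}\partial_{\uu}+\frac{\beta(r)}{1-\frac{2M}{r}}\partial_u$ with $\beta$ smooth and strictly positive on the compact interval $[2M,R_0]$. Hence
\[
|v\cdot \n_{\Sigma_{\tau}}| \, = \, \frac{|v_{\uu}|}{\beta(r)}+\frac{\beta(r)|v_u|}{1-\frac{2M}{r}} \, \sim \, |v_{\uu}|+\frac{|v_u|}{1-\frac{2M}{r}},
\]
with constants depending only on $\|\beta\|_{L^\infty}$ and $\inf \beta$. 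Combining this with the equivalence established in the previous paragraph yields $|v^{\mathstrut}_{\mathrm{N}}|\sim |v\cdot \n_{\Sigma_{\tau}}|$ on $\{2M<r<R_0\}$.

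There is no real obstacle here: the argument is a bookkeeping computation, and the only delicate point is checking that the $|\slashed{v}|/r$ contribution hidden in $|v^{\mathstrut}_{\mathrm{N}}|$ does not spoil the equivalence near the horizon, which is resolved cleanly by the AM-GM step coming from the mass-shell relation.
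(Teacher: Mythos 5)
Your proof is correct and follows essentially the same route as the paper's: both reduce $|v^{\mathstrut}_{\mathrm{N}}|$ and $|v\cdot\n_{\Sigma_{\tau}}|$ to the model quantity $|v_{\uu}|+\frac{|v_u|}{1-\frac{2M}{r}}$ using strict positivity of the coefficient functions on a compact interval in $r$, and both invoke the mass-shell relation \eqref{vu} to absorb the $\slashed{v}/r$ term. Your explicit three-region splitting along the cutoff $\chi$ is just a more hands-on version of the paper's observation that $v^{\mathstrut}_{\mathrm{N}}$ and $v\cdot\n_{\Sigma_{\tau}}$ are both of the form $\xi(r)v_{\uu}+\zeta(r)\frac{v_u}{1-\frac{2M}{r}}$ with $\xi,\zeta$ strictly positive.
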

\begin{proof}
According to Lemma \ref{vol} and \eqref{redshiftdef}, on the region $\{2M<r<R_0\}$, $v \cdot \n_{\Sigma_{\tau}}$ and $v^{\mathstrut}_{\mathrm{N}}$ are both of the form $\xi(r) v_{\uu}+\zeta(r)\frac{v_u}{1-\frac{2M}{r}}$, where $\xi$ and $\zeta$ are stricly positive functions defined on the compact $[2M,R_0]$. This implies that for $2M< r<R_0$
$$ |v_{\uu}|+\frac{|v_u|}{1-\frac{2M}{r}} \, \lesssim |v^{\mathstrut}_{\mathrm{N}}| \, \lesssim |v_{\uu}|+\frac{|v_u|}{1-\frac{2M}{r}}, \qquad  |v_{\uu}|+\frac{|v_u|}{1-\frac{2M}{r}} \, \lesssim |v \cdot \n_{\Sigma_{\tau}}| \, \lesssim |v_{\uu}|+\frac{|v_u|}{1-\frac{2M}{r}},$$
so that $|v^{\mathstrut}_{\mathrm{N}}|$ and $|v \cdot \n_{\Sigma_{\tau}}|$ are equivalent. Together with \eqref{vu}, it also implies \eqref{defvtbar} for $r < R_0$. Finally, note that \eqref{defvtbar} is straightforward in the region $r \geq R_0$ since each side of the inequality is comparable to $|v_t|$.
\end{proof}
In order to estimate $\T(|v^{\mathstrut}_{\mathrm{N}}|)$, we are led to compute the following quantities.
\begin{Lem}\label{Ty}
There holds on $\C$,
$$\T \left( \frac{2|v_u|}{1-\frac{2M}{r}} \right) \, = \,  -\frac{4M}{r^2}\frac{|v_u|^2}{(1-\frac{2M}{r})^2}+\frac{4}{r}\frac{|v_{\underline{u}}|| v_u|}{(1-\frac{2M}{r})} ,  \qquad \quad \T \left( \left( \! 1- \frac{2M}{r} \! \right)\! |v_t| \right) \, = \, \frac{2M}{r^2} |v_u|^2-\frac{2M}{r^2}|v_{\underline{u}}|^2 .$$
\end{Lem}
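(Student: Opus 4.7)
\textbf{Proof plan for Lemma \ref{Ty}.} The strategy is a direct computation using the explicit formula \eqref{defT} for $\T$, together with the conservation law $\T(v_t)=0$ from Proposition \ref{Comuprop} and the algebraic relations of \eqref{vu}. As preliminary observations, I would note that since $v$ is future directed and lightlike we have $v_t, v_u, v_{\uu} \leq 0$, so the absolute values factor out as signs, and that $\T(r)=v_{r^*}$ (because $\partial_{r^*}=(1-\tfrac{2M}{r})\partial_r$), hence $\T(1-\tfrac{2M}{r})=\tfrac{2M}{r^2}v_{r^*}$.

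For the second identity I would use the Leibniz rule together with $\T(|v_t|)=-\T(v_t)=0$ to obtain
\[
\T\!\left(\!\left(1-\tfrac{2M}{r}\right)|v_t|\right) \,=\, |v_t|\,\T\!\left(1-\tfrac{2M}{r}\right) \,=\, -\tfrac{2M}{r^2}\,v_t\,v_{r^*}.
\]
The formulas $v_t=v_u+v_{\uu}$ and $v_{r^*}=v_{\uu}-v_u$ from \eqref{vu} then give $v_t v_{r^*}=v_{\uu}^2-v_u^2=|v_{\uu}|^2-|v_u|^2$, which yields the claimed expression.

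For the first identity the main input is $\T(v_u)$. Writing $v_u=\tfrac{v_t-v_{r^*}}{2}$, using $\T(v_t)=0$, and reading the $\partial_{v_{r^*}}$-component of $\T$ in \eqref{defT}, I obtain $\T(v_{r^*})=\tfrac{r-3M}{r^4}|\slashed{v}|^2$ and therefore $\T(|v_u|)=\tfrac{r-3M}{2r^4}|\slashed{v}|^2$. Combined with the computation of $\T\bigl((1-\tfrac{2M}{r})^{-1}\bigr)=-\tfrac{2M\,v_{r^*}}{r^2(1-\frac{2M}{r})^2}$, the Leibniz rule gives
\[
\T\!\left(\tfrac{2|v_u|}{1-\frac{2M}{r}}\right) \,=\, \tfrac{(r-3M)|\slashed{v}|^2}{r^4(1-\frac{2M}{r})} \;-\; \tfrac{4M\,|v_u|\,v_{r^*}}{r^2(1-\frac{2M}{r})^2}.
\]
The final step is the simplification, which is where bookkeeping is the only real concern: I substitute $|\slashed{v}|^2=\tfrac{4r^2}{1-\frac{2M}{r}}|v_{\uu}||v_u|$ from \eqref{vu} in the first term, and rewrite the second as $|v_u|v_{r^*}=|v_u|^2-|v_u||v_{\uu}|$ using $v_{r^*}=v_{\uu}-v_u=-|v_{\uu}|+|v_u|$. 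Collecting the two $|v_u||v_{\uu}|$ contributions produces a factor $(r-3M)+M=r-2M=r(1-\tfrac{2M}{r})$, which cancels one power of $(1-\tfrac{2M}{r})$ in the denominator and delivers exactly the stated formula. The only place one has to be careful is tracking signs coming from the absolute values and keeping the factors $(1-\tfrac{2M}{r})$ lined up; there is no genuine obstacle, just an algebraic identity to unwind.
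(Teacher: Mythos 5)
Your proposal is correct and follows essentially the same route as the paper: both rely on the Leibniz rule, $\T(v_t)=0$, reading $\T(v_{r^*})=\frac{r-3M}{r^4}|\slashed{v}|^2$ off \eqref{defT}, the sign conventions $v_u,v_{\uu}\leq 0$, and the mass-shell relation $4|v_{\uu}||v_u|=\left(1-\frac{2M}{r}\right)\frac{|\slashed{v}|^2}{r^2}$ to collect the cross terms into the factor $r-2M$. All the intermediate identities you state check out, so the computation closes exactly as in the paper.
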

\begin{proof}
Recall from \eqref{defT} the expression of $\T$. As $2|v_u|=-v_t+v_{r^*}$, $\T(v_t)=0$ and $\partial_{r^*}=\left(1-\frac{2M}{r} \right) \partial_r$, we have
\begin{align*}
 \T \left( \! \frac{2|v_u|}{1-\frac{2M}{r}} \! \right) \, & = \, 2\T \left(\! \frac{1}{1-\frac{2M}{r}} \! \right)\! |v_u|+\frac{1}{1-\frac{2M}{r}} \T (v_{r^*}\!) \, = \, \frac{2v_{r^*}}{1-\frac{2M}{r}} \partial_{r^*} \left( \! \frac{1}{1-\frac{2M}{r}} \! \right)\! |v_u| +\frac{r-3M}{r^4 \left(1-\frac{2M}{r} \right)} |\slashed{v}|^2 \partial_{v_r^*} (v_{r^*}\!) \\
  & = \, -\frac{4M}{r^2(1-\frac{2M}{r})^2}v_{r^*}|v_u| + \frac{r-3M}{r^4(1-\frac{2M}{r})} |\slashed{v}|^2  .
  \end{align*}
Now, use that $v_{r^*}=v_{\underline{u}}-v_u$, $v_u \leq 0$, $ v_{\uu} \leq 0$ and \eqref{vu} in order to obtain
\begin{align*}
 \T \left( \frac{2|v_u|}{1-\frac{2M}{r}} \right) \, & = \, -\frac{4M|v_u|^2}{r^2(1-\frac{2M}{r})^2}+\frac{4M |v_{\underline{u}}| |v_u|}{r^2(1-\frac{2M}{r})^2}  + 4\frac{r-3M}{r^2(1-\frac{2M}{r})^2} |v_{\underline{u}}| |v_u|  \, = \, -\frac{4M |v_u|^2}{r^2(1-\frac{2M}{r})^2}+\frac{4 |v_{\underline{u}}|| v_u|}{r(1-\frac{2M}{r})} .
 \end{align*}
For the second identity, using $\T (v_t)=0$, $v_{t} =v_{\underline{u}}+v_u$ and $v_{r^*} =v_{\underline{u}}-v_u$, we get
$$ \T \left( \left( 1- \frac{2M}{r} \right) v_t \right) \hspace{1mm} = \hspace{1mm} \frac{v_{r^*}}{1-\frac{2M}{r}} \partial_{r^*} \left( 1-\frac{2M}{r} \right) v_t \hspace{1mm} = \hspace{1mm}  \frac{2M}{r^2}v_{r^*} v_t \hspace{1mm} = \hspace{1mm}  \frac{2M}{r^2}|v_{\underline{u}}|^2 -\frac{2M}{r^2} |v_u|^2$$ 
and it remains no notice that $|v_t|=-v_t$.
\end{proof}
We can now prove the first two properties satisfied by $v^{\mathstrut}_{\mathrm{N}}$.
\begin{Lem}\label{Conditions13}
If $r_0$ is chosen sufficiently close to $2M$, there exist $b>0$ and $B >0$ depending only on $M$ such that the following property holds. On $\C$, we have
$$ \forall \, 2M < r \leq r_0, \qquad  -\T \big( |v^{\mathstrut}_{\mathrm{N}}| \big) \, \geq \, b \, |v^{\mathstrut}_{\mathrm{N}}|^2 \qquad \text{and} \qquad \forall \,  r \geq r_0, \qquad \big| \T \big( |v^{\mathstrut}_{\mathrm{N}}| \big) \big| \,  \leq \, B |v_t|^2 \mathds{1}_{r_0 \leq r \leq r_1}.$$
\end{Lem}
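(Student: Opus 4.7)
The strategy is to compute $\T(|v^{\mathstrut}_{\mathrm{N}}|)$ explicitly by means of Lemma \ref{Ty}, and then to analyse its sign separately in the three regions $\{2M<r \leq r_0\}$, $\{r_0 \leq r \leq r_1\}$ and $\{r \geq r_1\}$. Since $v_u, v_t \leq 0$, I first rewrite
$$|v^{\mathstrut}_{\mathrm{N}}| \, = \, \frac{2\chi(r)|v_u|}{1-\frac{2M}{r}} + 8\chi(r)\left(1-\frac{2M}{r}\right)|v_t| + |v_t|,$$
and apply $\T$ term by term, using $\T(|v_t|)=0$ and $\T(\chi(r))=v_{r^*}\chi'(r)$.

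In the region $r \leq r_0$, where $\chi \equiv 1$ and $\chi' \equiv 0$, Lemma \ref{Ty} directly yields
$$\T(|v^{\mathstrut}_{\mathrm{N}}|) \, = \, -\frac{4M}{r^2}\frac{|v_u|^2}{(1-\frac{2M}{r})^2} + \frac{4}{r}\frac{|v_{\uu}||v_u|}{1-\frac{2M}{r}} + \frac{16M}{r^2}|v_u|^2 - \frac{16M}{r^2}|v_{\uu}|^2.$$
The first and last terms carry the ``good'' red-shift sign; the residual $\frac{16M}{r^2}|v_u|^2$ carries an extra factor $(1-\frac{2M}{r})^2$ relative to the leading term $|v_u|^2/(1-\frac{2M}{r})^2$ and is therefore absorbed by shrinking $r_0$. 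I then absorb the mixed term by Young's inequality
$$\frac{4}{r}\frac{|v_{\uu}||v_u|}{1-\frac{2M}{r}} \, \leq \, \frac{2\eta}{r}\frac{|v_u|^2}{(1-\frac{2M}{r})^2}+\frac{2}{\eta r}|v_{\uu}|^2,$$
where for both resulting coefficients to remain strictly negative one needs $\frac{r}{8M} < \eta < \frac{2M}{r}$, which is a nonempty window exactly when $r$ is sufficiently close to $2M$. This is the role of the factor $8$ built into $\mathrm{N}$: a smaller coefficient would fail to make the $-|v_{\uu}|^2$ term sufficiently negative to allow absorption. One obtains an estimate of the form
$$\T(|v^{\mathstrut}_{\mathrm{N}}|) \, \leq \, -c_1\frac{|v_u|^2}{(1-\frac{2M}{r})^2}-c_2|v_{\uu}|^2$$
for positive constants $c_1, c_2$ depending only on $M$. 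Combining with Lemma \ref{comparo} and the identity $|\slashed{v}|^2/r^2 = 4|v_{\uu}||v_u|/(1-\frac{2M}{r}) \leq 2(|v_{\uu}|^2 + |v_u|^2/(1-\frac{2M}{r})^2)$ coming from \eqref{vu} finally gives $-\T(|v^{\mathstrut}_{\mathrm{N}}|) \geq b|v^{\mathstrut}_{\mathrm{N}}|^2$.

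For $r \geq r_1$, the cutoff satisfies $\chi \equiv 0$, so $|v^{\mathstrut}_{\mathrm{N}}|=|v_t|$ and $\T(|v^{\mathstrut}_{\mathrm{N}}|)=0$, yielding the upper bound trivially. In the transition band $r_0 \leq r \leq r_1$, the quantities $\chi$, $\chi'$ and $(1-\frac{2M}{r})^{-1}$ are uniformly bounded on a compact set, and \eqref{defv0} gives $|v_u|/(1-\frac{2M}{r}), |v_{\uu}|, |\slashed{v}|/r \lesssim |v_t|$; each term appearing in the resulting expression for $\T(|v^{\mathstrut}_{\mathrm{N}}|)$ -- including those generated by $\chi'(r) v_{r^*}$ -- is therefore bounded by $|v_t|^2$ up to a constant depending only on $M$ and $\chi$, which furnishes $B$. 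The main obstacle is the balancing in the first region: matching the factor $8$ against the Young parameter $\eta$ is what both forces that specific coefficient in the definition of $\mathrm{N}$ and determines the existence of an admissible range for $r_0$.
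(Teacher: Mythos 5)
Your proof is correct and follows essentially the same route as the paper: the explicit computation from Lemma \ref{Ty}, absorption of the cross term $\frac{4}{r}\frac{|v_{\uu}||v_u|}{1-\frac{2M}{r}}$ by Young's inequality (the paper simply fixes your parameter $\eta=\frac{1}{2}$ via $4ab\leq a^2+4b^2$), the sign analysis near $r=2M$ after shrinking $r_0$, and boundedness of $\chi$, $\chi'$ and $(1-\frac{2M}{r})^{-1}$ on the compact transition band. One small overstatement in your commentary: the coefficient $8$ in $\mathrm{N}$ is not forced, since your admissible window $\frac{r}{cM}<\eta<\frac{2M}{r}$ is nonempty near $r=2M$ for any coefficient $c>2$ (and likewise the window $\frac{r}{8M}<\eta<\frac{2M}{r}$ is nonempty for all $r<4M$, not only for $r$ close to $2M$), so $8$ is merely a convenient choice.
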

\begin{proof}
Recall from \eqref{redshiftdef} the expression of $v^{\mathstrut}_{\mathrm{N}} \leq 0$. As $\T(|v_t|)=0$ and $\partial_{r^*}= \left( 1 - \frac{2M}{r} \right) \partial_r$, we have
$$ \frac{1}{2} \T \big( |v^{\mathstrut}_{\mathrm{N}}| \big) \, = \, v_{r^*} \partial_{r} \left(\chi (r) \right)\left( \frac{2|v_u|}{1-\frac{2M}{r}} +8\left( 1 - \frac{2M}{r} \right) \! |v_t| \right) +\chi(r) \T \left( \frac{2|v_u|}{1-\frac{2M}{r}} \right) +8\chi (r) \T \left( \left( 1 - \frac{2M}{r} \right) \! |v_t|  \right).$$
On the region $r \geq r_0$, we have $1\!-\!\frac{2M}{r} \geq 1\!-\!\frac{2M}{r_0} >0$, so Lemma \ref{Ty} together with $|v_u|, \, |v_{\uu}|, \, |v_{r^*}| \leq |v_t|$ yield
$$ \big|\T \big( |v^{\mathstrut}_{\mathrm{N}}| \big) \big| \, \lesssim \, \left( \| \chi \|^{\mathstrut}_{L^{\infty}}+\| \chi' \|^{\mathstrut}_{L^{\infty}} \right) |v_t|^2.$$
Moreover, if $r \geq r_1$, we have $v^{\mathstrut}_{\mathrm{N}}=v_t$ so $\T(|v^{\mathstrut}_{\mathrm{N}}|)=0$. Now, recall that $\chi =1$ on the region $2M < r \leq r_0$. Hence using Lemma \ref{Ty} as well as the inequality $4ab \leq a^2+4b^2$, we obtain
\begin{align*}
\forall \, 2M < r \leq r_0, \qquad -\frac{1}{2}\T  \big( |v^{\mathstrut}_{\mathrm{N}}| \big)  \, & \geq \,  \frac{4M |v_u|^2}{r^2(1-\frac{2M}{r})^2}-\frac{|v_u|^2}{r(1-\frac{2M}{r})^2}-\frac{4 |v_{\underline{u}}|^2}{r}-\frac{16M}{r^2}|v_u|^2 +\frac{16M}{r^2}|v_{\underline{u}}|^2 \\
& \geq \hspace{1mm} \frac{4M |v_u|^2}{r^2(1-\frac{2M}{r})^2} \left(1-\frac{r}{4M} -4\left(1-\frac{2M}{r} \right)^2 \right)+\frac{16M}{r^2}|v_{\underline{u}}|^2 \left( 1 - \frac{r}{4M} \right).
\end{align*}
Consequently, there exists $2M <r_0 < 3M$ depending only on $M$ such that for all $2M < r \leq r_0$,
$$-\frac{1}{2}\T \big( | v^{\mathstrut}_{\mathrm{N}}| \big) \,  \geq \, \frac{4M |v_u|^2}{r^2(1-\frac{2M}{r})^2} \cdot \frac{1}{4}+\frac{16M}{r^2}|v_{\underline{u}}|^2 \cdot \frac{1}{4} \, \geq \, \frac{1}{9M} \left( \frac{|v_u|^2}{(1-\frac{2M}{r})^2}+4|v_{\underline{u}}|^2 \right) \! .$$
This implies the first estimate since, for $r \leq r_0$, $|v^{\mathstrut}_{\mathrm{N}}| \lesssim |v_t| +\frac{|v_u|}{1-\frac{2M}{r}}$ and $|v_t| \leq |v_{\uu}|+|v_u| \leq |v_{\uu}|+\frac{|v_u|}{1-\frac{2M}{r}}$.
\end{proof}

\subsection{Proof of Proposition \ref{energyredshift}}

Let $\mathrm{N}$ be the vector field \eqref{redshiftdef}, which satisfies in particular the properties $1.$, $2.$ and $3.$ written in Proposition \ref{energyredshift}. The following lemma is a prerequisite for the proof of the non degenerate energy inequality.
\begin{Lem}\label{Lemforred}
Consider two real numbers $\tau_1 \leq \tau_2$ and three constants $c >0, \, D\geq 0, \, \overline{D} \geq 0$. Let $\phi \in \mathcal{C}^0 ( [\tau_1,\tau_2] , \R)$ satisfying
$$ \forall \, \tau_1 \leq \tau \leq \tau_2, \qquad \phi(\tau_2) +c\int_{\tau}^{\tau_2} \phi (s) \dr s \hspace{1mm} \leq \hspace{1mm} \phi(\tau)+D(\tau_2-\tau) +\overline{D}.$$
Then, 
$$  \phi(\tau_2)  \hspace{1mm} \leq \hspace{1mm} \phi(\tau_1)+\frac{D}{c}+\overline{D}.$$
\end{Lem}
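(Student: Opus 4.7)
The plan is to recognize the hypothesis as a Gronwall-type integral inequality in disguise, and to convert it into a linear first-order differential inequality by introducing a suitable antiderivative as the main unknown.

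First, I would define the auxiliary function
$$\psi(\tau) \, := \, \phi(\tau_2) - \overline{D} - D(\tau_2 - \tau) + c \int_\tau^{\tau_2} \phi(s) \, \dr s, \qquad \tau \in [\tau_1, \tau_2],$$
chosen precisely so that the hypothesis of the lemma reads $\psi(\tau) \leq \phi(\tau)$ for all $\tau \in [\tau_1, \tau_2]$, and so that $\psi(\tau_2) = \phi(\tau_2) - \overline{D}$. Since $\phi$ is continuous, $\psi$ is $\mathcal{C}^1$ with $\psi'(\tau) = -c \phi(\tau) + D$. Substituting the hypothesis then yields the autonomous linear differential inequality
$$\psi'(\tau) + c \psi(\tau) \, \leq \, D, \qquad \tau \in [\tau_1, \tau_2].$$

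Next, I would apply the standard integrating-factor trick: multiplying by $e^{c\tau}$ gives $\frac{\dr}{\dr \tau}\bigl( e^{c\tau} \psi(\tau) \bigr) \leq D e^{c\tau}$, and integrating from $\tau_1$ to $\tau_2$ yields
$$e^{c \tau_2} \psi(\tau_2) - e^{c \tau_1} \psi(\tau_1) \, \leq \, \frac{D}{c} \bigl( e^{c \tau_2} - e^{c \tau_1} \bigr).$$
Using $\psi(\tau_2) = \phi(\tau_2) - \overline{D}$, the bound $\psi(\tau_1) \leq \phi(\tau_1)$ (valid from the hypothesis at $\tau = \tau_1$, and compatible with multiplication by the positive factor $e^{-c(\tau_2 - \tau_1)}$), and dividing by $e^{c \tau_2}$, I obtain
$$\phi(\tau_2) - \overline{D} \, \leq \, e^{-c(\tau_2 - \tau_1)} \phi(\tau_1) + \frac{D}{c} \bigl( 1 - e^{-c(\tau_2 - \tau_1)} \bigr).$$

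Finally, since $\phi$ represents an energy flux of the nonnegative density $|f| |v^{\mathstrut}_{\mathrm{N}}|^a$ in the intended applications, we have $\phi(\tau_1) \geq 0$, so $e^{-c(\tau_2 - \tau_1)} \phi(\tau_1) \leq \phi(\tau_1)$; combined with $\frac{D}{c}(1 - e^{-c(\tau_2-\tau_1)}) \leq \frac{D}{c}$ (using $D \geq 0$ and $c > 0$), this yields $\phi(\tau_2) \leq \phi(\tau_1) + \frac{D}{c} + \overline{D}$, as required. There is no essential obstacle here; the only subtlety is identifying $\psi$ so that the integral Gronwall form collapses into a clean linear ODE inequality, and observing that the nonnegativity of $\phi$ (implicit in the applications) is what converts the exponential damping factor on $\phi(\tau_1)$ into the simpler stated conclusion.
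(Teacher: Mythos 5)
Your proof is correct and follows essentially the same route as the paper's: both convert the integral hypothesis into a first-order differential inequality for an auxiliary antiderivative-type function and apply the integrating factor $e^{c\tau}$, arriving at the identical intermediate bound $\phi(\tau_2) \leq e^{-c(\tau_2-\tau_1)}\phi(\tau_1) + \frac{D}{c}\bigl(1-e^{-c(\tau_2-\tau_1)}\bigr) + \overline{D}$. Your explicit remark that the last step requires $\phi(\tau_1) \geq 0$ is well taken: the paper's own final inequality silently uses the same fact (and the lemma is in fact false for general sign-changing $\phi$, e.g.\ $\phi(\tau)=-e^{c(\tau_2-\tau)}$ with $D=\overline{D}=0$), so this is an implicit hypothesis satisfied by the nonnegative energies in the application rather than a defect of your argument.
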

\begin{proof}
Introduce $\psi :\tau \mapsto \phi(\tau)-\frac{D}{c}$ as well as $\Psi : \tau \mapsto \int_{\tau}^{\tau_2} \psi (s) \dr s$. Then, by assumption
\begin{equation}\label{eq:tructruc}
\forall \, \tau \in [\tau_1,\tau_2], \qquad \psi (\tau_2)+c\Psi(\tau) \hspace{1mm} \leq \hspace{1mm} \psi (\tau) + \overline{D}.
\end{equation}
Consequently, for all $\tau_1 \leq \tau \leq \tau_2$,
$$ -\frac{\dr}{\dr \tau} \left( \Psi (\tau) \cdot e^{c \tau} \right) \hspace{1mm} = \hspace{1mm} -\left(- \psi(\tau) +c\Psi(\tau) \right) e^{c \tau} \hspace{1mm} \geq \hspace{1mm} \left( \psi (\tau_2)-\overline{D} \right) e^{c \tau}.$$
Taking the integral between $\tau_1$ and $ \tau_2$ of both side of this inequality, we obtain
$$ \Psi (\tau_1) e^{c\tau_1} \, \geq \, \frac{\psi (\tau_2)-\overline{D}}{c} (e^{c\tau_2}-e^{c\tau_1})$$
and, estimating $c \Psi (\tau_1)$ using \eqref{eq:tructruc}, we get
$$ \psi(\tau_1)-\psi(\tau_2)+\overline{D} \, \geq \, c \Psi (\tau_1) \, \geq \, \psi (\tau_2)(e^{c(\tau_2-\tau_1)}-1)-\overline{D} (e^{c(\tau_2-\tau_1)}-1)  .$$
Using that $\psi = \phi-\frac{D}{c}$, we finally obtain, as $e^{-c(\tau_2-\tau_1)} \leq 1$ and $\frac{D}{c} \geq 0$,
\begin{align*}
 \phi(\tau_2) \hspace{1mm} & \leq \hspace{1mm}  \phi (\tau_1)e^{-c(\tau_2-\tau_1)} +\frac{D}{c} \left(1-e^{-c(\tau_2-\tau_1)} \right)+\overline{D} \, \leq \, \phi (\tau_1) +\frac{D}{c} +\overline{D}.
\end{align*}
\end{proof}
In order to lighten the notations, we introduce the following quantities.
\begin{Def}\label{defbulknorm}
Let $h : \widehat{\Rm}_{-\infty}^{+\infty} \rightarrow \R$ be a sufficiently regular function. Let, for any $a \in \R_+$ and $0 \leq \tau_1 \leq \tau_2 $,
$$ \mathbb{D}^{\mathrm{N}}_{a}[h]_{\tau_1}^{\tau_2} \,= \, \int_{\Rm_{\tau_1}^{\tau_2}} |\T (h)| |v^{\mathstrut}_{\mathrm{N}}|^a \dr \m_{\Rm_{\tau_1}^{\tau_2}}.$$
\end{Def}
Note that $ \mathbb{D}^{\mathrm{N}}_{a}[h]_{\tau_1}^{\tau_2}=0$ if $h$ is solution to the massless Vlasov equation. Finally, we will use several times the following technical result which is a direct consequence of Proposition \ref{divergence0}.
\begin{Lem}\label{Lemmatechenergy}
Let $a \in \R_+$ and $f:\widehat{\Rm}_0^{+\infty} \rightarrow \R$ be a sufficiently regular function. We have, $\forall \, 0 \leq \tau_1 \leq \tau \leq \tau_2$,
$$ \int_{\Sigma_{\tau}}  \rho \Big[|f| |v_t|^a \Big] \dr \mu^{\mathstrut}_{\Sigma_{\tau}} + \int_{\Rm_{\tau}^{\tau_2}}   \int_{\C} |\T (f) | |v^{\mathstrut}_{\mathrm{N}}|^a \dr \mu^{\mathstrut}_{\C}  \dr \mu^{\mathstrut}_{\Rm_{\tau}^{\tau_2}} \, \leq \,  \int_{\Sigma_{\tau_1}}  \rho \Big[|f| |v_{\mathrm{N}}^{\mathstrut}|^a \Big] \dr \mu^{\mathstrut}_{\Sigma_{\tau_1}} + \mathbb{D}^{\mathrm{N}}_{a}[f]_{\tau_1}^{\tau_2}. $$
\end{Lem}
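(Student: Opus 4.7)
The strategy is to apply Proposition \ref{divergence0} to the nonnegative function $h := |f|\,|v_t|^a$ on the slab $\Rm_{\tau_1}^{\tau}$, and then to absorb the result into the desired inequality by splitting $\Rm_{\tau_1}^{\tau_2} = \Rm_{\tau_1}^{\tau} \cup \Rm_{\tau}^{\tau_2}$.

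First I note two elementary pointwise facts about $v^{\mathstrut}_{\mathrm{N}}$ and $v_t$ that drive the estimate. Since $\T(v_t) = 0$ by Proposition \ref{Comuprop}, and $v_t < 0$ on $\C$, we have $\T(|v_t|^a) = 0$ on $\C$ for every $a \geq 0$. Consequently, writing $\T(|f|) = \mathrm{sgn}(f)\,\T(f)$ a.e.,
\[
\bigl| \T\bigl(|f|\,|v_t|^a\bigr) \bigr| \, = \, \bigl| \T(|f|) \bigr|\,|v_t|^a \, \leq \, |\T(f)|\,|v_t|^a.
\]
Next, inspecting the explicit definition \eqref{redshiftdef} of $\mathrm{N}$, and using that $v_t, v_u \leq 0$, $\chi \geq 0$ on $\mathscr{D}$, one sees that
\[
|v^{\mathstrut}_{\mathrm{N}}| \, = \, \frac{2\chi(r)\,|v_u|}{1-\tfrac{2M}{r}} \, + \, 8\chi(r)\Bigl(1-\tfrac{2M}{r}\Bigr)|v_t| \, + \, |v_t| \, \geq \, |v_t|,
\]
so that $|v_t|^a \leq |v^{\mathstrut}_{\mathrm{N}}|^a$ pointwise on $\C$ over all of $\mathscr{D}$ (this is where the whole extra contribution from $\mathrm{N}$ becomes harmless, since we only need one-sided domination).

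Now I apply Proposition \ref{divergence0} to $h = |f|\,|v_t|^a \geq 0$ between the times $\tau_1$ and $\tau$ (noting $0 \leq \tau_1 \leq \tau$). This yields
\[
\int_{\Sigma_{\tau}} \rho\bigl[|f|\,|v_t|^a\bigr] \, \dr \mu^{\mathstrut}_{\Sigma_{\tau}} \, \leq \, \int_{\Sigma_{\tau_1}} \rho\bigl[|f|\,|v_t|^a\bigr] \, \dr \mu^{\mathstrut}_{\Sigma_{\tau_1}} \, + \, \int_{\Rm_{\tau_1}^{\tau}} \int_{\C} \T\bigl(|f|\,|v_t|^a\bigr)\, \dr \mu^{\mathstrut}_{\C} \, \dr \mu^{\mathstrut}_{\Rm_{\tau_1}^{\tau}}.
\]
Applying the two pointwise facts above, the last bulk term is bounded by $\int_{\Rm_{\tau_1}^{\tau}}\int_{\C} |\T(f)|\,|v^{\mathstrut}_{\mathrm{N}}|^a\,\dr \mu^{\mathstrut}_{\C}\,\dr \mu^{\mathstrut}_{\Rm_{\tau_1}^{\tau}}$, and the initial flux term is bounded by $\int_{\Sigma_{\tau_1}} \rho[|f|\,|v^{\mathstrut}_{\mathrm{N}}|^a]\,\dr \mu^{\mathstrut}_{\Sigma_{\tau_1}}$.

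Finally, I add $\int_{\Rm_{\tau}^{\tau_2}}\int_{\C} |\T(f)|\,|v^{\mathstrut}_{\mathrm{N}}|^a\,\dr \mu^{\mathstrut}_{\C}\,\dr \mu^{\mathstrut}_{\Rm_{\tau}^{\tau_2}}$ to both sides and use additivity of the integral over the disjoint decomposition $\Rm_{\tau_1}^{\tau_2} = \Rm_{\tau_1}^{\tau} \sqcup \Rm_{\tau}^{\tau_2}$ to combine the two bulk contributions on the right into $\mathbb{D}^{\mathrm{N}}_{a}[f]_{\tau_1}^{\tau_2}$. This yields exactly the inequality stated. There is no real obstacle here; the only minor care needed is the a.e. identity $\T(|f|) = \mathrm{sgn}(f)\,\T(f)$ when $f$ changes sign, which suffices since the quantities appear only under integrals.
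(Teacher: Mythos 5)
Your proposal is correct and follows essentially the same route as the paper: apply Proposition \ref{divergence0} to $h=|f|\,|v_t|^a$ between $\tau_1$ and $\tau$, use $\T(v_t)=0$ and $|v_t|\leq |v^{\mathstrut}_{\mathrm{N}}|$ to dominate the boundary and bulk terms, and conclude via $\mathbb{D}^{\mathrm{N}}_{a}[f]_{\tau_1}^{\tau}+\mathbb{D}^{\mathrm{N}}_{a}[f]_{\tau}^{\tau_2}=\mathbb{D}^{\mathrm{N}}_{a}[f]_{\tau_1}^{\tau_2}$. The only cosmetic difference is that you spell out the pointwise lower bound $|v^{\mathstrut}_{\mathrm{N}}|\geq|v_t|$ from \eqref{redshiftdef} explicitly, which the paper takes as immediate.
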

\begin{proof}
Fix $0 \leq \tau_1 \leq \tau \leq \tau_2 $ and remark that $\T(|f||v_t|^a)=\T(|f|)|v_t|^a=\frac{f}{|f|} \T(f)|v_t|^a \leq |\T(f)| |v_t|^a$. Hence, applying Proposition \ref{divergence0} to $h=|f||v_t|^a$ and using $|v_t|^a \leq |v^{\mathstrut}_{\mathrm{N}}|^a$, we get
$$ \int_{\Sigma_{\tau}} \! \rho \Big[ |f| |v_t|^a \Big] \dr \mu^{\mathstrut}_{\Sigma_{\tau}} \leq  \int_{\Sigma_{\tau_1}}  \! \rho \Big[|f| |v^{\mathstrut}_{\mathrm{N}}|^a \Big] \dr \mu^{\mathstrut}_{\Sigma_{\tau_1}} + \int_{\Rm_{\tau_1}^{\tau}}  \! \int_{\C}  |\T (f)| |v^{\mathstrut}_{\mathrm{N}}|^a \dr \mu^{\mathstrut}_{\C}  \dr \m_{\Rm_{\tau_1}^{\tau}} \! = \int_{\Sigma_{\tau_1}}  \!  \rho \Big[|f| |v_{\mathrm{N}}^{\mathstrut}|^a \Big] \dr \mu^{\mathstrut}_{\Sigma_{\tau_1}} + \mathbb{D}^{\mathrm{N}}_{a}[f]_{\tau_1}^{\tau} $$
and the inequality then ensues from $\mathbb{D}^{\mathrm{N}}_{a}[f]_{\tau_1}^{\tau}+\mathbb{D}^{\mathrm{N}}_{a}[f]_{\tau}^{\tau_2} = \mathbb{D}^{\mathrm{N}}_{a}[f]_{\tau_1}^{\tau_2}$ (see Definition \ref{defbulknorm}).
\end{proof}

We are now able to prove the non-degenerate energy inequality stated in Proposition \ref{energyredshift}.
\begin{Pro}\label{energyredshiftbis}
The estimate of Proposition \ref{energyredshift} holds for any $a \in \R_+$ and any sufficiently regular function $f : \widehat{\Rm}_0^{+\infty} \rightarrow \R$. Moreover, we also have for all $0 \leq \tau_1 \leq \tau_2$,
$$ \int_{\tau_1}^{\tau_2}\! \int_{\Sigma_{\tau}} \!  \mathds{1}_{r \leq r_0} \, \rho \left[ |f||v^{\mathstrut}_{\mathrm{N}}|^a \right] \dr \m_{\Sigma_{\tau}} \dr \tau \, \lesssim^{\mathstrut}_a \int_{\tau_1}^{\tau_2} \! \int_{\Sigma_{\tau}} \! \mathds{1}_{r_0 \leq r \leq r_1} \, \rho \Big[ |f||v_t|^a \Big] \dr \m_{\Sigma_{\tau}} \dr \tau+ \int_{\Sigma_{\tau_1}} \rho \left[|f||v^{\mathstrut}_{\mathrm{N}}|^a\right] \dr \mu^{\mathstrut}_{\Sigma_{\tau_1}}+ \mathbb{D}^{\mathrm{N}}_{a}[f]_{\tau_1}^{\tau_2}  .$$
\end{Pro}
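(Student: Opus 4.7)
The plan is to apply Proposition~\ref{divergence0} to $h=|f||v^{\mathstrut}_{\mathrm{N}}|^a$, use the red-shift properties of $\mathrm{N}$ from Lemma~\ref{Conditions13} to extract a favorable bulk term on $\{r\leq r_0\}$ and an error bulk on $\{r_0\leq r\leq r_1\}$, then close the estimate by combining the $\partial_t$-conservation (Lemma~\ref{Lemmatechenergy}) with the Gronwall-type Lemma~\ref{Lemforred}. Throughout, set
$$\phi(\tau):=\int_{\Sigma_{\tau}}\rho\!\left[|f||v^{\mathstrut}_{\mathrm{N}}|^a\right]\dr\mu^{\mathstrut}_{\Sigma_{\tau}},\qquad G(s):=\int_{\Sigma_s}\mathds{1}_{r\leq r_0}\,\rho\!\left[|f||v^{\mathstrut}_{\mathrm{N}}|^a\right]\dr\mu^{\mathstrut}_{\Sigma_s},\qquad E(s):=\int_{\Sigma_s}\mathds{1}_{r_0\leq r\leq r_1}\,\rho\!\left[|f||v_t|^a\right]\dr\mu^{\mathstrut}_{\Sigma_s}.$$

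First, since $\mathrm{sign}(f)\T(f)\leq |\T(f)|$ we get $\T(|f||v^{\mathstrut}_{\mathrm{N}}|^a)\leq |\T(f)||v^{\mathstrut}_{\mathrm{N}}|^a+a|f||v^{\mathstrut}_{\mathrm{N}}|^{a-1}\T(|v^{\mathstrut}_{\mathrm{N}}|)$. Lemma~\ref{Conditions13} gives $\T(|v^{\mathstrut}_{\mathrm{N}}|^a)\leq -ab|v^{\mathstrut}_{\mathrm{N}}|^{a+1}$ on $\{r\leq r_0\}$, $|\T(|v^{\mathstrut}_{\mathrm{N}}|^a)|\leq aB|v_t|^2|v^{\mathstrut}_{\mathrm{N}}|^{a-1}\mathds{1}_{r\leq r_1}$ elsewhere, and $\T(|v^{\mathstrut}_{\mathrm{N}}|^a)=0$ on $\{r\geq r_1\}$. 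On the compact region $\{r_0\leq r\leq r_1\}$, the identity \eqref{vu} together with $1-2M/r\geq 1-2M/r_0>0$ yields $|\slashed{v}|/r\lesssim|v_t|$, hence $|v^{\mathstrut}_{\mathrm{N}}|\sim|v_t|$ by \eqref{defvtbar}; thus $|v^{\mathstrut}_{\mathrm{N}}|^{a-1}|v_t|^2\lesssim_a|v_t|^{a+1}$. Together with Lemma~\ref{comparo} (which yields $|v^{\mathstrut}_{\mathrm{N}}|^{a+1}\sim|v^{\mathstrut}_{\mathrm{N}}|^a|v\cdot\n_{\Sigma_\tau}|$ on $\{r<R_0\}$) and Lemma~\ref{vol} (giving $\dr\mu_{\Rm_\tau^{\tau_2}}\sim\dr\tau\wedge\dr\mu_{\Sigma_\tau}$), an application of Proposition~\ref{divergence0} produces, for every $\tau_1\leq\tau\leq\tau_2$, the preliminary bound
\begin{equation}\label{prelimplan}
\phi(\tau_2)+c_a\int_\tau^{\tau_2}G(s)\,\dr s\;\leq\;\phi(\tau)+C_a\int_\tau^{\tau_2}E(s)\,\dr s+\mathbb{D}^{\mathrm{N}}_a[f]_\tau^{\tau_2},
\end{equation}
with $c_a,C_a>0$ depending on $a$.

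Taking $\tau=\tau_1$ in \eqref{prelimplan} and dropping the nonnegative $\phi(\tau_2)$ yields at once the second (bulk) estimate of the proposition.

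For the energy estimate, I will bound $E(s)$ and convert the good bulk $G(s)$ into the full $\phi(s)$. Since $|v_t|\leq|v^{\mathstrut}_{\mathrm{N}}|$, Lemma~\ref{Lemmatechenergy} applied on $[\tau_1,s]\subset[\tau_1,\tau_2]$ gives $E(s)\leq \int_{\Sigma_s}\rho[|f||v_t|^a]\dr\mu_{\Sigma_s}\leq \phi(\tau_1)+\mathbb{D}^{\mathrm{N}}_a[f]_{\tau_1}^{\tau_2}$. Next, write $\phi(s)=G(s)+\int_{\Sigma_s}\mathds{1}_{r>r_0}\rho[|f||v^{\mathstrut}_{\mathrm{N}}|^a]\dr\mu_{\Sigma_s}$; since $\mathrm{N}=\partial_t$ for $r\geq r_1$ and $|v^{\mathstrut}_{\mathrm{N}}|\sim|v_t|$ for $r_0\leq r\leq r_1$, the second summand is $\lesssim_a\int_{\Sigma_s}\rho[|f||v_t|^a]\dr\mu_{\Sigma_s}\leq\phi(\tau_1)+\mathbb{D}^{\mathrm{N}}_a[f]_{\tau_1}^{\tau_2}$ by Lemma~\ref{Lemmatechenergy} again. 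Hence $G(s)\geq \phi(s)-C'_a\bigl[\phi(\tau_1)+\mathbb{D}^{\mathrm{N}}_a[f]_{\tau_1}^{\tau_2}\bigr]$. Substituting into \eqref{prelimplan} gives, for every $\tau\in[\tau_1,\tau_2]$,
$$\phi(\tau_2)+c_a\int_\tau^{\tau_2}\phi(s)\,\dr s\;\leq\;\phi(\tau)+D(\tau_2-\tau)+\bar D,$$
with $D=(c_aC'_a+C_a)\bigl[\phi(\tau_1)+\mathbb{D}^{\mathrm{N}}_a[f]_{\tau_1}^{\tau_2}\bigr]$ and $\bar D=\mathbb{D}^{\mathrm{N}}_a[f]_{\tau_1}^{\tau_2}$. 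Lemma~\ref{Lemforred} then gives $\phi(\tau_2)\leq \phi(\tau_1)+D/c_a+\bar D\lesssim_a \phi(\tau_1)+\mathbb{D}^{\mathrm{N}}_a[f]_{\tau_1}^{\tau_2}$, which is the desired non-degenerate energy inequality.

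The main obstacle is the treatment of the error bulk on $\{r_0\leq r\leq r_1\}$: the red-shift only produces a good sign in the strictly smaller region $\{r\leq r_0\}$, and the constant $C_a$ in front of the error cannot in general be made smaller than $c_a$. The resolution is precisely the two-step argument above, where the degenerate $\partial_t$-conservation (Lemma~\ref{Lemmatechenergy}) controls the error bulk uniformly in $s$ and the Gronwall-type Lemma~\ref{Lemforred} turns the resulting linear-in-time bound into the required uniform one.
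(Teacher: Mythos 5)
Your proposal is correct and follows essentially the same route as the paper: the same preliminary red-shift inequality obtained from Proposition \ref{divergence0} and Lemma \ref{Conditions13}, the same use of Lemma \ref{Lemmatechenergy} to control both the error bulk on $\{r_0\leq r\leq r_1\}$ and the missing region $\{r>r_0\}$ so as to upgrade the good bulk $G(s)$ to the full energy $\phi(s)$, and the same conclusion via the Gr\"onwall-type Lemma \ref{Lemforred}. The only cosmetic difference is that the paper adds $b'\int_{\tau}^{\tau_2}\int_{\Sigma_s}\mathds{1}_{r\geq r_0}\,\rho[\,|f||v^{\mathstrut}_{\mathrm{N}}|^a]\,\dr\m_{\Sigma_s}\dr s$ to both sides rather than writing $G(s)\geq\phi(s)-C'_a[\phi(\tau_1)+\mathbb{D}^{\mathrm{N}}_a[f]_{\tau_1}^{\tau_2}]$, which is the same manipulation.
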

\begin{proof}
The main part of the proof consists in bounding sufficiently well the quantity
$$\phi (\tau)  \,  := \, \int_{\Sigma_{\tau}}  \rho \left[|f| |v^{\mathstrut}_{\mathrm{N}}|^a \right] \dr \mu^{\mathstrut}_{\Sigma_{\tau}}  $$
in order to apply Lemma \ref{Lemforred}. Fix $0 \leq \tau_1 \leq \tau_2$ and apply the energy inequality of Proposition \ref{divergence0} to $h=|f| |v^{\mathstrut}_{\mathrm{N}}|^a$. This gives, for any $ \tau_1 \leq \tau \leq \tau_2$,
$$ \int_{\Sigma_{\tau_2}} \rho \left[ |f| |v^{\mathstrut}_{\mathrm{N}}|^a \right] \dr \mu^{\mathstrut}_{\Sigma_{\tau_2}} \, \leq \, \int_{\Sigma_{\tau}} \rho \left[ |f| |v^{\mathstrut}_{\mathrm{N}}|^a \right]  \dr \mu^{\mathstrut}_{\Sigma_{\tau}}  + \int_{\Rm_{\tau}^{\tau_2}} \int_{\C} \T \left( |f| |v^{\mathstrut}_{\mathrm{N}}|^a \right)  \dr \mu^{\mathstrut}_{\C}  \dr \m_{\Rm_{\tau}^{\tau_2}}.$$
According to Lemma \ref{Conditions13} and using, in view of \eqref{redshiftdef}, that $|v_t| \leq |v^{\mathstrut}_{\mathrm{N}}| \lesssim |v_t|$ for $r \geq r_0$, we have
$$ \T ( |f| |v^{\mathstrut}_{\mathrm{N}}|^a ) = \frac{f}{|f|}\T(f )|v^{\mathstrut}_{\mathrm{N}}|^a +|f|\T ( |v^{\mathstrut}_{\mathrm{N}}|^a) \leq |\T(f)| |v^{\mathstrut}_{\mathrm{N}}|^a -ab|f||v^{\mathstrut}_{\mathrm{N}}|^{a+1} \, \mathds{1}_{r \leq r_0} +aB |f|  |v_t|^{a+1} \, \mathds{1}_{r_0 \leq r \leq r_1} .$$
We then deduce from these two inequalities and by applying Lemma \ref{Lemmatechenergy} that, 
$$ \phi (\tau_2) + ab \! \int_{\Rm_{\tau}^{\tau_2}} \! \mathds{1}_{r \leq r_0} \! \int_{\C} \! |f|  |v^{\mathstrut}_{\mathrm{N}}|^{a+1} \dr \mu^{\mathstrut}_{\C}  \dr \m_{\Rm_{\tau}^{\tau_2}}   \leq  \phi(\tau)+ aB \! \int_{\Rm_{\tau}^{\tau_2}} \! \mathds{1}_{r_0 \leq r \leq r_1} \! \int_{\C} \! |f|  |v_t|^{a+1} \dr \mu^{\mathstrut}_{\C}  \dr \m_{\Rm_{\tau}^{\tau_2}} + \mathbb{D}^{\mathrm{N}}_{a}[f]_{\tau_1}^{\tau_2}. $$
Recall now that $\Rm^{\tau_2}_{\tau} $ is foliated by the spacelike hypersurfaces $\Sigma_s $, $s \in [\tau, \tau_2]$. Moreover,
\begin{itemize} 
\item by Lemma \ref{vol}, for all $\tau_1 \leq \tau' \leq \tau_2$, we have $\dr \mu_{\Rm^{\tau_2}_{\tau'}} = \gamma_0(r) \dr \tau \wedge \dr \m_{\Sigma_{\tau}} $ and $C^{-1} \leq \gamma_0 \leq C$ on $]2M,+\infty[$. 
\item According to \eqref{redshiftdef} and Lemma \ref{comparo}, $|v_t| \leq |v^{\mathstrut}_{\mathrm{N}}| \lesssim |v \cdot \n_{\Sigma_{\tau}}| \lesssim |v^{\mathstrut}_{\mathrm{N}}|$ for all $2M < r \leq r_1$. 
\end{itemize}
We then obtain that
\begin{align*}
\int_{\Rm_{\tau}^{\tau_2}} \mathds{1}_{r_0 \leq r \leq r_1}  \int_{\C}  |f|  |v_t|^{a+1} \dr \mu^{\mathstrut}_{\C}  \dr \mu_{\Rm_{\tau}^{\tau_2}} \,&  = \, \int_{s=\tau}^{\tau_2} \int_{\Sigma_s} \mathds{1}_{r_0 \leq r \leq r_1} \, \gamma_0(r)  \int_{\C}   |f|  |v_t|^{a+1} \dr \mu^{\mathstrut}_{\C} \dr \m_{\Sigma_s} \dr s \\ & \lesssim \, \int_{s=\tau}^{\tau_2} \int_{\Sigma_s} \mathds{1}_{r_0 \leq r \leq r_1} \, \rho \Big[ |f| |v_t|^a\Big] \dr \m_{\Sigma_s} \dr s , \\
\int_{\Rm_{\tau}^{\tau_2}} \mathds{1}_{r \leq r_0}  \int_{\C} |f| |v^{\mathstrut}_{\mathrm{N}}|^{a+1} \dr \mu^{\mathstrut}_{\C}  \dr \mu_{\Rm_{\tau}^{\tau_2}} \, & \gtrsim \,  \int_{s=\tau}^{\tau_2} \int_{\Sigma_s} \mathds{1}_{ r \leq r_0} \, \rho \Big[ |f| |v^{\mathstrut}_{\mathrm{N}}|^a\Big] \dr \m_{\Sigma_s} \dr s.
\end{align*}
Consequently, there exist $0 < b' \leq B'$, both depending on $a$, such that
\begin{equation}\label{eq:tregon}
 \phi (\tau_2) + b' \! \int_{s=\tau}^{\tau_2} \int_{\Sigma_s} \! \mathds{1}_{ r \leq r_0} \, \rho \Big[ |f| |v^{\mathstrut}_{\mathrm{N}}|^a \Big] \dr \m_{\Sigma_s} \dr s   \leq  \phi(\tau)+ B' \! \int_{s=\tau}^{\tau_2} \int_{\Sigma_s} \! \mathds{1}_{r_0 \leq r \leq r_1} \, \rho \Big[ |f| |v_t|^a\Big] \dr \m_{\Sigma_s} \dr s + \mathbb{D}^{\mathrm{N}}_{a}[f]_{\tau_1}^{\tau_2}. 
 \end{equation}
This implies the second inequality of the proposition. For the first one, remark that, using $|v^{\mathstrut}_{\mathrm{N}}| \lesssim |v_t|$ for $r \geq r_0$ (see \eqref{defvtbar}) and Lemma \ref{Lemmatechenergy},
$$ \int_{s=\tau}^{\tau_2} \int_{\Sigma_s} \left(  \mathds{1}_{r_0 \leq r \leq r_1} \, \rho \Big[ |f| |v_t|^a\Big] + \mathds{1}_{ r \geq r_0} \, \rho \Big[ |f| |v_{\mathrm{N}}^{\mathstrut}|^a\Big]  \right) \dr \m_{\Sigma_s} \dr s \lesssim  \int_{s=\tau}^{\tau_2} \Big( \int_{\Sigma_{\tau_1}}   \rho \Big[ |f| |v_t|^a\Big] \dr \m_{\Sigma_{\tau_1}}+\mathbb{D}^{\mathrm{N}}_{a}[f]_{\tau_1}^{\tau_2} \Big) \dr s .$$
Hence, adding $b' \int_{s=\tau}^{\tau_2} \int_{\Sigma_s} \mathds{1}_{ r \geq r_0} \, \rho \big[ |f| |v_{\mathrm{N}}^{\mathstrut}|^a\big] \dr \m_{\Sigma_s} \dr s$ to both sides of \eqref{eq:tregon} yields
$$ \phi (\tau_2) + b'\int_{\tau}^{\tau_2} \phi (s) \dr s \,  \leq \, \phi(\tau)+ B_0 (1+\tau_2-\tau) \Big( \int_{\Sigma_{\tau_1}}   \rho \Big[ |f| |v_t|^a\Big] \dr \m_{\Sigma_{\tau_1}}+\mathbb{D}^{\mathrm{N}}_{a}[f]_{\tau_1}^{\tau_2} \Big),$$
for a certain constant $B_0 >0$ which depends on $a$. To conclude the proof, it then only remains to apply Lemma \ref{Lemforred} to $\phi : [\tau_1, \tau_2] \rightarrow \R$, with $c=b'$ and $C=\overline{D}=B_0 \int_{\Sigma_{\tau_1}}   \rho \big[ |f| |v_t|^a\big] \dr \m_{\Sigma_{\tau_1}}+B_0 \mathbb{D}^{\mathrm{N}}_{a}[f]_{\tau_1}^{\tau_2}$.
\end{proof}
\begin{Rq}
Note that a similar non-degenerate energy estimate holds for massive Vlasov fields. In that case, if $m>0$ is the mass of the particles, the distribution function is defined on
\begin{equation*}
 \mathcal{P}_m \hspace{1mm} := \hspace{1mm}  \displaystyle{\bigcup_{x \in \mathcal{M}}} \mathcal{P}_{m,x}, \qquad \mathcal{P}_{m,x} \hspace{1mm} := \hspace{1mm} \left\{ (x,v) \, / \, v \in T^{\star}_{x}\mathcal{M} , \;  \text{$g^{-1}(v,v)=-m^2$ and $v$ future oriented}  \right\},
 \end{equation*} 
so that $ v_t = - \big(\big(1-\frac{2M}{r} \big)m^2+ |v_{r^*}|^2 +\big( 1-\frac{2M}{r}\big) \frac{|\slashed{v}|^2}{r^2} \big)^{\frac{1}{2}}$ in $\mathscr{D}$. One can check that all the computations made in this section can be adapted to this context. The only difference would arise in Lemma \ref{Ty}, since we have for massive particles $\T \big( \frac{2|v_u|}{1-\frac{2M}{r}} \big) \, = \,  -\frac{4M}{r^2}\frac{|v_u|^2}{(1-\frac{2M}{r})^2}+\frac{4}{r}\frac{|v_{\underline{u}} v_u|}{(1-\frac{2M}{r})} - \frac{m^2}{r}$. As the extra term $-\frac{m^2}{r}$ has a good sign, it does not prevent Lemma \ref{Conditions13} to hold in the context of massive particles. In particular, the constants $r_0$, $r_1$, $b$ and $B$ could be chosen to be independant of the mass $m \in \R_+$.
\end{Rq}
\section{Integrated local energy decay}\label{sec3}

Positive bulk integrals for solutions to the free massless Vlasov equation can be generated by using multipliers of the form $\phi (r)\partial_{r^*}$. This corresponds to multiplying the distribution function by the weight $\phi (r) v_{r^*}$. In fact, in order to obtain stronger results, we will also consider weights of the form $\phi(r) \frac{v_{r^*}}{|v_{r^*}|}|v_{r^*}|^{\delta}$.
\begin{Rq}
The benefices of considering $\phi(r) \frac{v_{r^*}}{|v_{r^*}|}|v_{r^*}|^{\delta}$ instead of $\phi (r)v_{r^*}$ in the study of Vlasov fields can be compared with the use of modifications of the current $J^X[\psi]_{\alpha} = T[\psi]_{\alpha \beta} X^{\beta}$, where $X=\phi(r) \partial_{r^*}$, for the study of scalar fields. In particular, the modified currents contain not only first order derivatives of $\psi$, but also lower order term. 
\end{Rq}
Unfortunately, the integrated local energy decay that we will obtain in this way degenerates at the photon sphere $r=3M$. The second part of this section will then consist in dealing with this degeneracy. The results proved in this section will imply in particular the following estimate, which is crucial for performing the $r^p$-weighted energy method. Recall the notation $\langle w \rangle := (1+|w|^2)^{\frac{1}{2}}$ and the norms introduced in Definition \ref{defbulknorm}.
\begin{Pro}[Integrated local energy decay]\label{ILED}
Let $f : \widehat{\Rm}_0^{+\infty} \rightarrow \R$ be a sufficiently regular solution to $\T(f)=0$. For any $a \in \R_+$ and $s>1$, we have for all $0 \leq \tau_1 \leq \tau_2 $,
\begin{align*}
 \int_{\tau_1}^{\tau_2}  \int_{\Sigma_{\tau}} & \mathds{1}^{\mathstrut}_{ r \leq R_0} \,  \rho  \left[ |f| |v^{\mathstrut}_{\mathrm{N}}|^a \right]   \dr \m_{\Sigma_{\tau}} \dr \tau \\
& \quad \lesssim_{a,s}^{\mathstrut}   \int_{\Sigma_{\tau_1}}  \rho \left[ |f| |v^{\mathstrut}_{\mathrm{N}}|^a \right]   \dr \m_{\Sigma_{\tau_1}} + \left| \int_{\Sigma_{\tau_1}}  \rho \left[\left\langle r^{2(s-1)}\frac{|v_{\underline{u}}|}{|v_t|} \right\rangle \langle \slashed{v} \rangle^{4(s-1)}  |f|^s |v_t|^{1+s(a-1)} |v^{\mathstrut}_{\mathrm{N}}|^{s-1}\right] \dr \m_{\Sigma_{\tau_1}} \right|^{\frac{1}{s}} 
\end{align*}
and
\begin{align*}
 \int_{\tau_1}^{\tau_2}  \int_{\Sigma_{\tau}} & \mathds{1}^{\mathstrut}_{ r \leq R_0} \,  \rho  \left[ |f| |v^{\mathstrut}_{\mathrm{N}}|^a \right]   \dr \m_{\Sigma_{\tau}} \dr \tau \\
& \quad \lesssim_{a,s}^{\mathstrut}   \int_{\Sigma_{\tau_1}}  \rho \left[ |f| |v^{\mathstrut}_{\mathrm{N}}|^a \right]   \dr \m_{\Sigma_{\tau_1}} + \left| \int_{\Sigma_{\tau_1}}  \rho \left[\left\langle r^{2(s-1)}\frac{|v_{\underline{u}}|}{|v_t|} \right\rangle \langle v_t \rangle^{4(s-1)}  |f|^s |v_t|^{1+s(a-1)} |v^{\mathstrut}_{\mathrm{N}}|^{s-1}\right] \dr \m_{\Sigma_{\tau_1}} \right|^{\frac{1}{s}}\! .
\end{align*}
In particular, we have $\langle v_t \rangle^{4(s-1)}  |f|^s |v_t|^{1+s(a-1)} |v^{\mathstrut}_{\mathrm{N}}|^{s-1}\lesssim \langle v_t \rangle^{(4+a)(s-1)}  |f|^s |v^{\mathstrut}_{\mathrm{N}}|^a$.
\end{Pro}
\begin{Rq}\label{Rqlabelplustard}
We prove in Proposition \ref{noILED} below that we cannot control the left hand side of these two inequalities by $\int_{\Sigma_{\tau_1}} \! \rho [ |f| |v^{\mathstrut}_{\mathrm{N}}|^a ] \dr \m_{\Sigma_{\tau_1}}$ because of the trapping at the photon sphere. This result has to be compared with the one obtained by Sbierski in the context of wave equations (see \cite[Subsection $3.1.1$]{Sbierski}). Any non-degenerate integrated local energy decay statement for the wave equation on Schwarzschild spacetime has to lose regularity. In fact, Blue-Soffer proved in \cite{BlueSoffer} that it is sufficient to lose an $\epsilon$ of an angular derivative. If we schematically consider that a function $\Psi \in L^1(\R^3)$ behaves as\footnote{This property is of course not satisfied for any $L^1(\R^3)$-function but this is an heuristic discussion.} $\Psi(x)=\mathcal{O}_{|x| \rightarrow +\infty} (|x|^{-3})$, then 
\begin{itemize}
\item $\int_{\Sigma_{\tau_1}} \! \rho [ |f| |v^{\mathstrut}_{\mathrm{N}}|^a ] \dr \m_{\Sigma_{\tau_1}} \! < +\infty$ schematically implies that for $|v_{r^*}|+|\slashed{v}| \to + \infty$, $|f| = \mathcal{O} ( (|v_{r^*}|+|\slashed{v}|)^{-3-a} )$
\item whereas $\int_{\Sigma_{\tau_1}}  \rho \left[\left\langle r^{2(s-1)}\frac{|v_{\underline{u}}|}{|v_t|} \right\rangle \langle \slashed{v} \rangle^{4(s-1)}  |h|^s |v_t|^{1+s(a-1)} |v^{\mathstrut}_{\mathrm{N}}|^{s-1}\right] \dr \m_{\Sigma_{\tau_1}} \! < +\infty$ provides $|f| = \mathcal{O}(r^{-2-\frac{1}{s}})$ and $|f|=\mathcal{O} \big( (|v_{r^*}|+|\slashed{v}|)^{-\frac{3}{s}-a} (1+|\slashed{v}|)^{-4+\frac{4}{s}} \big)$.
\end{itemize} 
In this sense, we have then proved an integrated local energy decay estimate for massless Vlasov fields requiring the loss of an $|\slashed{v}|^{\epsilon}$ of integrability and Proposition \ref{ILED} then has to be compared with \cite[Theorem $8.20$]{BlueSoffer}. 
\end{Rq}

\subsection{Degenerate integrated local energy decay} 

We fix for all this subsection a sufficiently regular function $f : \widehat{\Rm}_0^{+\infty} \rightarrow \R$, which does not necessarily satisfies $\T(f)=0$. The first step of the proof of Proposition \ref{ILED}, and the main goal of this subsection, consists in proving the following result.

\begin{Pro}\label{ILEDdeg4}
For any $\delta >0$, we have, for all $0 \leq \tau_1 \leq \tau_2 $,
$$\int_{\Rm_{\tau_1}^{\tau_2}}\!  \int_{\C}  \frac{1}{r} \! \left(   \frac{|v_{r^*}|^\delta |v_t|^{2-\delta}}{\log^2(3+r)}\! +\! \frac{|r-3M|^{\delta}}{r^{\delta}} \frac{|\slashed{v}|^2}{r^2}  \right) \! |f|  \dr \m_{\C} \dr \m_{\Rm_{\tau_1}^{\tau_2}}  \lesssim^{\mathstrut}_{\delta}  \!  \int_{\Sigma_{\tau_1}} \! \rho \Big[ |f||v_t| \Big] \dr \m_{\Sigma_{\tau_1}}\! +\int_{\Rm_{\tau_1}^{\tau_2}} \! \int_{\C} |\T(f)| |v_t| \dr \m_{\C} \dr \m_{\Rm^{\tau_2}_{\tau_1}}  .$$
\end{Pro}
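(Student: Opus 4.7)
The strategy is to run a Morawetz-type (vector field) argument for Vlasov fields, multiplying $|f|$ by a radial weight of the form $\phi(r) v_{r^*}$ and applying the divergence theorem as in Proposition \ref{divergence0}. The bulk integrand obtained from $\T$ applied to the weight should, after a careful choice of $\phi$, dominate the two terms appearing in the statement; boundary contributions will be controlled by $\rho[|f||v_t|]$ because $|\phi(r) v_{r^*}|\lesssim |v_t|$.

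The first concrete step is the key computation
$$\T(\phi(r) v_{r^*}) \;=\; \phi'(r)\, |v_{r^*}|^2 \;+\; \phi(r)\,\frac{r-3M}{r^4}\,|\slashed{v}|^2,$$
obtained from the expression \eqref{defT} of $\T$, together with $\partial_{r^*} = (1-2M/r)\partial_r$ and the formula for the vertical component $\T(v_{r^*})=(r-3M)|\slashed{v}|^2/r^4$. Next, I would apply the divergence theorem to $h = |f|\phi(r) v_{r^*}$ between $\Sigma_{\tau_1}$ and $\Sigma_{\tau_2}$ (truncating by $\{u\le w\}\cap\{\underline{u}\le\underline{w}\}$ and taking limits by dominated convergence, exactly as in the proof of Proposition \ref{divergence0}). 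Using $\T(|f|)=\mathrm{sgn}(f)\T(f)$ this produces the identity
$$ \int_{\Rm_{\tau_1}^{\tau_2}}\!\int_{\C} |f|\Big[\phi'(r)|v_{r^*}|^2 + \phi(r)\tfrac{r-3M}{r^4}|\slashed{v}|^2\Big]\, \dr\m_{\C}\, \dr\m_{\Rm_{\tau_1}^{\tau_2}} \;=\; \text{(boundary terms)} \;-\; \int_{\Rm_{\tau_1}^{\tau_2}}\!\int_{\C} \mathrm{sgn}(f)\,\T(f)\,\phi(r) v_{r^*}\,\dr\m_{\C}\,\dr\m_{\Rm_{\tau_1}^{\tau_2}}, $$
whose right-hand side is bounded by $\int_{\Sigma_{\tau_1}}\rho[|f||v_t|]\,\dr\m_{\Sigma_{\tau_1}}+\int_{\Rm_{\tau_1}^{\tau_2}}\int_\C |\T(f)||v_t|$ (using Lemma \ref{Lemmatechenergy} to propagate the degenerate flux from $\Sigma_{\tau_2}$ back to $\Sigma_{\tau_1}$).

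The heart of the argument is then to find a bounded $\phi: \,]2M,\infty[\,\to\mathbb{R}$ for which the two coefficients $\phi'(r)$ and $\phi(r)(r-3M)/r^4$ control pointwise the desired weights. The weight $1/(r\log^2(3+r))$ at infinity is produced naturally by a tail of the form $\phi(r)\sim-1/\log(3+r)$, while requiring $\phi(3M)=0$ with $\phi$ of the same sign as $r-3M$ guarantees $\phi(r)(r-3M)/r^4\ge 0$. A convenient concrete ansatz is
$$\phi(r) \;=\; \Big(1-\tfrac{1}{\log(3+r)}\Big)\bigl(1-\tfrac{3M}{r}\bigr)\,\chi_{\ge 2M}(r),$$
which gives $\phi'(r)\gtrsim 1/(r\log^2(3+r))$ on $\{r\ge 2M+\eta\}$ and $\phi(r)(r-3M)/r^4\gtrsim (r-3M)^2/r^5$.

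The main obstacle, as expected, is the trapping at the photon sphere $r=3M$: the smooth ansatz above only yields a \emph{quadratic} degeneracy $(r-3M)^2/r^5$ on the angular piece, whereas the proposition demands merely $|r-3M|^\delta/r^{3+\delta}$. To reach the weaker (sublinear) degeneracy, I would either (i) replace $\phi$ near $r=3M$ by a non-smooth multiplier of the type $\phi_\delta(r)=\mathrm{sgn}(r-3M)\,|r-3M|^{\delta}/r^{\delta}$ (approximated by smooth $\phi_{\delta,\varepsilon}$ and passing to the limit), at the cost of a singular contribution in $\phi_\delta'(r)|v_{r^*}|^2$ that has the correct sign away from $r=3M$ but must be absorbed by the Morawetz-positive part of the smooth piece, or (ii) combine the smooth degenerate Morawetz estimate with a coarea-type splitting of $\{|r-3M|\le\eta\}\cup\{|r-3M|>\eta\}$, optimizing $\eta$: on the first region the desired weight is bounded by $(r-3M)^2/r^5$ with a constant blowing up like $\eta^{\delta-2}$, while on the second region one trades integrability using conservation of $v_t$ and $\slashed{v}$ together with the bound $|v_{r^*}|^2\gtrsim (r-3M)^2|\slashed{v}|^2$ valid near the critical impact parameter. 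The delicate balancing needed to obtain precisely the $|r-3M|^\delta/r^{3+\delta}$ factor, and to ensure all error terms are absorbed into the two quantities on the right-hand side of the estimate, is where the subtle work lies; once the multiplier is fixed and the divergence identity established, the rest is bookkeeping.
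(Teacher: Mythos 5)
Your setup is sound and your first half reproduces the paper's argument: the divergence identity for the signed current $N[\phi(r)v_{r^*}|f|]$ (the paper's Lemma \ref{divergencebis}), the computation $\T(\phi(r)v_{r^*})=\phi'(r)|v_{r^*}|^2+\phi(r)\frac{r-3M}{r^4}|\slashed{v}|^2$, and the choice $\phi\sim -\log^{-1}(3+r)$ vanishing at $r=3M$ give exactly the paper's Proposition \ref{ILEDdeg}, i.e.\ the estimate with \emph{quadratic} degeneracy $(r-3M)^2$ at the photon sphere and the $r^{-1}\log^{-2}(3+r)$ weight on $|v_{r^*}|^2$. You also correctly identify that the whole difficulty is improving the quadratic degeneracy to $|r-3M|^{\delta}$.

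However, neither of your two options for that improvement closes. Option (i), taking $\phi_{\delta}(r)=\mathrm{sgn}(r-3M)|r-3M|^{\delta}/r^{\delta}$ as a purely \emph{spatial} modification of the multiplier, produces the angular bulk term $\phi_{\delta}(r)\frac{r-3M}{r^4}|\slashed{v}|^2=\frac{|r-3M|^{1+\delta}}{r^{4+\delta}}|\slashed{v}|^2$: the degeneracy is $|r-3M|^{1+\delta}$, which is \emph{superlinear} and strictly weaker than the claimed sublinear $|r-3M|^{\delta}$; no choice of spatial weight alone can do better, since the angular term always carries the explicit factor $(r-3M)$ coming from $\T(v_{r^*})$. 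Option (ii) cannot work either as stated: on $\{|r-3M|>\eta\}$ you would need to control the angular flux with \emph{no} degeneracy, which by Proposition \ref{noILED} forces a loss of integrability in $\slashed{v}$ — but Proposition \ref{ILEDdeg4} has no such loss on its right-hand side. The ingredient you are missing is the paper's modification of the \emph{velocity} weight: it uses currents built from $\phi(r)\,\mathrm{sgn}(v_{r^*})|v_{r^*}|^{\delta_1}|v_t|^{1-\delta_1}$ with $\phi(r)=\chi(r)\,\mathrm{sgn}(r-3M)|r-3M|^{\delta_2}$ (Lemma \ref{Bulkphi} and Proposition \ref{ILEDdeg2}). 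Differentiating $|v_{r^*}|^{\delta_1}$ along $\T$ produces the singular velocity factor $|v_{r^*}|^{\delta_1-1}$, so one controls \emph{simultaneously} a term $\frac{|v_{r^*}|^{1+\delta_1}}{|r-3M|^{1-\delta_2}}|v_t|^{1-\delta_1}$ (singular in $r-3M$, good sign from $\phi'$) and a term $|r-3M|^{1+\delta_2}\frac{|\slashed{v}|^2}{r^2|v_{r^*}|^{1-\delta_1}}|v_t|^{1-\delta_1}$ (superlinear spatial degeneracy but singular in $v_{r^*}$). The sublinear degeneracy $|r-3M|^{\delta}$ is then obtained by a pointwise Young/H\"older interpolation in the phase-space variables between these two controlled quantities, with exponents $p=2+2\delta$, $q=p/(p-1)$, chosen so that $p\frac{1-\delta}{2}=1-\delta^2<1$ (the singular power stays integrable on the first term) and $q\frac{1+\delta}{2}>1$ (the second term retains a controllable superlinear degeneracy); this is the paper's Corollary \ref{ILEDdeg3}. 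Without this two-parameter family of multipliers and the interpolation step, the estimate of Proposition \ref{ILEDdeg4} is out of reach.
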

\begin{Rq}
We can obtain a better control for the region $\{ r \leq r_0 \}$ by combining this inequality with the one of Proposition \ref{energyredshiftbis}.
\end{Rq}
As we will consider functions which are not necessarily nonnegatives, we will not be able to apply Proposition \ref{divergence0}. Instead, we will use the next estimate.

\begin{Lem}\label{divergencebis}
Let $F : \widehat{\Rm}_0^{+\infty} \rightarrow \R$ be a sufficiently regular function such that $|F| \leq D  |f||v_t|$ for a certain constant $D>0$. Then, for all $0 \leq \tau_1 \leq \tau_2 $,
$$\int_{\Rm^{\tau_2}_{\tau_1}} \int_{\C} \T ( F) \, \dr \mu^{\mathstrut}_{\C} \dr \m_{\Rm_{\tau_1}^{\tau_2}}  \, \leq \, 2 D  \int_{\Sigma_{\tau_1}} \rho \Big[ |f| |v_t| \Big]  \dr \mu^{\mathstrut}_{\Sigma_{\tau_1}}   +D\int_{\Rm^{\tau_2}_{\tau_1}} \int_{\C} |\T ( f) ||v_t| \dr \mu^{\mathstrut}_{\C} \dr \m_{\Rm_{\tau_1}^{\tau_2}} .$$
\end{Lem}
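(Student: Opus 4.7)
The function $F$ is not assumed to have a definite sign, so Proposition \ref{divergence0} cannot be applied to $F$ directly. The trick is to exploit the hypothesis $|F| \leq D |f||v_t|$ by splitting $F$ into a sum of a nonnegative combination plus a controlled ``error'' to which $\T(|v_t|) = 0$ (from Proposition \ref{Comuprop}) can be applied. Concretely, I would set
$$ \widetilde{h} \, := \, D|f||v_t| - F,$$
which is nonnegative by assumption. Since $\T(|v_t|)=0$ almost everywhere and $\T(|f|) = \mathrm{sgn}(f) \T(f)$ on the set where $f \neq 0$, we have $\T(\widetilde h) = D\,\T(|f|)|v_t| - \T(F)$, and hence $\T(F) = D\,\T(|f|)|v_t| - \T(\widetilde h)$.

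The plan is then to integrate this identity over $\Rm_{\tau_1}^{\tau_2} \times \C$ and to treat each term separately. For the first term, the bound $|\T(|f|)| \leq |\T(f)|$ yields directly
$$ D \int_{\Rm_{\tau_1}^{\tau_2}} \int_{\C} \T(|f|)\,|v_t|\, \dr \m_{\C}\, \dr \m_{\Rm_{\tau_1}^{\tau_2}} \, \leq \, D \int_{\Rm_{\tau_1}^{\tau_2}} \int_{\C} |\T(f)|\,|v_t|\, \dr \m_{\C}\, \dr \m_{\Rm_{\tau_1}^{\tau_2}} , $$
which produces the second term on the right-hand side of the desired inequality. For the second term, I would apply Proposition \ref{divergence0} to the nonnegative function $\widetilde h$ between $\Sigma_{\tau_1}$ and $\Sigma_{\tau_2}$, giving
$$ \int_{\Sigma_{\tau_2}} \rho[\widetilde h] \, \dr \m_{\Sigma_{\tau_2}} \, \leq \, \int_{\Sigma_{\tau_1}} \rho[\widetilde h] \, \dr \m_{\Sigma_{\tau_1}} + \int_{\Rm_{\tau_1}^{\tau_2}} \int_{\C} \T(\widetilde h) \, \dr \m_{\C}\, \dr \m_{\Rm_{\tau_1}^{\tau_2}} . $$
Since $\rho[\widetilde h] \geq 0$, this rearranges to $-\int \T(\widetilde h) \leq \int_{\Sigma_{\tau_1}} \rho[\widetilde h]\, \dr \m_{\Sigma_{\tau_1}}$.

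Finally, from $|F| \leq D|f||v_t|$ we obtain the pointwise bound $0 \leq \widetilde h \leq 2D |f||v_t|$, so $\rho[\widetilde h] \leq 2D\, \rho[|f||v_t|]$, and combining the two pieces produces exactly
$$ \int_{\Rm_{\tau_1}^{\tau_2}} \int_{\C} \T(F) \, \dr \m_{\C}\, \dr \m_{\Rm_{\tau_1}^{\tau_2}} \, \leq \, 2D \int_{\Sigma_{\tau_1}} \rho[|f||v_t|] \, \dr \m_{\Sigma_{\tau_1}} + D \int_{\Rm_{\tau_1}^{\tau_2}} \int_{\C} |\T(f)|\,|v_t|\, \dr \m_{\C}\, \dr \m_{\Rm_{\tau_1}^{\tau_2}} . $$
There is no real obstacle here; the only delicate point is the use of $\T(|v_t|)=0$ (which holds almost everywhere, but that is enough for the integrated identity) and the careful choice of the sign in the splitting: if instead one had tried $h = D|f||v_t| + F \geq 0$, applying Proposition \ref{divergence0} would produce a lower bound on $\int \T(h)$ and the argument would not close.
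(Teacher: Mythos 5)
Your proof is correct, and it reaches the stated inequality with the same constants, but it follows a genuinely different route from the paper. The paper does not introduce an auxiliary nonnegative function: it goes back \emph{inside} the proof of Proposition \ref{divergence0}, applies the exact divergence identity \eqref{eq:divh} (valid for sign-indefinite integrands) to $h=F$ on the truncated domains $\{u\leq w\}\cap\{\underline{u}\leq\underline{w}\}$, bounds each of the four boundary fluxes of $F$ by $D$ times the corresponding flux of $|f||v_t|$ via the triangle inequality, controls those in turn by applying \eqref{eq:divh} to $h=|f||v_t|$ (this is where the factor $2$ arises, from the $\Sigma_{\tau_1}$ term appearing twice), and finally passes to the limit by dominated convergence. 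Your decomposition $\widetilde{h}:=D|f||v_t|-F\geq 0$ instead lets you invoke Proposition \ref{divergence0} as a black box, which is more modular and avoids re-exposing the null boundary pieces $C_w$ and $\underline{\N}_{\underline{w}}$; your observation about the sign of the splitting (that $D|f||v_t|+F$ would only yield a lower bound) is also accurate. The one point you should make explicit is the rearrangement $-\int\T(\widetilde{h})\leq\int_{\Sigma_{\tau_1}}\rho[\widetilde{h}]$: dropping the nonnegative term $\int_{\Sigma_{\tau_2}}\rho[\widetilde{h}]$ and moving the bulk integral to the other side presupposes that these quantities are finite, and likewise the splitting $\int\T(F)=D\int\T(|f|)|v_t|-\int\T(\widetilde{h})$ requires the two pieces to be individually integrable. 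Under the paper's standing ``sufficiently regular with finite initial energy'' hypotheses this is harmless, but it is the only place where your argument leans on something the paper's cutoff-plus-dominated-convergence route handles explicitly.
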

\begin{proof}
We follow closely the proof of Proposition \ref{divergence0}, which cannot be applied to $F$. Fix $0 \leq \tau_1 \leq \tau_2$ and consider $w > \tau_2+u_0$, $\underline{w} > w+2R_0^*$. Then, remark that \eqref{eq:divh} can be applied to $h=F$, so that we obtain from the triangle inequality for integrals and $|F| \leq D|f||v_t|$,
\begin{align*}
 \int_{\Rm^{\tau_2}_{\tau_1}} \mathds{1}_{u \leq w} \,  \mathds{1}_{\underline{u} \leq \underline{w}} \, \int_{\C} \T ( F)  \, \dr \mu^{\mathstrut}_{\C} \dr \m_{\Rm_{\tau_1}^{\tau_2}} \leq D\! \int_{\Sigma_{\tau_1}} \mathds{1}_{u \leq w} \,  \mathds{1}_{\underline{u} \leq \underline{w}} \, \rho \big[ |f| |v_t| \big] \dr \mu^{\mathstrut}_{\Sigma_{\tau_1}}\!+D\! \int_{\Sigma_{\tau_2}} \! \mathds{1}_{u \leq w} \,  \mathds{1}_{\underline{u} \leq \underline{w}} \, \rho \big[|f||v_t| \big] \dr \mu^{\mathstrut}_{\Sigma_{\tau_2}} & \\ +D\int_{C_w} \! \mathds{1}_{\tau_1 \leq \tau \leq \tau_2} \! \int_{\C} |f| |v_t| |v \cdot n^{\mathstrut}_{C_w}| \dr \m_{\C} \dr \mu^{\mathstrut}_{C_w} +D \int_{\underline{\N}_{\underline{w}}} \! \mathds{1}_{\tau_1 \leq \tau \leq \tau_2} \! \int_{\C} |f| |v_t|  |v \cdot n^{\mathstrut}_{\underline{\N}_{\underline{w}}}| \dr \m_{\C} \dr \mu^{\mathstrut}_{\underline{\N}_{\underline{w}}}. &
 \end{align*}
Apply \eqref{eq:divh} to $h=|f||v_t|$ and use $|\T(|f||v_t|)|=\big|\frac{f}{|f|}\T(f)|v_t|\big|=|\T(f)||v_t|$ in order to obtain
$$\int_{\Rm^{\tau_2}_{\tau_1}} \mathds{1}_{u \leq w} \,  \mathds{1}_{\underline{u} \leq \underline{w}} \, \int_{\C} \T ( F)  \, \dr \mu^{\mathstrut}_{\C} \dr \m_{\Rm_{\tau_1}^{\tau_2}}  \, \leq \, 2 D \int_{\Sigma_{\tau_1}}  \rho \big[ |f||v_t| \big] \dr \mu^{\mathstrut}_{\Sigma_{\tau_1}}  +D\int_{\Rm^{\tau_2}_{\tau_1}}  \int_{\C}| \T ( f) ||v_t| \dr \mu^{\mathstrut}_{\C} \dr \m_{\Rm_{\tau_1}^{\tau_2}} .$$
It then remains to apply the dominated convergence theorem.
\end{proof}
Furthermore, in order to generate positive bulk integrals, we will use the function $\Phi_{\delta}$ define as follows. 
\begin{Lem}\label{technisobo}
Let $ \delta>0$. The function $\Phi_{\delta} : s \mapsto \frac{s}{|s|} |s|^{\delta}$ is locally in $W^{1,1}(\R)$ and its derivative is almost everywhere equal to $\Phi_{\delta}':s \mapsto \delta |s|^{\delta-1}$.
\end{Lem}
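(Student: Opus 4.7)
The plan is to verify the claim by a standard truncation and integration-by-parts argument, using that $\Phi_\delta$ is continuous on $\mathbb{R}$ and classically differentiable away from the origin with the stated derivative.

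First, I would observe that for $s>0$ we have $\Phi_\delta(s)=s^\delta$, while for $s<0$ we have $\Phi_\delta(s)=-(-s)^\delta$; in both cases $\Phi_\delta$ is $C^1$ on $\mathbb{R}\setminus\{0\}$ with pointwise derivative $\delta|s|^{\delta-1}$. Moreover, since $\delta>0$, we have $\Phi_\delta(s)\to 0$ as $s\to 0$, so $\Phi_\delta$ is continuous on all of $\mathbb{R}$. The candidate derivative $s\mapsto \delta|s|^{\delta-1}$ is locally integrable on $\mathbb{R}$ because $\delta-1>-1$, so the only thing to check is that it coincides with the distributional derivative of $\Phi_\delta$.

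Next, I would fix $\varphi\in C^\infty_c(\mathbb{R})$ and, for $\varepsilon>0$, integrate by parts on each of the intervals $(-\infty,-\varepsilon)$ and $(\varepsilon,+\infty)$ (where $\Phi_\delta$ is smooth) to obtain
\begin{equation*}
\int_{|s|\geq \varepsilon}\Phi_\delta(s)\,\varphi'(s)\,\dr s \;=\; -\int_{|s|\geq\varepsilon}\delta|s|^{\delta-1}\varphi(s)\,\dr s \;-\;\bigl[\Phi_\delta(\varepsilon)-\Phi_\delta(-\varepsilon)\bigr]\varphi(0) \;+\;o(1),
\end{equation*}
where the $o(1)$ accounts for the fact that $\varphi(\pm\varepsilon)\to\varphi(0)$. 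Because $\Phi_\delta(\pm\varepsilon)=\pm\varepsilon^\delta\to 0$ and $\varphi$ is bounded, the boundary contributions vanish as $\varepsilon\to 0$. On the left-hand side, $\Phi_\delta\varphi'$ is bounded on $\mathrm{supp}(\varphi)$, so dominated convergence yields the limit $\int_\mathbb{R}\Phi_\delta\varphi'\,\dr s$. On the right-hand side, $|s|^{\delta-1}\varphi$ is locally integrable, so dominated convergence gives $-\int_\mathbb{R}\delta|s|^{\delta-1}\varphi\,\dr s$.

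Combining these, we conclude that the distributional derivative of $\Phi_\delta$ is the locally $L^1$ function $s\mapsto \delta|s|^{\delta-1}$, and hence $\Phi_\delta\in W^{1,1}_{\mathrm{loc}}(\mathbb{R})$ with almost everywhere derivative $\delta|s|^{\delta-1}$. There is no real obstacle here; the only point of care is controlling the boundary terms at $\pm\varepsilon$, which is immediate from $\delta>0$. The statement is purely a preparation for applying a chain-rule identity such as $\T(\Phi_\delta(v_{r^*}))=\delta|v_{r^*}|^{\delta-1}\T(v_{r^*})$ almost everywhere, which will be needed when generating positive bulk terms from the multiplier $\phi(r)\Phi_\delta(v_{r^*})$ in the subsequent argument.
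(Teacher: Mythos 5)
Your proof is correct and follows essentially the same route as the paper: truncate away from the origin, integrate by parts on $\{|s|\geq\varepsilon\}$, note the boundary contributions are $O(\varepsilon^{\delta})$ and hence vanish since $\delta>0$, and pass to the limit by dominated convergence using the local integrability of $|s|^{\delta-1}$. No further comment is needed.
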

\begin{proof}
Let $\psi \in \mathcal{C}^{\infty}_c (\R)$ and $\epsilon >0$. Then, by integration by parts
$$ \int_{-\infty}^{-\epsilon} -  (-s)^{\delta} \psi'(s) \dr s + \int_{\epsilon}^{+\infty} s^{\delta} \psi'(s) \dr s \, = \, -\epsilon^{\delta} (\psi'(-\epsilon)+ \psi'(\epsilon)) -\int_{|s| \geq \epsilon } \frac{\delta \psi (s) }{|s|^{1-\delta}} \, \dr s.$$
Since $s \mapsto \frac{s}{|s|} |s|^{\delta}$ and $s \mapsto \delta|s|^{\delta-1}$ are locally in $L^1(\R)$, $\epsilon^{\delta} \to_{\epsilon \to 0} 0$ and that $\psi$ and $\psi'$ are bounded and compactly supported, the dominated convergence theorem gives
$$ \int_{s \in \R} \frac{s}{|s|} |s|^{\delta} \psi'(s) \dr s  \, = \, -\int_{s \in \R} \frac{\delta \psi (s) }{|s|^{1-\delta}} \, \dr s .$$
\end{proof}
Let us now state a direct consequence of the previous results before explaining the strategy that we will follow in order to prove a degenerate integrated local energy decay estimate.
\begin{Lem}\label{Bulkphi}
Let $0 < \delta \leq 1$ and $\phi \in \mathcal{C}^1(]2M,+\infty[,\R)$ such that $\left\| \phi \right\|_{L^{\infty}} < + \infty$. Then, for all $0 \leq \tau_1 \leq \tau_2 $,   
\begin{multline*}
\int_{\Rm_{\tau_1}^{\tau_2}} \int_{\C} \left( \phi'(r)|v_{r^*}|^{1+\delta}  +  \delta \phi (r) \frac{(r-3M)}{r^2}\frac{|\slashed{v}|^2}{r^2|v_{r^*}|^{1-\delta}}  \right) |v_t|^{1-\delta} |f| \, \dr \m_{\C} \dr \m_{\Rm_{\tau_1}^{\tau_2}}  \\ \leq \, 2 \| \phi \|_{L^{\infty}}\left(  \int_{\Sigma_{\tau_1}} \rho \Big[ |f| |v_t| \Big] \dr \m_{\Sigma_{\tau_1}} + \int_{\Rm_{\tau_1}^{\tau_2}} \int_{\C} |\T(f)| |v_t| \dr \m_{\C} \dr \m_{\Rm_{\tau_1}^{\tau_2}}  \right)\!.
\end{multline*}
\end{Lem}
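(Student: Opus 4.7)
The strategy is to plug the weight
\[
F \;:=\; \phi(r)\,\Phi_{\delta}(v_{r^*})\,|v_t|^{1-\delta}\,|f|
\]
into the signed divergence estimate of Lemma~\ref{divergencebis}, where $\Phi_{\delta}(s) = \tfrac{s}{|s|}|s|^{\delta}$ is the function of Lemma~\ref{technisobo}. Since $0 < \delta \leq 1$ and $|v_{r^*}| \leq |v_t|$ by \eqref{defv0}, we have $|\Phi_{\delta}(v_{r^*})|\,|v_t|^{1-\delta} \leq |v_t|$, hence $|F| \leq \|\phi\|_{L^{\infty}}|f||v_t|$, so Lemma~\ref{divergencebis} is applicable with $D = \|\phi\|_{L^{\infty}}$.

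Next I would compute $\T(F)$ directly from the expression \eqref{defT} of $\T$. The key inputs are: $\T(v_t)=0$ from Proposition~\ref{Comuprop}; the identity $\partial_{r^*}= (1-\tfrac{2M}{r})\partial_r$ of \eqref{defderiv}, which gives $\T(\phi(r)) = v_{r^*}\phi'(r)$; and $\T(v_{r^*}) = \tfrac{r-3M}{r^4}|\slashed{v}|^2$. Applying the chain rule to $\Phi_{\delta}(v_{r^*})$ and substituting $\Phi_{\delta}'(s) = \delta|s|^{\delta-1}$ almost everywhere (Lemma~\ref{technisobo}), combined with the algebraic identity $v_{r^*}\,\Phi_{\delta}(v_{r^*}) = |v_{r^*}|^{1+\delta}$, I would arrive at
\begin{align*}
\T(F) \;=\;& \phi'(r)|v_{r^*}|^{1+\delta}|v_t|^{1-\delta}|f| \;+\; \delta\,\phi(r)\,\frac{r-3M}{r^2}\,\frac{|\slashed{v}|^2}{r^2|v_{r^*}|^{1-\delta}}\,|v_t|^{1-\delta}|f| \\
&+\; \phi(r)\,\Phi_{\delta}(v_{r^*})\,|v_t|^{1-\delta}\,\T(|f|).
\end{align*}
The first two terms are precisely the bulk integrand on the left-hand side of the claim. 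The last error term is bounded pointwise by $\|\phi\|_{L^{\infty}}|v_t||\T(f)|$, since $\T(|f|) = \tfrac{f}{|f|}\T(f)$ a.e. Transferring this error term to the right and applying Lemma~\ref{divergencebis} to $\int\T(F)$ then gives the required $2\|\phi\|_{L^{\infty}}$ on both pieces.

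\textbf{Main obstacle.} The technical difficulty is that $\Phi_{\delta}$ is not $\mathcal{C}^{1}$ at the origin and $|f|$ is not smooth on $\{f=0\}$, so $F$ is only $W^{1,1}_{\mathrm{loc}}$. The computation of $\T(F)$ and the use of Lemma~\ref{divergencebis} must therefore be justified by regularisation: I would replace $\Phi_{\delta}(s)$ by $\Phi_{\delta}^{\varepsilon}(s) := s(s^2+\varepsilon^2)^{(\delta-1)/2}$ and $|f|$ by $\sqrt{f^2+\varepsilon^2}$, derive the estimate for this smooth approximant (where the chain rule is classical), and pass to the limit $\varepsilon \to 0$ via the dominated convergence theorem. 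The uniform bound $|\Phi_{\delta}^{\varepsilon}(s)| \leq |s|^{\delta}$ and the pointwise limit $(\Phi_{\delta}^{\varepsilon})'(s) \to \delta|s|^{\delta-1}$ almost everywhere --- which is precisely the content of Lemma~\ref{technisobo} --- guarantee that every term converges to its expected limit and that the bound constants do not degenerate.
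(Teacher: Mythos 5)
Your proposal is correct and follows essentially the same route as the paper: apply Lemma \ref{divergencebis} to $F=\phi(r)\Phi_{\delta}(v_{r^*})|v_t|^{1-\delta}|f|$ with $D=\|\phi\|_{L^{\infty}}$, compute $\T(F)$ using $\T(v_t)=0$, $\T(\phi(r))=v_{r^*}\phi'(r)$, $\T(v_{r^*})=\frac{r-3M}{r^4}|\slashed{v}|^2$ and Lemma \ref{technisobo}, and absorb the $\T(|f|)$ error term. The regularisation argument you sketch to justify the chain rule for the merely $W^{1,1}_{\mathrm{loc}}$ weight is a reasonable way to make rigorous what the paper leaves implicit behind Lemma \ref{technisobo}.
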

\begin{proof}
This follows from Lemma \ref{divergencebis} applied with $F = \phi(r) \Phi_{\delta}(v_{r^*})|v_t|^{1-\delta}  |f|$ and $D= \| \phi \|_{L^{\infty}}$. Indeed, using Lemma \ref{technisobo} and $\T(v_t)=0$, we have
\begin{align*}
\T\left(F \right)  \, & = \, \T ( \phi (r) ) \frac{v_{r^*}}{|v_{r^*}|}|v_{r^*}|^{\delta}|v_t|^{1-\delta}  |f|+\phi (r) \T \left( \Phi_{\delta}(v_{r^*})  \right) |v_t|^{1-\delta} |f| +\phi(r) \frac{v_{r^*}}{|v_{r^*}|}|v_{r^*}|^{\delta}|v_t|^{1-\delta}  \T ( |f|) \\ & \geq \, \phi'(r)|v_{r^*}|^{1+\delta}|v_t|^{1-\delta}  |f|+ \phi(r)\frac{(r-3M)|\slashed{v}|^2}{r^4}\delta|v_{r^*}|^{\delta-1}|v_t|^{1-\delta} |f| - \| \phi\|_{L^{\infty}} |v_t| |\T(f)|.
\end{align*}
\end{proof}
The goal now is to find a function $\phi \in L^{\infty}$ such that
\begin{itemize}
\item the integrand of the term in the left hand side of the inequality of Lemma \ref{Bulkphi} is nonnegative. More precisely, we would like $\phi'$ to be strictly positive, $(r-3M) \phi(r) \geq 0$ and $(r-3M) \phi(r) \geq C_{\eta} >0$ for all $|r-3M| \geq \eta >0$.
\item Moreover, we would like that $\phi' \to_{r \to +\infty} 0$ with a slow decay rate and that $ (r-3M) \phi(r) \sim_{r \to 3M} (r-3M)^{\eta}$ with $\eta$ as small as possible.
\end{itemize}
Note that the choice $\delta=1$ and $\phi(r)=\log^{- 1} (3+3M)-\log^{-1}(3+r)$ checks all the conditions but the last one since $r \mapsto (r-3M) \phi'(r)$ vanishes quadratically at the photon sphere. Although it is not possible to remove completely this quadratic degeneracy, we will prove, as stated in Proposition \ref{ILEDdeg4}, that we can considerably reduce it. For this, the rough idea is to apply the previous lemma for arbitrary small $\delta$ and $\phi(r)\sim_{r \to 3M} \Phi_{\delta} (r-3M)$. Let us start by proving an integrated local energy decay estimate with a strong degeneracy at $r=3M$.
\begin{Pro}\label{ILEDdeg}
We have, for all $ 0 \leq \tau_1 \leq \tau_2 $,
$$\int_{\Rm_{\tau_1}^{\tau_2}} \! \int_{\C} \! \frac{1}{r} \left(   \frac{|v_{r^*}|^2}{ \log^2(
3+r)} \! + \! \frac{(r-3M)^2}{r^2 }  \frac{|\slashed{v}|^2}{r^2} \right) \! |f| \, \dr \m_{\C} \dr \m_{\Rm_{\tau_1}^{\tau_2}}  \lesssim   \int_{\Sigma_{\tau_1}} \rho \Big[ |f| |v_t| \Big] \dr \m_{\Sigma_{\tau_1}} \! + \! \int_{\Rm_{\tau_1}^{\tau_2}} \! \int_{\C} \! |\T(f)| |v_t| \dr \m_{\C} \dr \m_{\Rm^{\tau_2}_{\tau_1}}  .$$
\end{Pro}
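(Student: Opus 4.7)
My plan is to deduce Proposition \ref{ILEDdeg} from Lemma \ref{Bulkphi} applied with the exponent $\delta = 1$ (so that the $|v_t|^{1-\delta}$ factor disappears) and a carefully chosen multiplier $\phi \in L^\infty(]2M,+\infty[)$. With $\delta = 1$, the bulk integrand on the left-hand side of Lemma \ref{Bulkphi} reduces to
$$
\phi'(r)\,|v_{r^*}|^2 \, + \, \phi(r)\,\frac{r-3M}{r^4}\,|\slashed{v}|^2,
$$
so the task is to construct a bounded $\phi$ such that this integrand is nonnegative and dominates the target integrand $\frac{1}{r}\bigl(\frac{|v_{r^*}|^2}{\log^2(3+r)} + \frac{(r-3M)^2}{r^4}|\slashed{v}|^2\bigr)$.

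The concrete candidate I would try is
$$
\phi(r) \, := \, \frac{r-3M}{r} \, + \, C_M\left(\frac{1}{\log(3+3M)} - \frac{1}{\log(3+r)}\right),
$$
for a constant $C_M > 0$ chosen large enough (depending only on $M$). The first summand vanishes at $r = 3M$, is increasing, and is uniformly bounded by $1$; the second has the same structure, vanishes at $r=3M$, is increasing, and is bounded by $1/\log(3+3M)$. Consequently $\phi$ is in $L^\infty$, and $\phi$ has the same sign as $r-3M$, so $\phi(r)(r-3M) \geq 0$. A direct differentiation gives
$$
\phi'(r) \, = \, \frac{3M}{r^2} \, + \, \frac{C_M}{(3+r)\log^2(3+r)},
$$
which is strictly positive and, for $C_M$ sufficiently large, satisfies $\phi'(r) \gtrsim \frac{1}{r\log^2(3+r)}$ uniformly on $]2M,+\infty[$ (the first term handles the bounded region $r \leq R$ and the second the asymptotic region).

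For the angular control, I write the first summand as $\int_{3M}^r \frac{3M}{s^2}\,ds \geq \frac{(r-3M)}{r}\cdot\mathbf{1}_{r \geq 3M}/\text{etc.}$; more directly, since $\phi(r)/(r-3M)$ is the average of $\phi'$ on $[3M, r]$ (or $[r, 3M]$), and $\phi'(r) \geq 3M/r^2$, one obtains $\phi(r)(r-3M) \geq \frac{(r-3M)^2}{r}$ with a universal constant. Inserting these two pointwise lower bounds into the conclusion of Lemma \ref{Bulkphi}, and noting that $\|\phi\|_{L^\infty}$ is a constant depending only on $M$, yields exactly the desired integrated local energy decay estimate with degeneracy at $r = 3M$ on the $|\slashed{v}|^2$ term and the logarithmic loss on the $|v_{r^*}|^2$ term.

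The only subtlety — the ``hard part'', though here it is really just a bookkeeping issue — is to verify the two pointwise lower bounds uniformly on $]2M,+\infty[$; the sign condition $\phi(r)(r-3M) \geq 0$ and the positivity of $\phi'$ come for free from the monotonicity of each summand. The quadratic vanishing of $\phi(r)(r-3M)/r^2$ at $r = 3M$ is unavoidable here and is precisely what forces the degenerate weight $(r-3M)^2/r^2$ in the statement; removing this quadratic degeneracy at the photon sphere is the subject of the finer argument in Proposition \ref{ILEDdeg4}, which will require taking $\delta$ small and modifying $\phi$ near $r = 3M$ to behave like $\Phi_\delta(r-3M)$.
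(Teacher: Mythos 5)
Your proof is correct and follows the same overall route as the paper: apply Lemma \ref{Bulkphi} with $\delta=1$ and a bounded, increasing multiplier $\phi$ vanishing at the photon sphere. The only genuine difference is the choice of $\phi$. The paper takes $\phi(r)=\log^{-1}(3+3M)-\log^{-1}(3+r)$ alone and must then establish $(r-3M)\phi(r)\gtrsim (r-3M)^2/r$ by a Taylor--Lagrange expansion near $r=3M$ combined with a monotonicity argument away from the photon sphere; your extra summand $\frac{r-3M}{r}=\int_{3M}^r \frac{3M}{s^2}\,\dr s$ turns that lower bound into the exact identity $(r-3M)\cdot\frac{r-3M}{r}=\frac{(r-3M)^2}{r}$ (the logarithmic summand contributing a nonnegative extra term, since it has the sign of $r-3M$), which is a cleaner verification. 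All the hypotheses of Lemma \ref{Bulkphi} are met ($\phi$ is $\mathcal{C}^1$ and bounded uniformly in terms of $M$), and the bound $\phi'(r)\geq \frac{C_M}{(3+r)\log^2(3+r)}\gtrsim \frac{1}{r\log^2(3+r)}$ holds on $r>2M$ since $r/(3+r)$ is bounded below there — in fact $C_M=1$ already suffices for this part.
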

\begin{proof}
Apply Lemma \ref{Bulkphi} with $\delta=1$, $\phi(r)=\log^{- 1} (3+3M)-\log^{-1}(3+r)$ and note that
$$\phi'(r) \, = \, \frac{1}{(3+r) \log^2(3+r)} \gtrsim \frac{1}{r \log^2(3+r)} .$$
It then only remains to prove that
\begin{equation}\label{kevatalenn2}
 \forall \, r >2M, \qquad (r-3M) \phi (r)  \hspace{1mm} \gtrsim \hspace{1mm}   \frac{(r-3M)^2}{r} . 
\end{equation}
We treat first the region close to $r=3M$. By Taylor-Lagrange inequality,
$$ \forall \, |r-3M| < M, \hspace{1cm} \left| \phi (r)- \partial_r \phi(3M) (r-3M) \right| \leq \frac{1}{2} \sup_{]2M,4M]} |\partial_r^2 \phi | \cdot (r-3M)^2.$$
As
$$\partial_r \phi(3M) \, = \,   (3+3M)^{-1} \log^{-2}(3+3M) \, > \,0 ,  \qquad \sup_{[2M,4M]} |\partial_r^2 \phi| \, \leq \, (3+2M)^{-2} < + \infty$$
and since $\frac{2M}{r} \leq 1$ for $r \in ]2M,4M]$, there exists a constant $0<\eta<M$ such that
$$ \forall \, |r-3M| < \eta, \hspace{1cm}  (r-3M)\phi (r) \, \geq \, \frac{\partial_r \phi(3M)}{2}(r-3M)^2 \, \gtrsim \,  \frac{(r-3M)^2}{r}.$$
This implies \eqref{kevatalenn2} for $|r-3M| < \eta$. For the remaining region, we use that $(r-3M) \phi$ is nonnegative and then that $\phi$ strictly increases and vanishes at $r=3M$ in order to get
$$ \forall \, |r-3M| \geq \eta, \qquad (r-3M) \phi (r) \, = \, |r-3M||\phi(r)| \geq |r-3M| | \min \left( \phi (3M-\eta), \phi (3M+\eta) \right)| \gtrsim |r-3M|.$$
This leads to \eqref{kevatalenn2} for $|r-3M| \geq \eta$ since $|r-3M| \gtrsim \frac{(r-3M)^2}{r}$ in this region.
\end{proof}
We now improve the estimate near the photon sphere $r = 3M$.
\begin{Pro}\label{ILEDdeg2}
For any $0<\delta_1 \leq 1$ and $0 <\delta_2 \leq 1$, we have, for all $ 0 \leq \tau_1 \leq \tau_2 $,
\begin{multline*}
 \int_{\mathcal{R}_{\tau_1}^{\tau_2}}  \int_{\mathcal{P}^{\mathstrut}}  \mathds{1}_{r \leq R_0} \left(   \frac{|v_{r^*}|^{1+\delta_1}}{|r-3M|^{1-\delta_2}} + |r-3M|^{1+\delta_2} \frac{|\slashed{v}|^2}{r^2|v_{r^*}|^{1-\delta_1}} \right) \! |v_t|^{1-\delta_1} |f| \, \dr \mu_{\mathcal{P}} \mathrm{d} \mu_{\mathcal{R}_{\tau_1}^{\tau_2}} \\ \lesssim^{\mathstrut}_{\delta_1,\delta_2} \,  \int_{\Sigma_{\tau_1}} \rho \Big[ |f| |v_t| \Big] \dr \m_{\Sigma_{\tau_1}}  +  \int_{\Rm_{\tau_1}^{\tau_2}} \int_{\C} |\T(f)| |v_t| \dr \m_{\C} \dr \m_{\Rm^{\tau_2}_{\tau_1}}  .
\end{multline*}  
\end{Pro}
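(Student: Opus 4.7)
The plan is to apply Lemma \ref{Bulkphi} with $\delta = \delta_1$ to a weight $\phi$ tailored to the photon sphere, namely $\phi(r) = \Phi_{\delta_2}(r-3M)$ on $]2M,R_0]$, extended to $]R_0,+\infty[$ by a smooth nondecreasing function valued in a bounded interval and satisfying the sign condition $\phi(r)(r-3M) \geq 0$. Then $\phi \in L^\infty \cap W^{1,1}_{\mathrm{loc}}(]2M,+\infty[)$ with $\|\phi\|_{L^\infty}$ depending only on $R_0$, $\delta_2$ and $M$; moreover $\phi' \geq 0$ almost everywhere, and on the subinterval $]2M,R_0]$ one has the explicit formulas $\phi'(r) = \delta_2 |r-3M|^{\delta_2-1}$ and $\phi(r)(r-3M) = |r-3M|^{1+\delta_2}$.

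Since $\phi$ fails to be $C^1$ at $r=3M$, Lemma \ref{Bulkphi} does not apply verbatim. I would replace $\phi$ by a sequence of $C^1$ approximations $\phi_n$ satisfying $\phi_n(3M)=0$, $\phi_n' \geq 0$, $\|\phi_n\|_{L^\infty} \leq 2\|\phi\|_{L^\infty}$ and $\phi_n \to \phi$ in $W^{1,1}_{\mathrm{loc}}(]2M,+\infty[)$ (for instance by convolution, adjusted slightly to preserve $\phi_n(3M)=0$ and monotonicity). For each $n$, Lemma \ref{Bulkphi} gives
\[ \int_{\Rm_{\tau_1}^{\tau_2}} \! \int_{\C} \left( \phi_n'(r) |v_{r^*}|^{1+\delta_1} + \delta_1 \phi_n(r) \frac{r-3M}{r^2} \frac{|\slashed{v}|^2}{r^2 |v_{r^*}|^{1-\delta_1}} \right) \! |v_t|^{1-\delta_1} |f| \, \dr \m_{\C} \dr \m_{\Rm_{\tau_1}^{\tau_2}} \, \leq \, 4 \|\phi\|_{L^\infty} \cdot \mathrm{RHS}, \]
where $\mathrm{RHS}$ stands for the data terms appearing on the right-hand side of Lemma \ref{Bulkphi}. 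The integrand is pointwise nonnegative, since $\phi_n' \geq 0$ and $\phi_n(r)(r-3M)\geq 0$, so Fatou's lemma lets one pass the limit $n \to +\infty$ inside the integral.

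On $\{r \leq R_0\}$ the resulting limit integrand equals exactly
\[ \delta_2 \frac{|v_{r^*}|^{1+\delta_1}}{|r-3M|^{1-\delta_2}} |v_t|^{1-\delta_1} |f| \; + \; \delta_1 \frac{|r-3M|^{1+\delta_2}}{r^4} \frac{|\slashed{v}|^2 \, |v_t|^{1-\delta_1}}{|v_{r^*}|^{1-\delta_1}} |f|, \]
while on $\{r>R_0\}$ the integrand remains nonnegative and is simply discarded. Since $r \leq R_0$ implies $r^{-4} \gtrsim_{R_0} r^{-2}$, the second term above dominates the corresponding term in the statement; dividing through by $\min(\delta_1,\delta_2)$ and by the $R_0$-dependent factor absorbs every constant into $\lesssim_{\delta_1,\delta_2}$, as required.

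The main obstacle is the non-smoothness of $\phi$ at $r=3M$, which forces the approximation step; this is however resolved cleanly thanks to the pointwise nonnegativity of the bulk integrand in Lemma \ref{Bulkphi}, which is ensured by the monotonicity and sign design of $\phi$. A subsidiary technical point is the choice of extension of $\phi$ past $R_0$, which must preserve both monotonicity and the sign of $r-3M$; any bounded smooth interpolation to a large constant does the job.
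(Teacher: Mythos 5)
Your proof is correct, and it follows the same core idea as the paper — apply Lemma \ref{Bulkphi} with $\delta=\delta_1$ to a weight built from $\Phi_{\delta_2}(r-3M)$ — but it handles the region $r>R_0$ differently, in a way that is actually a little cleaner. The paper sets $\phi(r)=\chi(r)\Phi_{\delta_2}(r-3M)$ with $\chi$ a cutoff equal to $1$ for $r\leq R_0$ and vanishing for $r\geq 2R_0$; the term $\chi'(r)\Phi_{\delta_2}(r-3M)$ then contributes a \emph{negative} piece to $\phi'$ on $R_0\leq r\leq 2R_0$, which generates the error term $\int\mathds{1}_{R_0\leq r\leq 2R_0}|v_{r^*}|^{1+\delta_1}|v_t|^{1-\delta_1}|f|$ and has to be absorbed by invoking the already-proved degenerate estimate of Proposition \ref{ILEDdeg}. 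Your monotone bounded extension of $\Phi_{\delta_2}(r-3M)$ past $R_0$ keeps $\phi'\geq 0$ and $\phi(r)(r-3M)\geq 0$ everywhere, so the whole bulk integrand is nonnegative and the $r>R_0$ contribution can simply be dropped; no appeal to Proposition \ref{ILEDdeg} is needed for this step. You are also more careful than the paper on one technical point: Lemma \ref{Bulkphi} is stated for $\phi\in\mathcal{C}^1$, while $\Phi_{\delta_2}(r-3M)$ is only $W^{1,1}_{\mathrm{loc}}$ at $r=3M$ for $\delta_2<1$; the paper passes over this by citing Lemma \ref{technisobo}, whereas your $\mathcal{C}^1$-approximation plus Fatou argument (legitimate precisely because the integrand is nonnegative) makes the extension explicit. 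Your bookkeeping of the constants ($\delta_2$ in front of the first term, $\delta_1 r^{-4}\gtrsim_{R_0}\delta_1 r^{-2}\cdot r^{-2}$ in front of the second) matches the statement.
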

\begin{proof}
Let $\chi \in \mathcal{C}^{\infty}(\R,\R_+)$ be a nonnegative cutoff function such that $\chi (s)=1$ for all $s \leq R_0$ and $\chi(s) =0$ for all $s \geq 2R_0$. Consider $(\delta_1,  \delta_2 ) \in ]0,1]^2$ and $\phi$ defined by $\phi (r) = \chi (r)\Phi_{\delta_2} (r-3M)$. In view of the definition of $\chi$ and using Lemma \ref{technisobo}, we have for all $r>2M$,
\begin{align*}
 &\frac{r-3M}{r^2}\phi(r)   =  \chi (r) \frac{|r-3M|^{1+\delta_2}}{r^2}  \geq  \mathds{1}^{\mathstrut}_{r \leq R_0} |r-3M|^{1+\delta_2}, \\
 &\phi'(r)   =  \chi(r) \Phi_{\delta_2}'(r-3M)+\chi'(r) \Phi_{\delta_2}(r-3M)  \, \geq \, \delta_2 \mathds{1}_{r \leq 2R_0} |r-3M|^{\delta_2-1}\! -\mathds{1}^{\mathstrut}_{R_0 \leq r \leq 2R_0} \| \chi' \|^{\mathstrut}_{L^{\infty}}|2R_0-3M|^{\delta_2} \!  .
 \end{align*}
The result then ensues from Lemma \ref{Bulkphi}, applied to $\phi$ with $\delta=\delta_1$, provided that
$$ \int_{\Rm^{\tau_2}_{\tau_1}}  \int_{\C}  \mathds{1}^{\mathstrut}_{R_0 \leq r \leq 2R_0}|v_{r^*}|^{1+\delta_1} |v_t|^{1-\delta_1}  |f| \dr \m_{\C} \dr \m_{\Rm^{\tau_2}_{\tau_1}}  \lesssim \int_{\Sigma_{\tau_1}}  \rho \big[  |f| |v_t|  \big] \dr \m_{\Sigma_{\tau_1}} \! +  \int_{\Rm_{\tau_1}^{\tau_2}}  \int_{\C}  |\T(f)| |v_t| \dr \m_{\C} \dr \m_{\Rm^{\tau_2}_{\tau_1}} .$$
holds. This last inequality follows from Proposition \ref{ILEDdeg} since, for $3M<R_0 \leq r \leq 2R_0$, we have
$$ |v_{r^*}|^{1+\delta_1}|v_t|^{1-\delta_1} \, \lesssim \, |v_t|^2 \, = \, |v_{r^*}|^2+\left(1-\frac{2M}{r}\right) \frac{|\slashed{v}|^2}{r^2} \, \lesssim \, \frac{|v_{r^*}|^2}{r \log (3+r)}+ \frac{(r-3M)^2}{r^3}\frac{|\slashed{v}|^2}{r^2}.$$
\end{proof}
From this, we can deduce the following inequality which, combined with Proposition \ref{ILEDdeg}, implies the integrated local energy estimate of Proposition \ref{ILEDdeg4}. Recall that the constant $r_0$, introduced in Proposition \ref{energyredshift}, satisfied $2M < r_0 < 3M$ and that $R_0 >3M$.
\begin{Cor}\label{ILEDdeg3}
For any $\delta >0$, there holds, for all $ 0 \leq \tau_1 \leq \tau_2 $,
\begin{align*}
\int_{\Rm_{\tau_1}^{\tau_2}}  \int_{\C} \mathds{1}^{\mathstrut}_{r_0 \leq r \leq R_0} \left( \frac{|v_{r^*}|^\delta |v_t|^{2-\delta}}{r \log^2(3+r)}+ \frac{|r-3M|^{\delta}}{r^{1+\delta}} \frac{|\slashed{v}|^2}{r^2} \right)&|f|  \dr \m_{\C} \dr \m_{\Rm_{\tau_1}^{\tau_2}} \\
&  \lesssim^{\mathstrut}_{\delta}   \int_{\Sigma_{\tau_1}}  \rho \Big[ |f| |v_t| \Big] \dr \m_{\Sigma_{\tau_1}} +\int_{\Rm_{\tau_1}^{\tau_2}}  \int_{\C} |\T(f)| |v_t| \dr \m_{\C} \dr \m_{\Rm^{\tau_2}_{\tau_1}} .
\end{align*}
\end{Cor}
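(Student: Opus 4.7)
The plan is to deduce Corollary \ref{ILEDdeg3} from Proposition \ref{ILEDdeg2} by a phase-space splitting argument. Because $r \in [r_0, R_0]$ is bounded away from $0$ and from infinity on the integration domain, the factor $r^{-(1+\delta)}$ in the corollary's integrand is harmless, so it is enough to prove that
\[
\mathcal{I}\,:=\,\int_{\Rm_{\tau_1}^{\tau_2}}\int_{\C}\mathds{1}_{r_0 \leq r \leq R_0}\,|r-3M|^{\delta}\,\frac{|\slashed{v}|^2}{r^2}\,|f|\,\dr \m_{\C}\dr \m_{\Rm_{\tau_1}^{\tau_2}}
\]
is controlled by the right-hand side of the corollary (which I abbreviate RHS).

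For $\delta \geq 1$ the proof is essentially immediate: I apply Proposition \ref{ILEDdeg2} with $\delta_1=1$ and $\delta_2=\min(\delta-1,1) \in (0,1]$; the second term of that proposition then reads $\int \mathds{1}_{r \leq R_0}|r-3M|^{1+\delta_2}|\slashed{v}|^2/r^2 |f| \lesssim \mathrm{RHS}$, and since $|r-3M|$ is bounded by $C_1 := \max(R_0-3M,3M-r_0)$ on the integration domain, the pointwise comparison $|r-3M|^{\delta} \leq C_1^{\delta-1-\delta_2}\,|r-3M|^{1+\delta_2}$ yields the conclusion.

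For $\delta \in (0,1)$ the direct pointwise bound fails and the argument must exploit the factor $(|v_t|/|v_{r^*}|)^{1-\delta_1}$ appearing in the second term of Proposition \ref{ILEDdeg2}, which becomes large precisely in the trapping regime where $|v_{r^*}|$ is small compared with $|\slashed{v}|/r$. For parameters $\delta_1,\delta_2 \in (0,1)$ and $\alpha > 0$ to be specified, I split the integration domain into a trapping-like set $A_{\alpha} := \{|v_{r^*}| \leq |\slashed{v}|/r \cdot |r-3M|^{\alpha}\}$ and its complement $B_{\alpha}$. On $A_{\alpha}$, the null-shell identity $|v_t|^2 = |v_{r^*}|^2 + (1-2M/r)|\slashed{v}|^2/r^2$ together with $1 - 2M/r \gtrsim 1$ on $[r_0,R_0]$ gives $(|v_t|/|v_{r^*}|)^{1-\delta_1} \gtrsim |r-3M|^{-\alpha(1-\delta_1)}$, so that the second term of Proposition \ref{ILEDdeg2} controls $\int \mathds{1}_{A_{\alpha}}|r-3M|^{1+\delta_2-\alpha(1-\delta_1)}|\slashed{v}|^2/r^2|f|$ by the RHS; choosing $\alpha = (1+\delta_2-\delta)/(1-\delta_1)$ produces the required weight $|r-3M|^{\delta}$ on $A_{\alpha}$. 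On $B_{\alpha}$ the pointwise inequality $|\slashed{v}|^2/r^2 \leq |v_{r^*}|^2 |r-3M|^{-2\alpha}$ reduces the task to controlling $\int \mathds{1}_{B_{\alpha}}|v_{r^*}|^2 |r-3M|^{\delta-2\alpha}|f|$, which the first term of Proposition \ref{ILEDdeg2} handles as soon as $\delta_2' \in (0,1]$ is chosen with $\delta_2' \leq 1 + \delta - 2\alpha$.

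The main technical obstacle is the simultaneous compatibility of the two parameter conditions $\alpha = (1+\delta_2-\delta)/(1-\delta_1)$ and $\alpha < (1+\delta)/2$, equivalently $2(1+\delta_2-\delta) < (1+\delta)(1-\delta_1)$. Direct analysis shows this is satisfied for any $\delta > 1/3$ by taking $\delta_1,\delta_2$ small enough; for the remaining range $\delta \in (0,1/3]$ I expect to have to either iterate the estimate (feeding the already-established corollary at larger exponents into the splitting argument) or supplement it by applying Lemma \ref{Bulkphi} to a truncated multiplier $\phi(r) = \chi(r)\,g_{\epsilon}(r-3M)$ approximating an unbounded weight and passing to the limit $\epsilon \to 0^+$ by monotone convergence. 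In any case, the implicit constant $\lesssim_{\delta}$ necessarily deteriorates as $\delta \to 0^+$, consistent with the impossibility result of Proposition \ref{noILED}.
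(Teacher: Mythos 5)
Your strategy---splitting phase space into a trapping set $A_{\alpha}$ and its complement and invoking the two terms of Proposition \ref{ILEDdeg2} separately on each piece---is a ``hard cutoff'' version of what the paper actually does, and your own bookkeeping correctly identifies that it only closes for $\delta>1/3$. That is a genuine gap, not a technicality: the corollary is needed precisely for arbitrarily small $\delta$ (in the proof of Proposition \ref{ILEDbis} the resulting Proposition \ref{ILEDdeg4} is applied with $\delta=\frac{s-1}{2}$, where $s>1$ is taken arbitrarily close to $1$), and since $|r-3M|$ is bounded on $[r_0,R_0]$ the statement for small $\delta$ is strictly stronger than for large $\delta$, so nothing you prove for $\delta>1/3$ implies the missing range. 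Neither of your two suggested repairs is carried out, and the second one cannot work as described: the constant in Lemma \ref{Bulkphi} is $2\|\phi\|_{L^{\infty}}$, so any approximation scheme in which $\|\phi_{\epsilon}\|_{L^{\infty}}\to\infty$ loses the estimate in the limit (consistently with Proposition \ref{noILED}). The iteration idea is also not obviously closable: knowing the corollary at a larger exponent $\delta'$ provides no mechanism for absorbing the singular factor $|r-3M|^{\delta-\delta'}$ on either $A_{\alpha}$ or $B_{\alpha}$ beyond the gains you have already extracted, so the threshold $\delta>1/3$ does not improve under the iteration you sketch.

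The fix is to replace the indicator-function splitting by a pointwise Young inequality, which is exactly the paper's proof. On $\{r_0\le r\le R_0\}$ one writes, with $p=2+2\delta$ and $q=\frac{p}{p-1}$,
$$\frac{|r-3M|^{\delta}}{r^{1+\delta}}\frac{|\slashed{v}|^2}{r^2}\,\lesssim^{\mathstrut}_{\delta}\,|r-3M|^{\frac{1+\delta}{2}}\frac{|\slashed{v}|^{\frac{2}{q}}|v_t|^{\frac{1}{2}}}{r^{\frac{2}{q}}|v_{r^*}|^{\frac{1}{2}}}\cdot\frac{|v_t|^{\frac{2}{p}-\frac{1}{2}}|v_{r^*}|^{\frac{1}{2}}}{|r-3M|^{\frac{1-\delta}{2}}}\,\leq\,\frac{1}{q}|r-3M|^{q\frac{1+\delta}{2}}\frac{|\slashed{v}|^2|v_t|^{\frac{q}{2}}}{r^2|v_{r^*}|^{\frac{q}{2}}}+\frac{1}{p}\frac{|v_t|^{2-\frac{p}{2}}|v_{r^*}|^{\frac{p}{2}}}{|r-3M|^{p\frac{1-\delta}{2}}},$$
and the choice of $p$ gives $q\frac{1+\delta}{2}=1+\frac{\delta^2}{1+2\delta}>1$ and $p\frac{1-\delta}{2}=1-\delta^2<1$, so both terms are admissible for Proposition \ref{ILEDdeg2} (with $\delta_1=1-\frac{q}{2}$, $\delta_2=\frac{\delta^2}{1+2\delta}$ for the first and $\delta_1=\delta$, $\delta_2=\delta^2$ for the second) for every small $\delta>0$; the case of large $\delta$ then follows from monotonicity of the weight. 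The point is that Young's inequality interpolates continuously between your two regimes instead of committing to a single threshold $\alpha$, and the quadratic smallness $\delta^2$ of the resulting exponents is what survives all the way down to $\delta\to 0^+$.
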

\begin{proof}
We start by dealing with the angular component $\slashed{v}$. Fix $\delta >0$ and consider $p>2$ as well as its conjugate exponent $q=\frac{p}{p-1}<2$. On the region $r_0 \leq r \leq R_0$, we have $\frac{|\slashed{v}|}{r^2} \lesssim |v_t|$ and $r^{-1-\delta} \lesssim 1$, so
$$\frac{|r-3M|^{\delta}}{r^{1+\delta}}\frac{|\slashed{v}|^2}{r^2} \, \lesssim^{\mathstrut}_{\delta} \, |r-3M|^{\frac{1+\delta}{2}} \frac{|\slashed{v}|^{\frac{2}{q}}|v_t|^{\frac{1}{2}}}{r^{\frac{2}{q}}|v_{r^*}|^{\frac{1}{2}}} \cdot \frac{|v_t|^{\frac{2}{p}-\frac{1}{2}}|v_{r^*}|^{\frac{1}{2}}}{|r-3M|^{\frac{1-\delta}{2}}} \, \leq \, \frac{1}{q} |r-3M|^{q\frac{1+\delta}{2}} \frac{|\slashed{v}|^2|v_t|^{\frac{q}{2}}}{r^2|v_{r^*}|^{\frac{q}{2}}}+\frac{1}{p} \frac{|v_t|^{2-\frac{p}{2}}|v_{r^*}|^{\frac{p}{2}}}{|r-3M|^{p\frac{1-\delta}{2}}}. $$
Note now that if $p=2+2\delta$, we have $p\frac{1-\delta}{2}=1-\delta^2 <1$ and $q\frac{1+\delta}{2}=1+\frac{\delta^2}{1+2\delta} >1$. We then obtain the result from this last inequality and by applying Proposition \ref{ILEDdeg2}, first with $\delta_1= 1-\frac{q}{2}$, $\delta_2 = q\frac{1+\delta}{2}-1$ and then with $\delta_1= \frac{p}{2}-1$, $\delta_2 = 1-p\frac{1-\delta}{2}$.

For the radial component $v_{r^*}$, remark that on $\{r_0 \leq r \leq R_0 \}$,
$$ \frac{|v_{r^*}|^\delta |v_t|^{2-\delta}}{r \log^2(3+r)}  \lesssim^{\mathstrut}_{\delta}  \frac{|v_{r^*}|^2}{|r-3M|^{\frac{1}{2}}}+|r-3M|^{\frac{\delta}{4-2\delta}}|v_t|^2 \lesssim \frac{|v_{r^*}|^2}{|r-3M|^{\frac{1}{2}}}+|r-3M|^{\frac{\delta}{4}}\frac{|\slashed{v}|^2}{r^{3+\frac{\delta}{4}}}.$$
It remains to apply Proposition \ref{ILEDdeg2}, for $\delta_1=1$ and $\delta_2=1/2$, as well as the result for the angular component $\slashed{v}$ that we just proved.
\end{proof}

All the estimates proved in this subsection degenerate for the angular component $|\slashed{v}|^2 |f|$ for $r \approx 3M$, reflecting that there exist trapped null geodesics orbiting on the photon sphere $r=3M$. This problem also appears for the wave equation and can be solved by loosing an $\epsilon$ of an angular derivative \cite{BlueSoffer}. Moreover, this loss is necessary \cite[Subsection $3.1.1$]{Sbierski} and we prove in the following proposition a similar result for the massless Vlasov equation.
\begin{Pro}\label{noILED}
For all $n \in \mathbb{N}^*$, there exists a smooth solution $f_n \in \Rm_0^{+\infty} \rightarrow \R$ to the massless Vlasov equation $\T(f_n)=0$ and $T_n \in \R_+$ such that
$$ \int_{\Rm_0^{T_n}} \mathds{1}_{|r^*| \leq 1} \rho \Big[ |f_n| |v_t| \Big] \dr \m_{\Rm_0^{T_n}}  \, \geq \, n \int_{\Sigma_0} \rho \Big[ |f_n| |v_t| \Big] \dr \m_{\Sigma_0}.$$
This implies that a non-degenerate integrated local energy decay statement for massless Vlasov fields has to lose integrability.
\end{Pro}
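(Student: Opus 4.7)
The strategy is to exploit the circular null geodesics of the photon sphere $\{r^*=0\}$. Any such orbit $\gamma_\star$ satisfies $r \equiv 3M$ and $v_{r^*} \equiv 0$, with the null condition fixing the ratio of $|\slashed v|$ to $|v_t|$ along $\gamma_\star$ and the radial term $(r-3M)|\slashed v|^2/r^4$ in \eqref{defT} vanishing identically on it. I fix such a $\gamma_\star$ and a phase-space point $z_\star \in \mathcal{P}$ lying on $\gamma_\star$ over $\Sigma_0$. For each $n \in \mathbb{N}^*$, I would build $f_n$ as the solution of $\T(f_n) = 0$ arising from a smooth nonnegative bump $\chi_n$ concentrated in phase space near $z_\star$.

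The key intermediate statement is the following continuous-dependence fact: for every $T > 0$, there exists an open neighborhood $U_T \subset \mathcal{P}$ of $z_\star$ such that every integral curve of $\T$ starting in $U_T$ crosses $\Sigma_\tau$ inside $\{|r^*| \leq 1/2\}$ for all $\tau \in [0, T]$. This follows from standard continuous dependence of ODE solutions applied to the Liouville flow in the coordinates of Subsection~1.2, together with the fact that $\gamma_\star$ itself stays at $r^*=0$ for all times. I take $\chi_n$ supported in $U_{T_n} \cap \mathcal{P}|_{\Sigma_0}$ with $T_n$ to be chosen, and let $f_n$ be the corresponding solution. Since $f_n$ is constant along the flow of $\T$, the support of $f_n\vert_{\Sigma_\tau}$ is contained in $\{|r^*| \leq 1/2\}$ for every $\tau \in [0, T_n]$.

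By Proposition~\ref{Comuprop} one has $\T(f_n |v_t|) = 0$, and since the support of $h_n := f_n |v_t|$ on $\Sigma_\tau$ stays compactly inside $\mathscr{D}$ for $\tau \in [0, T_n]$ and never meets $\mathcal{H}^+$ or $\mathcal{I}^+$, conservation of the particle current --- which gives equality in Proposition~\ref{divergence0} when no mass escapes --- yields $\int_{\Sigma_\tau} \rho[h_n]\, \dr \mu_{\Sigma_\tau} = \int_{\Sigma_0} \rho[h_n]\, \dr \mu_{\Sigma_0}$ for every $\tau \in [0, T_n]$. Integrating in $\tau$ over $[0, T_n]$ and converting the spacetime volume form via Lemma~\ref{vol} (the function $\gamma_0$ of that lemma being uniformly bounded below on $\{r \geq 2M\}$) leads to
\[
\int_{\Rm_0^{T_n}} \mathds{1}_{|r^*| \leq 1}\, \rho[h_n]\, \dr \mu_{\Rm_0^{T_n}} \, \gtrsim \, T_n \int_{\Sigma_0} \rho[h_n]\, \dr \mu_{\Sigma_0},
\]
and choosing $T_n$ proportional to $n$ with a large enough constant completes the construction.

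The main obstacle lies in the construction of $U_T$: the orbit $\gamma_\star$ is an unstable closed orbit of the null geodesic flow --- the effective radial potential for null geodesics has a strict local maximum at $r = 3M$, with positive Lyapunov exponent --- so $U_T$ shrinks exponentially in $T$. This does not obstruct producing a single $f_n$ for each fixed $n$, but it is precisely why a non-degenerate integrated local energy decay cannot hold with only an $L^1$-type norm of the initial data on the right-hand side: Proposition~\ref{ILED} has to compensate by including the stronger norm involving $\langle \slashed v \rangle^{4(s-1)} |f|^s$, which dominates data increasingly concentrated in $|\slashed v|$.
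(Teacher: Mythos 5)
Your proposal is correct and follows essentially the same route as the paper: concentrate smooth data in phase space around the trapped null geodesic on the photon sphere, invoke continuous dependence on initial conditions for the geodesic (Liouville) flow to keep the support in a fixed neighbourhood of $r^*=0$ up to a time $T_n$ chosen after the neighbourhood, and then use exact conservation of $\int_{\Sigma_\tau}\rho[f_n|v_t|]$ (no flux through $\mathcal{H}^+$ or $\mathcal{I}^+$) together with Lemma \ref{vol} to bound the bulk integral below by $T_n/C$ times the initial energy. Your closing remark on the exponential shrinking of $U_T$ coming from the instability of the photon-sphere orbit is a nice heuristic the paper does not spell out, but it is not needed for the argument.
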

\begin{proof}
Given a $\mathcal{C}^1$ curve $c : [0,T[ \rightarrow \M$, we denote by $\vec{c}$ its velocity vector and we define the associated covector $ c^\star (s) := g(\vec{c}(s), \cdot)$. Since in this article the Vlasov fields are defined on $\C \subset T^{\star}\M$, we rather work with $c^{\star}$ instead of $\vec{c}$. Note that $c$ is a null geodesic future oriented, i.e. $\ddot{c}^{\alpha}+\Gamma^{\alpha}_{\beta \xi} \dot{c}^{\beta} \dot{c}^{\xi}=0$, $g(\vec{c},\vec{c})=0$ and $\vec{c}$ is future oriented, if and only if $c$ satisfies
\begin{equation}\label{ode}
 c^{\star} \in \C,  \qquad \frac{\dr  c^{\star}_{\alpha}}{ \dr s} + \frac{1}{2} \partial_{x^{\alpha}} g^{ \beta \xi}  c^{\star}_{\beta}  c^{\star}_{\xi} =0 ,\qquad \text{where} \quad c^{\star} = g^{-1} (\vec{c}, \cdot )=c^{\star}_t \dr t+c^{\star}_{r^*} \dr r^*+c^{\star}_{\theta} \dr \theta + c^{\star}_{\varphi} \dr \varphi.
 \end{equation}
For simplicity, we have denoted $(t,r^*, \theta, \varphi)$ by $(x^0,\dots, x^3)$. This means that $(c,c^\star_{r^*},c^{\star}_{\theta},c^{\star}_{\varphi})$ is a characteristic of the massless Vlasov equation.
Since the obstruction for a non-degenerate integrated local energy decay to hold comes from the trapping at the photon sphere $r^*=0$, we consider the trapped null geodesic 
$$\gamma : t \mapsto (t, 0, \frac{\pi}{2}, (27M^2)^{-\frac{1}{2}} t), \qquad \gamma^\star(t) = \frac{1}{3} \dr t+\sqrt{3}M \dr \varphi .$$ 
The idea is to consider solutions approaching the time dependant distribution 
$$ F(t)  \hspace{1mm} := \hspace{1mm} \delta^{\mathstrut}_{r^*=0} \otimes  \delta^{\mathstrut}_{\theta=\frac{\pi}{2}} \otimes  \delta_{\varphi= (27M^2)^{-\frac{1}{2}} t} \otimes  \delta^{\mathstrut}_{v_{r^*}=0} \otimes  \delta^{\mathstrut}_{v_{\theta}=0} \otimes \delta_{v_{\varphi}=\sqrt{3}M} \in \mathcal{D} \left( \R_t \times \R_{r^*} \times \mathbb{S}^2 \times \left( \R^3 \setminus \{0 \} \right) \right).$$
Recall from Lemma \ref{vol} that $\dr \m_{\Rm_0^T} = \gamma_0(r) \dr \tau \wedge \dr \m_{\Sigma_{\tau}}$ with $C^{-1} \leq \gamma_0 \leq C$ on $]2M,+\infty[$. Then, for any function $h : \widehat{\Rm}^{T}_0 \rightarrow \R$ solution to the massless Vlasov equation and supported in $\{-1 \leq r^* \leq 1\}$, we have, according to the energy inequality of Proposition \ref{divergence0},
$$\int_{\Rm_0^{T}} \! \mathds{1}_{|r^*| \leq 1} \, \rho \Big[ |h| |v_t| \Big] \dr \m_{\Rm^{T}_0} \, \geq \, C^{-1}  \! \int_{\tau=0}^{T} \int_{\Sigma_{\tau} }  \mathds{1}_{|r^*| \leq 1} \, \rho \Big[ |h| |v_t| \Big] \dr \m_{\Sigma_{\tau}} \dr \tau \, = \,  \frac{T}{C} \cdot \int_{\Sigma_0}  \rho \Big[ |h| |v_t| \Big] \dr \m_{\Sigma_0}  .$$
Moreover, according to Lemma \ref{vol}, we have $t-t_0 \leq \tau$ for all $|r^*| \leq 1$, where $t_0:=1+|u_0|+\|\underline{U}\|_{L^{\infty}}$. The proposition is then implied by the following assertion. For all $n \in \mathbb{N}^*$, there exists a smooth solution $f_n \in \widehat{\Rm}_0^{+\infty} \rightarrow \R$ to the massless Vlasov equation $\T(f_n)=0$ such $f_n(t,\cdot)$ is supported in $\{-1 \leq r^* \leq 1 \}$ for all $t \in [t_0,n]$. 

Let $\chi \in \mathcal{C}^{\infty} (\R,\R_+)$ be a function satisfying $\chi(0)=1$ and $\chi(s)=0$ if $|s| \geq 1$. Consider, for all $0 < \epsilon \leq 1$, the unique solution $h_{\epsilon}$ to the massless Vlasov equation such that\footnote{Note that there exist constants $C_{\epsilon}$ such that, in the sense of the distributions, $C_{\epsilon}h_{\epsilon}(t_0,\cdot) \to_{\epsilon \to 0} F(t_0)$. }
$$h_{\epsilon}(t_0,r^*,\theta,\varphi,v_{r^*},v_{\theta},v_{\varphi})  \hspace{1mm} = \hspace{1mm} \chi \left( \frac{r^*}{\epsilon} \right) \chi \left( \frac{\theta-\frac{\pi}{2}}{\epsilon} \right) \chi \left( \frac{ \varphi}{\epsilon} \right) \chi \left( \frac{v_{r^*}}{\epsilon} \right) \chi \left( \frac{v_{\theta}}{\epsilon} \right) \chi \left( \frac{ v_{\varphi}-\sqrt{3}M}{\epsilon} \right).$$
Since $h_{\epsilon}$ solves the massless Vlasov equation on $J^+( \{ t=t_0\}) \cap \mathscr{D}$, $h_{\epsilon}$ is conserved along any future oriented null geodesic $c : [t_0,+\infty[ \rightarrow \M$ such that $t(c(t_0))\geq t_0$. More precisely,
\begin{equation}\label{caracmethod}
 \forall \, s \in [t_0,+\infty[, \qquad h_{\epsilon}(c(s),c^{\star}_{r^*}(s),  c^{\star}_{\theta} (s),  c^{\star}_{\varphi} (s) ) \, = \, h_{\epsilon}(c(t_0), c^{\star}_{r^*}(t_0), c^{\star}_{\theta} (t_0), c^{\star}_{\varphi} (t_0) ) .
 \end{equation}
In particular, $h_{\epsilon}$ is equal to $1$ along $\gamma$ and we will prove that the support of $h_{\epsilon}$ stay localized around $\gamma$ during a time $T_{\epsilon}$ satisfying $T_{\epsilon} \to_{\epsilon \to 0} +\infty$. By continous dependence on the initial data of the solutions to the geodesic equations \eqref{ode}, we know that there exists $\delta_n >0$ such that all null geodesics $c$ satisfying $|c(t_0)-\gamma(t_0)|+|c^{\star}(t_0)-\gamma^{\star}(t_0)| \leq \delta_n$ verifies
$$ \forall \, s \in [t_0, n], \qquad |c(s)-\gamma(s)|+| c^{\star}(s)- \gamma^{\star} (s)| \leq 1. \qquad \text{In particular} \quad |r^*(c(s))| \leq 1.$$
Applying this property to the future oriented null geodesics $c$ satisfying $t(c(t_0))=t_0$, we obtain using \eqref{caracmethod} that $f_n(t, \cdot):=h_{\delta_n}(t,\cdot)$ is supported in $\{-1 \leq r^* \leq 1 \}$ for all $t \in [t_0,n]$. Since $J^+( \{ t=t_0\}) \cap \mathscr{D}$ contains $\widehat{\Rm}^{+\infty}_0$, this concludes the proof.
\end{proof}

\subsection{Proof of Proposition \ref{ILED}}

Let $f : \widehat{\Rm}_0^{+\infty} \rightarrow \R$ be a sufficiently regular solution to $\T(f)=0$. Let further $0 \leq \tau_1 \leq \tau_2$ and $a \in \R_+$. Recall from \eqref{defvtbar} that $|v^{\mathstrut}_{\mathrm{N}}| \lesssim |v_t|$ for $r \geq r_0$. Hence, applying Proposition \ref{energyredshiftbis} and using $r_1 \leq R_0$, we get
$$ \int_{\tau_1}^{\tau_2}\! \int_{\Sigma_{\tau}} \!  \mathds{1}_{r \leq R_0} \, \rho \left[ |f||v^{\mathstrut}_{\mathrm{N}}|^a \right] \dr \m_{\Sigma_{\tau}} \dr \tau  \lesssim^{\mathstrut}_a \int_{\tau_1}^{\tau_2} \! \int_{\Sigma_{\tau}} \! \mathds{1}_{r_0 \leq r \leq R_0} \, \rho \Big[ |f||v_t|^a \Big] \dr \m_{\Sigma_{\tau}} \dr \tau+ \int_{\Sigma_{\tau_1}} \! \rho \left[|f||v^{\mathstrut}_{\mathrm{N}}|^a\right] \dr \mu^{\mathstrut}_{\Sigma_{\tau_1}}.$$
It then remains to deal with the first term on the right hand side of the previous inequality. In order to lighten the notations, we introduce $h=f |v_t|^{a-1}$. As, in view of Lemma \ref{vol}, $|v \cdot n^{\mathstrut}_{\Sigma_{\tau}}| \lesssim |v_t|$ on $\{ r_0 \leq r \leq R_0 \}$, we have
$$\forall \, r_0 \leq r \leq R_0, \qquad  \rho \Big[ |h||v_t| \Big] \, \lesssim \, \int_{\C}  \left( |v_{r^*}|^2+\left( \! 1 - \frac{2M}{r} \! \right) \frac{|\slashed{v}|^2}{r^2} \right) \! |h| \,  \dr \m_{\C} \, \lesssim \,   \int_{\C} \! \left( |v_{r^*}|^2+\mathds{1}_{|v_{r^*}| \leq |\slashed{v}|}^{\mathstrut} \, |\slashed{v}|^2 \right) \! |h|\,  \dr \m_{\C} . $$
Hence, since $ \dr \m_{\Rm_{\tau_1}^{\tau_2}} =  \gamma_0(r) \dr \tau \wedge \dr \m_{\Sigma_{\tau}}$, with $\|1/\gamma_0\|^{\mathstrut}_{L^{\infty}} < +\infty$ (see Lemma \ref{vol}) and $r \log(3+r) \leq R_0 \log (3+R_0)$ on $\{ r_0 \leq r \leq R_0 \}$, we obtain
$$\int_{\tau=\tau_1}^{\tau_2}\int_{\Sigma_{\tau}} \mathds{1}^{\mathstrut}_{r_0 \leq r \leq R_0} \, \rho \Big[ |h| |v_t| \Big] \dr \m_{\Sigma_{\tau}} \dr \tau \, \lesssim \, \int_{\Rm_{\tau_1}^{\tau_2}} \int_{\C} \left(\frac{|v_{r^*}|^2}{r \log (3+r)}+\mathds{1}^{\mathstrut}_{r_0 \leq r \leq R_0}\mathds{1}_{|v_{r^*}| \leq |\slashed{v}|}^{\mathstrut} |\slashed{v}|^2 \right)  |h| \, \dr \m_{\C} \dr \m_{\Rm_{\tau_1}^{\tau_2}}.$$  
Since $|v_t| \leq |v^{\mathstrut}_{\mathrm{N}}|$ and $\T(h) = 0$, an application of Proposition \ref{ILEDdeg4} yields
\begin{equation*}
 \int_{\tau=\tau_1}^{\tau_2} \int_{\Sigma_{\tau}} \mathds{1}_{r \leq R_0} \, \rho \Big[|f||v^{\mathstrut}_{\mathrm{N}}|^a \Big] \dr \m_{\Sigma_{\tau}} \dr \tau  \lesssim  \int_{\Sigma_{\tau_1}} \! \rho \left[|f||v^{\mathstrut}_{\mathrm{N}}|^a\right] \dr \mu^{\mathstrut}_{\Sigma_{\tau_1}}  +  \int_{\Rm_{\tau_1}^{\tau_2}} \int_{\C} \mathds{1}^{\mathstrut}_{r_0 \leq r \leq R_0}\mathds{1}_{|v_{r^*}| \leq |\slashed{v}|}^{\mathstrut} |\slashed{v}|^2 |h| \dr \m_{\C} \dr \m_{\Rm_{\tau_1}^{\tau_2}}. 
\end{equation*}
Note now the identity
$$ \mathds{1}^{\mathstrut}_{r_0 \leq r \leq R_0}\mathds{1}_{|v_{r^*}| \leq |\slashed{v}|}^{\mathstrut} \leq \mathds{1}^{\mathstrut}_{r_0 \leq r \leq R_0}\mathds{1}_{d |v_t| \leq |\slashed{v}| \leq D |v_t|}^{\mathstrut}, \qquad d=\Big(1+\frac{1}{27M^2}\Big)^{-1/2} , \quad D= R_0 \left(1-2M/r_0\right)^{\frac{1}{2}} .$$
The remainder of this section consists in proving the following result, which, together with the last two inequalities, implies Proposition \ref{ILED}.
\begin{Pro}\label{Lemaddi2}
Let $h: \widehat{\Rm}_0^{+\infty} \rightarrow \R$ be a sufficiently regular solution to $\T(h)=0$. Then, for any $s>1$,
\begin{align*}
\forall \, 0 \leq \tau_1 \leq \tau_2, \qquad \mathcal{Q}_{\tau_1}^{\tau_2}&:=\int_{\Rm_{\tau_1}^{\tau_2}} \int_{\C} \mathds{1}^{\mathstrut}_{r_0 \leq r \leq R_0} \mathds{1}_{d |v_t| \leq |\slashed{v}| \leq D |v_t|}^{\mathstrut} |\slashed{v}|^2 |h| \dr \m_{\C} \dr \m_{\Rm_{\tau_1}^{\tau_2}} \\
& \lesssim^{\mathstrut}_{s} \,  \left| \int_{\Sigma_{\tau_1}}  \rho \Big[\mathds{1}_{\frac{d}{2}|v_t| \leq |\slashed{v}| \leq 2D |v_t|}(|v_t|+r^{2(s-1)}|v_{\underline{u}}|) \langle \slashed{v} \rangle^{4(s-1)}  |v_{\mathrm{N}}|^{s-1}|h|^s \Big] \dr \m_{\Sigma_{\tau_1}} \right|^{\frac{1}{s}}  .
\end{align*}
\end{Pro}
One of the idea of the proof is to control the spacetime integral of $|v_{r^*}|^{1-\delta} |h||r-3M|^{-1+\delta}$ over a region which is bounded in space and then to apply Proposition \ref{ILEDdeg2}. Unfortunately, we cannot use the weight $\Phi_{\delta_1}(v_{r^*})\Phi_{\delta_2}(r-3M)$ with $\delta_1 <0$ since it would generate infinite integrals. We then consider a weight function which is singular only at the photon sphere, which is of Lebesgue measure $0$. More precisely, let, for $0 < \delta \leq 1/4$, 
$$w^\delta_\epsilon := \left| \epsilon+\Big(|v_t|^2-\frac{1}{27M^2}|\slashed{v}|^2 \Big)^2 \right|^{-\frac{\delta}{2}} \chi \left( \frac{|\slashed{v}|}{|v_t|} \right), \qquad \epsilon \in \R_+,$$
where $\chi \in C^\infty_c(\R)$ verifies $\chi (s)=1$ for all $d \leq s \leq D$ and $\chi(s)=0$ for all $d/2 \leq s \leq 2D$. Then, the following estimate holds.
\begin{Lem}\label{Lemaddi1}
Let $0 < \delta \leq 1/4$ and $h: \widehat{\Rm}_0^{+\infty} \rightarrow \R$ be a sufficiently regular solution to $\T(h)=0$. Then,
$$
\forall \, 0 \leq \tau_1 \leq \tau_2, \qquad \int_{\Rm_{\tau_1}^{\tau_2}} \int_{\C} \mathds{1}^{\mathstrut}_{r_0 \leq r \leq R_0} \mathds{1}_{d |v_t| \leq |\slashed{v}| \leq D |v_t|}^{\mathstrut} |\slashed{v}|^2 |h| \dr \m_{\C} \dr \m_{\Rm_{\tau_1}^{\tau_2}} \, \lesssim^{\mathstrut}_{\delta} \,  \int_{\Sigma_{\tau_1}} \rho \Big[ w_0  |v_t|^{1+2\delta} |h| \Big] \dr \m_{\Sigma_{\tau_1}}   .$$
\end{Lem}

\begin{proof}
As, for $\epsilon >0$, $w_\epsilon$ is smooth and verifies $\T(w_\epsilon)=0$, we can apply Propositions \ref{ILEDdeg4} and \ref{ILEDdeg2} to the function $w_\epsilon h|v_t|^{2\delta}$. Then Beppo-Levi theorem provides
\begin{equation*}
 \int_{\Rm_{\tau_1}^{\tau_2}}  \int_{\C^{\mathstrut}}  \mathds{1}_{r_0 \leq r \leq R_0} \left(   \frac{|v_{r^*}|^{1+\delta}|v_t|^{1-\delta}}{|r-3M|^{1-\delta}} +  |r-3M|^{\delta}|\slashed{v}|^2 \right) \! w_0 |v_t|^{2\delta} |h| \, \dr \m_{\C} \dr \m_{\Rm_{\tau_1}^{\tau_2}}  \lesssim^{\mathstrut}_{\delta} \,  \int_{\Sigma_{\tau_1}} \rho \Big[ w_0  |v_t|^{1+2\delta} |h| \Big] \dr \m_{\Sigma_{\tau_1}}   .
 \end{equation*}
Note now that, in the region $\{r_0 \leq r \leq R_0 \}\cap \{ d |v_t| \leq |\slashed{v}| \leq D |v_t| \}$,
\begin{align*}
 \frac{|\slashed{v}|^2}{w_0} & \leq |v_{r^*}|^{2\delta}|\slashed{v}|^2+\left( \frac{1}{r^2}\left( 1-\frac{2M}{r} \right)-\frac{1}{27M^2}\right)^{\delta}|\slashed{v}|^{2+2\delta}  \\
& =|v_{r^*}|^{2\delta}|\slashed{v}|^2 +\frac{|r+6M|^{\delta}}{|27M^2r^{3}|^{\delta}}|r-3M|^{2\delta}|\slashed{v}|^{2+2\delta} \lesssim^{\mathstrut}_\delta |v_t|^{2\delta} \left( |v_{r^*}|^{2\delta}|\slashed{v}|^{2-2\delta}+|r-3M|^{2\delta}|\slashed{v}|^{2} \right).
\end{align*}
Finally, use Young's inequality for products, with the conjugate exponents $\frac{1+\delta}{2\delta}$ and $\frac{1+\delta}{1-\delta}$, in order to get
$$ \mathds{1}_{r_0 \leq r \leq R_0} \mathds{1}_{d |v_t| \leq |\slashed{v}| \leq D |v_t|}^{\mathstrut} |\slashed{v}|^2 \lesssim^{\mathstrut}_\delta |v_t|^{2\delta}w_0 \left( \frac{|v_{r^*}|^{1+\delta}|\slashed{v}|^{1-\delta}}{|r-3M|^{1-\delta}}+|r-3M|^{2\delta}|\slashed{v}|^{2} \right).$$
\end{proof}
We will now remove the singular weight $w_0$ through a Hölder inequality. However, if the function $h$ is not compactly supported, one step more is required in order to control the $L^1$ norm of $w_0h$ by an $L^s$ norm of $h$ that we can bound uniformly in time. For this, since this $L^s$ norm will carry a small $r$-weight, we would like it to be weighted by the component $v_{\underline{u}}$ as well.

\begin{refproof}[Proof of Proposition \ref{Lemaddi2}.]
Fix $\delta >0$ such that $\frac{s\delta}{s-1}<1$. Let $n \geq 2$ and $\tau_1 =s_0<s_1 <\dots < s_n = \tau_2$ be a partition of $[\tau_1,\tau_2]$ such that $s_{i+1}-s_i \leq 1$ for all $0 \leq i \leq n-1$. Consider further a cutoff function 
$$\widehat{\chi} \in C_c^\infty (\R), \qquad \forall \, y \leq R_0+2, \; \widehat{\chi}(y) =1, \qquad \forall \, y \geq R_0+3, \; \widehat{\chi}(y)=0.$$
We construct iteratively two sequences of solutions to the Vlasov equation, $(h_i)_{0 \leq i \leq n-1}$ and $(k_i)_{0 \leq i \leq n-1}$, as follows.
\begin{itemize}
\item $h_0$ is the solution to $\T(f)=0$ satisfying $h_0\vert_{\Sigma_{s_0}}=(\widehat{\chi}(r)|h| )\vert_{\Sigma_{s_0}}$ and $k_0:=|h|-h_0$.
\item Let $N \leq n-2$ and assume that we have constructed $(k_i)_{0 \leq i \leq N}$. We define $h_{N+1}$ as the unique solution to the massless Vlasov equation such that $h_{N+1}\vert_{\Sigma_{s_{N+1}}}=(\widehat{\chi}(r)k_N)\vert_{\Sigma_{s_{N+1}}}$ and $k_{N+1}:=k_N-h_{N+1}$.
\end{itemize}
Remark then that the following properties hold. For all $0 \leq i \leq n-1$,
\begin{enumerate}
\item $|h|=k_i+\sum_{0 \leq j \leq i} h_j$, where $k_i$ and each $h_j$ are nonnegative functions.
\item Then, on $\widehat{\Rm}_{s_i}^{s_{i+1}} \cap \{ r_0 \leq r \leq R_0\}$, we have $|h|=\sum_{0 \leq j \leq i} h_j$. Indeed, by finite speed of propagation and since $s_{i+1}-s_i \leq 1$, $k_i$ is supported in $\{ r \geq R_0+1 \}$ on the time slab $[s_i,s_{i+1}]$.
\item If $i \geq 1$, $h_{i}$ is supported in $\{ R_0+1 \leq r \leq R_0+3\}$ on $\Sigma_{s_{i}}$. Since $\left(1-\frac{2M}{r}\right)|\slashed{v}|^2=r^2|v_{\underline{u}}v_u|$, we have $|\slashed{v}|^2\lesssim |v_{\underline{u}}||v_t|$ on this domain. As $|v_t| \leq 2d^{-1}|\slashed{v }|$ on the support of $w_0$, we get
$$\int_{\Sigma_{s_{i}}} \rho \Big[ w_0  |v_t|^{1+2\delta} h_{i} \Big] \dr \m_{\Sigma_{s_{i}}} \lesssim \int_{\Sigma_{s_{i}}} \rho \Big[ w_0 |v_{\underline{u}}| |v_t|^{2\delta} h_{i} \Big] \dr \m_{\Sigma_{s_{i}}}.$$
Note now that  $\T(w_0 |v_{\underline{u}}| |v_t|^{2\delta} h_{i} )=\T(|v_{\underline{u}}|)w_0 |v_t|^{2\delta} h_{i}\leq 0$ on $\widehat{\Rm}_{\tau_1}^{s_{i}}$. Indeed, $\T(|v_{\underline{u}} |)=-\frac{r-3M}{r^4}|\slashed{v}|^2$ and $h_{i}$ vanishes for $r \leq 3M \leq R_0+1$. Consequently, the energy estimate of Proposition \ref{divergence0} provides
 $$\int_{\Sigma_{s_{i}}} \rho \Big[ w_0  |v_t|^{1+2\delta} h_{i} \Big] \dr \m_{\Sigma_{s_{i+1}}} \lesssim  \int_{\Sigma_{\tau_1}} \rho \Big[ w_0 |v_{\underline{u}}| |v_t|^{2\delta} h_{i} \Big] \dr \m_{\Sigma_{\tau_1}}, \qquad i \geq 1.$$
\end{enumerate}
We then deduce, using the property $2.$, that
$$ \mathcal{Q}_{\tau_1}^{\tau_2} = \sum_{i=0}^{n-1} \int_{\Rm_{s_i}^{\tau_2}} \int_{\C} \mathds{1}^{\mathstrut}_{r_0 \leq r \leq R_0} \mathds{1}_{d |v_t| \leq |\slashed{v}| \leq D |v_t|}^{\mathstrut} |\slashed{v}|^2 h_i \dr \m_{\C} \dr \m_{\Rm_{s_i}^{\tau_2}}.
$$
Then, combining Lemma \ref{Lemaddi1}, applied to any $h_i$ between times $s_i$ and $\tau_2$, with property $3.$ yields
$$ \mathcal{Q}_{\tau_1}^{\tau_2} \lesssim^{\mathstrut}_\delta \int_{\Sigma_{\tau_1}} \rho \Big[ w_0  |v_t|^{1+2\delta} h_{0} \Big] \dr \m_{\Sigma_{\tau_1}}+\sum_{i=1}^{n-1} \int_{\Sigma_{\tau_1}} \rho \Big[ w_0 |v_{\underline{u}}| |v_t|^{2\delta} h_{i} \Big] \dr \m_{\Sigma_{\tau_1}}.
$$
As $1 \lesssim r^{2-2s}$ on the support of $h_0$ and since $\sum_{0 \leq j \leq n-1} h_i \leq |h|$ by the property $1.$, we finally obtain
\begin{align*}
 \mathcal{Q}_{\tau_1}^{\tau_2} 
 & \lesssim^{\mathstrut}_\delta   \int_{\Sigma_{\tau_1}} \rho \Big[(r^{2-2s}|v_t|+|v_{\underline{u}}|) w_0  |v_t|^{2\delta} |h| \Big] \dr \m_{\Sigma_{\tau_1}} \\
& \leq \left| \int_{\Sigma_{\tau_1}}  \rho \Big[\mathds{1}_{\frac{d}{2}|v_t| \leq |\slashed{v}| \leq 2D |v_t|}(|v_t|+r^{2(s-1)}|v_{\underline{u}}|) \langle \slashed{v} \rangle^{4(s-1)}  |v_{\mathrm{N}}|^{s-1}|h|^s \Big] \dr \m_{\Sigma_{\tau_1}} \right|^{\frac{1}{s}}  \left| \mathcal{K} \right|^{\frac{s-1}{s}}, 
\end{align*}
where
$$ \mathcal{K} :=  \int_{\Sigma_{\tau_1}} \frac{1}{r^2}  \int_{\C} (|v_t|+|v_{\underline{u}}|) |v_t|^{2\frac{s\delta}{s-1}} |w_0|^{\frac{s}{s-1}} \, \frac{|v \cdot n_{\Sigma_\tau}|}{ \langle \slashed{v} \rangle^4 |v_{\mathrm{N}}|}  \dr \m_{\C} \dr \m_{\Sigma_{\tau_1}}.$$
Remark now that
$$|w_0|^{\frac{s}{s-1}} \leq \left|(v_{r^*}-P(r)|\slashed{v}|)(v_{r^*}+P(r)|\slashed{v}|)\right|^{-\frac{s\delta}{s-1}} \mathds{1}_{\frac{d}{2} |v_t| \leq |\slashed{v}| \leq 2D |v_t|}, \qquad P(r):=\left|\frac{(r-3M)^2(r+6M)}{27M^2 r^3} \right|^{\frac{1}{2}}\!.$$
As $s\delta (s-1)^{-1} <1$, $\dr \m_{\C} = r^{-2} \sin^{-1} (\theta) |v_t|^{-1} \dr v_{r^*} \dr v_{\theta} \dr v_{\varphi}$ and $|v \cdot n_{\Sigma_\tau}| \lesssim |v_{\mathrm{N}}|$ by Lemma \ref{comparo}, we get
\begin{align*}
& \int_{\C} (|v_t|+|v_{\underline{u}}|) |v_t|^{2\frac{s\delta}{s-1}} |w_0|^{\frac{s}{s-1}} \, \frac{|v \cdot n_{\Sigma_\tau}|}{\langle \slashed{v} \rangle^4|v_{\mathrm{N}}|}  \dr \m_{\C} \\ & \lesssim  \int_{(v_\theta , v_\varphi ) \in \R^2}  \int_{|v_{r^*}| \leq \frac{2}{d}}  \frac{|\slashed{v}|^{2\frac{s\delta}{s-1}} \dr v_{r^*} \dr v_{\theta} \dr v_{\varphi}}{\left|(v_{r^*}-P(r)|\slashed{v}|)(v_{r^*}+P(r)|\slashed{v}|)\right|^{\frac{s\delta}{s-1}}r^2 \sin (\theta ) \langle \slashed{v} \rangle^4}  \lesssim_{s,\delta}  \int_{(v_\theta , v_\varphi ) \in \R^2} \frac{\dr v_{r^*} \dr v_{\theta} \dr v_{\varphi}}{r^2 \sin (\theta ) \langle \slashed{v} \rangle^{3}} \leq \frac{4}{r^2},
\end{align*}
where, in the last step, we used $|\slashed{v}|^2=|v_\theta|^2+\sin^{-2}(\theta)|v_\varphi|^2$. As $s>1$ and in view of the expression of the volume form of $\Sigma_{\tau_1}$ (see Lemma \ref{vol}), we finally get $\mathcal{K} \lesssim \int_{\Sigma_{\tau_1}} r^{-4} \dr \m_{\Sigma_{\tau_1}} < + \infty.$
 
\end{refproof}

\section{Energy decay estimates}\label{sec4}

We prove in this section decay estimates for the energy $\int_{\Sigma_{\tau}} \rho \big[ |f| |v^{\mathstrut}_{\mathrm{N}}| \big] \dr \m_{\Sigma_{\tau}}$ by adapting the $r^p$-weighted energy method of Dafermos-Rodnianski to the massless Vlasov equation. In the case of wave equations, vector fields of the form $r^p \partial_{\uu}$, $0 \leq p \leq 2$, are used as multipliers\footnote{When applied to the time derivative of the solution, one can in fact extend the method to the range $0 \leq p < 4$ (see \cite[Section $5C$]{Volker}).}. In our case, this corresponds to considering weights of the form $r^p |v_{\uu}|$, $0 \leq p \leq 2$. In fact, assuming enough decay on the initial data, we will prove that stronger results can be obtained by using $r^p |v_{\uu}|^q$, with $0 \leq p \leq 2q$. This reflects that outside the wave zone, one can prove superpolynomial decay for the solutions to the massless Vlasov equation.

Before presenting the main result of this section, we introduce the following notations.
\begin{Def}\label{defsec4}
We denote by $2 \mathbb{Z}$ the set of the even integers and by $ 2 \mathbb{Z}+1$ the set of the odd integers. Moreover, we recall the floor and the ceiling functions, defined for any $x \in \R$ by
$$ \lfloor x \rfloor \, := \, \max \{ n \in \mathbb{Z} \, / \, n \leq x \}, \qquad \qquad  \lceil x \rceil \, := \, \min \{ n \in \mathbb{Z} \, / \, n \geq x \}.$$
\end{Def}
\begin{Pro}[energy decay estimates]\label{energydecaysec4}
Let $(a,p) \in \R_+^2$ and $f: \widehat{\Rm}_0^{+\infty} \rightarrow \R$ be a sufficiently regular solution to the massless Vlasov equation $\T(f)=0$. Then, we have
$$ \forall \, \tau \in \R_+, \qquad \int_{\Sigma_{\tau}} \rho \bigg[ r^p \frac{|v_{\uu}|^{p/2}}{|v_t|^{p/2}} |f| |v^{\mathstrut}_{\mathrm{N}}|^a \bigg] \dr \m_{\Sigma_{\tau}}  \lesssim^{\mathstrut}_{a,p} \int_{\Sigma_{0}} \rho \bigg[ \bigg( 1+r^p \frac{|v_{\uu}|^{p/2}}{|v_t|^{p/2}} \bigg) |f| |v^{\mathstrut}_{\mathrm{N}}|^a \bigg] \dr \m_{\Sigma_{0}}.$$
Consider further $s>1$ satisfying $s^2 \leq 1+\langle 5p \rangle^{-2}$. Then, there holds
\begin{align*}
 \forall \, \tau \in \R_+, \quad \int_{\Sigma_{\tau}} \rho \Big[  |f| |v^{\mathstrut}_{\mathrm{N}}|^a \Big] \dr \m_{\Sigma_{\tau}}  &\lesssim^{\mathstrut}_{a,p}  \frac{1}{(1+\tau)^p}\int_{\Sigma_{0}} \rho \bigg[ \bigg( 1+r^p \frac{|v_{\uu}|^{p/2}}{|v_t|^{p/2}} \bigg) |f| |v^{\mathstrut}_{\mathrm{N}}|^a \bigg] \dr \m_{\Sigma_{0}} \\
 & \; \; \; \; +\frac{1}{(1+\tau)^p}\bigg|\int_{\Sigma_{0}} \rho \bigg[ \bigg( 1+r^p \frac{|v_{\uu}|^{p/2}}{|v_t|^{p/2}} \bigg) |f|^{s^{\lceil p \rceil}} \! \langle v_t \rangle^{(4+a)(s^{\lceil p \rceil}-1)} |v^{\mathstrut}_{\mathrm{N}}|^a \bigg] \dr \m_{\Sigma_{0}}\bigg|^{s^{-\lceil p \rceil}}\! \hspace{-1mm}.
 \end{align*} 
\end{Pro}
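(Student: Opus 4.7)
The plan is to apply the energy identity of Lemma~\ref{Lemmatechenergy} to the weighted density $|f||v^{\mathstrut}_{\mathrm{N}}|^a \cdot w$, for $w = w_{p,q}(r,v)$ in the one-parameter family
\begin{equation*}
w_{p,q} \, := \, \chi(r)\, r^p \frac{|v_{\uu}|^q}{|v_t|^q}, \qquad \chi \in \mathcal{C}^{\infty}(\R,[0,1]), \quad \chi_{\vert \, ]-\infty,R_0]} \equiv 0, \quad \chi_{\vert \, [R_0+1,+\infty[} \equiv 1.
\end{equation*}
Using $\T(|v_t|)=0$, $\T(v_{\uu}) = \tfrac{r-3M}{2r^4}|\slashed{v}|^2$, the identity $v_{r^*} = v_{\uu}-v_u$ together with $v_{\uu},v_u \leq 0$, and the mass-shell relation $4|v_u||v_{\uu}| = (1-\tfrac{2M}{r})|\slashed{v}|^2/r^2$, the basic computation is
\begin{equation*}
\T\!\left( r^p \frac{|v_{\uu}|^q}{|v_t|^q} \right) \, = \, -p\, r^{p-1} \frac{|v_{\uu}|^{q+1}}{|v_t|^q} \, + \, \frac{p(r-2M)-2q(r-3M)}{4}\cdot \frac{r^{p-4}|v_{\uu}|^{q-1}|\slashed{v}|^2}{|v_t|^q}.
\end{equation*}
The crucial algebraic observation is that the choice $q = p/2$, relevant for the first assertion, collapses the bracket to the constant $pM > 0$, producing a clean split into a \emph{good} term $-pr^{p-1}|v_{\uu}|^{p/2+1}/|v_t|^{p/2}$ and a \emph{bad} term $\tfrac{pM}{4}r^{p-4}|v_{\uu}|^{p/2-1}|\slashed{v}|^2/|v_t|^{p/2}$.

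For the first (boundedness) assertion I apply Lemma~\ref{Lemmatechenergy} to $w_{p,p/2}\,|v^{\mathstrut}_{\mathrm{N}}|^a$. The good term produces a non-negative space-time bulk that is kept on the left-hand side, while the bad term and the cutoff error $\T(\chi)\cdot r^p |v_{\uu}|^{p/2}/|v_t|^{p/2}$, supported in $\{R_0 \leq r \leq R_0+1\}$, are absorbed as follows. On the bounded region $\{r\leq R_0+1\}$ these errors are controlled by the non-degenerate integrated local energy decay of Proposition~\ref{ILED} (any $s>1$ suffices since no decay in $\tau$ is sought). On $\{r \geq R_0+1\}$ the bad term is reabsorbed into the good bulk: writing $|\slashed{v}|^2 = 4r^2|v_u||v_{\uu}|/(1-\tfrac{2M}{r})$ and splitting $|v_u||v_{\uu}|^{p/2}|v_t|^{-p/2}$ by Cauchy--Schwarz against a fraction of the $|v_{\uu}|^{p/2+1}|v_t|^{-p/2}$ factor produces a residue of order $r^{p-2}$, which is subleading with respect to the good term of order $r^{p-1}$. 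Combining with Proposition~\ref{energyredshiftbis} to control the unweighted energy on the region of bounded $r$ gives the first conclusion.

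For the decay assertion I plan to run a Dafermos--Rodnianski dyadic pigeonhole argument. For each integer $k \in \{0,1,\dots,\lceil p \rceil\}$, the scheme above with parameters $(p,q)=(k,k/2)$ yields
\begin{equation*}
E_k(\tau_2) \, + \, c_k\! \int_{\tau_1}^{\tau_2}\! E_{k-1}(\tau) \, \dr \tau \, \lesssim^{\mathstrut}_{a,k} \, E_k(\tau_1) \, + \, \mathcal{I}_k(\tau_1,\tau_2), \qquad E_k(\tau) := \int_{\Sigma_{\tau}}\! \rho\!\left[w_{k,k/2}\, |f|\, |v^{\mathstrut}_{\mathrm{N}}|^a\right]\! \dr \m_{\Sigma_{\tau}},
\end{equation*}
where $\mathcal{I}_k$ is the ILED error from Proposition~\ref{ILED}. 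A mean-value step on a dyadic interval $[\tau/2,\tau]$, combined with the quasi-monotonicity of $E_{k-1}$ provided by Proposition~\ref{energyredshiftbis}, yields $E_{k-1}(\tau) \lesssim E_k(0)/\tau + \mathcal{I}_k(0,\tau)$; iterating from $k = \lceil p \rceil$ down to $k=0$ delivers integer-power decay at rate $\lceil p \rceil$. The non-integer rate $(1+\tau)^{-p}$ announced in the proposition is then obtained by interpolating between the consecutive integer levels $\lfloor p \rfloor$ and $\lceil p \rceil$, and the hypothesis $\zeta_{\lceil p \rceil}(s) \geq p$ is precisely the arithmetic condition on the geometric-series partial sums that ensures the interpolated decay survives the pigeonhole losses. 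The main obstacle I anticipate is the bookkeeping of the $s$-loss in velocity integrability: each invocation of Proposition~\ref{ILED} raises the $L^s$ exponent multiplicatively, so after $\lceil p \rceil$ iterations the exponent is $s^{\lceil p \rceil}$, which matches exactly the power of the norm appearing on the right-hand side of the stated estimate. A secondary subtlety is the regime $p<2$, where $p/2-1 < 0$ makes the bad term pointwise-singular in $|v_{\uu}|$ and precludes the Cauchy--Schwarz absorption used above; there the bad term must be handled by a Hölder step in the velocity variable, dominating it by a combination of the good bulk and the ILED term.
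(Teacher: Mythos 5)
Your multiplier computation is correct --- it agrees with Lemma \ref{Lemhierar} after simplification, and $q=p/2$ is indeed the weight used for the boundedness statement --- but the mechanism you propose for disposing of the bad angular bulk does not close, and this is the heart of the argument. With $q=p/2$ the coefficient $2q-p$ of the \emph{good} angular bulk in Lemma \ref{Lemhierar} vanishes identically, so the only coercive bulk available is $p\,r^{p-1}|v_{\uu}|^{p/2+1}/|v_t|^{p/2}$, while the bad term, rewritten via the mass-shell relation, is $\sim r^{p-2}|v_u||v_{\uu}|^{p/2}/|v_t|^{p/2}$. The ratio of bad to good is $\sim r^{-1}|v_u|/|v_{\uu}|$, which is unbounded on $\C_x$ (nearly radially outgoing covectors have $|v_{\uu}|$ arbitrarily small at fixed $|v_u|$), so no pointwise absorption is possible for \emph{any} $p$, not only $p<2$; the Cauchy--Schwarz residue is $\sim r^{p-3}|v_u|^2|v_{\uu}|^{p/2-1}/|v_t|^{p/2}$, which is not ``subleading'': it carries a worse $v_{\uu}$-weight than the good term and is of exactly the same nature as the term you started from. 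The resolution in the paper is a genuine hierarchy in the first parameter at \emph{fixed} $q$ (Proposition \ref{rphierar}): at level $p-1$ the good angular bulk reappears with coefficient $(2q-(p-1))/4\geq 1/4$ and dominates the level-$p$ bad term (the quantity $\mathbf{I}^{p-1}$ there), and one descends to $p\in[0,1]$, where the $p=0$ identity together with $r\geq R_0>3M$ closes the loop. A secondary issue: controlling the cutoff error at $r\approx R_0$ by the non-degenerate estimate of Proposition \ref{ILED} would import the higher $L^s$ moment norm into the right-hand side of the boundedness assertion, where it does not belong; the degenerate Proposition \ref{ILEDdeg} suffices there since that error is supported away from both the photon sphere and the horizon.

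For the decay assertion, a dyadic scheme over integer levels followed by interpolation of the resulting decay \emph{rates} cannot produce $(1+\tau)^{-p}$. Each dyadic step costs $\tau^{-1/s}$ rather than $\tau^{-1}$, because converting the $\{r\geq R_0\}$ bulk into a full $\Sigma_{\tau}$ bulk goes through Proposition \ref{ILED} and its $\tau^{(s-1)/s}$ loss, and this loss compounds: after $k$ steps the rate is $\zeta_k(s)=\sum_{j\leq k}s^{-j}<k$, not $k$ (this is precisely the content of Proposition \ref{rpgenera}). A convex combination of $\zeta_{\lfloor p\rfloor}(s)$ and $\zeta_{\lceil p\rceil}(s)$ is then generically strictly smaller than $p$ even under the hypothesis $\zeta_{\lceil p\rceil}(s)\geq p$ (test $p=3/2$ with $s$ chosen so that $\zeta_2(s)=3/2$). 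The paper instead runs the hierarchy with the fractional weights $(p-n,q_{p-n})$, $n=0,\dots,\lceil p\rceil$, so that the full weight $r^p$ is spent across the dyadic slabs and contributes the rate $p$ directly, and the H\"older interpolation is performed only at the bottom, between the two weights $r^{p-\lceil p\rceil}$ and $r^{p-\lceil p\rceil+1}$ inside a single slab (Corollary \ref{LemsolVlasovbis2}); the hypothesis $\zeta_{\lceil p\rceil}(s)\geq p$ then serves only to guarantee $\min\big(p,\zeta_{\lceil p\rceil}(s)\big)=p$ in the final rate. You would also need an interpolation of norms in the spirit of Lemma \ref{interpol} to prevent the intermediate moment norms from proliferating along the induction.
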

\begin{Rq}\label{energydecaysec4Rq}
We obtain by using Hölder's inequality and then Young's inequality for products, both applied for the conjugate exponents $\lambda^{-1}$ and $(1-\lambda)^{-1}$, that, for any $\lambda \in ]0,1[$,
$$ \int_{\Sigma_{\tau}}\! \rho \bigg[ r^{\lambda p} \frac{|v_{\uu}|^{\lambda p/2}}{|v_t|^{\lambda p/2}} |f| |v^{\mathstrut}_{\mathrm{N}}|^a \bigg] \dr \m_{\Sigma_{\tau}} \! \leq  (1+\tau)^{\lambda p} \int_{\Sigma_{\tau}}\! \rho \bigg[  |f| |v^{\mathstrut}_{\mathrm{N}}|^a \bigg] \dr \m_{\Sigma_{\tau}} +\frac{1}{(1+\tau)^{p-\lambda p}} \int_{\Sigma_{\tau}}\! \rho \bigg[ r^p \frac{|v_{\uu}|^{p/2}}{|v_t|^{p/2}} |f| |v^{\mathstrut}_{\mathrm{N}}|^a \bigg] \dr \m_{\Sigma_{\tau}}\!. $$
\end{Rq}

\subsection{Hierarchy of $r^p|v_{\uu}|^q$-weighted energy estimates}

We start with a computation, which will also be useful for the treatment of the region $\tau \leq 0$.
\begin{Lem}\label{Lemhierar}
Let $(p,q) \in \R^2$ satisfying $0 \leq p \leq 2q$. Then, we have almost everywhere
$$ -\T \left( r^p \frac{|v_{\uu}|^q}{|v_t|^q} \right) \, = \,  pr^{p-1}\frac{|v_{\uu}|^{q+1}}{|v_t|^{q}}  +\frac{1}{4}\left(2q-p \right)r^{p-1} \frac{|v_{\uu}|^{q-1}}{|v_t|^{q}} \frac{|\slashed{v}|^2}{r^2}- (3q-p)\frac{M}{2}r^{p-2}  \frac{|v_{\uu}|^{q-1}}{|v_t|^{q}} \frac{|\slashed{v}|^2}{r^2}. $$
Note that in view of \eqref{vu}, $|v_{\uu}|$ does not really carry a negative exponent. When $p=0$, we have in particular
$$ \forall \, r \geq R_0, \qquad -\T \left(  \frac{|v_{\uu}|^q}{|v_t|^q} \right) \, \gtrsim \,  \frac{q}{r} \frac{|v_{\uu}|^{q-1}}{|v_t|^q} \frac{|\slashed{v}|^2}{r^2}. $$
\end{Lem}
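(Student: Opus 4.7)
The plan is to compute $\T\bigl(r^p|v_{\uu}|^q|v_t|^{-q}\bigr)$ directly from the explicit form of $\T$ in \eqref{defT} via the product rule, and then multiply by $-1$. Since $\T(v_t)=0$ by Proposition \ref{Comuprop}, the weight $|v_t|^{-q}$ is constant along the Liouville flow and factors out, so the task reduces to computing $\T(r^p)$ and $\T(|v_{\uu}|^q)$. Using $\partial_{r^*}=(1-2M/r)\partial_r$, one obtains $\T(r^p)=pv_{r^*}r^{p-1}$. From $\T(v_{r^*})=\frac{r-3M}{r^4}|\slashed v|^2$ (the only surviving term in \eqref{defT}) one gets $\T(v_{\uu})=\frac{1}{2}\T(v_{r^*})=\frac{r-3M}{2r^4}|\slashed v|^2$, and since $v_{\uu}\le 0$ one deduces $\T(|v_{\uu}|^q)=-q|v_{\uu}|^{q-1}\frac{r-3M}{2r^4}|\slashed v|^2$.

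The next step is to rewrite $v_{r^*}=2v_{\uu}-v_t$ inside the contribution from $\T(r^p)$. This produces $-2pr^{p-1}|v_{\uu}|^{q+1}|v_t|^{-q}$ together with an a priori unwanted term $pr^{p-1}|v_{\uu}|^q|v_t|^{1-q}$ which does not appear in the statement. The key move is to eliminate this term using the mass-shell identity \eqref{vu}: since $|v_t|=|v_{\uu}|+|v_u|$ and $4|v_u||v_{\uu}|=(1-2M/r)|\slashed v|^2/r^2$, one has
\begin{equation*}
|v_t|\;=\;|v_{\uu}|+\frac{1-\frac{2M}{r}}{4|v_{\uu}|}\frac{|\slashed v|^2}{r^2}.
\end{equation*}
Multiplying by $|v_{\uu}|^q|v_t|^{-q}$ and then splitting $1-2M/r=1-\frac{2M}{r}$ yields
\begin{equation*}
p\,r^{p-1}\frac{|v_{\uu}|^q}{|v_t|^{q-1}}=p\,r^{p-1}\frac{|v_{\uu}|^{q+1}}{|v_t|^q}+\frac{p}{4}r^{p-1}\frac{|v_{\uu}|^{q-1}}{|v_t|^q}\frac{|\slashed v|^2}{r^2}-\frac{pM}{2}r^{p-2}\frac{|v_{\uu}|^{q-1}}{|v_t|^q}\frac{|\slashed v|^2}{r^2}.
\end{equation*}

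Collecting everything, the $r^{p-1}|v_{\uu}|^{q+1}|v_t|^{-q}$ coefficient is $-2p+p=-p$, the coefficient of $r^{p-1}|v_{\uu}|^{q-1}|v_t|^{-q}|\slashed v|^2/r^2$ becomes $p/4-q/2=-(2q-p)/4$, and the coefficient of $r^{p-2}|v_{\uu}|^{q-1}|v_t|^{-q}|\slashed v|^2/r^2$ is $-pM/2+3qM/2=(3q-p)M/2$. Multiplying by $-1$ gives the stated identity. For the consequence with $p=0$ and $r\ge R_0>3M$, the right-hand side factors as $\frac{q}{2r}\frac{|v_{\uu}|^{q-1}}{|v_t|^q}\frac{|\slashed v|^2}{r^2}\bigl(1-\frac{3M}{r}\bigr)$, and $1-3M/r\ge 1-3M/R_0>0$ provides the lower bound.

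The algebra is short, but two subtle points deserve care. First, one must track signs attentively: $v_{\uu}$ and $v_t$ are nonpositive whereas $v_{r^*}$ has indefinite sign, so the derivatives of $|v_{\uu}|^q$ and $|v_t|^{-q}$ differ by signs from those of $v_{\uu}^q$ and $v_t^{-q}$. Second, and this is the only non-mechanical step, the $|v_t|^{1-q}$ term produced by $v_{r^*}=2v_{\uu}-v_t$ must be recognized as eliminable through the mass-shell relation rather than as an irreducible remainder; the $-2M/r$ piece of $1-2M/r$ is precisely what generates the $r^{p-2}$ term with coefficient $3q-p$ (instead of $-p$), which is the mildly surprising feature of the identity.
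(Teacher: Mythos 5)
Your proof is correct and follows essentially the same route as the paper: both reduce the computation to $\T(r)=v_{r^*}$ and $\T(v_{r^*})=\frac{r-3M}{r^4}|\slashed{v}|^2$ via $\T(v_t)=0$, and both eliminate the leftover term through the mass-shell identity $4|v_u||v_{\uu}|=\bigl(1-\frac{2M}{r}\bigr)\frac{|\slashed{v}|^2}{r^2}$ (your substitution $v_{r^*}=2v_{\uu}-v_t$ is just a rearrangement of the paper's $v_{r^*}=|v_u|-|v_{\uu}|$). All coefficients and signs check out, including the $p=0$ consequence for $r\geq R_0>3M$.
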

\begin{proof}
We will use many times that $v_{\uu}, \, v_u \leq 0$, so that $|v_{\uu}|=-v_{\uu}$ and $|v_u|=-v_u$. Since $\T(v_t)=0$ and $2v_{\uu}=v_t+v_{r^*} $, we get
$$ \T \left( r^p \frac{|v_{\uu}|^q}{|v_t|^q} \right) \, = \, pr^{p-1}\frac{|v_{\uu}|^q}{|v_t|^q}\T \left( r \right)  -\frac{q}{2}r^p \frac{|v_{\uu}|^{q-1}}{|v_t|^q}\T \left( v_{r^*} \right)  .$$
Recall the expression \eqref{defT} of $\T$ and $\partial_{r^*}=\left(1-\frac{2M}{r} \right) \partial_r$. Then, $\T(r)=v_{r^*}=|v_u|-|v_{\uu}|$ and $\T(v_{r^*})= \frac{(r-3M)|\slashed{v}|^2}{r^4}$, so
$$ \T \left( r^p \frac{|v_{\uu}|^q}{|v_t|^q} \right) \, = \, -pr^{p-1}\frac{|v_{\uu}|^{q+1}}{|v_t|^q}+ pr^{p-1}\frac{|v_{\uu}|^{q-1}}{|v_t|^q}|v_{\uu}||v_u|  -\frac{q}{2}r^{p-1} \frac{|v_{\uu}|^{q-1}}{|v_t|^q} \frac{|\slashed{v}|^2}{r^2}+ \frac{3Mq}{2}r^{p-2}  \frac{|v_{\uu}|^{q-1}}{|v_t|^q} \frac{|\slashed{v}|^2}{r^2} .$$
The first identity then ensues from the mass-shell condition $g^{-1}(v,v)=0$, which provides $4|v_{\uu}|| v_u| = \left( 1 - \frac{2M}{r} \right)\frac{|\slashed{v}|^2}{r^2}$. This implies in particular that
$$ -\T \left(  \frac{|v_{\uu}|^q}{|v_t|^q} \right) \, = \,  \frac{q}{2r}\left(1-\frac{3M}{r} \right) \frac{|v_{\uu}|^{q-1}}{|v_t|^q} \frac{|\slashed{v}|^2}{r^2} $$
and the second part of the Lemma follows from $R_0 >3M$.
\end{proof}
We are now ready to prove hierarchized $r^p|v_{\uu}|^q$-weighted estimates.

\begin{Pro}\label{rphierar}
Let $(p,q) \in \R_+$ such that $0 \leq p \leq 2q$ and $h : \widehat{\Rm}_0^{+\infty} \rightarrow \R$ be a sufficiently regular function satisfying $\T(h)=0$. There holds for all $0 \leq \tau_1 \leq \tau_2$,
\begin{align*}
 \int_{\N_{\tau_2}} \int_{\C} r^p \frac{|v_{\uu}|^{q}}{|v_t|^{q}} |h
 |  |v_{\uu}|  \dr \m_{\N_{\tau_2}} +\int_{\tau=\tau_1}^{\tau_2} \int_{\N_{\tau}} \int_{\C} pr^{p-1}\frac{|v_{\uu}|^{q+1}}{|v_t|^{q}}& |h| +\left(2q-p \right)r^{p-1} \frac{|v_{\uu}|^{q-1}}{|v_t|^{q}} \frac{|\slashed{v}|^2}{r^2}|h| \dr \m_{\C} \dr \m_{\N_{\tau}} \dr \tau \\
& \hspace{-2mm} \lesssim^{\mathstrut}_q  \int_{\N_{\tau_1}} \int_{\C} r^p \frac{|v_{\uu}|^{q}}{|v_t|^{q}} |h|  |v_{\uu}|  \dr \m_{\N_{\tau_1}} +\int_{\Sigma_{\tau_1}} \rho \Big[ |h| \Big] \dr \m_{\Sigma_{\tau_1}}.
\end{align*}
\end{Pro}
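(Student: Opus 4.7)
The plan is to apply the divergence theorem to the particle current $N[|h|w]_\mu$, where $w := r^p |v_{\uu}|^q / |v_t|^q$, on the region $\widehat{\Rm}_{\tau_1}^{\tau_2} \cap \{r \geq R_0\}$. This region is foliated by the outgoing null hypersurfaces $(\N_\tau)_{\tau_1 \leq \tau \leq \tau_2}$ and bounded on the inside by the timelike piece $\{r = R_0\} \cap \Rm_{\tau_1}^{\tau_2}$. Since $\T(h)=0$ and $\T(|v_t|)=0$, we have $\T(|h|w)=|h|\T(w)$ almost everywhere, and Lemma \ref{Lemhierar} yields
\[
-\T(w) \,=\, p\,r^{p-1}\frac{|v_{\uu}|^{q+1}}{|v_t|^q} + \frac{2q-p}{4}\,r^{p-1}\frac{|v_{\uu}|^{q-1}}{|v_t|^q}\frac{|\slashed{v}|^2}{r^2} - \frac{(3q-p)M}{2}\,r^{p-2}\frac{|v_{\uu}|^{q-1}}{|v_t|^q}\frac{|\slashed{v}|^2}{r^2}.
\]
The first two terms are nonnegative since $0 \leq p \leq 2q$, and after integration in $\C$ and in the bulk they reproduce exactly the two positive bulk quantities on the left-hand side, up to $q$-dependent constants absorbed by $\lesssim_q$.

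I would then identify the three boundary fluxes produced by the divergence identity. On each $\N_{\tau_j}$ the normal is $\partial_{\uu}$ and $\dr\m_{\N_{\tau_j}} = r^2\,\dr\uu \wedge \dr\m_{\mathbb{S}^2}$, so the corresponding contributions are precisely $\int_{\N_{\tau_j}}\int_{\C}|h|\,w\,|v_{\uu}|\,\dr\m_\C\,\dr\m_{\N_{\tau_j}}$, matching the LHS flux at $\tau_2$ and the first term on the RHS at $\tau_1$. On the timelike boundary $\{r = R_0\}$ the normal is proportional to $\partial_{r^*}$; since $|v_{\uu}| \leq |v_t|$ forces $w \leq R_0^p$ and $|v_{r^*}| \leq |v_t|$, this flux is dominated by $C_{p,q}$ times the flux of $N[|h|]$ across $\{r = R_0\}\cap\Rm_{\tau_1}^{\tau_2}$. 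Applying Proposition \ref{divergence0} to the nonnegative function $|h|$ (which satisfies $\T(|h|)=0$ a.e.) on the bounded region $\Rm_{\tau_1}^{\tau_2} \cap \{r \leq R_0\}$, this boundary flux is bounded by $\int_{\Sigma_{\tau_1}}\rho[|h|]\,\dr\m_{\Sigma_{\tau_1}}$, which is the second term on the RHS.

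The main obstacle is absorbing the lower-order bulk term $\tfrac{(3q-p)M}{2}\,r^{p-2}[\cdots]$ into the two nonnegative main contributions. As an algebraic multiple of the angular main term it carries the coefficient $\tfrac{2(3q-p)M}{(2q-p)r}$, which falls below $\tfrac{1}{2}$ exactly when $r \geq R_\star := \tfrac{4(3q-p)M}{2q-p}$; on the outer region $r \geq \max(R_0,R_\star)$ the absorption is immediate and retains half of the angular main term on the LHS. On the compact strip $\{R_0 \leq r \leq R_\star\}$ (nonempty only when $R_0 < R_\star$), I would invoke the mass-shell identity $|\slashed{v}|^2/r^2 = 4|v_{\uu}||v_u|/(1-2M/r)$ together with $|v_u| \leq |v_t|$ to bound the error integrand by $C_{p,q}|h||v_t|$; the resulting spacetime integral is identified with the flux of $N[|h|]$ across $\{r = R_\star\}$ and is again controlled by $\int_{\Sigma_{\tau_1}}\rho[|h|]\,\dr\m_{\Sigma_{\tau_1}}$ via Proposition \ref{divergence0} applied to $\Rm_{\tau_1}^{\tau_2} \cap \{r \leq R_\star\}$. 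Collecting the divergence-theorem identity with these three bounds delivers the claimed inequality.
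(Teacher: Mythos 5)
Your skeleton is the same as the paper's: apply the divergence theorem to the current $N\big[r^p\tfrac{|v_{\uu}|^q}{|v_t|^q}|h|\big]_{\mu}$ on $\Rm_{\tau_1}^{\tau_2}\cap\{r\geq R_0\}$, compute the bulk term with Lemma \ref{Lemhierar}, and identify the fluxes on $\N_{\tau_1}$, $\N_{\tau_2}$ and $\{r=R_0\}$. However, the two steps where the real difficulty sits both fail as written. First, your absorption of the sign-indefinite term $(3q-p)\tfrac{M}{2}r^{p-2}\tfrac{|v_{\uu}|^{q-1}}{|v_t|^{q}}\tfrac{|\slashed{v}|^2}{r^2}|h|$ into the angular main term needs $2q-p>0$ and works only for $r\geq R_\star=\tfrac{4(3q-p)M}{2q-p}$. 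When $p=2q$ the angular main term vanishes identically and $R_\star=\infty$, so there is nothing to absorb into; using the mass-shell relation, the ratio of the bad term to the remaining good term $p\,r^{p-1}\tfrac{|v_{\uu}|^{q+1}}{|v_t|^q}$ is proportional to $|v_u|/(r|v_{\uu}|)$, which is unbounded. The endpoint $p=2q$ is inside the stated range and is precisely the case used downstream (e.g.\ $(p,q)=(2,1)$ in Proposition \ref{casep=12}). The paper sidesteps this with a downward induction in $p$: the troublesome integral $\mathbf{I}^{p-1}$ is controlled by the already-established estimate at level $p-1$, where the angular bulk coefficient is $2q-(p-1)\geq 1$, the induction being anchored at $p\leq 1$ via $\mathbf{I}^{p-1}\lesssim\mathbf{I}^{0}$ and the $p=0$ identity together with $R_0>3M$.

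Second, your control of the $\{r=R_0\}$ boundary term, and of the residual bulk integral on the strip $\{R_0\leq r\leq R_\star\}$, by ``Proposition \ref{divergence0} applied to $\Rm_{\tau_1}^{\tau_2}\cap\{r\leq R_0\}$'' is not valid. That proposition controls fluxes through the leaves $\Sigma_{\tau}$ and through null boundaries; it gives no bound on the time-integrated flux through the timelike hypersurface $\{r=R_0\}$ (whose integrand $v_{r^*}$ is not sign-definite, so the divergence identity on one side only controls the signed flux, not $\int|h||v_t|$), and a spacetime integral over a spatial strip is not a flux at all --- slicing it by $\Sigma_{\tau}$ and applying the conservation law only yields a bound proportional to $\tau_2-\tau_1$. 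Uniform control of either quantity is a Morawetz-type statement: the paper bounds the boundary term $\mathbf{J}$ by rewriting it through the $p=0$ divergence identity and estimating the resulting bulk term $\int|\T(|v_{\uu}|^q/|v_t|^q)|\,|h|$ with the degenerate integrated local energy decay of Proposition \ref{ILEDdeg} applied to $|v_t|^{-1}h$, which is effective there because the strip lies away from the photon sphere. Without some such ILED input, and without a mechanism for $p=2q$, your argument does not close. A minor further omission: the region $\{r\geq R_0\}$ is unbounded towards $\mathcal{I}^+$, so the divergence theorem must be applied on $\{\uu\leq\underline{w}\}$ and the limit $\underline{w}\to+\infty$ taken; this also produces the flux through $\underline{\N}_{\underline{w}}$ that is needed later in Remark \ref{energydecayintRq}.
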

\begin{Rq}\label{rqcontrol}
As for $r \geq R_0$, $\frac{|\slashed{v}|^2}{r^2} \geq 4\big(1-\frac{2M}{R_0} \big)^{-1} |v_u||v_{\uu}|$, Proposition \ref{rphierar} implies that for any $0 < p < 2q$,
$$
\int_{\tau=\tau_1}^{\tau_2} \int_{\N_{\tau}} \int_{\C} r^{p-1}\frac{|v_{\uu}|^{q-1}}{|v_t|^{q-1}} |h| |v_{\uu}| \dr \m_{\C} \dr \m_{\N_{\tau}} \dr \tau 
\lesssim_{p,q}^{\mathstrut} \int_{\N_{\tau_1}} \int_{\C} r^p \frac{|v_{\uu}|^{q}}{|v_t|^{q}} |h|  |v_{\uu}|  \dr \m_{\N_{\tau_1}} +\int_{\Sigma_{\tau_1}} \rho \Big[ |h| \Big] \dr \m_{\Sigma_{\tau_1}} .$$
By making use of these hierarchies in $(p,q)$, we will then be able to convert the weight $r|v_{\uu}|$ into $\tau$ decay, providing us in particular time decay for the region of bounded $r$. In fact, by fully exploiting the hierarchies given by the inequality of Proposition \ref{rphierar}, we will extract a decay rate of $\tau^{-2}$ from $r^2 |v_{\uu}|$.
\end{Rq}
\begin{Rq}\label{energydecayintRq}
For the purpose of establishing pointwise decay estimates, we will also prove that
$$ \sup_{\underline{w} \geq \tau_2+u_0+2R_0^*} \int_{\underline{\N}_{\underline{w}}} \mathds{1}_{\tau_1 \leq u-u_0 \leq \tau_2} \int_{\C} r^p \frac{|v_{\uu}|^q}{|v_t|^q} |h||v_{u}| \dr \m_{\C} \dr \m_{\underline{\N}_{\underline{w}}} 
\lesssim_{q}^{\mathstrut} \! \int_{\N_{\tau_1}} \int_{\C} r^p \frac{|v_{\uu}|^{q}}{|v_t|^{q}} |h|  |v_{\uu}|  \dr \m_{\N_{\tau_1}} +\int_{\Sigma_{\tau_1}} \rho \Big[ |h| \Big] \dr \m_{\Sigma_{\tau_1}}.$$
\end{Rq}
\begin{proof}
Let $0 \leq \tau_1 \leq \tau_2$ and introduce
$$ \mathcal{D}_{\tau_1}^{\tau_2} \, := \, \Rm_{\tau_1}^{\tau_2} \cap \{ r \geq R_0 \} \, = \,  \bigsqcup_{\tau_1 \leq \tau \leq \tau_2} \N_{\tau} \, = \, \{ (t,r^*,\omega) \in \R \times \R \times \mathbb{S}^2 \, /  \, r^* \geq R_0^*, \, \tau_1 \leq t-r^*-u_0 \leq \tau_2  \}.$$
Using Lemma \ref{vol}, we have $\dr \m_{\mathcal{D}_{\tau_1}^{\tau_2}} = \gamma_0(r) \dr \tau \wedge \N_{\tau}$ where $0<C^{-1} \leq \gamma_0 \leq C$, so, for any sufficiently regular function $H: \widehat{\Rm}_{\tau_1}^{\tau_2} \rightarrow \R$,
\begin{equation}\label{equiint}
 \int_{\tau=\tau_1}^{\tau_2} \int_{\N_{\tau}} \int_{\C} |H| \dr \m_{\C} \dr \m_{\N_{\tau}} \dr \tau \, \lesssim \, \int_{\mathcal{D}_{\tau_1}^{\tau_2}} |H| \dr \m_{\C} \dr \m_{\mathcal{D}_{\tau_1}^{\tau_2}}  \, \lesssim \, \int_{\tau=\tau_1}^{\tau_2} \int_{\N_{\tau}}\int_{\C} |H| \dr \m_{\C} \dr \m_{\N_{\tau}} \dr \tau.
\end{equation}
Remark also that for any $\underline{w} \geq \tau_2+u_0+2R_0^*$, the boundary of $\mathcal{D}_{\tau_1}^{\tau_2} \cap \{ \uu \leq \underline{w} \}$ is composed by
\begin{align*}
\mathcal{N}_{\tau_1} \cap \{ \uu \leq \underline{w} \}, \qquad  \mathcal{N}_{\tau_2} \cap \{ \uu \leq \underline{w} \}, \qquad  \underline{\N}_{\underline{w}} \cap \{ \tau_1 \leq  u-u_0 \leq \tau_2 \}, \qquad   \{ r=R_0 \} \cap \{  \tau_1 \leq u-u_0 \leq \tau_2 \}.
\end{align*}
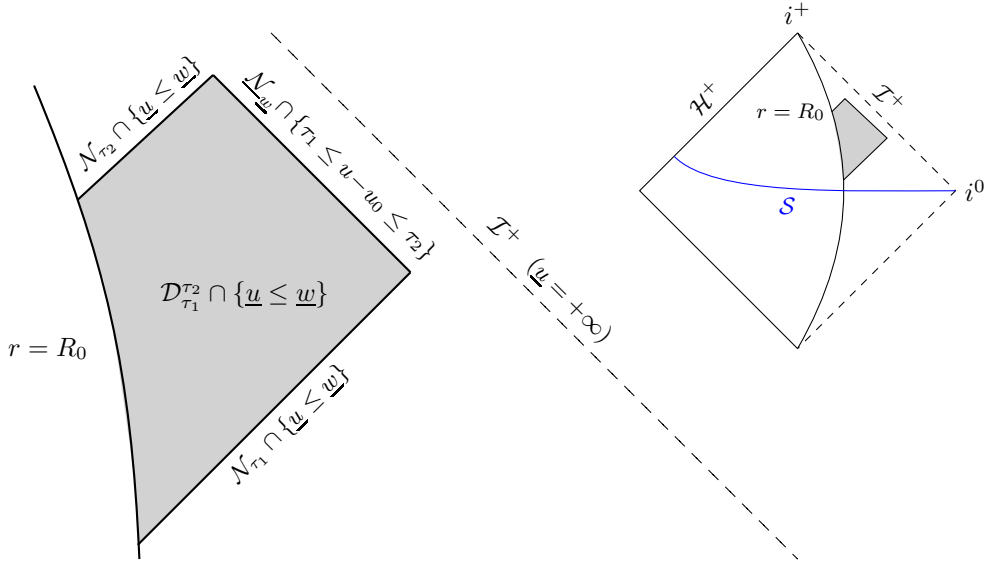
\begin{figure}[H]
\begin{center}
\begin{tikzpicture}[scale=0.7]
\node (II)   at (-0.5,0)   {};
\path  
  (II) +(90:10)  coordinate  (IItop)
       +(0:10)   coordinate  (IIright)
       ;
\draw[loosely dashed] 
      (IItop) --
          node[midway, above, sloped] {$\cal{I}^+$ \, \small{($\underline{u}=+\infty$)}}
      (IIright) -- cycle;
\fill[color=gray!35] (-3.03,0.27)--(2.15,5.45)--(-1.6,9.2)--(-4.17,6.83)--(-3.68,5.3)--(-3.53,4.5)--(-3.4,3.7)--(-3.1,1.6)--(-3.04,0.56);
\draw[thick] (-3.03,0.27) -- node[midway,below, sloped]{\small{$\N_{\tau_1}\cap \{ \uu \leq \underline{w} \}$}} (2.15,5.45);
\draw[thick] (-4.17,6.83) -- node[midway,above, sloped]{\; \;\small{$\N_{\tau_2}\cap \{ \uu \leq \underline{w} \}$}} (-1.6,9.2);
\draw[thick] (2.15,5.45) -- node[midway,above,sloped]{\; \, \small{$\underline{\N}_{\underline{w}} \! \cap  \{ \tau_1 \leq u \! - \! u_0 \leq \tau_2 \}$}} (-1.6,9.2);
\draw[thick]   (-3,0) to [bend right=10] (-5,9);
\draw (-3.8,4) node[left]{$r=R_0$};
\draw (-1,5) node{$\mathcal{D}_{\tau_1}^{\tau_2} \cap \{ \uu \leq \underline{w} \}$};

\node (I)   at (9.5,7)   {};
\path  
  (I) +(90:3)  coordinate[label=90:$i^+$]  (Itop)
       +(-90:3) coordinate (Ibot)
       +(180:3) coordinate (Ileft)
       +(0:3)   coordinate[label=0:$i^0$]  (Iright)
       ;
\draw (Ibot) --      (Ileft) -- 
        node[midway, above, sloped] {\small{$\cal{H}^+$}}
      (Itop);
\draw[ dashed]   (Itop)    --
          node[midway, above, sloped] {\small{$\cal{I}^+$} }
      (Iright) -- (Ibot);  
\fill[color=gray!35] (10.15,8.5)--(10.4,8.75)--(11.2,8)--(10.37,7.2)--(10.28,7.85);   
\draw  (10.15,8.5)--(10.4,8.75)--(11.2,8)--(10.37,7.2); 
\draw (9.5,10) to [bend left] (9.5,4);
\draw (10.2,8.5) node[left]{\footnotesize{$r=R_0$}};
\draw[thin,blue]  (7.1575,7.6525) .. controls  (7.9025,6.9025) and (10.37,7).. node[below]{\small{$\mathcal{S}$}}   (12.5,7)  ;
\end{tikzpicture}
\end{center}
\caption{The set $\mathcal{D}_{\tau_1}^{\tau_2} \cap \{ \uu \leq \underline{w} \}$ and its boundary.}
\end{figure}

\noindent Fix $0 \leq p \leq 2q$ and apply the divergence theorem to the current $N \big[ r^p \frac{|v_{\uu}|^q}{|v_t|^q}|h|  \big]_{\mu}$ in the domain $\mathcal{D}_{\tau_1}^{\tau_2} \cap \{ \uu \leq \underline{w} \}$. This leads, as $v \cdot \n_{\N_{\tau}} = v_{\uu} \leq 0$ and $v \cdot \n_{\underline{\N}_{\underline{w}}} = v_u \leq 0$, to 
\begin{multline}\label{divaux}
\int_{\N_{\tau_2}} \mathds{1}_{ \uu \leq \underline{w}} \int_{\C}
 r^p \frac{|v_{\uu}|^q}{|v_t|^q} |h||v_{\uu}| \dr \m_{\C} \dr \m_{\N_{\tau_2}}+\int_{\underline{\N}_{\underline{w}}} \mathds{1}_{\tau_1 \leq u-u_0 \leq \tau_2} \int_{\C}
 r^p \frac{|v_{\uu}|^q}{|v_t|^q} |h||v_{u}| \dr \m_{\C} \dr \m_{\underline{\N}_{\underline{w}}} \\ -\int_{\N_{\tau_1}} \mathds{1}_{ \uu \leq \underline{w}} \int_{\C}
 r^p \frac{|v_{\uu}|^q}{|v_t|^q} |h||v_{\uu}| \dr \m_{\C} \dr \m_{\N_{\tau_1}}-\int_{t=\tau_1+u_0+R_0^*}^{\tau_2+u_0+R_0^*}  \int_{\mathbb{S}^2}\int_{\C} |R_0|^p \frac{|v_{\uu}|^q}{|v_t|^q} |h|  v_{r^*} \, \dr \m_{\C} \, R_0^2 \dr \m_{\mathbb{S}^2} \dr t  \\
= \, \int_{\mathcal{D}_{\tau_1}^{\tau_2}} \mathds{1}_{\uu \leq \underline{w}} \int_{\C} \T \left( r^p \frac{|v_{\uu}|^q}{|v_t|^q} |h| \right)   \dr \m_{\C} \dr \m_{\mathcal{D}_{\tau_1}^{\tau_2}} .
\end{multline}
Let
$$\mathbf{K}^p \, := \, \sup_{\underline{w} \geq \tau_2+u_0+2R_0^*} \int_{\underline{\N}_{\underline{w}}} \mathds{1}_{\tau_1 \leq u-u_0 \leq \tau_2} \int_{\C} r^p \frac{|v_{\uu}|^q}{|v_t|^q} |h||v_{u}| \dr \m_{\C} \dr \m_{\underline{\N}_{\underline{w}}} \geq 0$$
be the quantity that we need to bound in order to prove Remark \ref{energydecayintRq}. According to Beppo-Levi's theorem, we have for $\tau \in \{ \tau_1 , \tau_2 \}$,
\begin{align*}
\lim_{\underline{w} \to + \infty} \,  \int_{\N_{\tau}} \mathds{1}_{ \uu \leq \underline{w}} \int_{\C}
 r^p \frac{|v_{\uu}|^q}{|v_t|^q} |h||v_{\uu}| \dr \m_{\C} \dr \m_{\N_{\tau}} \, = \, \int_{\N_{\tau}}    \int_{\C}  r^p \frac{|v_{\uu}|^{q}}{|v_t|^{q}} |h| |v_{\uu}| \dr \m_{\C} \dr \m_{\N_{\tau}} .
\end{align*}
It will also be convenient to introduce
\begin{equation}\label{defJq}
\mathbf{J} := \int_{t=\tau_1+u_0+R_0^*}^{\tau_2+u_0+R_0^*}  \int_{\mathbb{S}^2}\int_{\C} \frac{|v_{\uu}|^q}{|v_t|^q} |h|  v_{r^*} \, \dr \m_{\C} \, R_0^2 \dr \m_{\mathbb{S}^2} \dr t .
\end{equation}
Note also that according to Lemma \ref{Lemhierar}, $ -\T \! \left( \! r^p \frac{|v_{\uu}|^q}{|v_t|^q}  \right)|h|$ is the sum of two nonnegative terms and a nonpositive one. Consequently, Beppo-Levi's theorem yields
$$ \lim_{\underline{w} \to + \infty} \! -\int_{\mathcal{D}_{\tau_1}^{\tau_2}} \! \mathds{1}_{\uu \leq \underline{w}} \! \int_{\C} \! \T \! \left( \! r^p \frac{|v_{\uu}|^q}{|v_t|^q} \! \right) \!   |h| \dr \m_{\C} \dr \m_{\mathcal{D}_{\tau_1}^{\tau_2}} = \! \int_{\mathcal{D}_{\tau_1}^{\tau_2}} \! \int_{\C} \! pr^{p-1} \! \frac{|v_{\uu}|^{q+1}}{|v_t|^{q}}|h|   \dr \m_{\C} \dr \m_{\mathcal{D}_{\tau_1}^{\tau_2}}+\frac{2q\!-\!p}{4}\mathbf{I}^p-(3q-p)\frac{M}{2} \mathbf{I}^{p-1}\!,$$
where $\mathbf{I}^s$, introduced in order to clearly identify the hierarchy between the $r$-weighted energy estimates, is defined for any $0 \leq s \leq 2q$ as
$$ \mathbf{I}^s \, := \, \int_{\mathcal{D}_{\tau_1}^{\tau_2}} \int_{\C} r^{s-1}\frac{|v_{\uu}|^{q-1}|\slashed{v}|^2}{|v_t|^{q}r^2}|h|   \dr \m_{\C} \dr \m_{\mathcal{D}_{\tau_1}^{\tau_2}}.$$
We then deduce, using also $\T(|h|)=0$, that for any $0 \leq p \leq 2q$,
\begin{multline}\label{eq:eizh}
\int_{\N_{\tau_2}}  \int_{\C}  r^p \frac{|v_{\uu}|^{q}}{|v_t|^{q}} |h| |v_{\uu}| \dr \m_{\C} \dr \m_{\N_{\tau_2}}+\int_{\mathcal{D}_{\tau_1}^{\tau_2}} \int_{\C} pr^{p-1}\frac{|v_{\uu}|^{q+1}}{|v_t|^{q}}|h|  \dr \m_{\C} \dr \m_{\mathcal{D}_{\tau_1}^{\tau_2}}+\mathbf{K}^p  + \frac{2q\!-\!p}{4}\mathbf{I}^p   \\ \leq \, (3q-p)\frac{M}{2} \mathbf{I}^{p-1}+ \int_{\N_{\tau_1}}   \int_{\C}  r^p \frac{|v_{\uu}|^{q}}{|v_t|^{q}} |h| |v_{\uu}| \dr \m_{\C} \dr \m_{\N_{\tau_1}}+|R_0|^p \cdot \mathbf{J}  .
\end{multline}
The remainder of the proof is composed of two steps. The first one consists in controlling sufficiently well $\mathbf{I}^{p-1}$ by taking advantage of the hierarchy in $p$ between these $r$-weighted energy estimates. Then, it will remain us to control $ \mathbf{J}$.

In order to initialize an induction, we need first to improve \eqref{eq:eizh} for $p \leq 1$. For this, note first that we have, as $R_0 >3M$,
$$ q \, \mathbf{I}^0 \, \lesssim \,  \frac{q}{2}\int_{\mathcal{D}_{\tau_1}^{\tau_2}} \int_{\C} \frac{1}{r} \left( 1 - \frac{3M}{r}\right) \frac{|v_{\uu}|^{q-1}|\slashed{v}|^2}{|v_t|^{q-1}r^2}|h|   \dr \m_{\C} \dr \m_{\mathcal{D}_{\tau_1}^{\tau_2}} \, = \, \frac{2q}{4}\mathbf{I}^0 - 3q\frac{M}{2} \mathbf{I}^{-1},$$
so that $\mathbf{I}^0$ can be bounded applying \eqref{eq:eizh} for $p=0$. Now remark that for all $0 \leq p \leq 1$, $\mathbf{I}^{p-1} \leq (2M)^{-1+p} \mathbf{I}^0$ and \eqref{eq:eizh} yields, for any $0 \leq p \leq \min (1,2q)$,
\begin{multline}\label{eq:nav}
\int_{\N_{\tau_2}}  \int_{\C}  r^p \frac{|v_{\uu}|^{q}}{|v_t|^{q}} |h| |v_{\uu}| \dr \m_{\C} \dr \m_{\N_{\tau_2}}+\int_{\mathcal{D}_{\tau_1}^{\tau_2}} \int_{\C} pr^{p-1}\frac{|v_{\uu}|^{q+1}}{|v_t|^{q}}|h|  \dr \m_{\C} \dr \m_{\mathcal{D}_{\tau_1}^{\tau_2}} +\mathbf{K}^p+ (2q-p)\mathbf{I}^p \\  \lesssim^{\mathstrut}_q  \int_{\N_{\tau_1}}   \int_{\C} (1+ r^p) \frac{|v_{\uu}|^{q}}{|v_t|^{q}} |h| |v_{\uu}| \dr \m_{\C} \dr \m_{\N_{\tau_1}}+ \mathbf{J} .
\end{multline}
If $2q >1$, it remains to improve \eqref{eq:eizh} for $1 < p \leq 2q$. We then assume that $2q >1$ and we fix $1 < p' \leq 2q$. Applying \eqref{eq:eizh} to $p=p'-n$, for all $n \in \llbracket 0, \lfloor p' \rfloor-1 \rrbracket$, and \eqref{eq:nav} to $p=p'- \lfloor p' \rfloor$, we obtain, since $r^s \leq (2M)^{s-p'}r^{p'}$ for all $0 \leq s \leq p'$,
\begin{multline}\label{eq:dek}
\int_{\N_{\tau_2}}  \int_{\C}  r^{p'} \frac{|v_{\uu}|^{q}}{|v_t|^{q}} |h| |v_{\uu}| \dr \m_{\C} \dr \m_{\N_{\tau_2}}+\int_{\mathcal{D}_{\tau_1}^{\tau_2}} \int_{\C} p'r^{p'-1}\frac{|v_{\uu}|^{q+1}}{|v_t|^{q}}|h|  \dr \m_{\C} \dr \m_{\mathcal{D}_{\tau_1}^{\tau_2}} +\mathbf{K}^{p'}+ (2q-p')\mathbf{I}^{p'} \\  \lesssim^{\mathstrut}_q  \int_{\N_{\tau_1}}   \int_{\C} r^{p'} \frac{|v_{\uu}|^{q}}{|v_t|^{q}} |h| |v_{\uu}| \dr \m_{\C} \dr \m_{\N_{\tau_1}}  + \mathbf{J} .
\end{multline}
In view of \eqref{eq:nav}, the estimate \eqref{eq:dek} holds for any $0 \leq p' \leq 2q$. We can then conclude the proof by using \eqref{equiint}, in order to bound by below the left hand side of the previous inequality, provided that
\begin{equation}\label{boundJq}
\mathbf{J} \, \lesssim^{\mathstrut}_q  \int_{\Sigma_{\tau_1}} \rho \Big[ |h| \Big] \dr \m_{\Sigma_{\tau_1}}.
\end{equation}
In order to prove this last inequality, recall the definition \eqref{defJq} of $\mathbf{J}$ and apply \eqref{divaux} for $p=0$ and, say, $\underline{w}=\tau_2+u_0+2R_0^*$. As $|v_{\uu}| \leq |v_t|$, $\N_{\tau} \subset \Sigma_{\tau}$, $\mathcal{D}_{\tau_1}^{\tau_2} \subset \Rm_{\tau_1}^{\tau_2}$ and $\T(|h|)=0$, this gives
\begin{align*}
\left| \mathbf{J}\right| \,  \lesssim^{\mathstrut}_q & \int_{\Sigma_{\tau_2}} \rho \Big[ |h| \Big] \dr \m_{\Sigma_{\tau_2}}+\mathbf{K}^0+\int_{\Sigma_{\tau_1}} \rho \Big[ |h| \Big] \dr \m_{\Sigma_{\tau_1}} + \int_{\mathcal{D}_{\tau_1}^{\tau_2}} \int_{\Pm} \left| \T \left( \frac{|v_{\uu}|^q}{|v_t|^q} \right) \right| |h|  \dr \m_{\Pm} \dr \m_{\mathcal{D}_{\tau_1}^{\tau_2}}.
\end{align*}
Note now that according to the energy inequality of Proposition \ref{divergence0} and Remark \ref{divergence0Rq},
$$\int_{\Sigma_{\tau_2}} \rho \Big[ |h| \Big] \dr \m_{\Sigma_{\tau_2}}+\int_{\underline{\N}_{\underline{w}}} \mathds{1}_{\tau_1 \leq u-u_0 \leq \tau_2}   \int_{\C}  |h|  |v_u| \dr \m_{\C} \dr \m_{\underline{\N}_{\underline{w}}} \, \leq \, \int_{\Sigma_{\tau_1}} \rho \Big[ |h| \Big] \dr \m_{\Sigma_{\tau_1}} .$$
Moreover, using Lemma \ref{Lemhierar}, $|v_{\uu}| \leq |v_t|$ and $R_0 > 3M$, we obtain
$$ \forall \, r \geq R_0, \qquad \left|\T \left( \frac{|v_{\uu}|^q}{|v_t|^q} \right) \right| = \frac{q}{2r}\left(1-\frac{3M}{r} \right) \frac{|v_{\uu}|^{q-1}}{|v_t|^q} \frac{|\slashed{v}|^2}{r^2|v_t|} \lesssim^{\mathstrut}_q \frac{(r-3M)^2}{r^3}\frac{|\slashed{v}|^2}{r^2},$$
so that, as $\mathcal{D}_{\tau_1}^{\tau_2} \subset \Rm_{\tau_1}^{\tau_2}$ and according to Proposition \ref{ILEDdeg}, applied to $|v_t|^{-1} h$,
$$ \int_{\mathcal{D}_{\tau_1}^{\tau_2}} \int_{\Pm} \left| \T \left( \frac{|v_{\uu}|^q}{|v_t|^q} \right) \right| |h|  \dr \m_{\Pm} \dr \m_{\mathcal{D}_{\tau_1}^{\tau_2}} \, \lesssim^{\mathstrut}_q  \int_{\Sigma_{\tau_1}} \rho \Big[ |h| \Big] \dr \m_{\Sigma_{\tau_1}}.$$
The last inequalities imply \eqref{boundJq}, which concludes the proof.
\end{proof}

\subsection{The particular case $(p,q)=(2,1)$}

Contrary to the region $\{ r^* \geq R_0^*+t \}$, which is studied below in Section \ref{sec6}, one cannot derive decay on $\int_{\Sigma_{\tau}} \! \rho \big[ |f| |v^{\mathstrut}_{\mathrm{N}}| \big] \dr \m_{\N_{\tau}}$ through a direct application of the $r^p |v_{\uu}|^q$-weighted energy inequalities. Instead, the decay will be obtained from integrated energy estimates, which will be proved using Propositions \ref{ILED} and \ref{rphierar}. Before considering a wider framework in the next subsection, we illustrate the strategy by treating the particular case $(p,q)=(2,1)$. We prove first a technical result, which will be useful for the general case as well.
\begin{Lem}\label{interpol}
Let $h_1$ and $h_2$ be two sufficiently regular functions defined on $\widehat{\Rm}_{-\infty}^{+\infty}$ and $s>1$. Then, for any integers $0 \leq k \leq i$ and any $\tau \in \R$, there holds
\begin{align*}
 \left| \int_{\Sigma_{\tau}} \rho \Big[ |h_1| |h_2|^{s^k-1}\Big] \dr \m_{\Sigma_{\tau}} \right|^{\frac{1}{s^k}} \, \lesssim^{\mathstrut}_i & \int_{\Sigma_{\tau}} \rho \Big[ |h_1| \Big] \dr \m_{\Sigma_{\tau}}+\left| \int_{\Sigma_{\tau}} \rho \Big[ |h_1| |h_2|^{s^i-1} \Big] \dr \m_{\Sigma_{\tau}} \right|^{\frac{1}{s^i}} .
 \end{align*}
\end{Lem}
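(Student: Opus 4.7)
The plan is to reinterpret each quantity $I_j := \int_{\Sigma_0} \rho\bigl[|h_1||h_2|^{s^j-1}\bigr]\, \dr\m_{\Sigma_0}$ as the integral of $|h_2|^{s^j-1}$ against the (nonnegative) measure
$$ \dr \mu \, := \, |h_1|\,|v \cdot \n_{\Sigma_0}|\, \dr \m_{\C}\, \dr \m_{\Sigma_0} $$
on $\widehat{\Sigma}_0$, and then to derive the estimate from two standard convexity inequalities applied to this measure. The boundary cases $k=0$ (where $|h_2|^{s^0-1} = 1$ and both sides equal $I_0$) and $k=i$ (where the right-hand side trivially dominates the left) require no work, so I focus on $0<k<i$.

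For such $k$, the pair of numbers $\tfrac{s^i-1}{s^k-1}$ and $\tfrac{s^i-1}{s^i-s^k}$ is conjugate. Writing $|h_2|^{s^k-1} = \bigl(|h_2|^{s^i-1}\bigr)^{(s^k-1)/(s^i-1)}\cdot 1^{(s^i-s^k)/(s^i-1)}$, Hölder's inequality with respect to the measure $\dr\mu$ yields the log-convex bound
$$ I_k \, \leq \, I_i^{\frac{s^k-1}{s^i-1}}\, I_0^{\frac{s^i-s^k}{s^i-1}} .$$
Raising both sides to the power $1/s^k$ transforms this into
$$ I_k^{1/s^k} \, \leq \, \bigl(I_i^{1/s^i}\bigr)^{\frac{s^i(s^k-1)}{s^k(s^i-1)}} \cdot I_0^{\frac{s^i-s^k}{s^k(s^i-1)}} . $$

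The final step is an application of Young's weighted AM--GM inequality: the two exponents appearing in the last display sum to
$$ \frac{s^i(s^k-1)+(s^i-s^k)}{s^k(s^i-1)} \, = \, \frac{s^{i+k}-s^k}{s^k(s^i-1)} \, = \, 1, $$
so $x^\alpha y^\beta \leq \alpha x + \beta y \leq x+y$ with $x=I_i^{1/s^i}$, $y=I_0$ and $(\alpha,\beta)$ equal to this pair of exponents provides the desired inequality. Since these exponents depend continuously on $(k,i,s)$ and $k$ ranges over the finite set $\{0,\dots,i\}$, the implicit constant can be bounded in terms of $i$ (and $s$), matching the notation $\lesssim_i$. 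There is no genuine obstacle beyond the algebraic identity $s^{i+k}-s^k = s^k(s^i-1)$, which drives both the Hölder and Young steps.
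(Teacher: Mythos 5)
Your proof is correct and follows essentially the same route as the paper's: Hölder's inequality with exponent $p=\frac{s^i-1}{s^k-1}$ against the measure $|h_1|\,|v\cdot\n_{\Sigma_0}|\,\dr\m_{\C}\,\dr\m_{\Sigma_0}$, followed by the weighted arithmetic--geometric mean inequality, with the same algebraic identity $s^{i+k}-s^k=s^k(s^i-1)$ making the exponents work out. The only (cosmetic) difference is that you raise to the power $1/s^k$ before applying Young's inequality, whereas the paper keeps the exponents in the form $A^{1/q}B^{(q-1)/q}$ with $q=\frac{p}{p-1}s^k$.
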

\begin{proof}
As the inequality is straightforward for $k \in \{ 0,i \}$, we assume that $0<k<i$ and we consider $p= \frac{s^i-1}{s^k-1}$. Using Hölder's inequality, we get
$$ \left| \int_{\Sigma_\tau} \rho \Big[ |h_1| |h_2|^{s^k-1} \Big] \dr \m_{\Sigma_\tau} \right|^{\frac{1}{s^k}} \, \lesssim^{\mathstrut}_i  \left| \int_{\Sigma_\tau} \rho \Big[ |h_1| \Big] \dr \m_{\Sigma_\tau} \right|^{\frac{p-1}{ps^k}}  \left| \int_{\Sigma_\tau} \rho \Big[ |h_1| |h_2|^{s^i-1} \Big] \dr \m_{\Sigma_\tau} \right|^{\frac{1}{ps^k}}.$$
In then remains to apply the inequality $A^{\frac{1}{q}}B^{\frac{q-1}{q}} \leq \frac{1}{q}A+\frac{q-1}{q}B$, with $q=\frac{p}{p-1}s^k$. Indeed,
$$ \frac{(q-1)p s^k}{q} = (p-1)(q-1) = ps^k-p+1 = \frac{s^{i+k}-s^k-(s^i-1)+s^k-1}{s^k-1}= \frac{s^{i+k}-s^i}{s^k-1}=s^i.$$
\end{proof}
It will be convenient to use the following notation.
\begin{Def}\label{Defenergynorm}
We define, for any $0\leq p \leq 2q$, $s \geq 1$, $a \in \R_+$, $\tau \in \R$ and any sufficiently regular function $f : \widehat{\Rm}_{-\infty}^{+\infty}\to \R$,
$$ \mathbb{E}_{p,q}^{s,a}[f](\tau) := \int_{\Sigma_{\tau}}  \rho \left[ \left( 1+r^p \frac{|v_{\underline{u}}|^q}{|v_t|^q}\right) \langle v_t \rangle^{(4+a)(s-1)}  |f|^s |v^{\mathstrut}_{\mathrm{N}}|^a\right] \dr \m_{\Sigma_{\tau}}.$$
\end{Def}
\begin{Pro}\label{Proenergynorm}
Let $0\leq p \leq 2q$, $s>1$, $a \in \R_+$ and $f : \widehat{\Rm}_{-\infty}^{+\infty}\to \R$ be a sufficiently regular solution to the Vlasov equation. Then, for any $0 \leq k \leq i$,
\begin{align*} \forall \, \tau \in \R, \qquad  \left|  \mathbb{E}_{p,q}^{s^k,a}[f](\tau)  \right|^{\frac{1}{s^k}} & \lesssim_i  \mathbb{E}_{p,q}^{1,a}[f](\tau) + \left|  \mathbb{E}_{p,q}^{s^i,a}[f](\tau)  \right|^{\frac{1}{s^i}}, \\
\forall \, \tau_2 \geq \tau_1 \geq 0 , \qquad \mathbb{E}_{p,q}^{s^k,a}\big[f  \big](\tau_2) & \lesssim_{a,q} \mathbb{E}_{p,q}^{s^k,a} \big[f \big](\tau_1).
\end{align*}
\end{Pro}
\begin{proof}
The first inequality follows from the previous Lemma \ref{interpol}. For the energy estimate, note first that $\T(\langle v_t \rangle^{(4+a)(s-1)} f)=0$ according to Lemma \ref{Comuprop} and recall that $v_{\mathrm{N}}=v_t$ for $r \geq R_0$. It remains to combine Propositions \ref{energyredshift} with \ref{rphierar}. 
\end{proof}
We now prove an energy decay statement for solutions to the massless Vlasov equation.
\begin{Pro}\label{casep=12}
Let $f : \widehat{\Rm}_0^{+\infty} \rightarrow \R$ be a sufficiently regular solution to $\T(f)=0$. Then, for any $1<s \leq 6/5$, we have for all $\tau \geq 0$,
\begin{align*}
 \int_{\Sigma_{\tau}} \rho \left[ |f| |v^{\mathstrut}_{\mathrm{N}}| \right] \dr \m_{\Sigma_{\tau}} \lesssim_s \, & \frac{1}{(1+\tau)^2} \left( \mathbb{E}_{2,1}^{1,1}[f](0)+\left| \mathbb{E}_{2,1}^{s^3,1}[f](0) \right|^{\frac{1}{s^3}} \right)
 \end{align*}
\end{Pro}
\begin{proof}
Consider $s \in ]1,\frac{6}{5}]$ and $0 \leq \tau_1 < \tau_2$. Applying Proposition \ref{rphierar} to $h=f|v_t|$, with $(p,q)=(2,1)$, gives
\begin{align*}
\int_{\tau=\tau_1}^{\tau_2} \int_{\N_{\tau}} \int_{\C} r |f||v_{\uu}|^2  \dr \m_{\C} \dr \m_{\N_{\tau}} \dr \tau \, & \leq \, \int_{\tau=0}^{\tau_2} \int_{\N_{\tau}} \int_{\C} r |f||v_{\uu}|^2  \dr \m_{\C} \dr \m_{\N_{\tau}} \dr \tau \\
& \lesssim \, \int_{\N_{0}} \int_{\C} r^2|f| |v_{\uu}|^2  \dr \m_{\N_{0}} +\int_{\Sigma_{0}} \rho \Big[ |f| |v_t| \Big] \dr \m_{\Sigma_{0}}.
\end{align*}
Applying this time Proposition \ref{rphierar} with $(p,q)=(1,1)$, between times $\tau$ and $\tau_2$, we obtain
$$ \int_{\tau=\tau_1}^{\tau_2} \int_{\N_{\tau_2}} \int_{\C} r |f||v_{\uu}|^2  \dr \m_{\C} \dr \m_{\N_{\tau_2}} \dr \tau  \lesssim  \int_{\tau=\tau_1}^{\tau_2} \bigg( \int_{\N_{\tau}} \int_{\C} r |f||v_{\uu}|^2  \dr \m_{\C} \dr \m_{\N_{\tau}}+\int_{\Sigma_{\tau}} \rho \Big[ |f| |v_t| \Big] \dr \m_{\Sigma_{\tau}} \bigg) \dr \tau .$$
We then deduce from these last two estimates and Proposition \ref{divergence0}, applied between times $\tau_1$ and $\tau$,
$$ ( \tau_2-\tau_1)\! \int_{\N_{\tau_2}} \! \int_{\C} r |f||v_{\uu}|^2  \dr \m_{\C} \dr \m_{\N_{\tau_2}}   \lesssim  \int_{\N_{0}} \! \int_{\C} r^2|f| |v_{\uu}|^2  \dr \m_{\N_{0}} +\int_{\Sigma_{0}} \rho \Big[ |f| |v_t| \Big] \dr \m_{\Sigma_{0}} +(\tau_2-\tau_1) \!\int_{\Sigma_{\tau_1}} \rho \Big[ |f| |v_t| \Big] \dr \m_{\Sigma_{\tau_1}} \! .$$
The idea, in order to derive decay from such an inequality, will be to apply it for a dyadic sequence of times, i.e. with $(\tau_1,\tau_2)=(2^j, 2^{j+1})$ so that $\tau_1 \sim \tau_2-\tau_1 \sim \tau_2$, and then to control the energy flux at time $\tau \sim 2^j$ by Proposition \ref{energyredshift}. In view of Remark \ref{rqcontrol}, Proposition \ref{rphierar}, applied to $h=f|v_t|$ with $(p,q)=(1,1)$, leads, for all $\tau_3 > \tau_2$, to
\begin{align*}
\int_{\tau=\tau_2}^{\tau_3}\int_{\N_{\tau}} \int_{\C}  |f||v_t||v_{\uu}|  \dr \m_{\C} \dr \m_{\N_{\tau}} \dr \tau  \, \lesssim \, &  \frac{1}{\tau_2-\tau_1} \int_{\N_{0}} \int_{\C} r^2|f| |v_{\uu}|^2  \dr \m_{\N_{0}} +\frac{1}{\tau_2-\tau_1}\int_{\Sigma_{0}} \rho \Big[ |f| |v_t| \Big] \dr \m_{\Sigma_{0}}\\
& +\int_{\Sigma_{\tau_1}} \rho \Big[ |f| |v_t| \Big] \dr \m_{\Sigma_{\tau_1}} +\int_{\Sigma_{\tau_2}} \rho \Big[ |f| |v_t| \Big] \dr \m_{\Sigma_{\tau_2}} .
\end{align*}
The last term is equal to the penultimate one according to Proposition \ref{divergence0}. Recall now that $v_{\uu}=v \cdot \n_{\Sigma_{\tau}}$ and $v_t = v^{\mathstrut}_{\mathrm{N}}$ for $r \geq R_0$ (see Proposition \ref{energyredshift}). Moreover, by \eqref{redshiftdef} we have $|v_t| \leq |v^{\mathstrut}_{\mathrm{N}}|$. Hence, combining the last inequality with the integrated local energy decay estimate of Proposition \ref{ILED}, we get, as $2(s-1) \leq \frac{1}{2}$,
\begin{align*}
\int_{\tau=\tau_2}^{\tau_3}\int_{\Sigma_{\tau}} \rho \Big[ |f| |v^{\mathstrut}_{\mathrm{N}}| \Big] \dr \m_{\Sigma_{\tau}} \dr \tau  \, \lesssim^{\mathstrut}_s \, &  \frac{1}{\tau_2-\tau_1} \int_{\Sigma_{0}} \rho \left[ \left( 1+ r^2 \frac{|v_{\uu}|}{|v_t|} \right) \!|f| |v^{\mathstrut}_{\mathrm{N}}| \right] \dr \m_{\Sigma_{0}} +\int_{\Sigma_{\tau_1}} \rho \Big[ |f| |v^{\mathstrut}_{\mathrm{N}}| \Big] \dr \m_{\Sigma_{\tau_1}}\\ &+ \left| \int_{\Sigma_{\tau_2}}  \rho \left[\left\langle \sqrt{r}\frac{|v_{\underline{u}}|}{|v_t|} \right\rangle \langle v_t \rangle^{5(s-1)}  |f|^s |v^{\mathstrut}_{\mathrm{N}}|\right] \dr \m_{\Sigma_{\tau_2}} \right|^{\frac{1}{s}}.
\end{align*}
Now, apply the energy estimate of Proposition \ref{energyredshift} between times $\tau$ and $\tau_3$ in order to bound below the left hand side of the last inequality. We bound above the last term on the right hand side using Hölder inequality. This gives, for all $\tau_3 > \tau_2$,
\begin{align}
\nonumber \int_{\Sigma_{\tau_3}} \rho  \Big[ |f|&|v^{\mathstrut}_{\mathrm{N}}| \Big]  \dr \m_{\Sigma_{\tau_3}}  \,  \lesssim^{\mathstrut}_s \,   \frac{1}{(\tau_3-\tau_2)(\tau_2-\tau_1)} \int_{\Sigma_{0}} \rho \left[ \left( 1+ r^2 \frac{|v_{\uu}|}{|v_t|} \right) \!|f| |v^{\mathstrut}_{\mathrm{N}}| \right] \dr \m_{\Sigma_{0}}+\frac{1}{\tau_3-\tau_2}\int_{\Sigma_{\tau_1}} \rho \Big[ |f| |v^{\mathstrut}_{\mathrm{N}}| \Big] \dr \m_{\Sigma_{\tau_1}}  \\  & +\frac{1}{\tau_3-\tau_2} \left| \int_{\Sigma_{\tau_2}}  \rho \left[\left( 1+r^2\frac{|v_{\underline{u}}|}{|v_t|} \right) \langle v_t \rangle^{5(s-1)}  |f|^s |v^{\mathstrut}_{\mathrm{N}}|\right] \dr \m_{\Sigma_{\tau_2}} \right|^{\frac{1}{4s}} \left| \int_{\Sigma_{\tau_2}}  \rho \left[ \langle v_t \rangle^{5(s-1)}  |f|^s |v^{\mathstrut}_{\mathrm{N}}|\right] \dr \m_{\Sigma_{\tau_2}} \right|^{\frac{3}{4s}}. \nonumber
\end{align}
Remark that $2^{j+1}-2^j =2^j$ for all $j \geq 0$. Note further, in view of Lemma \ref{Comuprop}, that $\T \left( \langle v_t \rangle^{5(s^n-1)}|f|^{s^n}  \right)=0$ and $\langle v_t \rangle^{5(s-1)}|\langle v_t \rangle^{5(s^n-1)}|f|^{s^n} |^s=\langle v_t \rangle^{5(s^{n+1}-1)}|f|^{s^{n+1}} $. We apply the last inequality, for any $j \in \mathbb{N}$,
\begin{itemize}
\item to $f$, for $(\tau_3,\tau_2,\tau_1)=(2^{j+4},2^{j+3},2^{j+2})$ and then for $(\tau_3,\tau_2,\tau_1)=(2^{j+2},2^{j+1},0)$.
\item To $\langle v_t \rangle^{5(s-1)}|f|^{s}$, for $(\tau_3,\tau_2,\tau_1)=(2^{j+3},2^{j+2},2^{j+1})$ and then for $(\tau_3,\tau_2,\tau_1)=(2^{j+1},2^{j},0)$.
\item To $\langle v_t \rangle^{5(s^2-1)}|f|^{s^2}$, for $(\tau_3,\tau_2,\tau_1)=(2^{j+2},2^{j+1},0)$.
\end{itemize} 
Using the norms introduced in Definition \ref{Defenergynorm}, which decrease with time by Proposition \ref{Proenergynorm}, this leads to
\begin{align*}
 \int_{\Sigma_{2^{j+4}}} \rho \Big[ |f||v^{\mathstrut}_{\mathrm{N}}| \Big] & \dr \m_{\Sigma_{2^{j+4}}}   \lesssim^{\mathstrut}_s  \left( 2^{-2j}+2^{-3j} \right) \mathbb{E}_{2,1}^{1,1}[f](0)  
 \\
 &+  \left(2^{-2j}+2^{-j-2\frac{3j}{4s}}+2^{-j-3\frac{3j}{4s}} \right) \! \left|  \mathbb{E}_{2,1}^{s,1}[f](0) \right|^{\frac{1}{s}}+2^{-j-2\frac{3j}{4s}}\left|  \mathbb{E}_{2,1}^{s,1}[f](0) \right|^{\frac{1}{4s}} \left|  \mathbb{E}_{2,1}^{s^2,1}[f](0) \right|^{\frac{3}{4s^2}} \\
&  +2^{-j-\frac{3j}{4s}-\frac{9j}{16s^2}}\left|  \mathbb{E}_{2,1}^{s,1}[f](0) \right|^{\frac{1}{4s}} \left(\left|  \mathbb{E}_{2,1}^{s^2,1}[f](0) \right|^{\frac{3}{4s^2}}+\left|  \mathbb{E}_{2,1}^{s^2,1}[f](0) \right|^{\frac{3}{16s^2}}\left|  \mathbb{E}_{2,1}^{s^3,1}[f](0) \right|^{\frac{9}{16s^3}} \right).
\end{align*}
As $s \leq \frac{6}{5}$, we finally obtain, using Young's inequality for products as well as Proposition \ref{Proenergynorm}, 
\begin{equation}
 2^{2j} \int_{\Sigma_{2^{j+4}}} \rho \Big[ |f||v^{\mathstrut}_{\mathrm{N}}| \Big] \dr \m_{\Sigma_{2^{j+4}}}  \, \lesssim^{\mathstrut}_s \, \mathbb{E}_{2,1}^{1,1}[f](0) +  \left|  \mathbb{E}_{2,1}^{s^3,1}[f](0) \right|^{\frac{1}{s^3}} \!. \label{eq:daouzek}
\end{equation}
Let $\tau \geq 2^4$ and consider $j \geq 0$ satisfying $2^{j+4} \leq \tau < 2^{j+5}$, so that $1+\tau \leq 2^6 \cdot 2^{j}$. We then deduce from Proposition \ref{energyredshift} that
$$ (1+\tau)^{2} \int_{\Sigma_{\tau}} \rho \Big[ |f| |v^{\mathstrut}_{\mathrm{N}}| \Big] \dr \m_{\Sigma_{\tau}}   \, \lesssim \, 2^{2j} \int_{\Sigma_{2^{j+4}}} \rho \Big[ |f||v^{\mathstrut}_{\mathrm{N}}| \Big] \dr \m_{\Sigma_{2^{j+4}}},$$
which, combined with \eqref{eq:daouzek}, implies the required estimate. If $\tau \leq 2^4$, $1 \lesssim (1+\tau)^{-2} $ and the estimate directly follows from Proposition \ref{energyredshift}.
\end{proof}

\subsection{Proof of Proposition \ref{energydecaysec4}}

Let, for all this subsection, $f : \widehat{\Rm}_0^{+\infty} \rightarrow \R$ be a sufficiently regular function satisfying $\T(f)=0$. Recall that
\begin{itemize}
\item $\N_{\tau} = \Sigma_{\tau} \cap \{ r \geq R_0 \}$, $v \cdot \n_{\N_{\tau}} = v_{\uu}$ and $v^{\mathstrut}_{\mathrm{N}}=v_t$ for all $r \geq R_0$.
\item $|v_{\uu}| \leq |v_t|$ and $r^p \lesssim_p^{\mathstrut} 1$ in the region $\{ r < R_0 \}$.
\end{itemize}
Consequently, for any $(a,p) \in \R_+^2$, we have
 $$\forall \, \tau \in \R_+, \qquad \int_{\Sigma_{\tau}} \rho \bigg[ r^p \frac{|v_{\uu}|^{p/2}}{|v_t|^{p/2}} |f| |v^{\mathstrut}_{\mathrm{N}}|^a \bigg] \dr \m_{\Sigma_{\tau}}  \lesssim^{\mathstrut}_{p} \int_{\Sigma_{\tau}} \rho \bigg[  |f| |v^{\mathstrut}_{\mathrm{N}}|^a \bigg] \dr \m_{\Sigma_{0}}+\int_{\N_{\tau}} \int_{\C} r^p \frac{|v_{\uu}|^{p/2}}{|v_t|^{p/2}} |f||v_t|^a |v_{\uu}| \dr \m_{\C} \dr \m_{\N_{\tau}}.$$
The first estimate of Proposition \ref{energydecaysec4} then ensues from the energy inequality of Proposition \ref{energyredshift} and Proposition \ref{rphierar}, applied to $h=f|v_t|^a$, with the parameters $(p,p/2)$ and between times $0$ and $\tau$.

We now turn to the second estimate. It will be convenient to use the following notation.
\begin{Def}\label{defsec5}
Let $x \in \R$ and recall the notations introduced in Definition \ref{defsec4}. We define $q_x \in \R$ as
\begin{itemize}
\item $q_x := \frac{x}{2} $ if $\lceil x \rceil \in 2 \mathbb{Z}$ is even. Note that if $x=2n$, $n \in \mathbb{Z}$, then $q_x=n$.
\item $q_x= \frac{x+1}{2} $ if $\lceil x \rceil \in 2 \mathbb{Z}+1$ is odd. In particular, if $x=2n-1$, $n \in \mathbb{Z}$, then $q_x=n$.
\end{itemize}
\end{Def}
Even if the main ideas are the same than those used in the proof of Proposition \ref{casep=12}, a new difficulty arises. Indeed, by iterating Proposition \ref{rphierar} we do not obtain an estimate of the energy norm of $f$ if $p \notin \mathbb{N}$ but merely on the ones of $r^{p-\lfloor p \rfloor}f$ and $r^{p-\lceil p \rceil}f$. This forces us to carefully choose $q$ and then to estimate the energy norm of $f$ through an interpolation. More precisely, we will apply Proposition \ref{rphierar} with $(p-n,q_{p-n})$, for all $n \in \llbracket 0, \lceil p \rceil \rrbracket$. For the purpose of proving and exploiting a hierarchy of $r^{p-n} |v_{\uu}|^{q_{p-n}}$-weighted energy estimates, the following properties, which can easily be checked, will be useful.
\begin{Lem}\label{forapplyrp}
For any $x \in \R_+^*$, we have,
\begin{itemize}
\item if $\lceil x \rceil \in 2 \mathbb{Z}$ is even, then $q^{\mathstrut}_{x-1}=q^{\mathstrut}_x$ and $0<x=2q_x$.
\item Otherwise $\lceil x \rceil \in 2 \mathbb{Z}+1$ is odd and $q^{\mathstrut}_{x-1}=q^{\mathstrut}_x -1$. Moreover, $2q_x-x=1$ so $0 \leq x < 2q_x$.
\end{itemize}
Consequently, we can apply Proposition \ref{rphierar} with the parameters $(x,q_x)$. Moreover, if $\lceil x \rceil$ is odd, we will also be able to use Remark \ref{rqcontrol}.
\end{Lem}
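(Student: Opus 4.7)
The plan is to prove the two cases by direct computation using Definition \ref{defsec4}, splitting on the parity of $\lceil x \rceil$ and then tracking how $\lceil x-1 \rceil$ changes. There is no real obstacle here; the lemma is purely arithmetic and the only thing to be careful about is locating $x$ inside the half-open interval that determines its ceiling, so that shifting by $-1$ lands in the adjacent interval with the opposite parity.

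First I would fix $x\in \R_+^*$ and write $\lceil x\rceil = m$, so $x\in(m-1,m]$ and hence $x-1\in(m-2,m-1]$, which gives $\lceil x-1\rceil = m-1$. This is the only nontrivial observation: the ceiling of $x-1$ is always one less than the ceiling of $x$, and in particular its parity flips. From this all four statements follow by just unfolding the definition of $q_{\,\cdot\,}$.

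In the even case $m=2n$, Definition \ref{defsec4} gives $q_x = x/2$, so $2q_x = x$, and since $x>0$ this yields $0<x=2q_x$. Because $\lceil x-1\rceil = 2n-1$ is odd, the second clause of the definition applies and $q_{x-1} = ((x-1)+1)/2 = x/2 = q_x$. In the odd case $m=2n-1$, Definition \ref{defsec4} gives $q_x = (x+1)/2$, hence $2q_x - x = 1$, which immediately gives $x<2q_x$ and, together with $x>0$, the inequality $0\le x<2q_x$. Since now $\lceil x-1\rceil = 2n-2$ is even, the first clause applies and $q_{x-1} = (x-1)/2 = q_x - 1$.

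Finally, the two concluding remarks are just rereadings of what we have proved. The bound $0\le x \le 2q_x$ is exactly what is required to apply Proposition \ref{rphierar} with $(p,q)=(x,q_x)$, and in the odd case the strict inequality $0<x<2q_x$ is precisely the hypothesis needed for Remark \ref{rqcontrol}. Hence no further verification is required, and the lemma is established.
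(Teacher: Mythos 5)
Your proof is correct and is exactly the direct verification the paper intends — the paper states these properties "can easily be checked" and gives no argument, and your computation (the identity $\lceil x-1\rceil=\lceil x\rceil-1$, which flips the parity, followed by unfolding Definition \ref{defsec4} in each case) supplies precisely that check. The concluding observations that $0\le x\le 2q_x$ (resp. $0<x<2q_x$ in the odd case) match the hypotheses of Proposition \ref{rphierar} (resp. Remark \ref{rqcontrol}) are also accurate.
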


The first step of the proof consists in proving an integrated decay estimate for the region $\{r \geq R_0\}$.
\begin{Lem}\label{LemsolVlasovbis1}
Let $h : \widehat{\Rm}_0^{+\infty} \rightarrow \R$ be a sufficiently regular function, $p >0$ and define $\tau_j := 2^j$. For any $j \in \mathbb{N}$ and any $n \in \mathbb{N}^*$ such that $n \leq \lceil p \rceil$,
$$ \int_{\tau_{j+n-1}}^{\tau_{j+n}}\!\int_{\N_{\tau}}\! \int_{\C} r^{p-n} \frac{|v_{\uu}|^{q_{p-n}}}{|v_t|^{q_{p-n}}} |h| |v_{\uu}| \dr \m_{\C} \dr \m_{\N_{\tau}} \dr \tau  \lesssim^{\mathstrut}_p     \frac{1}{2^{(n-1) j}} \!\int_{\N_{\tau_j}} \! \int_{\C} r^{p} \frac{|v_{\uu}|^{q_p}}{|v_t|^{q_p}} |h| |v_{\uu}| \dr \m_{\C} \dr \m_{\N_{\tau_j}}  + \int_{\Sigma_{\tau_j}} \! \rho \Big[ |h| \Big] \dr \m_{\Sigma_{\tau_j}}\! .$$
Moreover, we have
$$ \int_{\tau=\tau_{j}}^{\tau_{j+1}} \!  \int_{\N_{\tau}}\! \int_{\C} r^{p} \frac{|v_{\uu}|^{q_{p}}}{|v_t|^{q_{p}}} |h| |v_{\uu}| \dr \m_{\C} \dr \m_{\N_{\tau}} \dr \tau  \lesssim^{\mathstrut}_p  2^j\! \int_{\N_{\tau_j}}\!  \int_{\C}\! r^{p} \frac{|v_{\uu}|^{q_p}}{|v_t|^{q_p}} |h| |v_{\uu}| \dr \m_{\C} \dr \m_{\N_{\tau_j}} + 2^j \! \int_{\Sigma_{\tau_j}} \rho \Big[ |h| \Big] \! \dr \m_{\Sigma_{\tau_j}}  .$$
\end{Lem}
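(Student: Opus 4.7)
To shorten notation, set $F_m(\tau) := \int_{\N_\tau}\int_{\C} r^{p-m}\frac{|v_{\uu}|^{q_{p-m}}}{|v_t|^{q_{p-m}}}|h||v_{\uu}|\, \dr \m_{\C}\dr \m_{\N_\tau}$ and $E(\tau) := \int_{\Sigma_\tau} \rho[|h|]\,\dr \m_{\Sigma_\tau}$. Since $\T(|h|)=0$ almost everywhere by Lemma \ref{Comuprop} (assuming $\T(h)=0$, as required to invoke Proposition \ref{rphierar}), the energy inequality of Proposition \ref{divergence0} yields $E(\tau_2)\leq E(\tau_1)$ whenever $\tau_1\leq\tau_2$. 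The heart of the proof is the following \textbf{reduction inequality}: for every integer $0\leq m\leq \lceil p\rceil -1$ and all $0\leq \tau_1\leq \tau_2$,
\begin{equation}\label{reductionplan}
\int_{\tau_1}^{\tau_2} F_{m+1}(\tau)\,\dr\tau \,\lesssim_p^{\mathstrut}\, F_m(\tau_1)+E(\tau_1).
\end{equation}
This is obtained by applying Proposition \ref{rphierar} with parameters $(p-m,q_{p-m})$, which is admissible by Lemma \ref{forapplyrp} since $0\leq p-m\leq 2q_{p-m}$. When $\lceil p-m\rceil$ is even, one has $q_{p-m-1}=q_{p-m}$ and the bulk term with coefficient $(p-m)>0$ in Proposition \ref{rphierar} gives precisely the integrand of $F_{m+1}$. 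When $\lceil p-m\rceil$ is odd, one has $p-m<2q_{p-m}$ and $q_{p-m-1}=q_{p-m}-1$; the angular bulk term, combined with Remark \ref{rqcontrol}, again controls $F_{m+1}$.

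The second estimate of the lemma (essentially the case $m=0$) is immediate: Proposition \ref{rphierar} applied with $(p,q_p)$ between times $\tau_j$ and $\tau$ yields $F_0(\tau)\lesssim_p^{\mathstrut} F_0(\tau_j)+E(\tau_j)$ for every $\tau\in[\tau_j,\tau_{j+1}]$, and integrating over this interval of length $2^j$ gives the claim.

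The main statement is proved by induction on $n\in\llbracket 1,\lceil p\rceil\rrbracket$. The base case $n=1$ is exactly \eqref{reductionplan} with $m=0$ applied on $[\tau_j,\tau_{j+1}]$. For the inductive step, suppose the bound holds at level $n$. By the mean value theorem there exists $\sigma_n\in[\tau_{j+n-1},\tau_{j+n}]$ such that
\[
F_n(\sigma_n)\,\leq\,\frac{1}{\tau_{j+n}-\tau_{j+n-1}}\int_{\tau_{j+n-1}}^{\tau_{j+n}}F_n(\tau)\,\dr\tau\,\lesssim_p^{\mathstrut}\, \frac{1}{2^{j+n-1}}\!\left(\frac{F_0(\tau_j)}{2^{(n-1)j}}+E(\tau_j)\right)\!.
\]
Now apply \eqref{reductionplan} with $m=n$ on the interval $[\sigma_n,\tau_{j+n+1}]$, which contains $[\tau_{j+n},\tau_{j+n+1}]$:
\[
\int_{\tau_{j+n}}^{\tau_{j+n+1}} F_{n+1}(\tau)\,\dr\tau\,\leq\,\int_{\sigma_n}^{\tau_{j+n+1}}F_{n+1}(\tau)\,\dr\tau\,\lesssim_p^{\mathstrut}\, F_n(\sigma_n)+E(\sigma_n).
\]
Since $E$ is nonincreasing one has $E(\sigma_n)\leq E(\tau_j)$, so combining the two displays yields the desired bound $\lesssim_p 2^{-nj}F_0(\tau_j)+E(\tau_j)$ and closes the induction. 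The one conceptual step that requires care is the even/odd dichotomy handled through Lemma \ref{forapplyrp} and Remark \ref{rqcontrol}; once \eqref{reductionplan} is secured, the rest is a routine pigeonhole.
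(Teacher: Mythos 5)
Your proof is correct and follows essentially the same route as the paper: the reduction inequality coming from Proposition \ref{rphierar} applied with parameters $(p-m,q_{p-m})$ (admissible by Lemma \ref{forapplyrp}, with Remark \ref{rqcontrol} handling the odd case), followed by a dyadic induction that gains a factor $2^{-j}$ at each step. The only (harmless) deviation is that you select a good time $\sigma_n$ by pigeonhole, whereas the paper integrates the monotonicity $F_n(\tau_{j+n})\lesssim_p F_n(\tau)+E(\tau)$ over $[\tau_{j+n-1},\tau_{j+n}]$ to evaluate $F_n$ at the dyadic endpoint; both steps rest on the same application of Proposition \ref{rphierar} and yield the same bound.
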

\begin{proof}
Fix $j \in \mathbb{N}$, $p>0$ and let us perform an induction in order to prove the first estimate. In view of Lemma \ref{forapplyrp}, we obtain from Proposition \ref{rphierar}, applied with the parameters $(p,q_p)$, 
\begin{align*}
 \int_{\tau=\tau_j}^{\tau_{j+1}} \! \int_{\N_{\tau}}  \int_{\C} r^{p-1} \frac{|v_{\uu}|^{q_{p-1}}}{|v_t|^{q_{p-1}}} |h| |v_{\uu}| \dr \m_{\N_{\tau}} \dr \tau \, \lesssim^{\mathstrut}_p & \int_{\N_{\tau_j}} \int_{\C} r^p \frac{|v_{\uu}|^{q_p}}{|v_t|^{q_p}} |h| |v_{\uu}| \dr \m_{\N_{\tau_j}}+\int_{\Sigma_{\tau_j}} \rho \Big[ |h|  \Big] \dr \m_{\Sigma_{\tau_j}}.
 \end{align*}
The inequality then holds at the rank $n=1$. Assume now that $p >1$ and consider $ n \in \mathbb{N}^*$ satisfying $n \leq \lceil p \rceil-1$ and such that the result holds at the rank $n$. Applying Proposition \ref{rphierar} with the parameters $(p-n,q_{p-n})$ and between times $\tau$ and $\tau_{j+n}$ leads to
\begin{multline*}
\int_{\tau=\tau_{j+n-1}}^{\tau_{j+n}}\! \int_{\N_{\tau_{j+n}}} \!\int_{\C} r^{p-n} \frac{|v_{\uu}|^{q_{p-n}}}{|v_t|^{q_{p-n}}} |h|  |v_{\uu}| \dr \m_{\C} \dr \m_{\N_{\tau_{j+n}}} \dr \tau \\ \lesssim^{\mathstrut}_p \int_{\tau=\tau_{j+n-1}}^{\tau_{j+n}} \bigg( \int_{\N_{\tau}}  \int_{\C} r^{p-n} \frac{|v_{\uu}|^{q_{p-n}}}{|v_t|^{q_{p-n}}} |h|  |v_{\uu}| \dr \m_{\C} \dr   \m_{\N_{\tau}}+\int_{\Sigma_{\tau}} \rho \Big[ |h| \Big] \dr \m_{\Sigma_{\tau}} \bigg) \dr \tau.
\end{multline*}
Now note that $\tau_{j+n}-\tau_{j+n-1}=2^{n-1} \cdot 2^j$. Consequently, combining the last inequality with the energy estimate of Proposition \ref{divergence0}, applied between times $\tau_{j}$ and $\tau$, and the induction hypothesis, we obtain
$$2^j\! \int_{\N_{\tau_{j+n}}}\!  \int_{\C}  r^{p-n} \frac{|v_{\uu}|^{q_{p-n}}}{|v_t|^{q_{p-n}}} |h| |v_{\uu}| \dr \m_{\C} \dr \m_{\N_{\tau_{j+n}}} \! \lesssim^{\mathstrut}_p  \frac{1}{2^{(n-1) j}}\!\int_{\N_{\tau_j}} \!\int_{\C}  r^{p} \frac{|v_{\uu}|^{q_p}}{|v_t|^{q_p}} |h|  |v_{\uu}| \dr \m_{\C} \dr \m_{\N_{\tau_j}}\! + 2^j\!\int_{\Sigma_{\tau_j}}  \rho \Big[  |h|  \Big] \dr \m_{\Sigma_{\tau_j}} \! .$$
Applying again Proposition \ref{rphierar} with the parameters $(p-n,q_{p-n})$, between times $\tau_{j+n}$ and $\tau_{j+n+1}$, yields, in view of Lemma \ref{forapplyrp},
\begin{align*}
 \int_{\tau=\tau_{j+n}}^{\tau_{j+n+1}}\! \int_{\N_{\tau}} \int_{\C}  r^{p-n-1} \frac{|v_{\uu}|^{q_{p-n-1}}}{|v_t|^{q_{p-n-1}}} |h| |v_{\uu}| \dr \m_{\C} \dr \m_{\N_{\tau}} \dr \tau  \, \lesssim^{\mathstrut}_p \, &  \frac{1}{2^{n j}}\!\int_{\N_{\tau_j}}  \int_{\C} r^{p} \frac{|v_{\uu}|^{q_p}}{|v_t|^{q_p}} |h |v_{\uu}| \dr \m_{\C} \dr \m_{\N_{\tau_j}}  \\ &  +\int_{\Sigma_{\tau_j}}  \rho \Big[ |h| \Big] \dr \m_{\Sigma_{\tau_j}} +\int_{\Sigma_{\tau_{j+n}}}  \rho \Big[ |h| \Big] \dr \m_{\Sigma_{\tau_{j+n}}} .
\end{align*}
We then obtain the inequality at the rank $n+1$ by applying once again Proposition \ref{divergence0}, between times $\tau_j$ and $\tau_{j+n}$.

For the second part of the lemma, apply Proposition \ref{rphierar}, with the parameters $(p,q_p)$ and between times $\tau_j$ and $\tau$. This gives,
$$\int_{\N_{\tau}}  \int_{\C} r^{p} \frac{|v_{\uu}|^{q_{p}}}{|v_t|^{q_{p}}} |h| |v_{\uu}| \dr \m_{\N_{\tau}}  \, \lesssim^{\mathstrut}_p  \int_{\N_{\tau_j}} \int_{\C} r^p \frac{|v_{\uu}|^{q_p}}{|v_t|^{q_p}} |h| |v_{\uu}| \dr \m_{\N_{\tau_j}}\!+\int_{\Sigma_{\tau_j}} \rho \Big[ |h|  \Big] \dr \m_{\Sigma_{\tau_j}}.$$
It then remains to integrate the previous inequality between $\tau_j$ and $\tau_{j+1}$.
\end{proof}
\begin{Cor}\label{LemsolVlasovbis2}
Let $a \geq 0$ and $p >0 $. Then, for any $j \in \mathbb{N}$, we have
$$ \int_{\tau= \tau_{j+\lceil p \rceil -1}}^{\tau_{j+\lceil p \rceil}} \mathds{1}_{r \geq R_0} \! \int_{\Sigma_{\tau}} \rho \Big[  |f| |v^{\mathstrut}_{\mathrm{N}}|^a \Big] \dr \m_{\Sigma_{\tau}} \dr \tau  \lesssim^{\mathstrut}_{p}   \frac{1}{2^{(p-1) j}} \! \int_{\Sigma_0}  \rho \left[\!\left(\! 1+ r^{p} \frac{|v_{\uu}|^{q_p}}{|v_t|^{q_p}} \right) \!  |f| |v^{\mathstrut}_{\mathrm{N}}|^a \! \right] \! \dr \m_{\Sigma_0}  +  \int_{\Sigma_{j}} \! \rho \Big[ |f| |v_t|^a \Big] \dr \m_{\Sigma_{j}} \!.$$
\end{Cor}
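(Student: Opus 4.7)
The plan is to combine Lemma \ref{LemsolVlasovbis1}, applied with the maximal index $n = \lceil p \rceil$, with the $r$-weighted energy inequality of Proposition \ref{rphierar}, taking $h := f |v_t|^a$ as the auxiliary solution. Since $\T(f) = 0$ and $\T(v_t) = 0$ by Proposition \ref{Comuprop}, one has $\T(h) = 0$. The hypothesis $p > 0$ gives $\lceil p \rceil \geq 1$, so $n = \lceil p \rceil \in \mathbb{N}^*$ is an admissible index in Lemma \ref{LemsolVlasovbis1}.

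First, I would instantiate Lemma \ref{LemsolVlasovbis1} with this $h$ and $n = \lceil p \rceil$, producing a bound on the integral of $r^{p - \lceil p \rceil}(|v_{\uu}|/|v_t|)^{q_{p - \lceil p \rceil}} |h| |v_{\uu}|$ over $\bigsqcup_{\tau_{j + \lceil p \rceil - 1} \leq \tau \leq \tau_{j + \lceil p \rceil}} \N_\tau$. The key algebraic observation is that $p - \lceil p \rceil \in (-1, 0]$, which by Definition \ref{defsec4} forces $q_{p - \lceil p \rceil} \leq 0$. On $\N_\tau \subset \{r \geq R_0\}$, both factors $r^{p - \lceil p \rceil}$ and $(|v_{\uu}|/|v_t|)^{q_{p - \lceil p \rceil}}$ are therefore bounded below by positive $p$-dependent constants (using $|v_{\uu}| \leq |v_t|$ for the latter, since the exponent is nonpositive). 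Recalling that $\mathrm{N} = \partial_t$ on $\{r \geq R_0 > r_1\}$ and that $\n_{\Sigma_\tau}\vert_{\N_\tau} = \partial_{\uu}$, the integrand satisfies $|h| |v_{\uu}| = |f| |v^{\mathstrut}_{\mathrm{N}}|^a |v \cdot \n_{\Sigma_\tau}|$ there, so the LHS of the lemma dominates a constant times $\int_{\tau_{j + \lceil p \rceil - 1}}^{\tau_{j + \lceil p \rceil}} \int_{\Sigma_\tau} \mathds{1}_{r \geq R_0} \, \rho[|f||v^{\mathstrut}_{\mathrm{N}}|^a] \dr \m_{\Sigma_\tau} \dr \tau$, which is exactly the LHS of the corollary.

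Next, I would bound the first term on the RHS of Lemma \ref{LemsolVlasovbis1} by applying Proposition \ref{rphierar} to $h$ with parameters $(p, q_p)$ between times $0$ and $\tau_j$, which is legitimate since $0 \leq p \leq 2 q_p$ by Lemma \ref{forapplyrp}. This transfers the flux through $\N_{\tau_j}$ to one through $\N_0 \subset \Sigma_0$ plus an energy of $|h|$ on $\Sigma_0$, both of which are absorbed into $\int_{\Sigma_0} \rho[(1 + r^p (|v_{\uu}|/|v_t|)^{q_p}) |f| |v^{\mathstrut}_{\mathrm{N}}|^a]$ after using $|v_t|^a \leq |v^{\mathstrut}_{\mathrm{N}}|^a$. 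Finally, since $\lceil p \rceil - 1 \geq p - 1$, we have $2^{-(\lceil p \rceil - 1) j} \leq 2^{-(p-1) j}$, so the prefactor produced by Lemma \ref{LemsolVlasovbis1} fits within the claimed $2^{-(p-1) j}$, and the residual $\Sigma_{\tau_j}$-energy term from Lemma \ref{LemsolVlasovbis1} supplies the second term on the corollary's RHS. The only mildly delicate step is the weight-removal on the LHS, which hinges crucially on the sign $q_{p - \lceil p \rceil} \leq 0$ guaranteed by Definition \ref{defsec4} when $p - \lceil p \rceil$ lies in $(-1, 0]$; all other steps are direct invocations of estimates already established in this section.
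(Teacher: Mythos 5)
There is a genuine gap in the weight-removal step, and it is precisely the point the corollary's proof has to work hardest on. You claim that on $\{r \geq R_0\}$ the factor $r^{p-\lceil p \rceil}$ is bounded below by a positive constant. When $p \notin \mathbb{N}$ the exponent $p - \lceil p \rceil$ lies in $(-1,0)$, and $r$ is \emph{unbounded above} on $\N_\tau$, so $r^{p-\lceil p \rceil} \to 0$ as $r \to +\infty$: there is no positive lower bound. (Your treatment of the velocity weight is fine, since $|v_{\uu}|/|v_t| \leq 1$ and $q_{p-\lceil p\rceil} \leq 0$ indeed give a factor $\geq 1$; and for $p \in \mathbb{N}^*$ the radial weight is identically $1$ and your argument goes through. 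But the corollary is needed for non-integer $p$ in Proposition \ref{energydecaysec4}.) Consequently the output of Lemma \ref{LemsolVlasovbis1} with $n = \lceil p \rceil$ alone does not dominate the unweighted flux $\int \mathds{1}_{r\geq R_0}\,\rho[|f||v^{\mathstrut}_{\mathrm{N}}|^a]$, and your proof does not close.

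The paper's proof repairs exactly this. It writes the target quantity $\mathbf{Q}$ via H\"older's inequality as a product of the integral carrying the weight $r^{p-\lceil p\rceil}|v_{\uu}/v_t|^{(p-\lceil p\rceil)/2}$ (raised to the power $p-\lceil p\rceil+1$) and the one carrying $r^{p-\lceil p\rceil+1}|v_{\uu}/v_t|^{(p-\lceil p\rceil+1)/2}$ (raised to the power $\lceil p\rceil-p$); the radial exponents then cancel exactly. Applying Lemma \ref{LemsolVlasovbis1} with $n=\lceil p\rceil$ and $n=\lceil p\rceil-1$ respectively, the two factors decay like $2^{-(\lceil p\rceil-1)j}$ and $2^{-(\lceil p\rceil-2)j}$, and the convex combination $(\lceil p\rceil-1)(p-\lceil p\rceil+1)+(\lceil p\rceil-2)(\lceil p\rceil-p) = p-1$ recovers precisely the claimed rate $2^{-(p-1)j}$ after Young's inequality for products. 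Note also that for $p<1$ an extra application of Proposition \ref{divergence0} between times $0$ and $\tau_j$ is needed to absorb the factor $2^{(1-p)j}$ into the $\Sigma_0$-energy; your sketch does not address this case either. The rest of your outline (the instantiation of \eqref{mil} via Proposition \ref{rphierar} with parameters $(p,q_p)$, and the comparison $2^{-(\lceil p\rceil-1)j}\leq 2^{-(p-1)j}$) agrees with the paper.
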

\begin{proof}
Fix $j \in \mathbb{N}$ and apply Proposition \ref{rphierar} to $h=f|v_t|^a$ with the parameters $(p,q_p)$, between times $0$ and $\tau_j$. This gives
$$ \int_{\N_{\tau_j}}  \int_{\C} r^{p} \frac{|v_{\uu}|^{q_p}}{|v_t|^{q_p}} |f| |v_t|^a |v_{\uu}| \dr \m_{\C} \dr \m_{\N_{\tau_j}}  \lesssim^{\mathstrut}_p  \int_{\N_{0}} \int_{\C}  r^{p} \frac{|v_{\uu}|^{q_p}}{|v_t|^{q_p}} |f| |v_t|^a |v_{\uu}| \dr \m_{\C} \dr \m_{\N_{0}}+\int_{\Sigma_0}  \rho \Big[ |f| |v_t|^a \Big] \dr \m_{\Sigma_0}.$$
We then deduce, since $\Sigma_{\tau} \cap \{ r \geq R_0 \} = \N_{\tau}$, $v_{\uu} = v \cdot \n_{\N_{\tau}}$, $|v_t| \leq |v^{\mathstrut}_{\mathrm{N}}|$ and $v_t=v^{\mathstrut}_{\mathrm{N}}$ for $r \geq R_0$ (see \eqref{redshiftdef}), that
\begin{align}\label{mil}
& \int_{\N_{\tau_j}} \! \int_{\C} r^{p} \frac{|v_{\uu}|^{q_p}}{|v_t|^{q_p}} |f| |v_t|^a |v_{\uu}| \dr \m_{\C} \dr \m_{\N_{\tau_j}} \, \lesssim^{\mathstrut}_p \,  \int_{\Sigma_0}  \rho \left[\left( 1+ r^{p} \frac{|v_{\uu}|^{q_p}}{|v_t|^{q_p}} \right) \! |f| |v^{\mathstrut}_{\mathrm{N}}|^a  \right]  \dr \m_{\Sigma_0}  , \\
\nonumber & \mathbf{Q}  \, := \, \int_{\tau= \tau_{j+\lceil p \rceil -1}}^{\tau_{j+\lceil p \rceil}} \mathds{1}_{r \geq R_0} \int_{\Sigma_{\tau}} \rho \Big[  |f| |v^{\mathstrut}_{\mathrm{N}}|^a \Big] \dr \m_{\Sigma_{\tau}} \dr \tau  \, = \, \int_{\tau= \tau_{j+\lceil p \rceil -1}}^{\tau_{j+\lceil p \rceil}}  \int_{\N_{\tau}} \int_{\C}  |f| |v_t|^a |v_{\uu}| \dr \m_{\C} \dr \m_{\N_{\tau}} \dr \tau.
 \end{align}
Hence, if $p \in \mathbb{N}^*$, one only has to apply Lemma \ref{LemsolVlasovbis1} with $h=f|v_t|^a$ and $n=p$. We now assume $p \in \R_+^* \setminus \mathbb{N}$ and we use Hölder's inequality in order to obtain
\begin{multline*}
 \mathbf{Q} \, \lesssim^{\mathstrut}_{p} 
  \left| \int_{\tau= \tau_{j+\lceil p \rceil}+1}^{\tau_{j+\lceil p \rceil}}  \int_{\N_{\tau}} \int_{\C} r^{p-\lceil p \rceil} \frac{|v_{\uu}|^{\frac{p-\lceil p \rceil}{2}}}{|v_t|^{\frac{p-\lceil p \rceil}{2}}}  |f| |v_t|^a |v_{\uu}| \dr \m_{\C} \dr \m_{\N_{\tau}} \dr \tau \right|^{p-\lceil p \rceil+1} \\ \times  \left| \int_{\tau= \tau_{j+\lceil p \rceil}+1}^{\tau_{j+\lceil p \rceil}}  \int_{\N_{\tau}} \int_{\C} r^{p-\lceil p \rceil+1} \frac{|v_{\uu}|^{\frac{p-\lceil p \rceil+1}{2}}}{|v_t|^{\frac{p-\lceil p \rceil+1}{2}}}  |f| |v_t|^a |v_{\uu}| \dr \m_{\C} \dr \m_{\N_{\tau}} \dr \tau \right|^{\lceil p \rceil-p}.
  \end{multline*}
Remark now that, since $-1<p-\lceil p  \rceil <0$ and $0 < p-\lceil p \rceil+1< 1$, we have $2q_{p-\lceil p \rceil}= p-\lceil p \rceil$ and $2q_{p-\lceil p \rceil+1}=p-\lceil p \rceil+1$. So, using the last inequality, Lemma \ref{LemsolVlasovbis1}, applied for $n= \lceil p \rceil $ and $n=\lceil p \rceil-1$, as well as Young's inequality for products $A^{\frac{s-1}{s}}B^{\frac{1}{s}} \leq \frac{s-1}{s}A+ \frac{1}{s}B$, with $\frac{1}{s} = \lceil p \rceil-p$, we get
$$\mathbf{Q} \, \lesssim^{\mathstrut}_p  \frac{1}{2^{(p-1) j}}\int_{\N_{\tau_j}}  \int_{\C} r^{p} \frac{|v_{\uu}|^{q_p}}{|v_t|^{q_p}} |f| |v_t|^a |v_{\uu}| \dr \m_{\C} \dr \m_{\N_{\tau_j}}  + \max (1, 2^{(1-p)j} ) \int_{\Sigma_{\tau_j}}  \rho \Big[ |f| |v_t|^a \Big] \dr \m_{\Sigma_{\tau_j}}  .$$
The result then follows from \eqref{mil} and, if $p< 1$, Proposition \ref{divergence0}, applied between times $0$ and $\tau_j$.
\end{proof}
We are now able to conclude the proof of Proposition \ref{energydecaysec4}, which is implied by applying the following result for $n=\lceil p \rceil$ and by noticing that $|v_{\uu}|^{q_p}\leq |v_t|^{q_p-p/2} |v_{\uu}|^{p/2}$.
\begin{Pro}\label{rpgenera}
Let $(a,p) \in \R_+^2$ and $1<s \leq 1+\langle 5 p \rangle^{-2}$. For any $n \in \llbracket 0, \lceil p \rceil \rrbracket$, we have for all $\tau \in \R_+$, 
\begin{align*}
 \int_{\Sigma_{\tau}} \rho \Big [|f| |v^{\mathstrut}_{\mathrm{N}}|^a \Big] \dr \m_{\Sigma_{\tau}} \lesssim_{a,p,s} \, & \frac{1}{(1+\tau)^{\min(p, n)}}\int_{\Sigma_0} \rho \left[ \left(1+ r^{p} \frac{|v_{\uu}|^{q_{p}}}{|v_t|^{q_{p}}} \right)|f||v^{\mathstrut}_{\mathrm{N}}|^{a } \right] \dr \m_{\Sigma_0} \\
 & +\frac{1}{(1+\tau)^{\min(p, n)}} \left|\int_{\Sigma_0} \rho \left[ \left(1+ r^{p} \frac{|v_{\uu}|^{q_p}}{|v_t|^{q_p}} \right)\! |f|^{s^{2n}}\langle v_t \rangle^{(4+a)(s^{2n}-1)}|v^{\mathstrut}_{\mathrm{N}}|^{a } \right] \dr \m_{\Sigma_0} \right|^{s^{-2n}} .
\end{align*}
\end{Pro}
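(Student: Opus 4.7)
The plan is to argue by induction on $n \in \llbracket 0, \lceil p \rceil \rrbracket$. The base case $n = 0$ has $\zeta_0(s) = 0$, so the estimate reduces to the uniform bound $\int_{\Sigma_\tau}\rho[|f||v^{\mathstrut}_{\mathrm{N}}|^a] \lesssim^{\mathstrut}_{a,p} \int_{\Sigma_0}\rho[(1+r^p|v_{\uu}|^{q_p}/|v_t|^{q_p})|f||v^{\mathstrut}_{\mathrm{N}}|^a]$, which follows from the first estimate of Proposition \ref{energydecaysec4} since $q_p \geq p/2$ and $|v_{\uu}| \leq |v_t|$.

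For the inductive step from $n$ to $n+1$ (with $n+1 \leq \lceil p \rceil$), I would mimic the dyadic scheme from the proof of Proposition \ref{casep=12}. Set $\tau_k := 2^k$, fix $\tau \geq 2^{\lceil p \rceil +1}$, and choose $j$ so that $\tau_{j+\lceil p \rceil} \leq \tau < \tau_{j+\lceil p \rceil +1}$. Over the slab $[\tau_{j+\lceil p \rceil -1}, \tau_{j+\lceil p \rceil}]$ of length $\sim \tau$, decompose $\int_{\Sigma_{\tau'}}\rho[|f||v^{\mathstrut}_{\mathrm{N}}|^a]$ into its $\{r \leq R_0\}$ and $\{r \geq R_0\}$ parts, controlled respectively by the integrated local energy decay of Proposition \ref{ILED} (in its $\langle v_t \rangle$-weighted form) and by Corollary \ref{LemsolVlasovbis2}. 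This produces an integrated bound composed of a piece of size $\tau^{-(p-1)}\int_{\Sigma_0}\rho[(1+r^p\ldots)|f||v^{\mathstrut}_{\mathrm{N}}|^a]$, the first-order energies at times $\tau_j$ and $\tau_{j+\lceil p \rceil-1}$, and a higher-moment contribution of the form $\tau^{(s-1)/s}\left|\int_{\Sigma_{\tau_{j+\lceil p \rceil-1}}}\rho[\langle v_t\rangle^{4(s-1)}|f|^s|v_t|^{as}]\right|^{1/s}$.

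I would then invoke the induction hypothesis at rank $n$ in two ways: to $f$ itself with parameters $(a,p)$ to handle the first-order energy terms, and to the function $g := \langle v_t\rangle^{4(s-1)}|f|^s$ (which solves $\T(g)=0$ since $\T(v_t)=0$ and $\T(|f|)=0$ almost everywhere) with parameters $(as,p)$ to handle the higher-moment term, after the trivial bound $|v_t|^{as} \leq |v^{\mathstrut}_{\mathrm{N}}|^{as}$. The bookkeeping hinges on the algebraic identities
\begin{equation*}
g^{s^n}\langle v_t \rangle^{4(s^n-1)} = |f|^{s^{n+1}}\langle v_t \rangle^{4(s^{n+1}-1)}, \qquad s^n \cdot (as) = s^{n+1}a, \qquad \zeta_{n+1}(s) = s^{-1}(1+\zeta_n(s)),
\end{equation*}
which together guarantee that the rank-$n$ higher-moment norm of $g$ coincides with the rank-$(n+1)$ higher-moment norm of $f$, and that the ILED prefactor $\tau^{(s-1)/s}$ combined with the induction factor $\tau^{-\zeta_n(s)/s}$ collapses precisely to $\tau^{1-\zeta_{n+1}(s)}$.

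Dividing the integrated bound by the slab length $\sim \tau$ gives, via the mean-value principle, a time $\tau^{\ast} \in [\tau_{j+\lceil p \rceil-1}, \tau_{j+\lceil p \rceil}]$ at which the energy satisfies the rank-$(n+1)$ inequality; this is then propagated forward to $\tau$ using the monotonicity of Proposition \ref{energyredshift}, which also handles small $\tau$. The main obstacle is the combinatorial repackaging of the intermediate initial-data terms: after applying induction to $g$, one is left with norms carrying $|v^{\mathstrut}_{\mathrm{N}}|^{as}$-type weights in place of $|v_t|^{s^n a}$, and these must be absorbed into the two canonical norms appearing on the right-hand side of the statement at rank $n+1$. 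This is handled by repeated application of Lemma \ref{interpol} with $h_1 = (1+r^p|v_{\uu}|^{q_p}/|v_t|^{q_p})|f||v^{\mathstrut}_{\mathrm{N}}|^a$ and a carefully chosen $h_2$, combined with Young's inequality $A^{1/s}B^{(s-1)/s} \leq s^{-1}A + (s-1)s^{-1}B$ to absorb cross terms and enforce the $\min(p, \zeta_{n+1}(s))$ cap on the decay rate in the regime $p < \zeta_{n+1}(s)$.
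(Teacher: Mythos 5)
Your overall architecture coincides with the paper's: the same dyadic slabs $[\tau_{j+\lceil p\rceil-1},\tau_{j+\lceil p\rceil}]$, the same splitting of the slab integral into the $\{r\le R_0\}$ part (Proposition \ref{ILED}) and the $\{r\ge R_0\}$ part (Corollary \ref{LemsolVlasovbis2}), the same double application of the induction hypothesis to $f$ and to the higher moment, the same identity $\zeta_{n+1}(s)=s^{-1}(1+\zeta_n(s))$, and Lemma \ref{interpol} to repackage the intermediate norms; the endpoint energy is recovered from the slab average by the monotonicity of Proposition \ref{energyredshift}, exactly as in Proposition \ref{casep=12}. The base case via the first estimate of Proposition \ref{energydecaysec4} (or, more directly, Proposition \ref{energyredshift}) is fine.

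There is, however, one step that fails as written: the treatment of the higher-moment term. You first enlarge $|v_t|^{as}$ to $|v^{\mathstrut}_{\mathrm{N}}|^{as}$ and then apply the induction hypothesis to $g=\langle v_t\rangle^{4(s-1)}|f|^s$ with exterior exponent $as$. The first of the two resulting initial-data norms is then $\int_{\Sigma_0}\rho\big[\big(1+r^p|v_{\uu}|^{q_p}/|v_t|^{q_p}\big)\langle v_t\rangle^{4(s-1)}|f|^s|v^{\mathstrut}_{\mathrm{N}}|^{as}\big]$, and this intermediate quantity cannot be absorbed into the two canonical rank-$(n+1)$ norms by Lemma \ref{interpol}: any choice of $h_1,h_2$ for which $|h_1||h_2|^{s-1}$ dominates it forces the endpoint $|h_1||h_2|^{s^{n+1}-1}$ to carry $|v^{\mathstrut}_{\mathrm{N}}|^{as^{n+1}}$, which is \emph{not} controlled by the stated $|v_t|^{s^{n+1}a}$-weighted norm, because $|v^{\mathstrut}_{\mathrm{N}}|/|v_t|$ is unbounded as $r\to 2M$ (this is the whole point of the red-shift weight). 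Your scheme therefore only closes for a weakened version of the proposition in which $|v_t|^{s^na}$ is replaced by $|v^{\mathstrut}_{\mathrm{N}}|^{s^na}$ in the second norm, which in turn would degrade the initial-data norms of Theorem \ref{theorem} near the horizon. The fix is the one the paper uses: do not enlarge the weight; apply the induction hypothesis at rank $n$ to the full product $\langle v_t\rangle^{4(s-1)}|f|^s|v_t|^{sa}$ with exterior exponent $a'=0$, so that the intermediate norm carries only $|v_t|^{sa}$, and then apply Lemma \ref{interpol} with $h_1=\big(1+r^p|v_{\uu}|^{q_p}/|v_t|^{q_p}\big)|v_t|^af$ and $h_2=\langle v_t\rangle^4 f|v_t|^a$, $i=n+1$, $k\in\{1,n\}$, whose two endpoints are exactly the canonical norms (using $|v_t|\le|v^{\mathstrut}_{\mathrm{N}}|$ only for the $\int\rho[|h_1|]$ term, where the inequality goes the right way).
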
 
\begin{proof}
Let $(a,p) \in \R_+^2$, $1<s \leq 1+\langle 5 p \rangle^{-2}$ and note that if $n=0$, it suffices to apply the energy estimate of Proposition \ref{energyredshift}. Assume then that $p>0$ and that the result holds at the rank $n \in \llbracket 0, \lceil p \rceil -1 \rrbracket$. According to Proposition \ref{ILED}, applied between times $\tau_{j+\lceil p \rceil}$ and $\tau_{j}$,
\begin{align*}
 \int_{\tau=\tau_{j+\lceil p \rceil -1}}^{\tau_{j+\lceil p \rceil }} \! \mathds{1}_{r \leq R_0} & \int_{\Sigma_{\tau}} \rho \Big[ |f| |v^{\mathstrut}_{\mathrm{N}}|^a \Big]  \dr \m_{\Sigma_{\tau}} \dr \tau \leq \int_{\tau=\tau_{j}}^{\tau_{j+\lceil p \rceil }} \! \mathds{1}_{r \leq R_0} \! \int_{\Sigma_{\tau}} \rho \Big[ |f| |v^{\mathstrut}_{\mathrm{N}}|^a \Big]  \dr \m_{\Sigma_{\tau}} \dr \tau \\ & \lesssim^{\mathstrut}_{s}  \int_{\Sigma_{\tau_j}} \! \rho \Big[ |f| |v^{\mathstrut}_{\mathrm{N}}|^a \Big] \dr \m_{\Sigma_{\tau_j}} + \left| \int_{\Sigma_{\tau_j}}  \rho \left[\left\langle r^{2(s-1)}\frac{|v_{\underline{u}}|}{|v_t|} \right\rangle \langle v_t \rangle^{(4+a)(s-1)}  |f|^s  |v^{\mathstrut}_{\mathrm{N}}|^{a}\right] \dr \m_{\Sigma_{\tau_j}} \right|^{\frac{1}{s}}\! .
 \end{align*}
Moreover, according to Proposition \ref{energyredshift}, applied between times $\tau$ and $\tau_{j+\lceil p \rceil}$, and $\tau_{j+\lceil p \rceil}-\tau_{j+\lceil p \rceil-1}= 2^{\lceil p \rceil-1} \cdot 2^j$,
$$  \int_{\Sigma_{\tau_{j+\lceil p \rceil}}} \rho \Big[ |f| |v^{\mathstrut}_{\mathrm{N}}|^a \Big]  \dr \m_{\Sigma_{\tau_{j+\lceil p \rceil}}}  \, \lesssim^{\mathstrut}_a \frac{1}{ 2^{\lceil p \rceil-1}  \cdot 2^j} \int_{\tau=\tau_{j+\lceil p \rceil-1}}^{\tau_{j+\lceil p \rceil}}   \int_{\Sigma_{\tau}} \rho \Big[ |f| |v^{\mathstrut}_{\mathrm{N}}|^a \Big]  \dr \m_{\Sigma_{\tau}} \dr \tau  .$$
Combining the last two estimates with Corollary \ref{LemsolVlasovbis2} yields, 
\begin{align} 
 \nonumber \int_{\Sigma_{\tau_{j+\lceil p \rceil}}} \! \rho \Big[ |f| |v^{\mathstrut}_{\mathrm{N}}|^a \Big] \dr \m_{\Sigma_{\tau_{j+\lceil p \rceil}}}    \lesssim^{\mathstrut}_{a,p,s} & \, \frac{1}{2^{pj}} \int_{\Sigma_{0}}  \rho \left[ \left( 1+r^{p} \frac{|v_{\uu}|^{q_p}}{|v_t|^{q_p}} \right) |f| |v^{\mathstrut}_{\mathrm{N}}|^a \right] \dr \m_{\Sigma_{0}}  + \frac{1}{2^j} \! \int_{\Sigma_{\tau_j}} \! \rho \Big[ |f| |v^{\mathstrut}_{\mathrm{N}}|^a \Big] \dr \m_{\Sigma_{\tau_j}} \\ 
 & + \frac{1}{2^{j}}  \left| \int_{\Sigma_{\tau_j}} \! \rho \left[\left(1+ r^{2(s-1)}\frac{|v_{\underline{u}}|}{|v_t|} \right) \langle v_t \rangle^{(4+a)(s-1)}  |f|^s  |v^{\mathstrut}_{\mathrm{N}}|^{a}\right] \! \dr \m_{\Sigma_{\tau_j}} \right|^{\frac{1}{s}} \! . \label{eq:trizek}
\end{align}
If $n=0$, it remains to bound the last two terms by applying Proposition \ref{Proenergynorm} between times $0$ and $\tau_j$. Otherwise, $n \geq 1$ and then $p \geq 1$. We apply the induction hypothesis to $f$ at the rank $n$ in order to bound the second term on the right hand side of \eqref{eq:trizek}. We get, using the energy norms introduced in Definition \ref{Defenergynorm} and applying Proposition \ref{Proenergynorm},
$$2^{nj} \!\int_{\Sigma_{\tau_j}} \! \rho \Big[ |f| |v^{\mathstrut}_{\mathrm{N}}|^a \Big] \dr \m_{\Sigma_{\tau_j}} \! \lesssim_{a,p,s}  \mathbb{E}_{p,q_p}^{1,a}[f](0)+\left| \mathbb{E}_{p,q_p}^{s^{2n},a}[f](0) \right|^{\frac{1}{s^{2n}}}\lesssim_p   \mathbb{E}_{p,q_p}^{1,a}[f](0)+\left| \mathbb{E}_{p,q_p}^{s^{2n+1},a}[f](0) \right|^{\frac{1}{s^{2n+1}}}  .$$
For the last term on the right hand side in \eqref{eq:trizek}, which is equal to $|\mathbb{E}_{2(s-1),1}^{s,a}[f](\tau_j)|^{1/s}$, we start by applying Hölder inequality with exponents $\frac{p}{2(s-1)}$ and $\frac{p}{p-2(s-1)}$. As $|v_{\uu}| \leq |v_t|$ and $q_p \leq \frac{p}{2(s-1)}$,

$$ |\mathbb{E}_{2(s-1),1}^{s,a}[f](\tau_j)|^{\frac{1}{s}} \leq \left| \mathbb{E}_{p,q_p}^{s,a}[f](\tau_j)\right|^{\frac{2(s-1)}{ps}} \, \left| \mathbb{E}_{0,0}^{s,a}[f](\tau_j) \right|^{\frac{p-2(s-1)}{ps}} \leq \left| \mathbb{E}_{p,q_p}^{s,a}[f](0)\right|^{\frac{2(s-1)}{ps}} \, \left| \mathbb{E}_{0,0}^{s,a}[f](\tau_j) \right|^{\frac{p-2(s-1)}{ps}}  .$$  
We then deduce that
\begin{align}
 \nonumber \int_{\Sigma_{\tau_{j+\lceil p \rceil}}} \! \rho \Big[ |f| |v^{\mathstrut}_{\mathrm{N}}|^a \Big] \dr \m_{\Sigma_{\tau_{j+\lceil p \rceil}}}   & \lesssim^{\mathstrut}_{a,p,s}  \, \frac{1}{2^{\min(p,(n+1))j}} \left( \mathbb{E}_{p,q_p}^{1,a}[f](0)+\left| \mathbb{E}_{p,q_p}^{s^{2n+1},a}[f](0) \right|^{\frac{1}{s^{2n+1}}} \right) \\
 & +\frac{1}{2^j}\left| \mathbb{E}_{p,q_p}^{s,a}[f](0)\right|^{\frac{2(s-1)}{ps}} \, \left| \int_{\Sigma_{\tau_{j}}} \! \rho \Big[\langle v_t \rangle^{(4+a)(s-1)} |f| |v^{\mathstrut}_{\mathrm{N}}|^a \Big] \dr \m_{\Sigma_{\tau_{j}}}  \right|^{\frac{p-2(s-1)}{ps}}\! . \label{eq:pevarzek}
\end{align}
The constant
$$ \alpha := \frac{p-2(s-1)}{ps}$$
will play an important role in the remainder of the proof. Moreover, it will be useful to remark that
$$ \forall \, k \in \mathbb{N}, \qquad \langle v_t \rangle^{(4+a)(s^k-1)}\big|\langle v_t \rangle^{(4+a)(s-1)}|f|^{s} \big|^{s^k}=\langle v_t \rangle^{(4+a)(s^{k+1}-1)}|f|^{s^{k+1}} .$$
Applying the induction hypothesis to $\langle v_t \rangle^{(4+a)(s-1)} f$, at the rank $n$, and then Proposition \ref{Proenergynorm} yields
\begin{equation}\label{eq:reutiliserla}
 \int_{\Sigma_{\tau_{j+\lceil p \rceil}}} \! \rho \Big[ |f| |v^{\mathstrut}_{\mathrm{N}}|^a \Big] \dr \m_{\Sigma_{\tau_{j+\lceil p \rceil}}} \!   \lesssim  \! \left( \frac{1}{2^{\min(p,(n+1))j}}+\frac{1}{2^{j+\alpha nj}} \right) \! \left( \mathbb{E}_{p,q_p}^{1,a}[f](0)+\left| \mathbb{E}_{p,q_p}^{s^{2n+1},a}[f](0) \right|^{\frac{1}{s^{2n+1}}} \right) \!.
\end{equation}
If $1+\alpha n \geq p$, then a new application of Proposition \ref{Proenergynorm} gives us the result at the rank $n+1$ for any $\tau \in \{ \tau_j \, , \, j \geq \lceil p \rceil\}$. Otherwise, we apply \eqref{eq:reutiliserla} to $\langle v_t \rangle^{(4+a)(s-1)} f$, so that we get from \eqref{eq:pevarzek},
\begin{align*}
 \int_{\Sigma_{\tau_{j+2\lceil p \rceil}}} \! \rho \Big[ |f| |v^{\mathstrut}_{\mathrm{N}}|^a \Big] \dr \m_{\Sigma_{\tau_{j+2\lceil p \rceil}}}    \lesssim^{\mathstrut}_{a,p,s}  &  \frac{1}{2^{\min(p,(n+1))j}} \left( \mathbb{E}_{p,q_p}^{1,a}[f](0)+\left| \mathbb{E}_{p,q_p}^{s^{2n+1},a}[f](0) \right|^{\frac{1}{s^{2n+1}}} \right)  \\
 & +\frac{1}{2^{j+\alpha(j+\alpha nj)}} \left( \mathbb{E}_{p,q_p}^{1,a}[f](0)+\left| \mathbb{E}_{p,q_p}^{s^{2n+2},a}[f](0) \right|^{\frac{1}{s^{2n+2}}} \right).
\end{align*}
Since $s \leq 1+\frac{1}{5p^2}$, we have $\alpha(j+\alpha nj) \geq nj$. Consequently, in any cases, we have
\begin{equation}\label{eq:reutiliserla2}
 \int_{\Sigma_{\tau_{j+2\lceil p \rceil}}} \! \rho \Big[ |f| |v^{\mathstrut}_{\mathrm{N}}|^a \Big] \dr \m_{\Sigma_{\tau_{j+2\lceil p \rceil}}}    \lesssim^{\mathstrut}_{a,p,s}  \! \frac{1}{2^{\min(p,(n+1))j}} \! \left( \mathbb{E}_{p,q_p}^{1,a}[f](0)+\left| \mathbb{E}_{p,q_p}^{s^{2n+2},a}[f](0) \right|^{\frac{1}{s^{2n+2}}} \right) \!.
\end{equation}
We are now able to prove the result at the rank $n+1$.
\begin{itemize}
\item If $\tau \geq \tau_{2\lceil p \rceil}$, there exists $j \in \mathbb{N}$ such that $\tau_{j+2\lceil p \rceil} \leq \tau < \tau_{j+2\lceil p \rceil+1}$. Then, by Proposition \ref{energyredshift},
$$ \int_{\Sigma_{\tau}} \rho \Big[ |f| |v^{\mathstrut}_{\mathrm{N}}|^a \Big]  \dr \m_{\Sigma_{\tau}} \, \lesssim_{a}^{\mathstrut} \,  \int_{\Sigma_{\tau_{j+2\lceil p \rceil}}} \rho \Big[ |f| |v^{\mathstrut}_{\mathrm{N}}|^a \Big] \dr \m_{\Sigma_{\tau_{j+\lceil p \rceil}}},  \qquad 1+\tau \lesssim^{\mathstrut}_p 2^j.$$
Combining this with \eqref{eq:reutiliserla2} yields the estimate at the rank $n+1$ in the case $\tau \geq \tau_{2\lceil p \rceil}$.
\item Otherwise, $1+\tau \leq 1+2^{2\lceil p \rceil}$ and one only has to apply Proposition \ref{energyredshift} between times $0$ and $\tau$.
\end{itemize}
\end{proof}
\begin{Rq}
In order to simplify the presentation, we did not consider the case where $\T(f)=F$, with $F$ a source term decaying sufficiently fast. Otherwise, we would have to add spacetime integrals such as
$$\frac{1}{(1+\tau)^p} \int_0^{\tau} \int_{\Sigma_{\tau'}} \int_{\C} r^{p-n} \frac{|v_{\uu}|^{q_{p-n}}}{|v_t|^{q_{p-n}}}(1+\tau)^n |\T(f)| |v^{\mathstrut}_{\mathrm{N}}|^a \dr \m_{\C} \dr \m_{\Sigma_{\tau'}} \dr \tau', \qquad n \in \mathbb{N},$$
to the right hand side of the estimates proved in this subsection.
\end{Rq}
\section{Pointwise decay estimates}\label{sec5}

\subsection{Preparatory results}
The purpose of this subsection is to express the radial derivative of $\rho[|f| |v^{\mathstrut}_{\mathrm{N}}|]$, for $f$ a solution to the massless Vlasov equation, in terms of derivatives of $f$ that we can control in $L^1 $ by applying the results obtained previously in this paper. This will allow us to prove pointwise decay estimates on the velocity average of $f$ through Sobolev inequalities.

We start with useful computations.
\begin{Lem}\label{calculvstar}
For any $i \in \llbracket 1, 3 \rrbracket$, there holds almost everywhere
$$ \widehat{\Omega}_i (v_{r^*})=0, \qquad \widehat{\Omega}_i (|\slashed{v}|)=0, \qquad \widehat{\Omega}_i (v_t)=0, \qquad \widehat{\Omega}_i (v^{\mathstrut}_{\mathrm{N}})=0.$$
For any $a \in \R_+$ and $0 \leq p \leq 2q$, we have on $\widehat{\Rm}_{-\infty}^{+\infty}$,
$$ \bigg|\partial_{r} \! \bigg( \frac{|v^{\mathstrut}_{\mathrm{N}}|^a}{r^2|v_t|^2} \bigg) \! \bigg|  \lesssim^{\mathstrut}_{a} \! \frac{|v^{\mathstrut}_{\mathrm{N}}|^a}{r^2|v_t|^2}, \qquad  \left|\partial_{\uu} \! \left( \frac{r^p|v_{\uu}|^{q+1}}{r^2|v_t|^3} \right) \!\right|  \lesssim^{\mathstrut}_{q} \! \frac{r^p|v_{\uu}|^{q+1}}{r^2|v_t|^3} , \qquad  \left|\partial_{u} \! \left( \frac{r^p|v_{\uu}|^{q}|v_u|}{r^2|v_t|^3} \right)\! \right|  \lesssim^{\mathstrut}_{q}  \! \frac{r^p|v_{\uu}|^{q}|v_u|}{r^2|v_t|^3}. $$
For any $s \in [1,+\infty[$, any vector field $V \in \{ \partial_t, \widehat{\Omega}_1 , \widehat{\Omega}_2, \widehat{\Omega}_3 , \T \}$ and any locally $W^{1,s}$ function $h$,
\begin{equation}\label{pderiv}
 \left| V (|h|^s ) \right| \, \leq \, |V(h)|^s + (s-1)|h|^s.
 \end{equation}
\end{Lem}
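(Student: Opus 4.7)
The plan is to establish the three blocks of claims independently, each by a direct computation.

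For the first group of identities, $\widehat{\Omega}_i(v_{r^*})=0$ is read off the definitions \eqref{def1}--\eqref{def3}: none of $\widehat{\Omega}_1,\widehat{\Omega}_2,\widehat{\Omega}_3$ involves $\partial_{r^*}$, $\partial_r$ or $\partial_{v_{r^*}}$, so they annihilate the coordinate $v_{r^*}$ as well as the function $r$. For $|\slashed{v}|$ the case $i=3$ is immediate since $\widehat{\Omega}_3=\partial_\varphi$, and the cases $i=1,2$ follow either by a direct trigonometric computation on $|\slashed{v}|^2$ or from the identification $|\slashed{v}|^2=v(\Omega_1)^2+v(\Omega_2)^2+v(\Omega_3)^2$ recalled just before Proposition \ref{Comuprop}, together with the manifest $SO_3(\R)$-invariance of the total angular momentum. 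The mass-shell relation $v_t^2=v_{r^*}^2+(1-\tfrac{2M}{r})|\slashed{v}|^2/r^2$ then yields $\widehat{\Omega}_i(v_t)=0$, and since the redshift weight $v^{\mathstrut}_{\mathrm{N}}$ defined in \eqref{redshiftdef} is a function only of $r$, $v_t$ and $v_u=(v_t-v_{r^*})/2$, the chain rule gives $\widehat{\Omega}_i(v^{\mathstrut}_{\mathrm{N}})=0$ as well.

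For the derivative bounds the strategy is a direct Leibniz computation controlling the logarithmic derivative of each factor separately. The one non-trivial input is the formula $\partial_r v_t=-(1-\tfrac{3M}{r})|\slashed{v}|^2/(r^3 v_t)$, obtained by differentiating the mass-shell relation, from which \eqref{defderiv} immediately gives $\partial_{\uu} v_t=\tfrac{1}{2}(1-\tfrac{2M}{r})\partial_r v_t$ and $\partial_u v_t=-\tfrac{1}{2}(1-\tfrac{2M}{r})\partial_r v_t$; differentiation of $v^{\mathstrut}_{\mathrm{N}}$ from \eqref{redshiftdef} then reduces to derivatives of $\chi(r)$, $1/(1-\tfrac{2M}{r})$, $v_t$ and $v_u$. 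Using \eqref{vu} to rewrite $|\slashed{v}|^2/r^2=4|v_{\uu}||v_u|/(1-\tfrac{2M}{r})$, and the equivalences proved in Lemma \ref{comparo} together with $|v_t|\leq|v^{\mathstrut}_{\mathrm{N}}|$, one can absorb every factor $1/(1-\tfrac{2M}{r})$ arising from the differentiation of the redshift weight into the corresponding power of $|v^{\mathstrut}_{\mathrm{N}}|$ already sitting in the numerator. The main obstacle lies precisely in this bookkeeping near the event horizon: one must verify, term by term, that the prescribed powers $|v^{\mathstrut}_{\mathrm{N}}|^a$, $|v_{\uu}|^{q+1}$ and $|v_{\uu}|^q|v_u|$ in the numerators, together with the $|v_t|^{-2}$ or $|v_t|^{-3}$ in the denominators, exactly compensate the degeneration of the coefficients $1/(1-\tfrac{2M}{r})^2$ produced when $\partial_r$ hits $v^{\mathstrut}_{\mathrm{N}}$ near $r=2M$.

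For the inequality \eqref{pderiv}, I would first handle the case of smooth $h$. At points where $h\neq 0$ the identity $V(|h|^s)=s|h|^{s-1}\mathrm{sign}(h)\,V(h)$ holds, and Young's inequality with conjugate exponents $s$ and $s/(s-1)$ gives $s|h|^{s-1}|V(h)|\leq|V(h)|^s+(s-1)|h|^s$; at points where $h=0$ both $V(|h|^s)$ and $|h|^s$ vanish for smooth $h$, so the inequality holds everywhere. The general case $h\in W^{1,s}_{\mathrm{loc}}$ follows from the standard chain rule for $|\cdot|^s$ applied to Sobolev functions, which provides the pointwise identity above almost everywhere, the vector fields $V\in\{\partial_t,\widehat{\Omega}_1,\widehat{\Omega}_2,\widehat{\Omega}_3,\T\}$ being smooth with coefficients locally bounded on $\widehat{\Rm}_{-\infty}^{+\infty}$.
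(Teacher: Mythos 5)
Your treatment of the first group of identities and of \eqref{pderiv} is correct and essentially identical to the paper's: the annihilation of $v_{r^*}$ and $|\slashed{v}|^2$ is checked directly (the paper performs the trigonometric computation for $\widehat{\Omega}_1,\widehat{\Omega}_2$; your $SO_3(\R)$-invariance remark is an acceptable substitute), the identities for $v_t$ and $v^{\mathstrut}_{\mathrm{N}}$ follow by the chain rule, and \eqref{pderiv} is Young's inequality exactly as in the paper.

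The middle block is where the content lies, and this is precisely the part you have not carried out. The paper does not argue by ``absorbing factors of $(1-\frac{2M}{r})^{-1}$ into powers of $|v^{\mathstrut}_{\mathrm{N}}|$''. It computes the $\partial_{r^*}$-derivatives explicitly: differentiating the mass-shell relation and using $(1-\frac{2M}{r})\frac{|\slashed{v}|^2}{r^2}=4|v_u||v_{\uu}|$ gives $\partial_{r^*}v_t$ as a bounded multiple of $\frac{(r-3M)|v_u||v_{\uu}|}{r^2 v_t}$, hence $\partial_{r^*}|v_{\uu}|=\partial_{r^*}|v_u|=\frac{1}{2}\partial_{r^*}|v_t|$ and $\partial_{r^*}\big(\frac{|v_u|}{1-\frac{2M}{r}}\big)=\big(\frac{4(r-3M)|v_{\uu}|}{r^2v_t}-\frac{2M}{r^2}\big)\frac{|v_u|}{1-\frac{2M}{r}}$. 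Since $|v_u|,|v_{\uu}|\leq|v_t|$ and $|r-3M|\leq r$, every one of these logarithmic $\partial_{r^*}$-derivatives is bounded by an absolute constant; the crucial structural point is that $\partial_{r^*}$ applied to $(1-\frac{2M}{r})^{-1}$ returns $-\frac{2M}{r^2}(1-\frac{2M}{r})^{-1}$, so no degenerate factor is ever produced. The second and third displayed bounds then follow at once because $\partial_{\uu}$ and $\partial_u$ act as $\pm\frac{1}{2}\partial_{r^*}$ on these $\partial_t$-invariant weights, and no horizon issue arises there at all (those weights do not involve $v^{\mathstrut}_{\mathrm{N}}$ or $(1-\frac{2M}{r})^{-1}$), contrary to what your write-up suggests.

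The absorption you propose, and defer to an unperformed ``term by term'' check, in fact fails for the first displayed bound as printed: there the derivative is $\partial_r=(1-\frac{2M}{r})^{-1}\partial_{r^*}$, and the extra factor cannot be absorbed into $|v^{\mathstrut}_{\mathrm{N}}|$. Concretely, fix $v_{r^*}=1$ and $|\slashed{v}|\neq0$ and let $r\to2M^+$: then $|v_t|\to1$, $|v_u|\to1$, so $|v^{\mathstrut}_{\mathrm{N}}|\sim 2(1-\frac{2M}{r})^{-1}$ and $\frac{|v^{\mathstrut}_{\mathrm{N}}|^a}{r^2|v_t|^2}\sim C(1-\frac{2M}{r})^{-a}$, while the dominant term of its $\partial_r$-derivative, coming from $\partial_r$ hitting $(1-\frac{2M}{r})^{-1}$ inside $v^{\mathstrut}_{\mathrm{N}}$, is of size $(1-\frac{2M}{r})^{-a-1}$; the ratio blows up like $(1-\frac{2M}{r})^{-1}$ and no compensation occurs. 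What the paper's computation actually establishes is the estimate with $\partial_{r^*}$ in place of $\partial_r$ (equivalently, the printed inequality multiplied by $1-\frac{2M}{r}$), and that is the statement your argument should target; as written, your proposal both leaves the decisive computation undone and asserts a cancellation that does not hold for that term.
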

\begin{proof}
We start by the first part of the Lemma and we fix $1 \leq i \leq 3$. In view of the expression of $\widehat{\Omega}_i$ (see \eqref{def1}-\eqref{def3}), we have $\widehat{\Omega}_i (v_{r^*})=0$. For the second equality, since $(v_{\theta},v_{\varphi}) \neq 0$ almost everywhere, it is sufficient to prove $\widehat{\Omega}_i(|\slashed{v}|^2)=0$. Recall that $|\slashed{v}|^2:= |v_{\theta}|^2 + \frac{|v_{\varphi}|^2}{\sin^2 ( \theta)}$, so that
\begin{align*}
\widehat{\Omega}_1 (|\slashed{v}|^2) \hspace{1mm} & = \hspace{1mm} \sin (\varphi) \frac{\cot(\theta) |v_{\varphi}|^2}{\sin^2(\theta)}-\cos (\varphi)\frac{v_{\varphi}}{\sin^2 (\theta)}v_{\theta} +\cos (\varphi) v_{\theta}\frac{v_{\varphi}}{\sin^2 (\theta)}-\sin (\varphi) \cot(\theta) v_{\varphi} \frac{v_{\varphi}}{\sin^2 (\theta)} \hspace{1mm} = \hspace{1mm} 0,\\
\widehat{\Omega}_2 (|\slashed{v}|^2) \hspace{1mm} & = \hspace{1mm}  -\cos (\varphi) \frac{\cot(\theta) |v_{\varphi}|^2}{\sin^2(\theta)}-\sin (\varphi)\frac{v_{\varphi}}{\sin^2 (\theta)}v_{\theta} +\sin (\varphi) v_{\theta}\frac{v_{\varphi}}{\sin^2 (\theta)}+\cos (\varphi) \cot(\theta) v_{\varphi} \frac{v_{\varphi}}{\sin^2 (\theta)} \hspace{1mm} = \hspace{1mm} 0,\\
\widehat{\Omega}_3 (|\slashed{v}|^2) \hspace{1mm} & = \hspace{1mm} \partial_{\varphi} |\slashed{v}|^2 \hspace{1mm} = \hspace{1mm} 0.
\end{align*}
As $\widehat{\Omega}_i(r)=0$, this implies directly, in view of the definition of $v_t$ and $v^{\mathstrut}_{\mathrm{N}}$ (see \eqref{defv0}, \eqref{redshiftdef}), that $\widehat{\Omega}_i (v_t) = \widehat{\Omega} (v^{\mathstrut}_{\mathrm{N}})=0$. For \eqref{pderiv}, one only has to note that for any smooth vector field $V$, 
\begin{equation*}
 \left| V (|h|^s ) \right| \, = \, s |V (h) h|h|^{s-2} | \, = \, s|V(h)| |h|^{s-1} \, \leq \, |V(h)|^s + (s-1)|h|^s.
 \end{equation*}
We now turn to the second part of the Lemma. Recall that $\partial_{r^*}= \left(1-\frac{2M}{r} \right) \partial_r$. Hence, using the expression \eqref{defv0} of $v_t$ and then the co-mass shell condition \eqref{vu}, we get
\begin{align*}
\partial_{r^*} v_t \, = \, \left(1-\frac{2M}{r} \right) \partial_r \left( \frac{1-\frac{2M}{r}}{r^2} \right) \frac{|\slashed{v}|^2}{v_t} \, = \, \frac{6M-2r}{r^2}\left( 1-\frac{2M}{r} \right)\frac{|\slashed{v}|^2}{r^2 v_t} \, = \, \frac{8(3M-r)|v_u||v_{\uu}|}{r^2v_t}.
\end{align*}
Recall now that $v_{\underline{u}}=\frac{v_t+v_{r^*}}{2}\leq 0$ and $v_{u}=\frac{v_t-v_{r^*}}{2} \leq 0$, so that
\begin{align*}
\partial_{r^*} |v_{\underline{u}}| \, = \, \partial_{r^*} |v_{u}| \, = \, \frac{4(r-3M)|v_u||v_{\uu}|}{r^2v_t}, \qquad  \quad \partial_{r^*}\! \left( \frac{ |v_u|}{1-\frac{2M}{r}} \right) \, = \, \left(\frac{4(r-3M)|v_{\uu}|}{r^2v_t}- \frac{2M}{r^2} \right) \frac{|v_u|}{1-\frac{2M}{r}}.
\end{align*}
Together with $|r-3M| \leq r$, $\frac{1}{r} \lesssim 1$, $v_t \leq 0$, $|v_u| \leq |v_t|$, $|v_{\uu}| \leq |v_t|$ and the definition \eqref{redshiftdef} of $v^{\mathstrut}_{\mathrm{N}}$, this leads to
$$|\partial_{r^*} |v_t| | \lesssim  |v_t|, \quad |\partial_{r^*} |v_u| | \lesssim |v_u|, \quad |\partial_{r^*} |v_{\uu}| | \lesssim |v_{\uu}|, \quad |\partial_{r^*} |v^{\mathstrut}_{\mathrm{N}}| | \lesssim |v^{\mathstrut}_{\mathrm{N}}|.$$
As $|\partial_{r^*}(r^{s})| \leq sr^{s-1} \lesssim_s r^s$, this implies the result.
\end{proof}
The following commutation property between the angular derivatives and the averaging in $v$ will be useful.
\begin{Lem}\label{Sobsphere}
Let $ h : \widehat{\Rm}_{-\infty}^{+\infty} \rightarrow \R$ be a sufficiently regular function. We have
\begin{align}
\nonumber \forall \, i \in \llbracket 1,3 \rrbracket, \qquad \left| \Omega_i \int_{\C} |h|  \dr \m_{\C} \right| \, & \leq \,   \int_{\C} \left| \widehat{\Omega}_i h \right| \dr \m_{\C} .
\end{align}
Moreover, for any $s \in [1,+\infty[$, there exists a constant $C_s>0$ depending only on $s$ such that
$$ \left\| \int_{\C} |h|^s \dr \m_{\C} \right\|_{L^{\infty}( \mathbb{S}^2)} \, \leq \, C_s \, \sum_{|I| \leq 2} \int_{\mathbb{S}^2} \int_{\C} \left| \widehat{\Omega}^I h \right|^s \dr \m_{\C} \dr \m_{\mathbb{S}^2}.$$
\end{Lem}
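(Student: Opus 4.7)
For the first (commutation-type) estimate, the idea is to exploit that $\widehat{\Omega}_i$ is the complete lift of the Killing vector field $\Omega_i$: the associated flow on $T^{\star}\mathcal{M}$ sends $\C_x$ onto $\C_{\phi_s(x)}$ preserving $\dr\m_{\C_x}$, and projects down to the flow of $\Omega_i$ on the base. First I would differentiate under the integral via this change of variables in order to obtain the pointwise identity
$$\Omega_i \int_{\C} |h|\,\dr\m_{\C} \,=\, \int_{\C} \widehat{\Omega}_i(|h|)\,\dr\m_{\C},$$
and the stated inequality follows from the almost-everywhere identity $\widehat{\Omega}_i|h|=\mathrm{sign}(h)\,\widehat{\Omega}_i h$ combined with the triangle inequality.

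For the $L^{\infty}$ bound, I would fix $(t,r^*)$ and regard $F(\omega):=\int_{\C}|h|^s\,\dr\m_{\C}\big\vert_{(t,r^*,\omega)}$ as a smooth function on $\mathbb{S}^2$. Two ingredients then combine: the critical Sobolev embedding $W^{2,1}(\mathbb{S}^2)\hookrightarrow L^{\infty}(\mathbb{S}^2)$ (the borderline case $n=kp$ with $n=k=2$, $p=1$, transferred from $\mathbb{R}^2$ via a partition of unity), and the observation that the commutation argument of the first part applies to any integrand (it only uses measure-preservation of the lifted flow), yielding $\Omega^I F=\int_{\C}\widehat{\Omega}^I(|h|^s)\,\dr\m_{\C}$ for $|I|\le 2$. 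It therefore suffices to bound $\int_{\mathbb{S}^2\times\C}|\widehat{\Omega}^I(|h|^s)|$ by $C_s\sum_{|J|\le 2}\|\widehat{\Omega}^J h\|^s_{L^s(\mathbb{S}^2\times\C)}$. The case $|I|\le 1$ is exactly \eqref{pderiv}. For $|I|=2$, the chain rule gives
$$\widehat{\Omega}_j\widehat{\Omega}_i(|h|^s) \,=\, s(s-1)|h|^{s-2}\widehat{\Omega}_ih\,\widehat{\Omega}_jh \,+\, s|h|^{s-1}\mathrm{sign}(h)\,\widehat{\Omega}_j\widehat{\Omega}_ih,$$
and I would bound each summand in $L^1$: H\"older with exponents $(s/(s-1),s)$ and Young's inequality handle the second term, while for $s\ge 2$ the three-factor H\"older with exponents $(s/(s-2),s,s)$ followed by Young's inequality handles the first.

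The main obstacle is the range $1\le s<2$, in which the exponent $s/(s-2)$ is negative -- invalidating the direct H\"older step -- and the factor $|h|^{s-2}$ is pointwise singular at $\{h=0\}$. I would overcome this via a regularisation: replace $|h|^s$ with $u_{\varepsilon}:=(h^2+\varepsilon^2)^{s/2}$, for which the chain rule is classical and all the resulting quantities are smooth on $\mathbb{S}^2\times\C$; the analogous expansion then involves $(h^2+\varepsilon^2)^{(s-2)/2}$ in place of $|h|^{s-2}$, and one can exploit that $|h|\le(h^2+\varepsilon^2)^{1/2}$ together with the fact that the singularity $|h|^{s-2}$ is $L^1_{\mathrm{loc}}$ for every $s>0$, to derive a bound on $\int_{\mathbb{S}^2\times\C}|\widehat{\Omega}_j\widehat{\Omega}_iu_{\varepsilon}|$ that is uniform in $\varepsilon$ by a suitable H\"older/Young combination adapted to the range $s\in[1,2)$. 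Dominated convergence as $\varepsilon\to 0$ then delivers the required estimate, with the constant $C_s$ blowing up as $s\downarrow 1$ but remaining finite throughout the claimed range.
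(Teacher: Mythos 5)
Your first inequality is fine, and your route to it is genuinely different from the paper's: you invoke the fact that the cotangent lift of the flow of the Killing field $\Omega_i$ preserves $\C$ and the fibre measure $\dr \m_{\C_x}$, which gives the identity $\Omega_i \int_{\C} g \, \dr \m_{\C} = \int_{\C} \widehat{\Omega}_i g \, \dr \m_{\C}$ directly. The paper obtains the same cancellation by hand, splitting $\widehat{\Omega}_i = \Omega_i + \Omega_i^v$ and integrating the velocity part by parts in $(v_{\theta}, v_{\varphi})$ against the term coming from $\Omega_i \left( 1/\sin \theta \right)$ in the measure. Your version is cleaner; both are correct.

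The second part has a genuine gap, and it is exactly the step you defer. For $1<s<2$ the term $s(s-1)|h|^{s-2}\widehat{\Omega}_i h \, \widehat{\Omega}_j h$ (or its regularised version with $(h^2+\varepsilon^2)^{(s-2)/2}$ in place of $|h|^{s-2}$) cannot be bounded by $\sum_{|J| \leq 2} \int |\widehat{\Omega}^J h|^s$ through any pointwise H\"older or Young combination: the exponent that the factor $|h|^{s-2}$ would have to carry is $s/(s-2)<0$, and the regularisation restores smoothness but not the missing convexity. The local integrability of $|h|^{s-2}$ is irrelevant here, since what is needed is a bound in terms of $L^s$ norms of $h$ and its lifted derivatives, not mere finiteness. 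The required inequality is in fact true --- for instance one can integrate by parts (the $\widehat{\Omega}_j$ preserve $\dr \m_{\mathbb{S}^2} \wedge \dr \m_{\C}$) to trade $|h|^{s-2}\widehat{\Omega}_i h \, \widehat{\Omega}_j h$ for $\frac{1}{s-1}|h|^{s-1}|\widehat{\Omega}_j \widehat{\Omega}_i h|$, which H\"older--Young then handles --- but ``a suitable H\"older/Young combination adapted to the range $s \in [1,2)$'' is an assertion, not an argument, and every direct attempt fails. (At $s=1$ itself the coefficient $s(s-1)$ vanishes, so the obstruction lives precisely in $1<s<2$.)

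Note how the paper sidesteps the issue entirely: it never differentiates $|h|^s$ twice. It interleaves two one-dimensional Sobolev embeddings (in $\theta$, then in $\varphi$, on two overlapping charts of $\mathbb{S}^2$) with the first-order commutation inequality, so that after the first embedding one controls $\sum_{|I| \leq 1} \int |\widehat{\Omega}^I h|^s$, and the second embedding then differentiates the functions $|\widehat{\Omega}^I h|^s$ --- the absolute value and the $s$-th power are re-applied at each stage --- using only \eqref{pderiv}, i.e. $|V(|g|^s)| \leq |V(g)|^s+(s-1)|g|^s$, valid for every $s \geq 1$ with no singular factor. This resetting is only possible because the two one-dimensional embeddings are applied successively; your global $W^{2,1}(\mathbb{S}^2) \hookrightarrow L^{\infty}(\mathbb{S}^2)$ embedding forces you to place $\Omega_j \Omega_i \int_{\C} |h|^s \, \dr \m_{\C} = \int_{\C} \widehat{\Omega}_j \widehat{\Omega}_i (|h|^s) \, \dr \m_{\C}$ in $L^1$, which is precisely the quantity you cannot estimate directly. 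Either adopt the paper's iterated one-dimensional structure, or supply the integration-by-parts (or an interpolation) argument for the cross term.
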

\begin{proof}
For simplicity, we introduce $\Omega_i^v :=\widehat{\Omega}_i-\Omega_i$. Note first that using Lemma \ref{calculvstar}, we have $\big| \widehat{\Omega}_i\big(|h| |v_t|^{-1} \big)\big| = \big| \widehat{\Omega}_i(h)\frac{h}{|h|}|v_t|^{-1} \big|=\big| \widehat{\Omega}_i(h) \big| |v_t|^{-1} $ in $W^{1,1}$. Hence, since $\dr \m_{\C}=|v_t|^{-1}r^{-2} \sin^{-1} (\theta ) \dr v_{r^*} \dr v_{\theta}\dr v_{\varphi} $, there holds
\begin{align*}
\left| \Omega_i \int_{\C} |h| \dr \m_{\C} \right| \, & = \, \left| \int_{\C} \! \left(\widehat{\Omega}_i \! \left(|h| |v_t|^{-1} \right)\!-\Omega_i^v \!\left( |h| |v_t|^{-1} \right) \! \right) \! \frac{\dr v_{r^*} \dr v_{\theta} \dr v_{\varphi} }{r^2 \sin (\theta)}+\int_{\C}\! |h||v_t|^{-1} \Omega_i \left( \frac{1}{\sin \theta } \right) \! \frac{\dr v_{r^*} \dr v_{\theta} \dr v_{\varphi} }{r^2  } \right| \\
& \leq \,  \int_{\C} \left| \widehat{\Omega}_i(h) \right|  \dr \m_{\C}+ \left| \int_{\C}  \left( |h||v_t|^{-1} \Omega_i \left( \frac{1}{\sin \theta } \right) -\frac{\Omega_i^v \big(|h| |v_t|^{-1}\big)}{\sin (\theta)} \right) \frac{\dr v_{\theta} \dr v_{\varphi} \dr v_{r^*}}{r^2 } \right|.
\end{align*}
It remains to notice that the last term on the right hand side of the last inequality vanishes. Indeed, this is straightforward if $i=3$ since $\Omega_3=\partial_{\varphi}$ and $\widehat{\Omega}_{\varphi}=\partial_{\varphi}$. If $i=1$, we have $\Omega_1=- \sin \varphi \, \partial_{\theta}-\cot \theta \cos \varphi \, \partial_{\varphi}$, so that $\Omega_1 \left( \frac{1}{\sin \theta } \right)=\frac{\sin ( \varphi) \cot (\theta)}{\sin(\theta)}$, and $\Omega^v_1= - \cos \varphi \frac{v_{\varphi}}{\sin^2 \theta} \partial_{v_{\theta}}+\left( \cos \varphi \, v_{\theta}-\sin \varphi \cot \theta \, v_{\varphi} \right)\partial_{v_{\varphi}} $, so by integration by parts in\footnote{Note that there is no boundary terms since $\{v_{r^*}=0\}$ has Lebesgue measure $0$.} $(v_{\theta}, v_{\varphi})$,
$$ -\int_{\C}   \frac{\Omega_i^v \big(|h||v_t|^{-1} \big)}{\sin (\theta)} \frac{\dr v_{\theta} \dr v_{\varphi} \dr v_{r^*}}{r^2 } = -\int_{\C} \frac{ \sin (\varphi) \cot ( \theta )}{\sin (\theta)}|h| |v_t|^{-1}\frac{\dr v_{\theta} \dr v_{\varphi} \dr v_{r^*}}{r^2 }.$$
The case $i=2$ can be treated similarly. We then deduce from Remark \ref{exprdtheta} and \eqref{othersphecoord} the following inequalities, for the derivatives of the spherical coordinate systems $(\theta, \varphi)$ and $(\widetilde{\theta} , \widetilde{\varphi})$,
\begin{align}
\left| \partial_{\theta} \int_{\C} |h|  \dr \m_{\C} \right|+\left| \partial_{\varphi} \int_{\C} |h| \dr \m_{\C} \right|  +
\left| \partial_{\widetilde{\theta}} \int_{\C} |h| \dr \m_{\C} \right|+\left| \partial_{\widetilde{\varphi}} \int_{\C} |h| \dr \m_{\C} \right|  \,  \leq \, 2\sum_{j=1}^3 \left| \int_{\C} \left| \widehat{\Omega}_j h \right|  \dr \m_{\C} \right| . \label{angularcom}
 \end{align}
We now turn to the Sobolev inequality. Let $\mathbb{V}$ be the subset of $\mathbb{S}^2$ containing all the points $\omega$ which are in the set covered by the coordinate system $(\theta , \varphi)$ and satisfying $\frac{\pi}{4} \leq \theta (\omega)  \leq \frac{3\pi}{4}$. We start by considering $\omega \in \mathbb{V}$. Using first a local one dimensional Sobolev inequality and then \eqref{angularcom}, applied to $|h|^s$, together with \eqref{pderiv}, we get
\begin{align*}
 \int_{\C} |h|^s \dr \m_{\C} \Big\vert_{(\theta(\omega), \varphi ( \omega))} \, & \lesssim \, \int_{\theta=\theta(\omega)}^{\theta(\omega)+\frac{\pi}{12}} \int_{\C} |h|^s \dr \m_{\C} \Big\vert_{(\theta, \varphi ( \omega))} +\left| \left(\partial_{\theta} \int_{\C} |h|^s \dr \m_{\C} \right) \Big\vert_{(\theta, \varphi ( \omega))} \right| \dr \theta \\
 &  \lesssim^{\mathstrut}_s \, \sum_{|I| \leq 1} \int_{\theta=\frac{\pi}{4}}^{\frac{5\pi}{6}} \int_{\C} \left|\widehat{\Omega}^I h \right|^s  \dr \m_{\C} \Big\vert_{(\theta, \varphi ( \omega))} \dr \theta.
\end{align*}
Applying again a one dimensional Sobolev inequality and then using \eqref{angularcom}, \eqref{pderiv} as well as $\sin (\theta) \geq \frac{1}{2}$ on the domain of integration, we obtain
\begin{align*}
 \int_{\C} |h|^s \dr \m_{\C} \Big\vert_{(\theta(\omega), \varphi ( \omega))} \, & \lesssim^{\mathstrut}_s \, \sum_{|I| \leq 1} \int_{\varphi=0}^{2 \pi} \int_{\theta=\frac{\pi}{4}}^{\frac{5\pi}{6}} \int_{\C} \left|\widehat{\Omega}^I h \right|^s \dr \mu_{\C}+ \left| \partial_{\varphi}\int_{\C} \left|\widehat{\Omega}^I h \right|  \dr \m_{\C} \right|^s \dr \theta \dr \varphi \\
  & \lesssim^{\mathstrut}_s \,  \sum_{|J| \leq 2} \int_{\varphi=0}^{2 \pi}\int_{\theta=\frac{\pi}{4}}^{\frac{5\pi}{6}} \int_{\C} \! \left|\widehat{\Omega}^J h \right|^s \dr \m_{\C}  \sin(\theta) \dr \theta \dr \varphi \, \leq \, \sum_{|J| \leq 2} \int_{\mathbb{S}^2} \int_{\C} \! \left|\widehat{\Omega}^J h \right|^s  \dr \m_{\C}  \dr \m_{\mathbb{S}^2}\!.
\end{align*}
The case of the points $\omega \in \mathbb{S}^2 \setminus \mathbb{V}$ can be handled similarly using this time the coordinate system $(\widetilde{\theta} , \widetilde{\varphi})$ instead of $(\theta , \varphi)$ and \eqref{angularcom}. Indeed, in view of \eqref{othersphecoord}, if $\omega \in \mathbb{S}^2 \setminus \mathbb{V}$, then $\frac{\pi}{4} \leq \widetilde{\theta} (\omega) \leq \frac{3 \pi}{4}$.
\end{proof}

The following result will be crucial in order to derive boundedness in $L^s(\Sigma_t)$ for quantities involving $|v_{r^*}|^2  \partial_{r^*}f$, where $f$ is a solution to the massless Vlasov equation. The main idea of the proof consists in rewritting $v_{r^*} \partial_{r^*} f$ using the operator $\T$ and then to deal with the terms containing $v$ derivatives of $f$ by integration by parts.
\begin{Lem}\label{rdeveriv}
Let $a \in \R_+$ and $h : \widehat{\Rm}_{-\infty}^{+\infty} \rightarrow \R$ be a sufficiently regular function. On $\Rm_{-\infty}^{+\infty}$, we have
$$\left| \frac{1}{1-\frac{2M}{r}} \partial_{u}  \int_{\C} \frac{|v_{r^*}|^2}{|v_t|^2} |h| |v_t| |v^{\mathstrut}_{\mathrm{N}}|^{a} \dr \m_{\C} \right| \,  \lesssim^{\mathstrut}_{a}  \sum_{i=1}^3 \int_{\C} \left( |h| +|\partial_t h| + \big|\widehat{\Omega}_i h\big|\right)|v^{\mathstrut}_{\mathrm{N}}|^{a+1} \dr \m_{\C}+\int_{\C} \left| \T (h) \right| |v^{\mathstrut}_{\mathrm{N}}|^a  \dr \m_{\C} .$$
For any $0 \leq p \leq 2q$, there holds on $\Rm_{-\infty}^{+\infty} \cap \{ r \geq R_0 \}$,
\begin{align*}
\left| \partial_{\uu}  \int_{\C} \frac{|v_{r^*}|^2}{|v_t|^2} r^p |v_{\uu}|^q|h| |v_{\uu}|  \dr \m_{\C} \right| \, & \lesssim^{\mathstrut}_{q}  \sum_{i=1}^3 \int_{\C} r^p |v_{\uu}|^q \left( |h| +|\partial_t h| + \big|\widehat{\Omega}_i h\big|\right) \! |v_{\uu}| \dr \m_{\C} +\int_{\C} r^p\frac{|v_{\uu}|^q}{|v_t|}\left| \T (h) \right| |v_{\uu}|  \dr \m_{\C} ,
\\
\left| \partial_{u}  \int_{\C} \frac{|v_{r^*}|^2}{|v_t|^2} r^p |v_{\uu}|^q|h| |v_u|  \dr \m_{\C} \right| \, & \lesssim^{\mathstrut}_{q}  \sum_{i=1}^3 \int_{\C} r^p |v_{\uu}|^q \left( |h| +|\partial_t h| + \big|\widehat{\Omega}_i h\big|\right)\! |v_{u}| \dr \m_{\C} +\int_{\C} r^p\frac{|v_{\uu}|^q}{|v_t|}\left| \T (h) \right| |v_{u}|  \dr \m_{\C} .
\end{align*}
\end{Lem}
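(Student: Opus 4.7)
The plan is to reduce the three identities to an algebraic manipulation using the Vlasov operator $\T$, combined with integration by parts in the velocity variables. Focus first on the estimate involving $\tfrac{1}{1-\frac{2M}{r}}\partial_u$. Write $\tfrac{1}{1-\frac{2M}{r}}\partial_u = \tfrac{1}{2(1-\frac{2M}{r})}(\partial_t - \partial_{r^*})$ and differentiate under the integral sign. The $\partial_t$ derivative only acts on $|h|$, since every other factor is $t$-independent; this produces a term that will be combined later with one coming from the Vlasov substitution. The $\partial_{r^*}$ derivative splits into a piece falling on the $r$-dependent weight $\tfrac{|v_{r^*}|^2}{|v_t|^2}\tfrac{|v^{\mathstrut}_{\mathrm{N}}|^a}{r^2|v_t||\sin\theta|}$ and a piece falling on $|h|$. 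The weight piece is controlled by Lemma \ref{calculvstar} together with an elementary computation of $\partial_r|v_t|$; after multiplying by $(1-\frac{2M}{r})^{-1}$ and invoking the bound $|v_u|/(1-\tfrac{2M}{r})\lesssim |v^{\mathstrut}_{\mathrm{N}}|$ from \eqref{defvtbar} and the mass-shell relation $4|v_u||v_{\uu}| = (1-\tfrac{2M}{r})|\slashed{v}|^2/r^2$ from \eqref{vu}, it gives a contribution pointwise bounded by $\int_{\C}|h|\,|v^{\mathstrut}_{\mathrm{N}}|^{a+1}\dr\m_{\C}$.

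For the piece where $\partial_{r^*}$ hits $|h|$, the key algebraic identity is obtained by rearranging the expression \eqref{defT} of $\T$:
$$v_{r^*}\partial_{r^*} h \,=\, \Big(1-\tfrac{2M}{r}\Big)\T(h) + v_t\partial_t h - \Big(1-\tfrac{2M}{r}\Big)\!\Big[\tfrac{v_{\theta}}{r^2}\partial_{\theta}h + \tfrac{v_{\varphi}}{r^2\sin^2\theta}\partial_{\varphi}h + \tfrac{r-3M}{r^4}|\slashed{v}|^2\partial_{v_{r^*}}h + \tfrac{\cot\theta}{r^2\sin^2\theta} v_{\varphi}^2 \partial_{v_{\theta}}h\Big].$$
Multiplying by $-\tfrac{v_{r^*}|v^{\mathstrut}_{\mathrm{N}}|^a}{2(1-\frac{2M}{r})|v_t|}$ and integrating against $\dr\m_{\C}$ cancels the singular factor $(1-\tfrac{2M}{r})^{-1}$ in all but the $\partial_t h$ piece, and delivers a $\T(h)$ contribution with coefficient bounded by $\tfrac{|v_{r^*}|}{|v_t|}|v^{\mathstrut}_{\mathrm{N}}|^a \leq |v^{\mathstrut}_{\mathrm{N}}|^a$, which is precisely the last term of the desired bound. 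Combining the resulting $v_t\partial_t h$ piece with the direct $\partial_t|h|$ contribution from the first step, and using $v_{r^*}-v_t = -2v_u$, produces a net $\partial_t h$ coefficient proportional to $v_{r^*}v_u/((1-\tfrac{2M}{r})|v_t|)$; this is again bounded by $|v^{\mathstrut}_{\mathrm{N}}|$ by \eqref{defvtbar}, yielding an $|\partial_t h||v^{\mathstrut}_{\mathrm{N}}|^{a+1}$ contribution.

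The angular derivatives $\partial_{\theta}h, \partial_{\varphi}h$ are then converted into $\widehat{\Omega}_i h$ modulo $v$-derivatives of $h$, via Remark \ref{exprdtheta} and the definitions \eqref{def1}-\eqref{def3}, on the patch $\sin\theta \geq \tfrac{1}{\sqrt{2}}$, and via the $(\widetilde\theta,\widetilde\varphi)$ variables of \eqref{othersphecoord} on the complementary patch, as in the proof of Lemma \ref{Sobsphere}; this absorbs the $\sin^{-1}\theta$ and $\cot\theta$ singularities. All remaining $v$-derivatives of $h$ (the explicit $\partial_{v_{r^*}}, \partial_{v_{\theta}}$ in the Vlasov identity, plus those from the $v$-parts of $\widehat{\Omega}_i$) are integrated by parts on $\C$: boundary terms vanish at $|v|\to\infty$ (and $\{v=0\}$ has measure zero), while each derivative of the polynomial weight is controlled by the weight itself using $|v_{\theta}|, |v_{\varphi}|/\sin\theta \leq |\slashed{v}| \leq r|v^{\mathstrut}_{\mathrm{N}}|$, so all error terms fit into $\int_{\C}(|h|+\sum_i|\widehat{\Omega}_i h|)|v^{\mathstrut}_{\mathrm{N}}|^{a+1}\dr\m_{\C}$.

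The second and third estimates follow by the same scheme with $\partial_{\uu} = \tfrac12(\partial_t+\partial_{r^*})$ or $\partial_u = \tfrac12(\partial_t-\partial_{r^*})$ in place of $\partial_u$. Since on $\{r \geq R_0\}$ the factor $1-\frac{2M}{r}$ is bounded above and below by positive constants, no singular prefactor is needed; the weight bounds required for the new weights $r^p|v_{\uu}|^{q+1}/(r^2|v_t|^3)$ and $r^p|v_{\uu}|^q|v_u|/(r^2|v_t|^3)$ are furnished precisely by the second and third assertions of Lemma \ref{calculvstar}. The main obstacle throughout is bookkeeping: ensuring that after the Vlasov substitution and the $v$-integration by parts, every surviving term matches one of the four types $|h|$, $|\partial_t h|$, $|\widehat{\Omega}_i h|$, or $|\T(h)|$ on the right-hand side with the correct power of $|v^{\mathstrut}_{\mathrm{N}}|$ (or $|v_{\uu}|$, $|v_u|$ in the second and third estimates). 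This is a patient but elementary computation; the genuinely delicate point is the cancellation of $(1-\tfrac{2M}{r})^{-1}$ in the first estimate, which relies on the combination of the two $\partial_t h$ contributions together with the estimate \eqref{defvtbar}.
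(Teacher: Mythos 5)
Your scheme coincides with the paper's up to one point: differentiate under the integral, control the derivative of the weight by Lemma \ref{calculvstar}, trade $v_{r^*}\partial_{r^*}h$ for $\T(h)$ plus angular and velocity derivatives via \eqref{defT}, cancel the $(1-\frac{2M}{r})^{-1}$ against the combined $\partial_t h$ coefficient $\propto v_{r^*}v_u$, convert $\partial_{\theta}$ and $\sin^{-1}(\theta)\partial_{\varphi}$ into $\widehat{\Omega}_i$ plus $v$-derivatives, and integrate the latter by parts. All of that is correct (the paper avoids your two-chart argument by using two decompositions of $\sin^{-1}(\theta)\partial_{\varphi}$, one regular at $\theta=0$ and one at $\theta=\pi$, but that difference is cosmetic).

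There is, however, a genuine gap at the $(v_{\theta},v_{\varphi})$ integration by parts. After regrouping, the residual velocity operator is, up to a bounded function of $\theta$, $\frac{1}{r^2}v_{\varphi}\bigl(v_{\theta}\partial_{v_{\varphi}}-\frac{v_{\varphi}}{\sin^2\theta}\partial_{v_{\theta}}\bigr)$, and your claim that ``each derivative of the polynomial weight is controlled by the weight itself'' fails for it. Indeed $\partial_{v_{\theta}}v_t=\frac{(1-\frac{2M}{r})v_{\theta}}{r^2 v_t}$, so $\bigl|\partial_{v_{\theta}}\bigl(\frac{v_{r^*}|v^{\mathstrut}_{\mathrm{N}}|^a}{|v_t|^2}\bigr)\bigr|\sim\frac{|v_{\theta}|}{r^2|v_t|}\cdot\frac{|v^{\mathstrut}_{\mathrm{N}}|^a}{|v_t|^2}$; multiplying by the coefficient $\frac{v_{\varphi}^2}{r^2\sin^2\theta}\lesssim|v^{\mathstrut}_{\mathrm{N}}||v_t|$ leaves a contribution exceeding the target $|v^{\mathstrut}_{\mathrm{N}}|^{a+1}\dr\m_{\C}$ by a factor of order $\frac{|\slashed{v}|}{r^2|v_t|}\lesssim r^{-1}\sqrt{|v^{\mathstrut}_{\mathrm{N}}|/|v_t|}$, which is unbounded near the horizon --- exactly where the first estimate is needed. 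The paper's resolution is not an estimate but an exact cancellation: the weight depends on $(v_{\theta},v_{\varphi})$ only through $|\slashed{v}|^2=|v_{\theta}|^2+|v_{\varphi}|^2\sin^{-2}\theta$, so the rotation-type operator $v_{\theta}\partial_{v_{\varphi}}-\frac{v_{\varphi}}{\sin^2\theta}\partial_{v_{\theta}}$ annihilates it (this is the second identity of \eqref{tobeproved}), and the integration by parts only produces the divergence of the explicit polynomial coefficients, namely $v_{\theta}$, which is harmless. Without this observation your outline does not close; by contrast, your naive treatment of the $\partial_{v_{r^*}}$ term does go through, because the Liouville coefficient there carries the extra factor $\frac{|r-3M|}{r^2}\leq r^{-1}$.
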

\begin{proof}
Since $\dr \m_{\C}= r^{-2} \sin^{-1}(\theta)|v_t|^{-1} \dr v_{\theta} \dr v_{\varphi} \dr v_{r^*}$ and $2 \partial_u = \partial_t-\left(1 - \frac{2M}{r} \right) \partial_r$, we have 
$$ \left| \frac{1}{1-\frac{2M}{r}} \partial_{u} \int_{\C} \frac{|v_{r^*}|^2}{|v_t|} |h| |v^{\mathstrut}_{\mathrm{N}}|^a  \dr \m_{\C} \right|   =  \left| \int_{\C} \left( \frac{|v_{r^*}|^2}{1-\frac{2M}{r}} \partial_{u} ( |h| )  \frac{|v^{\mathstrut}_{\mathrm{N}}|^a}{r^2|v_t|^2} -\frac{1}{2} |v_{r^*}|^2|h| \partial_{r} \! \left( \frac{|v^{\mathstrut}_{\mathrm{N}}|^a}{r^2|v_t|^2} \right)  \right) \frac{\dr v_{\theta} \dr v_{\varphi} \dr v_{r^*}}{ \sin ( \theta )} \right| .$$ 
The second term of the integrand on the right hand side can be bounded using Lemma \ref{calculvstar}. For the first one, note that $2v_{r^*} \partial_u =-v_{r^*} \partial_{r^*}+ v_t \partial_t-2v_u \partial_t$ and $\frac{|v_u|}{1-\frac{2M}{r}} \leq |v^{\mathstrut}_{\mathrm{N}}|$ (see \eqref{defvtbar}). Consequently,
\begin{align*}
\left| \frac{1}{1-\frac{2M}{r}} \partial_{u}\! \int_{\C} \! \frac{|v_{r^*}|^2}{|v_t|} |h| |v^{\mathstrut}_{\mathrm{N}}|^a  \dr \m_{\C} \right| \! & \lesssim \! \left| \int_{\C} \! \frac{v_t \partial_t (|h|)\!-\!v_{r^*} \partial_{r^*}  \! ( |h| )}{1-\frac{2M}{r}} \frac{v_{r^*}|v^{\mathstrut}_{\mathrm{N}}|^a}{|v_t|^2} \frac{\dr v_{\theta} \dr v_{\varphi} \dr v_{r^*}}{ r^2\sin ( \theta )} \right|\! +\! \hspace{-0.5mm} \int_{\C}\! \left(  |h| \! + \! |\partial_t h | \right)\! |v^{\mathstrut}_{\mathrm{N}}|^{a+1} \! \dr \m_{\C}. 
\end{align*}
Similarly, using Lemma \ref{calculvstar}, $1-\frac{2M}{r} \leq 1$ as well as $2v_{r^*} \partial_u =-v_{r^*} \partial_{r^*}+ v_t \partial_t-2v_u \partial_t$ or $2v_{r^*} \partial_{\uu} =v_{r^*} \partial_{r^*}- v_t \partial_t+2v_{\uu} \partial_t$, we get
\begin{multline*}
 \left| \partial_{\uu} \int_{\C} \frac{|v_{r^*}|^2}{|v_t|^2} r^p|v_{\uu}|^q |h||v_{\uu}|  \dr \m_{\C} \right| \,  = \, \left| \int_{\C} \frac{r^p|v_{\uu}|^{q+1}}{r^2|v_t|^3} |v_{r^*}|^2 \partial_{\uu} ( |h| ) +  |v_{r^*}|^2|h| \partial_{r^*} \! \left( \frac{r^p|v_{\uu}|^{q+1}}{r^2|v_t|^3} \right)  \frac{\dr v_{\theta} \dr v_{\varphi} \dr v_{r^*}}{ \sin ( \theta )} \right| \\
 \lesssim^{\mathstrut}_q \, \left| \int_{\C} \frac{v_t \partial_t (|h|)-v_{r^*} \partial_{r^*}  (|h|)}{1-\frac{2M}{r}} \frac{v_{r^*} r^p|v_{\uu}|^{q+1}}{|v_t|^3} \frac{\dr v_{\theta} \dr v_{\varphi} \dr v_{r^*}}{ r^2\sin ( \theta )} \right| + \int_{\C} r^p|v_{\uu}|^q \left(  |h| +|\partial_t h | \right) |v_{\uu}| \dr \m_{\C}
 \end{multline*}
 and 
 \begin{align*}
 \left| \partial_{u} \int_{\C} \frac{|v_{r^*}|^2}{|v_t|^2}  r^p|v_{\uu}|^q |h||v_{u}| \dr \m_{\C} \right| \,  \lesssim^{\mathstrut}_q \, & \left| \int_{\C}\frac{v_t \partial_t (|h|)-v_{r^*} \partial_{r^*}  (|h|)}{1-\frac{2M}{r}} \frac{ v_{r^*} r^p|v_{\uu}|^q|v_{u}|}{|v_t|^3} \frac{\dr v_{\theta} \dr v_{\varphi} \dr v_{r^*}}{ r^2\sin ( \theta )} \right| \\
 &   + \int_{\C} r^p|v_{\uu}|^q\left(  |h| +|\partial_t h | \right) |v_{u}|  \dr \m_{\C}.  
\end{align*}
In order to unify the treatment of these three cases, we introduce $ \alpha (v)  \, \in \,  \left\{ |v^{\mathstrut}_{\mathrm{N}}|^a \, , \, r^p\frac{|v_{\uu}|^{q+1}}{|v_t|} \, , \,  r^p\frac{|v_{\uu}|^{q}}{|v_t|}|v_{u}| \right\}$
and let us prove
\begin{align}
\nonumber \left| \int_{\C} \frac{v_t \partial_t (h)-v_{r^*} \partial_{r^*}  (h)}{1-\frac{2M}{r}} \frac{v_{r^*}\alpha(v)}{|v_t|^2} \frac{\dr v_{\theta} \dr v_{\varphi} \dr v_{r^*}}{ r^2\sin ( \theta )} \right| \,  \lesssim^{\mathstrut}_{a,q} \, & \sum_{i=1}^3 \int_{\C} \left( |h|  + \left|\widehat{\Omega}_i h\right|\right) \! |v^{\mathstrut}_{\mathrm{N}}| \alpha(v) \dr \m_{\C} \\
&  +\int_{\C} \left| \T (h) \right| \alpha(v)  \dr \m_{\C}. \label{kev:unan}
\end{align}
As $v^{\mathstrut}_{\mathrm{N}}= v_t$ in the region $r \geq R_0$ according to Proposition \ref{energyredshift}, this will imply the three estimates of the Lemma. The starting point consists in noticing, using the definition \eqref{defT} of $\T$, that
$$\frac{v_{r^*} \partial_{r^*}  (|h|)-v_t \partial_t (|h|)}{1-\frac{2M}{r}} = \T\left( |h| \right)-\frac{v_{\theta}}{r^2}\partial_{\theta}(|h|)-\frac{v_{\varphi}}{r^2 \sin^2 ( \theta)} \partial_{\varphi}( |h| ) -\frac{\cot (\theta)  |v_{\varphi}|^2}{r^2 \sin^2 ( \theta)} \partial_{v_{\theta}}(|h|) -\frac{r-3M}{r^4} |\slashed{v}|^2 \partial_{v_r^*}(|h|).$$
Next, we perform the following estimates. 
\begin{itemize}
\item We have $ \big|  \T (|h|) \frac{v_{r^*}\alpha(v)}{|v_t|^2} \big| = \big| \frac{h}{|h|} \T (h) \frac{v_{r^*}\alpha(v)}{|v_t|^2} \big| \leq  | \T (h)|\frac{\alpha(v)}{|v_t|}$.
\item The angular part of the Vlasov operator can be written as
$$ \frac{v_{\theta}}{r^2}\partial_{\theta}(|h|)+\frac{v_{\varphi}}{r^2 \sin^2 ( \theta)} \partial_{\varphi}( |h| ) +\frac{\cot (\theta)  |v_{\varphi}|^2}{r^2 \sin^2 ( \theta)} \partial_{v_{\theta}}(|h|)=\frac{1}{r^2} \sum_{1 \leq i \leq 3} v\big(\Omega_i \big) \widehat{\Omega}_i(|h|) .$$
Moreover, there holds
$$ \forall \, 1 \leq i \leq 3, \qquad \frac{|v_{r^*} v\big(\Omega_i \big)|}{r|v_t|^2} \leq \frac{|\slashed{v}|}{r|v_t|} \lesssim \frac{|v_{\mathrm{N}}|}{|v_t|}.$$
\item By an integration by parts, 
$$  \left| \int_{\C} \frac{r-3M}{r^4} |\slashed{v}|^2 \partial_{v_r^*}(|h|) \frac{v_{r^*}\alpha(v)}{|v_t|^2} \frac{\dr v_{\theta} \dr v_{\varphi} \dr v_{r^*}}{ r^2\sin ( \theta )} \right| \, = \, \left| \int_{\C} \frac{r-3M}{r^4} |\slashed{v}|^2 |h| \partial_{v_r^*} \left( \frac{v_{r^*}\alpha(v)}{|v_t|^2} \right)\frac{\dr v_{\theta} \dr v_{\varphi} \dr v_{r^*}}{ r^2\sin ( \theta )} \right|. $$
Now, recall the definitions of $v_t$, $v_{\uu}$, $v_{u}$ and $v^{\mathstrut}_{\mathrm{N}}$ (see \eqref{defv0}, \eqref{vu} and \eqref{redshiftdef}). Then, remark
$$ \partial_{v_{r^*}}v_t \, = \, \frac{v_{r^*}}{v_t}, \qquad \partial_{v_{r^*}} v_{\uu}  \, = \, \frac{1}{2}\partial_{v_{r^*}}(v_t+v_{r^*})= \frac{v_{\uu}}{v_t}, \qquad  \partial_{v_{r^*}} v_{u} \,  = \, \frac{1}{2}\partial_{v_{r^*}}(v_t-v_{r^*})= -\frac{v_{u}}{v_t},$$
which implies $|\partial_{v_{r^*}} v^{\mathstrut}_{\mathrm{N}}| \leq \frac{|v^{\mathstrut}_{\mathrm{N}}|}{|v_t|}$ and $|\partial_{v_{r^*}} \alpha(v)| \lesssim_{a,q} \frac{\alpha(v)}{|v_t|}$. We then deduce, as $ \frac{|\slashed{v}|^2}{r^2} = 4\frac{|v_u|}{ 1-\frac{2M}{r}}|v_{\uu}| \lesssim |v^{\mathstrut}_{\mathrm{N}}||v_t|  $,
$$ \frac{|\slashed{v}|^2}{r^2}\left|\partial_{v_{r^*}} \left(\frac{ v_{r^*} \alpha(v)}{ |v_t|^2}  \right) \right| \, \lesssim^{\mathstrut}_{a,q} \, \frac{|v^{\mathstrut}_{\mathrm{N}}|\alpha(v)}{|v_t|}. $$
\end{itemize}
The previous estimates and $\dr \m_{\C}= r^{-2} \sin^{-1}(\theta)|v_t|^{-1} \dr v_{\theta} \dr v_{\varphi} \dr v_{r^*}$ imply \eqref{kev:unan}, which concludes the proof.
\end{proof}
\subsection{Weighted $L^s-L^{\infty}$ estimates for the velocity average of massless Vlasov fields}

The purpose of this subsection is to estimate pointwise the non-degenerate energy density $\mathbb{T}[f](\mathrm{N}, \mathrm{N})$ by applying Sobolev inequalities. For the reason mentionned below in Remark \ref{Rqnonlin} and in order to simplify the presentation, we will only work with solutions to $\T(f)=0$ but we could in fact prove functional inequalities adapted to the study of solutions to massless Vlasov equations in the spirit of \cite[Proposition $2.11$]{rVP}. In view of the previous subsection, we start by estimating $ \int_{\C} \frac{|v_{r^*}|^2}{|v_t|^2} |f|  |v_t||v^{\mathstrut}_{\mathrm{N}}| \dr \m_{\C}$. The next result will also be useful in order to deal with the region $\Rm_{-\infty}^0$.
\begin{Lem}\label{decayvrstar}
Let $(a,p,q) \in \R_+^3$ such that $0 \leq p \leq 2q$, $s \in [1,+\infty[$ and $f : \widehat{\Rm}_{-\infty}^{+\infty} \rightarrow \R$ be a sufficiently regular function satisfying $\T(f)=0$. For all $\tau \in \R$ and $(t,r^*,\omega) \in \Sigma_{\tau}$ such that $r^* < R_0^*$, we have
$$ \int_{\C} \frac{|v_{r^*}|^2}{|v_t|^2}  |f|^s  |v_t||v^{\mathstrut}_{\mathrm{N}}|^a \dr \m_{\C}\Big\vert_{(t,r^*,\omega)} \, \lesssim^{\mathstrut}_{a,s}  \frac{1}{r^{2}} \sum_{\substack{n+|I| \leq 3 \\ n \leq 1}} \int_{\Sigma_{\tau}} \rho \bigg[ \left|\partial_t^n \widehat{\Omega}^I f\right|^s |v^{\mathstrut}_{\mathrm{N}}|^a  \bigg]  \dr \m_{\Sigma_{\tau}} .$$
For all $\tau \in \R$ and $(t,r^*,\omega) \in \Sigma_{\tau}$ such that $r^* \geq R_0^*$, there holds
\begin{multline*}
 \int_{\C} \frac{|v_{r^*}|^2}{|v_t|^2} \frac{|v_{\uu}|^{q}}{|v_t|^{q}} |f|^s  |v_t||v^{\mathstrut}_{\mathrm{N}}|^a \dr \m_{\C}\Big\vert_{(t,r^*,\omega)} \, \lesssim^{\mathstrut}_{q,s}  \frac{1}{r^{2+p}} \sum_{\substack{n+|I| \leq 3 \\ n \leq 1}} \int_{\Sigma_{\tau}} \rho \left[ r^p \frac{|v_{\uu}|^{q}}{|v_t|^{q}}  \left|\partial_t^n \widehat{\Omega}^I f\right|^s |v_t|^a  \right]  \dr \m_{\Sigma_{\tau}} \\ 
+ \frac{1}{r^{2+p}}\sum_{\substack{n+|I| \leq 3 \\ n \leq 1}}  \int_{\underline{\N}_{t+r^*\!}}  \mathds{1}_{\tau \leq u-u_0 \leq \tau+1} \int_{\C}  r^p \frac{|v_{\uu}|^{q}}{|v_t|^{q}} \left| \partial_t^n \widehat{\Omega}^I  h \right|^s\! |v_t|^a|v_{u}| \dr \m_{\C} \dr \m_{\underline{\N}_{t+r^*\!}}.
 \end{multline*}
\end{Lem}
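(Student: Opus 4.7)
The plan is to derive both estimates by a Sobolev embedding chain: the sphere Sobolev inequality from Lemma~\ref{Sobsphere} combined with a one-dimensional Sobolev embedding in a well-chosen null direction, where the radial derivative is controlled via Lemma~\ref{rdeveriv}. Set
\[
\Phi(t, r^*, \omega) := \int_{\C} \frac{|v_{r^*}|^2}{|v_t|^2} \left(\frac{|v_{\uu}|}{|v_t|}\right)^{\!q} |f|^s |v_t| |v^{\mathstrut}_{\mathrm{N}}|^a \dr \m_{\C},
\]
with $q=0$ for the first estimate. Since Lemma~\ref{calculvstar} ensures that $\widehat{\Omega}_i$ annihilates $v_{r^*}, |\slashed{v}|, v_t, v_{\uu}, v^{\mathstrut}_{\mathrm{N}}$, the $v$-weights are $\widehat{\Omega}_i$-invariant, so combining Lemma~\ref{Sobsphere} applied to $\omega \mapsto \Phi$ with the chain-rule bound \eqref{pderiv} yields
\[
\Phi(t, r^*, \omega) \lesssim_s \sum_{|J| \leq 2} \int_{\mathbb{S}^2} \int_{\C} \frac{|v_{r^*}|^2}{|v_t|^2} \left(\frac{|v_{\uu}|}{|v_t|}\right)^{\!q} |\widehat{\Omega}^J f|^s |v_t| |v^{\mathstrut}_{\mathrm{N}}|^a \dr\m_{\C} \dr\m_{\mathbb{S}^2},
\]
which costs two angular derivatives and reduces the problem to controlling the $L^1(\mathbb{S}^2)$ average at fixed $(t, r^*)$.

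For region 1 ($r^* < R_0^*$), I would apply a one-dimensional Sobolev embedding in $r^*$ on a unit-length subinterval of $(-\infty, R_0^*)$, on which $r$ stays in a compact subset of $(2M, R_0]$, eliminating any $r$-weight issues. Writing $\partial_{r^*} = \partial_{\uu} - \partial_u$ (or equivalently $\partial_u = \frac{1}{2}(\partial_t - \partial_{r^*})$), the radial derivative is handled by Lemma~\ref{rdeveriv}'s first estimate applied to $h = |\widehat{\Omega}^J f|^s$: the crucial fact is that $\T(h) = 0$ almost everywhere, which follows from $[\T, \widehat{\Omega}_i] = 0$ (Proposition~\ref{Comuprop}) together with \eqref{pderiv}. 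Packaging the resulting line integral together with the sphere integration via Lemma~\ref{vol}, which identifies $\gamma(r) r^2 dr \wedge \dr\m_{\mathbb{S}^2}$ with $\dr\m_{\Sigma_\tau}$ in the bounded region, produces the desired $\int_{\Sigma_\tau} \rho[\cdots]$ bound after extracting the $r^{-2}$ prefactor from the Jacobian. The at-most-one $\partial_t$ and up to three-total-derivatives on $f$ match: two from the sphere Sobolev and one (either $\partial_t$ or $\widehat{\Omega}_i$) introduced by Lemma~\ref{rdeveriv}.

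For region 2 ($r^* \geq R_0^*$), the point lies on $\N_\tau$ with $v^{\mathstrut}_{\mathrm{N}} = v_t$, and I perform a two-step 1D Sobolev argument. A Sobolev embedding in $u$ on $[\tau + u_0, \tau + u_0 + 1]$ along $\underline{\N}_{t+r^*}$ produces the $\int_{\underline{\N}_{t+r^*}}\!\mathds{1}_{\tau \leq u - u_0 \leq \tau + 1}$ term (with weight $|v_u|$ dictated by $\dr\m_{\underline{\N}} = r^2 du \wedge \dr\m_{\mathbb{S}^2}$), its derivative controlled by Lemma~\ref{rdeveriv}'s third estimate; the boundary value at $u = \tau + u_0 + 1$ is then bounded by a complementary 1D Sobolev in $\uu$ along $\N_\tau$ using Lemma~\ref{rdeveriv}'s second estimate, which yields the $\int_{\Sigma_\tau}$ term through $\dr\m_{\N_\tau} = r^2 d\uu \wedge \dr\m_{\mathbb{S}^2}$. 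The $r^{2+p}$ scaling comes from the $r^2$ Jacobians together with the $r^p$ weight built into the second and third estimates of Lemma~\ref{rdeveriv}.

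The main technical obstacle lies in managing the $|v_{\uu}|^q/|v_t|^q$ weight when invoking Lemma~\ref{rdeveriv}: with $h = |\widehat{\Omega}^J f|^s$ carrying this extra factor, $\T(h)$ is no longer exactly zero but, by Lemma~\ref{Lemhierar} applied with $p=0$, satisfies $|\T(|v_{\uu}|^q/|v_t|^q)| \lesssim_q r^{-1}\, |v_{\uu}|^{q-1} |v_t|^{-q} |\slashed{v}|^2/r^2$. Using the mass-shell identity $|\slashed{v}|^2/r^2 \lesssim |v_t||v_{\uu}|$ from \eqref{vu} (valid on $r \geq R_0$) turns this into a lower-order contribution bounded by $r^{-1}$ times the leading integrand, which is absorbed into the main estimate. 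Choosing at each step the estimate of Lemma~\ref{rdeveriv} whose weight ($|v_t|$, $|v_{\uu}|$, or $|v_u|$) matches the induced volume form on the hypersurface under consideration avoids any need for negative powers of $|v_{\uu}|$ in the auxiliary function $h$, which would otherwise introduce spurious singularities.
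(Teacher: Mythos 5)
Your overall architecture --- the sphere Sobolev inequality of Lemma \ref{Sobsphere} followed by a one-dimensional Sobolev embedding whose derivative term is controlled by Lemma \ref{rdeveriv} --- is indeed the paper's. However, in the region $r^* < R_0^*$ your one-dimensional embedding is taken in the variable $r^*$ over a unit interval, and this breaks down near the horizon. The induced measure on $\Sigma_{\tau} \cap \{ r < R_0 \}$ is $\gamma(r) r^2 \, \dr r \wedge \dr \m_{\mathbb{S}^2}$ with $\gamma$ bounded above and below, so converting a line integral in $\dr r^*$ into an integral against $\dr \m_{\Sigma_{\tau}}$ costs a factor $(1-\frac{2M}{r})^{-1}$ which is unbounded as $r \to 2M$: for radial null covectors ($\slashed{v}=0$, hence $v_u = 0$ and $|v_{r^*}| = |v_t| = |v_{\uu}|$) the inequality $\frac{|v_{r^*}|^2}{|v_t|}(1-\frac{2M}{r})^{-1} \lesssim |v \cdot \n_{\Sigma_{\tau}}|$ needed to absorb this Jacobian fails. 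The ``compact subset of $(2M,R_0]$'' you invoke is not uniform in the base point; as $r^* \to -\infty$ it degenerates to $\{2M\}$ and the implied constant blows up. (A Sobolev in $r^*$ at fixed $t$ has the additional defect that the segment of integration is not contained in $\Sigma_{\tau}$.) The paper instead parameterizes $\Sigma_{\tau} \cap \{r < R_0\}$ by $(r,\omega) \mapsto (\underline{U}(r)-r^*+\tau, r^*, \omega)$ and performs the embedding in the regular coordinate $r$ over the \emph{fixed} compact interval $[2M,R_0]$ along the slice; the derivative along the slice is then a combination of $\partial_t$ and $\frac{1}{1-\frac{2M}{r}}\partial_u$, which is precisely the horizon-regular operator controlled by the first estimate of Lemma \ref{rdeveriv}.

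In the region $r^* \geq R_0^*$ your order of operations also produces a weight mismatch. Applying the $u$-Sobolev first to the full quantity (which carries $|v_t|^{a+1}$) yields a zeroth-order term on $\underline{\N}_{t+r^*}$ with weight $|v_t|$, which is not controlled by the flux $\int_{\underline{\N}}\int_{\C}(\cdots)|v_u|\,\dr\m_{\C}\dr\m_{\underline{\N}}$ appearing in the statement, since $|v_t| \geq |v_u|$ with no reverse bound; moreover the boundary value at $u = \tau+u_0+1$ lies on $\N_{\tau+1}$, not on $\N_{\tau}$. The paper splits $v_t = v_{\uu}+v_u$ \emph{before} any embedding, so that the piece with weight $|v_{\uu}|$ is integrated in $\uu$ from $t+r^*$ to $+\infty$ along $\N_{\tau}$ (matching the $\N_{\tau}$-flux) and the piece with weight $|v_u|$ is integrated in $u$ over $[t-r^*, t-r^*+1]$ along $\underline{\N}_{t+r^*}$ (matching the $\underline{\N}$-flux), both line integrals emanating from the point itself. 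Finally, your workaround for $\T(h)\neq 0$ via Lemma \ref{Lemhierar} is unnecessary: the weights $r^p|v_{\uu}|^q$ are already built into the second and third estimates of Lemma \ref{rdeveriv}, so one applies them to $h = |\widehat{\Omega}^J f|^s|v_t|^a$, which is still annihilated by $\T$.
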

\begin{Rq}
Replacing $\frac{|v_{r^*}|^2}{|v_t|^2}$ by $\frac{|v_{r^*}|^{s}}{|v_t|^{s}}$, we could only require to control $\lceil \frac{3}{s} \rceil$ derivatives of $f$ but this would not provide us any additional useful information (because, in particular, of Proposition \ref{decayangu}).
\end{Rq}
\begin{proof}
Fix $(t,x^{*},\omega_0) \in \Rm_{-\infty}^{+\infty}$ and consider $\tau \in \R$ such that $(t,x^*,\omega_0) \in \Sigma_{\tau}$. In order to avoid any confusion, we emphasize that during this proof, $f$ will be viewed as a function of the variables $(t,r^*,\theta , \varphi,v_{r^*}, v_{\theta},v_{\varphi})$ and $r$ as a function of the coordinate $r^*$. 

Recall from Lemma \ref{calculvstar} that $\widehat{\Omega}^J(v_{r^*})=\widehat{\Omega}^J(v_t)=\widehat{\Omega}^J(v^{\mathstrut}_{\mathrm{N}})=0$. Consequently, Lemma \ref{Sobsphere} yields
\begin{equation}\label{eq:Sobspherep}
 \int_{\C} \frac{|v_{r^*}\!|^2}{|v_t|^2} \frac{|v_{\uu}|^{q}}{|v_t|^{q}} |f|^s  |v_t||v^{\mathstrut}_{\mathrm{N}}|^a \dr \m_{\C}\Big\vert_{(t,x^*\!, \omega_0)}  \lesssim_s^{\mathstrut} \! \sum_{|J| \leq 2} \int_{\omega \in \mathbb{S}^2} \! \int_{\C}\frac{|v_{r^*}\!|^2}{|v_t|^2}  \frac{|v_{\uu}|^{q}}{|v_t|^{q}} \left| \widehat{\Omega}^J f \right|^s \! |v_t||v^{\mathstrut}_{\mathrm{N}}|^a \dr \m_{\C}\Big\vert_{(t,x^*\!, \omega)} \dr \m_{\mathbb{S}^2}.
 \end{equation}
Fix $\omega \in \mathbb{S}^2$, $|J| \leq 2$ and denote $\widehat{\Omega}^J f$ by $h$, so that $\T(h)=0$ according to Lemma \ref{Comuprop}. Assume first that $x^* < R_0^*$ and recall from Lemma \ref{vol} that $\Sigma_{\tau} \cap \{ r^* < R_0^* \}$ can be parameterized by $(\underline{U}(r)-r^*(r)+\tau,r^*(r),\omega')$, with $2M<r < R_0$ and $\omega' \in \mathbb{S}^2$. By a $1$-dimensional Sobolev inequality, we obtain
\begin{align*}
\int_{\C} \frac{|v_{r^*}\!|^2}{|v_t|^2} |h|^s |v_t||v^{\mathstrut}_{\mathrm{N}}|^a \dr \m_{\C}\Big\vert_{(t,x^*\!, \omega)} \, \lesssim^{\mathstrut}_s \sum_{0 \leq n \leq 1}\int_{r=2M}^{R_0}  \left|\partial_{r}^n\left( \int_{\C} \frac{|v_{r^*}\!|^2}{|v_t|^2} |h|^s |v_t||v^{\mathstrut}_{\mathrm{N}}|^a  \dr \m_{\C}\Big\vert_{(\underline{U}(r)-r^*+\tau,r^*\!,\omega)} \right) \right|\dr r .
\end{align*}
Since $\underline{U}$ is a smooth function on $[2M,R_0]$, we have $|\underline{U}'| \lesssim 1$. As $\partial_r r^*= \left( 1 - \frac{2M}{r} \right)^{-1}$, we get
\begin{align*}
 \left|\partial_{r} \! \left( \int_{\C} \frac{|v_{r^*}\!|^2}{|v_t|^2} |h|^s |v_t||v^{\mathstrut}_{\mathrm{N}}|^a  \dr \m_{\C}\Big\vert_{(\underline{U}(r)-r^*+\tau,r^*\!,\omega)} \right)\right| &  \lesssim     \left|\left( \partial_t \! \int_{\C} \frac{|v_{r^*}\!|^2}{|v_t|^2} |h|^s |v_t||v^{\mathstrut}_{\mathrm{N}}|^a  \dr \m_{\C} \right) \!\Big\vert_{(\underline{U}(r)-r^*+\tau,r^*\!,\omega)}\right| \\ &+ \left|\left(\frac{1}{1-\frac{2M}{r}} \partial_u \! \int_{\C} \frac{|v_{r^*}\!|^2}{|v_t|^2} |h|^s |v_t||v^{\mathstrut}_{\mathrm{N}}|^a \dr \m_{\C} \right) \! \Big\vert_{(\underline{U}(r)-r^*+\tau,r^*\!,\omega)}\right|.
\end{align*}
Applying Lemma \ref{rdeveriv} to $|h|^s$ then gives, in view of \eqref{pderiv} and since $\T(h)=0$,
\begin{align*}
\int_{\C}\frac{|v_{r^*}\!|^2}{|v_t|^2} |h|^s |v_t||v^{\mathstrut}_{\mathrm{N}}|^a \dr \m_{\C}\Big\vert_{(t,x^*\!, \omega)}  \lesssim^{\mathstrut}_{a,s} \! \sum_{n+|I| \leq 1} \int_{r=2M}^{R_0} \! \int_{\C}  \left|\partial_t^n \widehat{\Omega}^I h \right|^s \! |v^{\mathstrut}_{\mathrm{N}}|^{a+1} \dr \m_{\C}\Big\vert_{(\underline{U}(r)-r^*+\tau,r^*\!,\omega)}\dr r .
\end{align*}
Combining the last inequality with \eqref{eq:Sobspherep}, we obtain, since $2M < r,r(x^*) \leq R_0$ on the domain considered,
\begin{align*}
|r(x^*\!)|^2\! \int_{\C} \frac{|v_{r^*}\!|^2}{|v_t|^2} |f|^s |v_t||v^{\mathstrut}_{\mathrm{N}}|^a \dr \m_{\C}\Big\vert_{(t,x^*\!, \omega)} \! \lesssim^{\mathstrut}_{a,s} \! \sum_{\substack{n+|I| \leq 3 \\ n \leq 1}} \int_{r=2M}^{R_0} \! \int_{\mathbb{S}^2} \! \int_{\C} \left|\partial_t^n \widehat{\Omega}^I f \right|^s \!|v^{\mathstrut}_{\mathrm{N}}|^{a+1}\dr \m_{\C} \dr \m_{\mathbb{S}^2}\Big\vert_{(\underline{U}(r)-r^*+\tau,r^*)} r^2\dr r .
\end{align*}
Finally, as $|v^{\mathstrut}_{\mathrm{N}}| \lesssim | v \cdot \n_{\Sigma_{\tau}}|$ on $\{r^* < R_0^*\}$ by Lemma \ref{comparo} and in view of the expression of $\dr \m_{\Sigma_{\tau}}$ (see Lemma \ref{vol}), we get
\begin{align*}
|r(x^*\!)|^2\! \int_{\C} \frac{|v_{r^*}\!|^2}{|v_t|^2} |f|^s |v_t||v^{\mathstrut}_{\mathrm{N}}|^a \dr \m_{\C}\Big\vert_{(t,x^*\!, \omega)}  \lesssim^{\mathstrut}_{a,s} \! \sum_{\substack{n+|I| \leq 3 \\ n \leq 1}} \int_{\Sigma_{\tau}} \mathds{1}_{r \leq R_0} \, \rho \left[  \left|\partial_t^n \widehat{\Omega}^I f \right|^s |v^{\mathstrut}_{\mathrm{N}}|^a \right] \dr \m_{\Sigma_{\tau}} .
\end{align*}
Suppose now that $x^* \geq R_0^*$. Here, we cannot merely integrate on $\Sigma_{\tau}$ since $v \cdot \n_{\N_{\tau}}= v_{\uu}$ does not control $|v_t|$. Instead, we use  $v_t=v_{\uu}+v_u$ and $v^{\mathstrut}_{\mathrm{N}}=v_t$ for $r \geq R_0$ (see Proposition \ref{energyredshift}), so that
\begin{align}
& \int_{\C} \frac{|v_{r^*}\!|^2}{|v_t|^2} \frac{|v_{\uu}|^{q}}{|v_t|^{q}} |h|^s |v_t||v^{\mathstrut}_{\mathrm{N}}|^a \dr \m_{\C}\Big\vert_{(t,x^*\!, \omega)} \, \leq \,  H_1(t,x^*\!,\omega) + H_2(t,x^*\!,\omega), \label{eq:daouugent} \\ \nonumber
 & H_1(t,x^* \!, \omega)    := \! \int_{\C} \! \frac{|v_{r^*}\!|^2}{|v_t|^2}  \frac{|v_{\uu}|^{q}}{|v_t|^{q}} |h|^s |v_t|^a |v_{\uu}| \dr \m_{\C}\Big\vert_{(t,x^*\!, \omega)}\!, \qquad H_2(t,x^* \!, \omega)  := \! \int_{\C} \! \frac{|v_{r^*}\!|^2}{|v_t|^2}  \frac{|v_{\uu}|^{q}}{|v_t|^{q}} |h|^s |v_t|^a|v_u| \dr \m_{\C}\Big\vert_{(t,x^*\!, \omega)}
\end{align}
and we bound $H_1$ and $H_2$ separately. By one dimensional Sobolev inequalities, we have
\begin{align*}
  H_1(t,x^*\!, \omega)  & \, = \, -  \int_{\underline{u}=t+x^*}^{+\infty} \! \partial_{\uu} \left( H_1 \Big(  \frac{1}{2}\uu + \frac{1}{2}(t-x^*),\frac{1}{2}\uu-\frac{1}{2}(t-x^*), \omega \Big) \right)  \dr \uu , \\
  H_2 (t,x^*\!, \omega) & \, \lesssim \, \sum_{n \leq 1}  \int_{u=t-x^*}^{t+x^*\!+1} \! \left| \partial^n_{u} \left( H_2 \Big(  \frac{1}{2}(t+x^*)+ \frac{1}{2}u,\frac{1}{2}(t+x^*)-\frac{1}{2}u, \omega \Big) \right) \right| \dr u.
\end{align*}
Note that $r(x^*) \leq r$ on the domain of integration of the first integral and $r^* \geq x^*-\frac{1}{2}$, so that $R_0 \leq r(x^*) \lesssim  r$, on the one of the second integral. Thus, applying Lemma \ref{rdeveriv} to $|h|^s|v_t|^{a}$, we obtain, using \eqref{pderiv} and since $\partial_t (v_t)=\widehat{\Omega}_i(v_t)=\T(|h|^s|v_t|^a)=0$,
\begin{align*}
|r(x^*)|^{2+p} H_1 ( t,x^* \! , \omega)  & \lesssim^{\mathstrut}_{q,s} \! \sum_{n+|I| \leq 1 }  \int_{\underline{u}=t+x^*}^{+\infty} \! \int_{\C}   r^p \frac{|v_{\uu}|^{q}}{|v_t|^{q}} \! \left| \partial_t^n \widehat{\Omega}^I h \right|^s \! |v_t|^a|v_{\uu}| \dr \m_{\C}\Big\vert_{\big(\frac{1}{2}\uu + \frac{1}{2}(t-x^*),\frac{1}{2}\uu-\frac{1}{2}(t-x^*), \omega \big)} r^2 \dr \uu, \\
|r(x^*)|^{2+p} H_2 ( t,x^* \! , \omega)  & \lesssim^{\mathstrut}_{q,s} \! \sum_{n+|I| \leq 1 }  \int_{u=t-x^*}^{t-x^*+1} \! \hspace{-0.4mm} \int_{\C}   r^p \frac{|v_{\uu}|^{q}}{|v_t|^{q}} \! \left| \partial_t^n \widehat{\Omega}^I h \right|^s \! |v_t|^a|v_{u}| \dr \m_{\C}\Big\vert_{\big(\frac{1}{2}(t+r^*) + \frac{1}{2}u,\frac{1}{2}(t+r^*)-\frac{1}{2}u, \omega \big)} r^2 \dr u.
\end{align*}
Note now that $t-x^*=\tau+u_0$ since $x^* \geq R_0$ and $(t,x^* , \omega_0) \in \Sigma_{\tau}$. Recall from Subsection \ref{subsec1} that
\begin{itemize}
\item $\Sigma_{\tau} \cap \{ r^* \geq R_0^* \} = \N_{\tau}$, $v \cdot \n_{\N_{\tau}} =v_{\uu}$, $\dr \N_{\tau} = r^2 \dr \underline{u} \wedge \dr \m_{\mathbb{S}^2}$ and
\item $\dr \underline{\N}_{t+x^*} = r^2 \dr u \wedge \dr \m_{\mathbb{S}^2}$. Moreover, $t-x^* \leq u \leq t-x^*+1 \Leftrightarrow \tau \leq u-u_0 \leq \tau+1$. 
\end{itemize}
We then deduce the second estimate of the Proposition from \eqref{eq:Sobspherep}, \eqref{eq:daouugent} and the last two inequalities.
\end{proof}
This allows us to deduce the following estimate for the region $\tau \geq 0$.
\begin{Pro}\label{decayint}
Let $(a,p,q) \in \R_+^3$ such that $0 \leq p \leq 2q$ and $s \in [1,+\infty[$. Consider a sufficiently regular function $f : \widehat{\Rm}_{0}^{+\infty} \rightarrow \R$ satisfying $\T(f)=0$. There holds, for all $\tau \geq 0$ and $(t,r^*,\omega) \in \Sigma_{\tau}$,
$$ \int_{\C} \frac{|v_{r^*}|^2}{|v_t|^2} \frac{|v_{\uu}|^q}{|v_t|^q} |f|^s  |v_t||v^{\mathstrut}_{\mathrm{N}}|^a \dr \m_{\C}\Big\vert_{(t,r^*,\omega)} \, \lesssim^{\mathstrut}_{a,q,s}  \frac{1}{r^{2+p}} \sum_{\substack{n+|I| \leq 3 \\ n \leq 1}} \int_{\Sigma_{\tau}} \rho \left[ \left( 1+r^p\frac{|v_{\uu}|^q}{|v_t|^q} \right) \left|\partial_t^n \widehat{\Omega}^I f\right|^s |v^{\mathstrut}_{\mathrm{N}}|^a  \right]  \dr \m_{\Sigma_{\tau}} .$$
\end{Pro}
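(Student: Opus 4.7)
The strategy is to combine the two estimates of Lemma \ref{decayvrstar}, treating separately the regions $r^* < R_0^*$ (where $\Sigma_\tau$ is spacelike) and $r^* \geq R_0^*$ (where $\Sigma_\tau$ coincides with the outgoing null hypersurface $\N_\tau$). The only real work is to convert, in the second region, the flux appearing on $\underline{\N}_{t+r^*}$ into an integral on $\Sigma_\tau$.

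For a point $(t,r^*,\omega) \in \Sigma_\tau$ with $r^* < R_0^*$, we apply the first estimate of Lemma \ref{decayvrstar}. Since $|v_{\uu}| \leq |v_t|$ we have $\frac{|v_{\uu}|^q}{|v_t|^q} \leq 1$, so the LHS of the Proposition is bounded by the LHS of that first estimate. Moreover $2M < r \leq R_0$ on this region, so $\frac{1}{r^2} \leq \frac{R_0^p}{r^{2+p}}$, and the weight $\left(1 + r^p \frac{|v_{\uu}|^q}{|v_t|^q}\right) \geq 1$ can be trivially inserted on the right. This yields the desired bound, up to a constant depending on $p$ and the foliation.

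For $(t,r^*,\omega) \in \Sigma_\tau$ with $r^* \geq R_0^*$, we apply the second estimate of Lemma \ref{decayvrstar}. The first bulk term on the right is already in the stated form, since $|v_t|^a \leq |v^{\mathstrut}_{\mathrm{N}}|^a$ by \eqref{defvtbar} and $r^p \frac{|v_{\uu}|^q}{|v_t|^q} \leq 1 + r^p \frac{|v_{\uu}|^q}{|v_t|^q}$. It remains to absorb the flux
\[
\mathbf{F}_{n,I} := \int_{\underline{\N}_{t+r^*}} \mathds{1}_{\tau \leq u-u_0 \leq \tau+1} \int_{\C} r^p \frac{|v_{\uu}|^{q}}{|v_t|^{q}} \left| \partial_t^n \widehat{\Omega}^I f \right|^s |v_t|^a |v_u| \, \dr \m_{\C} \dr \m_{\underline{\N}_{t+r^*}}
\]
into the $\Sigma_\tau$-integral on the right. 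For this, set $h_{n,I} := \left|\partial_t^n \widehat{\Omega}^I f\right|^s |v_t|^a$. By Lemma \ref{Comuprop} the vector fields $\partial_t$ and $\widehat{\Omega}_i$ commute with $\T$, so $\partial_t^n \widehat{\Omega}^I f$ still solves the massless Vlasov equation; combined with $\T(v_t)=0$ this gives $\T(h_{n,I})=0$ almost everywhere. We can then apply Remark \ref{energydecayintRq} to $h_{n,I}$ with $\tau_1 = \tau$ and a suitable $\tau_2 \geq \tau+1$, which bounds $\mathbf{F}_{n,I}$ by
\[
\int_{\N_\tau} \int_{\C} r^p \frac{|v_{\uu}|^q}{|v_t|^q} h_{n,I} |v_{\uu}| \dr \m_{\C} \dr \m_{\N_\tau} + \int_{\Sigma_\tau} \rho[h_{n,I}] \dr \m_{\Sigma_\tau}.
\]
On $\N_\tau$ one has $\n_{\N_\tau}=\partial_{\uu}$ and $\dr \m_{\N_\tau}$ equals the restriction of $\dr \m_{\Sigma_\tau}$ to $\{r \geq R_0\}$, so the first of these two terms is exactly $\int_{\Sigma_\tau} \mathds{1}_{r \geq R_0}\, \rho\bigl[r^p \frac{|v_{\uu}|^q}{|v_t|^q} h_{n,I}\bigr] \dr \m_{\Sigma_\tau}$, and both terms combine to give the required bound $\int_{\Sigma_\tau} \rho\bigl[(1+r^p \frac{|v_{\uu}|^q}{|v_t|^q}) |\partial_t^n \widehat\Omega^I f|^s |v^{\mathstrut}_{\mathrm{N}}|^a\bigr] \dr \m_{\Sigma_\tau}$ after using $|v_t|^a \leq |v^{\mathstrut}_{\mathrm{N}}|^a$.

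The one technical point to watch is the compatibility condition $\underline{w} \geq \tau_2 + u_0 + 2R_0^*$ in Remark \ref{energydecayintRq}: since $t+r^* = \tau + u_0 + 2r^*$, the choice $\tau_2 = \tau+1$ works as soon as $r^* \geq R_0^* + \tfrac{1}{2}$, which handles all of the second region except a thin shell $R_0^* \leq r^* < R_0^* + \tfrac{1}{2}$ where $r$ is bounded above by some $R_0' > R_0$. In this shell the estimate is essentially equivalent to the first case (since $r$ is uniformly bounded), and can be obtained either by extending the $1$-dimensional Sobolev inequality in the proof of Lemma \ref{decayvrstar} to the enlarged interval $[2M, R_0']$ or by a direct divergence-theorem argument for $N[r^p \frac{|v_{\uu}|^q}{|v_t|^q} h_{n,I}]$ on the compact domain bounded by $\underline{\N}_{t+r^*}$, $\Sigma_\tau$ and $\Sigma_{\tau+1}$. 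This boundary adjustment is where one has to be most careful, but it is purely local and does not require any new ingredients.
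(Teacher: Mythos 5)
Your decomposition into the two regions $r^* < R_0^*$ and $r^* \geq R_0^*$, the use of the two estimates of Lemma \ref{decayvrstar}, and the application of Remark \ref{energydecayintRq} with $\tau_1=\tau$, $\tau_2=\tau+1$ in the far region are exactly the paper's argument, and you correctly identified the sharp threshold $r^* \geq R_0^*+\tfrac12$ for the compatibility condition $\underline{w}\geq \tau_2+u_0+2R_0^*$ (the paper uses the slightly cruder split at $R_0^*+1$). The gap is in your treatment of the transition shell. Your first proposed fix --- extending the one-dimensional Sobolev inequality of the first case to an enlarged interval $[2M,R_0']$ --- does not work: that argument relies on integrating along the \emph{spacelike} part of $\Sigma_\tau$, where Lemma \ref{comparo} gives $|v^{\mathstrut}_{\mathrm{N}}|\lesssim |v\cdot \n_{\Sigma_\tau}|$ so that the $|v_t||v^{\mathstrut}_{\mathrm{N}}|^a$-weighted integrand is absorbed into $\rho$. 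Past $r=R_0$ the hypersurface $\Sigma_\tau$ is the null cone $\N_\tau$, $v\cdot \n_{\N_\tau}=v_{\uu}$ no longer controls $|v_t|$, and this failure is precisely what forces the splitting $|v_t|=|v_{\uu}|+|v_u|$ and produces the $\underline{\N}_{t+r^*}$ flux in the first place; so there is no enlarged interval on which the first case's argument applies. Your second proposed fix (divergence theorem for $N[r^p\tfrac{|v_{\uu}|^q}{|v_t|^q}h_{n,I}]$ on the slab between $\Sigma_\tau$, $\Sigma_{\tau+1}$ and $\underline{\N}_{t+r^*}$) can be pushed through, but it is not "purely local with no new ingredients": the slab reaches the horizon, and by Lemma \ref{Lemhierar} the bulk term $\T(r^p|v_{\uu}|^q/|v_t|^q)$ contains $\tfrac{|\slashed{v}|^2}{r^2}\sim \tfrac{|v_u||v_{\uu}|}{1-\frac{2M}{r}}$, so the resulting spacetime integral involves the non-degenerate density $h_{n,I}|v^{\mathstrut}_{\mathrm{N}}|$ and requires the energy machinery of Sections \ref{sec2}--\ref{sec3} (or at least a co-area argument in $\uu$ together with Remark \ref{divergence0Rq}) to close. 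You would need to supply that.

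The paper's fix is a one-liner you missed: on $\underline{\N}_{t+r^*}\cap\{\tau\leq u-u_0\leq \tau+1\}$ with $r^*\leq R_0^*+1$, the radius stays bounded, so the weight satisfies $r^p\tfrac{|v_{\uu}|^q}{|v_t|^q}\lesssim_q 1$ \emph{on the flux hypersurface itself}. One can therefore simply drop the weight and bound the unweighted flux $\int_{\underline{\N}_{t+r^*}}\mathds{1}_{\tau\leq u-u_0\leq\tau+1}\int_{\C}h_{n,I}|v_u|\dr\m_{\C}\dr\m_{\underline{\N}_{t+r^*}}$ directly by $\int_{\Sigma_\tau}\rho[h_{n,I}]$ using Remark \ref{divergence0Rq}, which follows from the basic conservation law and needs no weighted divergence identity. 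With that substitution your proof is complete and coincides with the paper's.
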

\begin{proof}
If $r^* < R_0^*$, then remark that $|v_{\uu}|^q \leq |v_t|^q$, so that the result ensues from Lemma \ref{decayvrstar} and $r^{-2} \lesssim^{\mathstrut}_q r^{-2-p}$. Otherwise $r^* \geq R_0^*$ and we also apply Lemma \ref{decayvrstar}. We bound the flux through the piece of the hypersurface $\underline{\N}_{t+r^*}$ using
\begin{itemize}
\item Remark \ref{divergence0Rq}, applied with $\tau_1=\tau$ and $\tau_2$ sufficiently large, if $r^* \leq R_0^*+1$ since $r^p |v_{\uu}|^q \lesssim^{\mathstrut }_q |v_t|^q$.
\item Remark \ref{energydecayintRq}, applied with $\tau_1=\tau$ and $\tau_2=\tau+1$, if $r^* \geq R_0^*+1$, since in that case we have the inclusion $\underline{\N}_{t+r^*} \cap \{ \tau \leq u-u_0 \leq \tau+1 \} \subset \{ r \geq R_0 \}$ or, equivalently, $t+r^* \geq \tau+1+u_0+2R_0^*$.
\end{itemize}
It then remains to use that $|v_t| \leq |v^{\mathstrut}_{\mathrm{N}}|$ (see \eqref{redshiftdef}).
\end{proof}
The next lemma will permit us to estimate $ \int_{\C} |f|  |v^{\mathstrut}_{\mathrm{N}}|^2 \dr \mu_{\C}$ by $L^s$ norms of $f$  through an application of the previous result.
\begin{Lem}\label{decayangu}
Let $s \in \, ]3,4]$, $\delta>0$ and $f : \widehat{\Rm}_{-\infty}^{+\infty} \rightarrow \R$ be a sufficiently regular function. Then, for any $q \in \R_+$,
$$ \int_{\C}  \frac{|v_{\uu}|^q}{|v_t|^q} |f| |v^{\mathstrut}_{\mathrm{N}}|^2 \dr \m_{\C} \, \lesssim^{\mathstrut}_{s,\delta} \,  \frac{r^{\frac{4}{s}}}{r^{2\frac{s-1}{s}}} \left| \int_{\C} \frac{|v_{r^*}|^2}{|v_t|^2}  \frac{|v_{\uu}|^{sq+2}}{|v_t|^{sq+2}} \left| f \, |v_t|^{\frac{4-s}{s}} \langle v_t \rangle^{\frac{s-3+\delta}{s}} \langle \slashed{v} \rangle^{2\frac{s-3+\delta}{s}} \right|^{s}\! |v_t| |v^{\mathstrut}_{\mathrm{N}}|^{2(s+1)}  \dr \m_{\C} \right|^{\frac{1}{s}} \! .$$
\end{Lem}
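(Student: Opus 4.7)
My plan is to prove Lemma~\ref{decayangu} by a Hölder inequality argument with conjugate exponents $s$ and $s' := s/(s-1)$. First I would split the LHS integrand as $\frac{|v_{\uu}|^q}{|v_t|^q}|f||v^{\mathstrut}_{\mathrm{N}}|^2 = A \cdot B$, where $A$ is chosen so that $A^s$ reproduces the RHS integrand, namely
\[
A \,:=\, \frac{|v_{r^*}|^{2/s}}{|v_t|^{2/s}} \frac{|v_{\uu}|^{q+2/s}}{|v_t|^{q+2/s}} |f| \, |v_t|^{(5-s)/s} \langle v_t \rangle^{(s-3+\delta)/s} \langle \slashed{v} \rangle^{2(s-3+\delta)/s} |v^{\mathstrut}_{\mathrm{N}}|^{2(s+1)/s},
\]
and then $B$ is forced by the factorisation; a short computation gives
\[
B \,=\, \frac{|v_t|^{(s-1)/s}}{|v_{r^*}|^{2/s}\,|v_{\uu}|^{2/s}\,\langle v_t\rangle^{(s-3+\delta)/s}\,\langle\slashed{v}\rangle^{2(s-3+\delta)/s}\,|v^{\mathstrut}_{\mathrm{N}}|^{2/s}}.
\]
Hölder's inequality then reduces the statement to proving the key estimate $\|B\|_{L^{s'}(\dr \m_{\C})} \lesssim_{s,\delta} r^{(6-2s)/s} = r^{4/s}/r^{2(s-1)/s}$.

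The heart of the argument is to show $\int_{\C} B^{s'} \dr \m_{\C} \lesssim_{s,\delta} r^{(6-2s)/(s-1)}$. The main obstacle here is the factor $|v_{\uu}|^{-2/(s-1)}$, which is singular on the outgoing light cone where $v_{r^*}\to|v_t|$ (so $|v_{\uu}|\to 0$); moreover any naive use of the mass-shell identity to tame this singularity introduces a factor $(1-2M/r)^{-2/(s-1)}$ that blows up at the horizon. I would resolve both issues simultaneously by combining the mass-shell identity $|v_u||v_{\uu}|=(1-2M/r)|\slashed{v}|^2/(4r^2)$ from \eqref{vu} with the upper bound $|v_u| \lesssim (1-2M/r)|v^{\mathstrut}_{\mathrm{N}}|$ which follows from \eqref{defvtbar}. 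These together yield the crucial cancellation
\[
\frac{1}{|v_{\uu}|\,|v^{\mathstrut}_{\mathrm{N}}|} \,=\, \frac{4r^2\,|v_u|}{(1-2M/r)\,|\slashed{v}|^2\,|v^{\mathstrut}_{\mathrm{N}}|} \,\lesssim\, \frac{r^2}{|\slashed{v}|^2},
\]
which removes the $(1-2M/r)$ factor and produces exactly the correct power of $r$.

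After this cancellation, the factor $|v_t|$ in $B^{s'}$ combines with the $|v_t|^{-1}$ from $\dr\m_{\C}$, and I am left with
\[
\int_{\C} B^{s'} \dr \m_{\C} \,\lesssim_{s,\delta}\, r^{(6-2s)/(s-1)} \int \frac{\dr v_{r^*}\,\dr v_\theta\,\dr v_\varphi}{|\sin\theta|\,|v_{r^*}|^{\frac{2}{s-1}}\,|\slashed{v}|^{\frac{4}{s-1}}\,\langle v_t\rangle^{\frac{s-3+\delta}{s-1}}\,\langle\slashed{v}\rangle^{\frac{2(s-3+\delta)}{s-1}}}.
\]
I would then pass to polar coordinates $(V,\phi)$ with $V:=|\slashed{v}|$ and $\dr v_\theta\,\dr v_\varphi=V\sin\theta\,\dr V\,\dr\phi$, integrate out $\phi$, and use $\langle v_t\rangle \geq \langle v_{r^*}\rangle$ to separate variables. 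The resulting one-dimensional integrals reduce to $\int |v_{r^*}|^{-2/(s-1)}\langle v_{r^*}\rangle^{-(s-3+\delta)/(s-1)}\,\dr v_{r^*}$ and $\int_0^{+\infty} V^{(s-5)/(s-1)}\langle V\rangle^{-2(s-3+\delta)/(s-1)}\,\dr V$: the condition $s>3$ is precisely what ensures local integrability at the origin in each variable (it gives $2/(s-1)<1$ and $(s-5)/(s-1)>-1$), while $\delta>0$ gives the required decay at infinity (exponents $1+\delta/(s-1)$ and $1+2\delta/(s-1)$ respectively). Raising to the $1/s'$-th power, multiplying by $\|A\|_{L^s(\dr\m_{\C})}$, and using that $(\int A^s \dr\m_{\C})^{1/s}$ is exactly the RHS norm, concludes the proof.
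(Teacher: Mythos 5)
Your proposal is correct and follows essentially the same route as the paper: a Hölder inequality with exponents $s$ and $s/(s-1)$ whose $L^s$ factor is exactly the right-hand-side integrand, followed by the mass-shell identity $4|v_{\uu}||v_u|=\left(1-\frac{2M}{r}\right)\frac{|\slashed{v}|^2}{r^2}$ combined with $\frac{|v_u|}{1-\frac{2M}{r}}\lesssim|v^{\mathstrut}_{\mathrm{N}}|$ from \eqref{defvtbar}, and the same convergence analysis in which $s>3$ controls the origin and $\delta>0$ the tails. The only (immaterial) difference is bookkeeping: the paper applies the mass-shell substitution to the $L^s$ factor, converting $|\slashed{v}|^4$ into $r^4|v_{\uu}|^2|v^{\mathstrut}_{\mathrm{N}}|^2$ in its integral $\mathcal{I}$, whereas you apply it to the dual factor $B$, converting $|v_{\uu}|^{-2/(s-1)}|v^{\mathstrut}_{\mathrm{N}}|^{-2/(s-1)}$ into $r^{4/(s-1)}|\slashed{v}|^{-4/(s-1)}$, which yields the identical power of $r$ in the end.
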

\begin{proof}
We start by applying the Hölder inequality. As $\frac{s}{s-1}$ is the conjugate exponent of $s$, we have
\begin{equation}\label{eq:seitek}
 \int_{\C}  \frac{|v_{\uu}|^q}{|v_t|^q} |f| |v^{\mathstrut}_{\mathrm{N}}|^2 \dr \m_{\C} \, \lesssim \, \left| \mathcal{I} \right|^{\frac{1}{s}} \cdot \left| \mathcal{J} \right|^{\frac{s-1}{s}},
 \end{equation}
where
\begin{align*}
\mathcal{I} \, & := \,  \int_{\C} \left|  \frac{|v_{\uu}|^q}{|v_t|^q} |f| |v^{\mathstrut}_{\mathrm{N}}|^2 \right|^s \frac{|v_{r^*}|^2 |\slashed{v}|^4(1+|v_{r^*}|)^{s-3+\delta}(1+|\slashed{v}|)^{2(s-3+\delta)}}{|v_t|^{s-1}} \dr \m_{\C} , \\
\mathcal{J} \, & := \,  \int_{\C} \frac{|v_t|}{|v_{r^*}|^{\frac{2}{s-1}}(1+|v_{r^*}|)^{\frac{s-3+\delta}{s-1}} |\slashed{v}|^{\frac{4}{s-1}}(1+|\slashed{v}|)^{2\frac{s-3+\delta}{s-1}}} \dr \m_{\C} .
\end{align*}
Note now that since $\dr \m_{\C} = r^{-2} \sin^{-1} (\theta) |v_t|^{-1} \dr v_{r^*} \dr v_{\theta} \dr v_{\varphi}$ and $\sqrt{2}|\slashed{v}| \geq |v_{\theta}|+\sin^{-1}(\theta) |v_{\varphi}|$, 
\begin{align*}
\mathcal{J} \, \lesssim \, \frac{1}{r^2} \int_{v_{r^*} \in \R} \frac{\dr v_{r^*}}{|v_{r^*}|^{\frac{2}{s-1}} (1+|v_{r^*}|)^{\frac{s-3+\delta}{s-1}} } \int_{(v_{\theta}, v_{\varphi}) \in \R^2}\frac{\dr v_{\theta} \dr \frac{v_{\varphi}}{\sin (\theta)}}{ \big(|v_{\theta}|+\frac{|v_{\varphi}|}{\sin (\theta)} \big)^{\frac{4}{s-1}} \big( 1+|v_{\theta}|+\frac{|v_{\varphi}|}{\sin (\theta)} \big)^{2\frac{s-3+\delta}{s-1}}} .
\end{align*}
We then deduce, as $s>3$, that $\mathcal{J} \lesssim_{s,\delta} r^{-2}$. We now turn to $\mathcal{I}$ and we recall that $|v_{r^*}| \leq |v_t|$ and $\left( 1 - \frac{2M}{r} \right)\frac{|\slashed{v}|^2}{r^2} =4|v_{\uu}| |v_u|$ (see \eqref{vu}). Hence, in view of the definition \eqref{redshiftdef} of $v^{\mathstrut}_{\mathrm{N}}$, we have $|\slashed{v}|^2 \lesssim r^2 |v_{\uu}| |v^{\mathstrut}_{\mathrm{N}}|$, so that
$$ \mathcal{I} \, \lesssim^{\mathstrut} \, \int_{\C} \left|  \frac{|v_{\uu}|^q}{|v_t|^q} |f| |v^{\mathstrut}_{\mathrm{N}}|^2 \right|^s \frac{|v_{r^*}|^2}{|v_t|^2} \frac{r^4|v_{\uu}|^2}{|v_t|^2} |v_t||v^{\mathstrut}_{\mathrm{N}}|^2 \frac{(1+|v_t|)^{s-3+\delta}(1+|\slashed{v}|)^{2(s-3+\delta)}}{|v_t|^{s-4}} \dr \m_{\C} .$$
The result then follows from \eqref{eq:seitek},  $\mathcal{J} \lesssim_{s,\delta} r^{-2}$ and this last estimate.
\end{proof}
We are now able to prove pointwise decay estimates for the non-degenerate energy density $\mathbb{T}[f](\mathrm{N}, \mathrm{N})$.
\begin{Pro}\label{pointwise}
Let $q \in \R_+$, $z \in \, ]3,4]$, $\delta>0$ and $f : \widehat{\Rm}_{0}^{+\infty} \rightarrow \R$ be a sufficiently regular function. Then, for all $\tau \geq 0$ and $(t,r^*,\omega) \in \Sigma_{\tau}$,
\begin{align*}
 \int_{\C} & \frac{|v_{\uu}|^q}{|v_t|^q} |f| |v^{\mathstrut}_{\mathrm{N}}|^2 \dr \m_{\C}\Big\vert_{(t,r^*,\omega)} \\ &\, \lesssim^{\mathstrut}_{q,z,\delta}   \frac{1}{r^{2+2q}} \sum_{\substack{n+|I| \leq 3 \\ n \leq 1}} \left| \int_{\Sigma_{\tau}} \rho  \left[ \left(  1+ r^{2zq+4} \frac{|v_{\uu}|^{zq+2}}{|v_t|^{zq+2}} \right) \! \left| \partial_t^n \widehat{\Omega}^I f \right|^z  \big\langle |v_t|+|\slashed{v}| \big\rangle^{3(z-3+\delta)} |v_t|^{4-z} |v^{\mathstrut}_{\mathrm{N}}|^{2(z+1)}  \right]  \dr \m_{\Sigma_{\tau}}\right|^{\frac{1}{z}} \! .
 \end{align*}
\end{Pro}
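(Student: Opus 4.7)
The plan is to combine the two preparatory results in this subsection through an auxiliary function that absorbs the additional $v_t$ and $\slashed{v}$ weights. First I would apply Lemma \ref{decayangu} with $s=z$ to the left-hand side, which reduces the problem to controlling
$$ \mathcal{F} \, := \, \int_{\C} \frac{|v_{r^*}|^2}{|v_t|^2} \frac{|v_{\uu}|^{zq+2}}{|v_t|^{zq+2}} \left| f \, |v_t|^{(4-z)/z} \langle v_t \rangle^{(z-3+\delta)/z} \langle \slashed{v} \rangle^{2(z-3+\delta)/z} \right|^{z} |v_t| \,|v^{\mathstrut}_{\mathrm{N}}|^{2(z+1)} \dr \m_{\C} $$
multiplied by the prefactor $r^{4/z}/r^{2(z-1)/z}$, and raised to the power $1/z$. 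To handle $\mathcal{F}$ I would introduce
$$ F \, := \, f \cdot |v_t|^{(4-z)/z} \langle v_t \rangle^{(z-3+\delta)/z} \langle \slashed{v} \rangle^{2(z-3+\delta)/z}, $$
which still satisfies $\T(F) = 0$ almost everywhere, since $\T(v_t) = 0$ and $\T(\slashed{v}) = 0$ a.e.\ by Proposition \ref{Comuprop} and $\T(f)=0$ by assumption.

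Next, I would apply Proposition \ref{decayint} to $F$ with the parameters $s = z$, $a = 2(z+1)$, the new angular-weight exponent $q' := zq+2$, and the maximal admissible choice $p := 2(zq+2) = 2zq + 4$ (for which the hypothesis $0 \leq p \leq 2q'$ saturates). This yields
$$ \mathcal{F} \, \lesssim^{\mathstrut}_{z,q} \, \frac{1}{r^{2zq+6}} \sum_{\substack{n+|I|\leq 3 \\ n \leq 1}} \int_{\Sigma_{\tau}} \rho\!\left[ \left( 1 + r^{2zq+4} \frac{|v_{\uu}|^{zq+2}}{|v_t|^{zq+2}} \right) \left| \partial_t^n \widehat{\Omega}^I F \right|^z |v^{\mathstrut}_{\mathrm{N}}|^{2(z+1)} \right] \dr \m_{\Sigma_{\tau}}. $$
The cleanness of this application relies on the fact that the weights in $F$ are invariant under $\partial_t$ and the $\widehat{\Omega}_i$: indeed Lemma \ref{calculvstar} (together with the analogous trivial observation $\partial_t(v_t) = \partial_t(\slashed{v}) = 0$) gives $\widehat{\Omega}_i(v_t) = \widehat{\Omega}_i(\slashed{v}) = 0$, so that the Leibniz rule produces $\partial_t^n \widehat{\Omega}^I F = |v_t|^{(4-z)/z}\langle v_t \rangle^{(z-3+\delta)/z} \langle \slashed{v} \rangle^{2(z-3+\delta)/z} \cdot \partial_t^n \widehat{\Omega}^I f$, and consequently $|\partial_t^n \widehat{\Omega}^I F|^z = |v_t|^{4-z}\langle v_t\rangle^{z-3+\delta}\langle \slashed{v}\rangle^{2(z-3+\delta)} |\partial_t^n \widehat{\Omega}^I f|^z$.

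Combining the prefactors then produces the $r$-weight asserted in the statement, since
$$ \frac{r^{4/z}}{r^{2(z-1)/z}} \cdot \left( \frac{1}{r^{2zq+6}} \right)^{1/z} \, = \, r^{(6-2z-2zq-6)/z} \, = \, r^{-(2+2q)}, $$
while the inequality $|\sum a_i|^{1/z} \leq \sum |a_i|^{1/z}$ (valid since $z>1$) pulls the finite sum over $(n,I)$ outside the $z$-th root. Finally, since $z>3$ forces $z-3+\delta > 0$ and $|v_t|,|\slashed{v}| \geq 0$, the elementary bound $\langle v_t\rangle^{z-3+\delta}\langle \slashed{v}\rangle^{2(z-3+\delta)} \lesssim \langle |v_t|+|\slashed{v}|\rangle^{3(z-3+\delta)}$ converts the resulting weight into the one appearing in the conclusion. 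No substantial obstacle is expected: the entire argument is essentially a careful tracking of exponents, the crux being the verification that $F$ solves $\T(F)=0$ and that the $\partial_t$ and $\widehat{\Omega}_i$ derivatives pass through the added weights unscathed, which is what allows Proposition \ref{decayint} to be invoked directly.
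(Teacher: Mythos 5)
Your proposal is correct and follows essentially the same route as the paper's own proof: apply Lemma \ref{decayangu} with $s=z$, then Proposition \ref{decayint} to the weighted function $f\,|v_t|^{\frac{4-z}{z}}\langle v_t\rangle^{\frac{z-3+\delta}{z}}\langle\slashed{v}\rangle^{2\frac{z-3+\delta}{z}}$ with $s=z$, $a=2(z+1)$, $p=2zq+4$, $q=zq+2$, and conclude with $\langle v_t\rangle\langle\slashed{v}\rangle^2\lesssim\langle|v_t|+|\slashed{v}|\rangle^3$. Your additional verifications (that the auxiliary function still solves $\T(F)=0$, that the weights commute with $\partial_t$ and $\widehat{\Omega}_i$, and the exponent bookkeeping) are exactly the details the paper leaves implicit, and they check out.
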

\begin{proof}
Apply first Lemma \ref{decayangu} to $f$ with $s=z$ and then Proposition \ref{decayint} to $f |v_t|^{\frac{z-4}{z}} \langle v_t \rangle^{\frac{z-3+\delta}{s}} \langle \slashed{v} \rangle^{2\frac{z-3+\delta}{z}}$, for the parameters $s=z$, $a=2(z+1)$, $p=2zq+4$ and, by an abuse of notation, $q=zq+2$. Finally, use $\langle v_t \rangle \langle \slashed{v} \rangle^2 \lesssim \langle |v_t|+|\slashed{v}| \rangle^3$.
\end{proof}
\begin{Rq}\label{Rqplustardaussi}
Note that the assumption $z \leq 4$ is important. Indeed, if $z>4$, $|v_t|$ would carry a negative exponent and this would force us to assume stronger vanishing properties in the variable $v_{\uu}$ on $f$ near the event horizon.

By a similar heuristic analysis as the one carried out in Remark \ref{Rqlabelplustard}, one can check that
\begin{itemize}
\item $\int_{\Sigma_{\tau}} \! \rho [ |f| |v^{\mathstrut}_{\mathrm{N}}| ] \dr \m_{\Sigma_{\tau}} \! < +\infty$ schematically implies that, for $|v_{r^*}|+|\slashed{v}| \to + \infty$, $|f| = \mathcal{O} ( (|v_{r^*}|+|\slashed{v}|)^{-4} )$ and, for $r \to +\infty$, $|f|= \mathcal{O} (r^{-1})$.
\item The finiteness of the integral on the right hand side of the estimate of Proposition \ref{pointwise} provides, in the case $q=0$, $|f| = \mathcal{O} \big( (|v_{r^*}|+|\slashed{v}|)^{-4-\frac{3\delta}{z}} \big)$ and $|f|=\mathcal{O} (r^{-3+\frac{4+3\delta}{z}})$.
\end{itemize} 
\end{Rq}
\begin{Rq}\label{Rqnonlin}
In the context of a non linear problem, $\T(f)$ could be schematically of the form $\psi \cdot h$, where $\psi$ is a solution to a wave equation. Consequently, it could be difficult to deal with $|\psi|^s$ for $s >2$ if $\psi$ cannot be estimated pointwise. For this reason, we considered only solutions to $\T(f)=0$. Note however that with more informations on $\T(f)$, we could extend our results to a more general setting. For instance, if $s=1$, $p=0$ and $a=1$, one can obtain by following the proof of Proposition \ref{decayvrstar} that, for all $\tau \geq 0$ and $(t,r^*,\omega) \in \Sigma_{\tau}$, 
\begin{align*}
r^2 \! \int_{\C} \frac{|v_{r^*}|^2}{|v_t|^2} |f|  |v_t||v^{\mathstrut}_{\mathrm{N}}| \dr \m_{\C}\Big\vert_{(t,r^*,\omega)}\lesssim  \sum_{n+|I| \leq 3 } \int_{\Sigma_{\tau}} \rho \left[  \left|\partial_t^n \widehat{\Omega}^I f\right| |v^{\mathstrut}_{\mathrm{N}}| \right]  \dr \m_{\Sigma_{\tau}} +\sum_{|J| \leq 2}  \int_{\Sigma_{\tau}} \int_{\C}  \left|\T \left( \widehat{\Omega}^J f\right) \right| |v^{\mathstrut}_{\mathrm{N}}| \dr \m_{\C} \dr \m_{\Sigma_{\tau}} \\ + \sum_{n+|I| \leq 3 } \sum_{|J| \leq 2}\int_{\tau'=\tau}^{\tau+1} \! \int_{\Sigma_{\tau'}} \int_{\C} \left( \left|\T \left( \partial_t^n \widehat{\Omega}^I f\right) \right| |v_t|+ \left|\T \left( \T \left( \widehat{\Omega}^J f \right)\right) \right| \right) \dr \m_{\C} \dr \m_{\Sigma_{\tau'}} \dr \tau'.
\end{align*}
The bulk integral arises from an application of Remark \ref{divergence0Rq} in order to bound the flux on $\underline{\N}_{t+r^*}$.
\end{Rq}

\section{The region $r^* \geq R_0+t$}\label{sec6}

As in Minkowski spacetime, Vlasov fields behave better in such a region, which corresponds to the exterior of a light cone. One can already see that if $f$ satisfies $\T(f)=0$ and is initially compactly supported in $\mathcal{S} \cap \{r^* \leq R_0^* \}$ as, in that case, $f$ vanishes in $\Rm_{-\infty}^{0}$. This means in particular that $\int_{\Sigma_{\tau}} \rho \big[ |f| |v^{\mathstrut}_{\mathrm{N}}| \big] \dr \m_{\Sigma_{\tau}} = 0$ for all $\tau \leq 0$. More generally, solutions to the Vlasov equation can be studied in $\{r^* \geq R_0^*+t \}$ without any information on their behaviour in the remaining part of Schwarzschild spacetime since this region is globally hyperbolic. This will simplify the analysis compared to the domain $\{ r^* \leq R_0^*+t\}$. In particular, no difficulty related to the photon sphere or the event horizon will arise here.

\begin{Pro}[energy decay estimates for the region $\tau \leq 0$]\label{energydecayexter}
Let $(d,p) \in \R_+^2$, $a \in \R$ and $f : \widehat{\Rm}_{-\infty}^0 \rightarrow \R$ be a sufficiently regular function such that $\T(f)=0$. For all $\tau \leq 0$, there holds
$$\int_{\N_{\tau}} \int_{\C} r^{p} \frac{|v_{\uu}|^{p/2}}{|v_t|^{p/2}} |f||v_t|^a |v_{\uu}|  \dr \m_{\N_{\tau}} \, \lesssim^{\mathstrut}_{d,p} \frac{1}{(1+|\tau|)^{d}} \int_{\mathcal{S}} \mathds{1}_{r \geq R_0} \int_{\C} r^{p}  \frac{|v_{\uu}|^{p/2}}{|v_t|^{p/2}}(1+|u|)^d |f||v_t|^a | v \cdot \n_{\mathcal{S}}| \dr \m_{\C}  \dr \m_{\mathcal{S}}.$$
\end{Pro}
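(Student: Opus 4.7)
The plan is to adapt the proof strategy of Proposition \ref{rphierar} to the region $\Rm_{-\infty}^{\tau}$ (for $\tau \leq 0$), augmented by the multiplicative weight $(1+|u|)^{d}$ in order to produce the decay factor $|\tau|^{-d}$. The crucial geometric simplification is that for $\tau \leq 0$ we have $\Sigma_{\tau} = \N_{\tau}$, and $\Rm_{-\infty}^{\tau}$ is foliated by outgoing null hypersurfaces $\N_{\tau'}$, all contained in $\{r \geq R_{0}\}$. In particular, neither the redshift near the event horizon nor the trapping at the photon sphere enters the analysis. Moreover, since $\mathcal{S}$ enters the region at $r = R_{0}$, the boundary of $\Rm_{-\infty}^{\tau} \cap \{\uu \leq \uu_{\max}\}$ consists only of $\mathcal{S} \cap \{r \geq R_{0}\}$, $\N_{\tau}$ and $\underline{\N}_{\uu_{\max}}$, with no contribution at $\{r = R_{0}\}$, in contrast to the situation in the proof of Proposition \ref{rphierar}.

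The main step is to apply the divergence theorem to the current $N[(1+|u|)^{d}\, r^{p}|v_{\uu}|^{p/2}/|v_{t}|^{p/2}\,|f||v_{t}|^{a}]_{\mu}$ on this domain and let $\uu_{\max} \to +\infty$. Since $\T(u) = 2|v_{\uu}|/(1-2M/r) \geq 0$, one has $\T((1+|u|)^{d}) \leq 0$ in the region $\{u \leq 0\}$ that asymptotically exhausts $\Rm_{-\infty}^{\tau}$, so this weight contributes an \emph{additional positive} term to the integrand of $-\T$ (up to a bounded harmless contribution near $u=0$, which one can handle by working with $(1+u_{-})^{d}$ and a short cutoff argument, or by noting that small $|\tau|$ gives the estimate trivially). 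Combined with Lemma \ref{Lemhierar} in the borderline case $(p,q) = (p, p/2)$, the $-\T$ bulk integrand splits into a positive energy-type term $p\,r^{p-1}|v_{\uu}|^{p/2+1}/|v_{t}|^{p/2}(1+|u|)^{d}$, a second positive term $2d\,r^{p}|v_{\uu}|^{p/2+1}(1+|u|)^{d-1}/[(1-2M/r)|v_{t}|^{p/2}]$ generated by the new weight, and a bad negative term $-(pM/4)\,r^{p-2}|v_{\uu}|^{p/2-1}|\slashed{v}|^{2}(1+|u|)^{d}/(r^{2}|v_{t}|^{p/2})$ coming from the coefficient $(3q-p) = p/2$ in Lemma \ref{Lemhierar}. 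The flux on $\underline{\N}_{\uu_{\max}}$ is nonnegative (since $v \cdot \n_{\underline{\N}} = v_{u} \leq 0$) and is discarded in the limit.

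To absorb the bad bulk term, I will iterate the same divergence identity along the hierarchy $(p, p/2) \to (p-1, p/2) \to \cdots \to (0, p/2)$, exactly as in the proof of Proposition \ref{rphierar}: at each rank, the bad ``$\mathbf{I}^{p-n-1}$''-type integral is absorbed by the positive $|\slashed{v}|^{2}$ bulk of the next lower rank, for which $2q - (p-n-1) > 0$ now provides the required positive coefficient. At the base rank $p=0$ one has $W = 1$ with $\T(W) = 0$, so only the nonpositive $\T((1+|u|)^{d})$ contribution survives and the bulk already has a clean sign. The $(1+|u|)^{d}$ weight only adds positive bulk contributions at each step and does not interact with the sign structure of the iteration, so the hierarchy closes in exactly the same manner as in Proposition \ref{rphierar}, yielding a bound of $\int_{\N_{\tau}} (1+|u|)^{d} r^{p}(|v_{\uu}|/|v_{t}|)^{p/2} |f||v_{t}|^{a}|v_{\uu}|\, \dr\m_{\N_{\tau}}$ by the analogous integral on $\mathcal{S} \cap \{r \geq R_{0}\}$.

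The conclusion then follows from $u = \tau + u_{0}$ on $\N_{\tau}$, which gives $(1+|u|)^{d} \sim_{d} (1+|\tau|)^{d}$, so dividing through produces the claimed inequality. I expect the main technical obstacle to be the clean bookkeeping of the hierarchy; this is essentially the same difficulty already overcome in Proposition \ref{rphierar}, only with the extra positive contributions from the $(1+|u|)^{d}$ weight to be tracked carefully throughout the induction and a minor care needed at $u=0$ for non-integer $d$.
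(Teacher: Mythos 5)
Your proof is correct in substance, but it obtains the decay factor by a genuinely different mechanism than the paper. The paper applies the divergence theorem to the \emph{unweighted} current $N\big[r^p\tfrac{|v_{\uu}|^q}{|v_t|^q}|f|\big]_\mu$ on the same domain $\{u\le\tau+u_0\}\cap\{\uu\le\underline{w}\}\cap\Rm_{-\infty}^{+\infty}$ and extracts $(1+|\tau|)^{-d}$ purely from a support argument on the data: on $\mathcal{S}\cap\{u-u_0\le\tau\}$ one has $|u|+|u_0|\ge-\tau$, hence $(1+|u|)^d/(1+|\tau|)^d\gtrsim_d 1$ can be inserted into the initial-data integral for free, and the $r$-weighted hierarchy is then run exactly as you describe, with no $\tau$-weight anywhere in the bulk. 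Your version instead promotes $(1+u_-)^d$ to a multiplier, uses $\T(u)=\tfrac{2|v_{\uu}|}{1-2M/r}\ge 0$ to get a good-signed extra bulk term, and reads the decay off the flux on $\N_\tau$ where $u\equiv\tau+u_0$. Both close; the paper's route is shorter (no Lipschitz issue at $u=0$, no extra bulk terms to carry through the induction), while yours is the more robust multiplier formulation that would automatically produce weighted spacetime error integrals if $\T(f)=F\neq0$. One imprecision to fix: at the base rank of the hierarchy it is not true that ``only the $\T((1+|u|)^d)$ contribution survives'' --- the $r$-weight is $r^0=1$ but the velocity weight $|v_{\uu}|^{q}/|v_t|^{q}$ with $q=p/2>0$ still produces, via Lemma \ref{Lemhierar}, the net angular bulk $\tfrac{q}{2r}\big(1-\tfrac{3M}{r}\big)\tfrac{|v_{\uu}|^{q-1}}{|v_t|^q}\tfrac{|\slashed{v}|^2}{r^2}$, which is positive only because $r\ge R_0>3M$ and which is precisely what absorbs the bad term cascading down from rank one; this term is essential, not absent, at $p=0$.
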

\begin{Rq}\label{energydecayextRq} We will also use the following inequality. For all $\tau' \leq 1$,
$$ \sup_{\underline{w} \in \R}   \int_{\underline{\N}_{\underline{w}}} \! \mathds{1}_{u-u_0 \leq \tau'}  \int_{\C}  r^{p} \frac{|v_{\uu}|^{\frac{p}{2}}}{|v_t|^{\frac{p}{2}}} |f||v_t|^a |v_{u}|  \dr \m_{\underline{\N}_{\underline{w}}}  \lesssim^{\mathstrut}_{d,p} \frac{1}{(1+|\tau'|)^{d}}  \int_{\mathcal{S}}  \! \int_{\C} \! r^{p}  \frac{|v_{\uu}|^{\frac{p}{2}}}{|v_t|^{\frac{p}{2}}} (1+u_-)^{d} |f||v_t|^a | v \cdot \n_{\mathcal{S}}| \dr \m_{\C}  \dr \m_{\mathcal{S}},$$
where $u_- := \max(0,-u)$.
\end{Rq}
\begin{proof}
Even if $\tau =u-u_0$ in the region studied here, we will abusively consider $\tau$ as a fixed parameter during this proof. Note that since $h=f|v_t|^a$ also satisfies $\T(h)=0$ in view of Lemma \ref{Comuprop}, it suffices to treat the case $a=0$. Fix $\tau \leq 0$, $\underline{w} \in \R$ and introduce the set
$$ \mathcal{D}( \tau, \underline{w} ) \, = \, \{ (t,r^*,\omega) \in \R \times \R \times \mathbb{S}^2 \, / \,  \, u \leq \tau+u_0 , \, \underline{u} \leq \underline{w} \} \cap \Rm_{-\infty}^{+\infty},$$
which is non empty if $\underline{w}$ is sufficiently large, and remark that its boundary is composed by
\begin{align*}
 \mathcal{N}_{\tau} \cap \{ \uu \leq \underline{w} \}, \qquad  \underline{\N}_{\underline{w}} \cap \{ u \leq \tau +u_0\}, \qquad   \mathcal{S}^{\mathstrut} \cap \{\uu \leq \underline{w}, \, u \leq \tau+u_0 \}.
\end{align*}

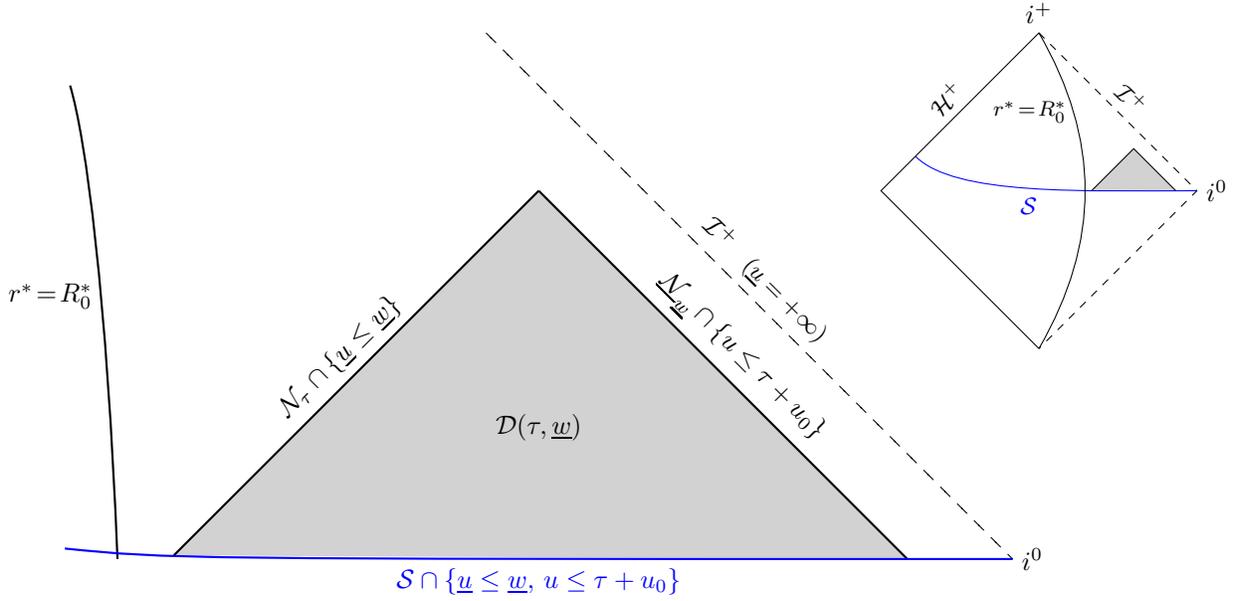
\begin{figure}[H]
\begin{center}
\begin{tikzpicture}[scale=0.7]
\node (II)   at (-1,0)   {};
\path  
  (II) +(90:10)  coordinate  (IItop)
       +(0:10)   coordinate[label=0:$i^0$]  (IIright)
       ;
\draw[loosely dashed] 
      (IItop) --
          node[midway, above, sloped] {$\cal{I}^+$ \, \small{($\underline{u}=+\infty$)}}
      (IIright) -- cycle;
\fill[color=gray!35] (-6.93,0.07) --(0,7)--(7,0);
\draw[thick] (-6.93,0.07) -- node[midway,above,sloped]{$\N_{\tau}\cap \{ \uu \leq \underline{w} \}$} (0,7) ;
\draw[thick] (0,7) -- node[midway,above,sloped]{$\underline{\N}_{\underline{w}}\cap \{ u \leq \tau+u_0 \}$} (7,0);
\draw[thick]   (-8,0) .. controls (-8.2,5) and (-8.6,8) .. (-8.9,9);
\draw[thin,blue] (0,0) node[below]{$\mathcal{S} \cap \{ \uu \leq \underline{w}, \, u \leq \tau+u_0 \}$};
\draw (0,2.5) node{$\mathcal{D}(\tau,\underline{w})$};
\draw (-8.3,5) node[left]{$r^*\!=\!R_0^*$};
\draw[blue,thick]   (-9,0.2) .. controls (-7,0)  .. (9,0);

\node (I)   at (9.5,7)   {};
\path  
  (I) +(90:3)  coordinate[label=90:$i^+$]  (Itop)
       +(-90:3) coordinate (Ibot)
       +(180:3) coordinate (Ileft)
       +(0:3)   coordinate[label=0:$i^0$]  (Iright)
       ;
\draw (Ibot) --      (Ileft) -- 
        node[midway, above, sloped] {\small{$\cal{H}^+$}}
      (Itop);
\draw[ dashed]   (Itop)    --
          node[midway, above, sloped] {\small{$\cal{I}^+$} }
      (Iright) -- (Ibot);  
\fill[color=gray!35] (10.5,7)--(11.3,7.8)--(12.1,7);   
\draw  (10.5,7)--(11.3,7.8)--(12.1,7); 
\draw[thin,blue]  (7.1575,7.6525) .. controls  (7.9025,6.9025) and (10.37,7).. node[below]{\small{$\mathcal{S}$}}   (12.5,7)  ;
\draw (9.5,10) to [bend left] (9.5,4);
\draw (10.2,8.5) node[left]{\footnotesize{$r^*\!=\! R_0^*$}};
\end{tikzpicture}
\end{center}
\caption{The set $\mathcal{D}(\tau,\underline{w})$ and its boundary.}
\end{figure}
We fix, for all the proof, $q \in \R_+$ and we consider $0 \leq p \leq 2q$. Recall that $v \cdot \n_{\N_{\tau}} = v_{\uu} \leq 0$, $v \cdot \n_{\underline{\N}_{\underline{w}}} = v_{u} \leq 0$ and $v \cdot \n_{\mathcal{S}} < 0$ since $\n_{\mathcal{S}}$ is timelike and future oriented. An application of the divergence theorem to the current $N \big[ r^p \frac{|v_{\uu}|^q}{|v_t|^q}|f|  \big]_{\mu}$ in the domain $\mathcal{D}(\tau_,\underline{w})$ then leads to
\begin{multline*}
\int_{\N_{\tau}} \mathds{1}_{ \uu \leq \underline{w}} \int_{\C} r^p \frac{|v_{\uu}|^q}{|v_t|^q} |f||v_{\uu}| \dr \m_{\C} \dr \m_{\N_{\tau}}+\int_{\underline{\N}_{\underline{w}}} \mathds{1}_{u-u_0 \leq \tau}  \int_{\C} r^p \frac{|v_{\uu}|^q}{|v_t|^q} |f||v_u| \dr \m_{\C} \dr \m_{\underline{\N}_{\underline{w}}} \\ - \int_{\mathcal{S}} \mathds{1}_{ \uu \leq \underline{w}} \, \mathds{1}_{u-u_0 \leq \tau} \int_{\C} r^p \frac{|v_{\uu}|^q}{|v_t|^q} |f||v \cdot \n_{\mathcal{S}}| \dr \m_{\C} \dr \m_{\mathcal{S}} \, = \,
\int_{\mathcal{D}(\tau,\underline{w})} \int_{\C} \T \left( r^p \frac{|v_{\uu}|^q}{|v_t|^q} \,|f| \right)   \dr \m_{\C} \dr \m_{\mathcal{D}(\tau,\underline{w})} .
\end{multline*}
Since $u-u_0 \leq \tau \leq 0$ implies $|u|+|u_0| \geq - \tau \geq 0$ as well as $r \geq R_0$ on $\mathcal{S }$, there holds
\begin{multline*}
 \int_{\mathcal{S}} \mathds{1}_{ \uu \leq \underline{w}} \, \mathds{1}_{u-u_0 \leq \tau} \int_{\C} r^p \frac{|v_{\uu}|^q}{|v_t|^q} |f||v \cdot \n_{\mathcal{S}}| \dr \m_{\C} \dr \m_{\mathcal{S}} \, 
  \lesssim^{\mathstrut}_d   \int_{\mathcal{S}} \frac{(1+|u|)^d}{(1+|\tau|)^d} \mathds{1}_{u-u_0 \leq \tau} \int_{\C} r^p \frac{|v_{\uu}|^q}{|v_t|^q} |f||v \cdot \n_{\mathcal{S}}| \dr \m_{\C} \dr \m_{\mathcal{S}} \\
\lesssim^{\mathstrut}_{d,q}   \mathbf{I}_{\mathcal{S}} \, := \,\frac{1}{(1+|\tau|)^d}\int_{\mathcal{S}} \mathds{1}_{ r \geq R_0} \int_{\C} r^{2q} \frac{|v_{\uu}|^q}{|v_t|^q}(1+|u|)^d |f||v \cdot \n_{\mathcal{S}}| \dr \m_{\C} \dr \m_{\mathcal{S}}.
\end{multline*}
 In order to lighten the notations, we also introduce
 $$ \mathbf{F}^{p,q}_{\tau, \underline{w}} \, := \, \int_{\underline{\N}_{\underline{w}}} \mathds{1}_{u-u_0 \leq \tau}  \int_{\C} r^p \frac{|v_{\uu}|^q}{|v_t|^q} |f||v_u| \dr \m_{\C} \dr \m_{\underline{\N}_{\underline{w}}}.$$
As $\T(|f|)= 0$, we then deduce, for all $p \in [0,2q]$, the following $r$-weighted energy estimates
$$\int_{\N_{\tau}} \mathds{1}_{ \uu \leq \underline{w}} \int_{\C} r^p \frac{|v_{\uu}|^{q}}{|v_t|^{q}} |f| |v_{\uu}| \dr \m_{\C} \dr \m_{\N_{\tau}}+ \mathbf{F}^{p,q}_{\tau, \underline{w}}-\int_{\mathcal{D}(\tau,\underline{w})} \int_{\C} \T \left( r^p \frac{|v_{\uu}|^q}{|v_t|^q}  \right)  |f| \dr \m_{\C} \dr \m_{\mathcal{D}(\tau,\underline{w})} \, \lesssim^{\mathstrut}_q \mathbf{I}_{\mathcal{S}} .$$
We now apply these last inequalities together with Lemma \ref{Lemhierar} in three different settings. First, if $p=2q$,
$$\int_{\N_{\tau}} \mathds{1}_{ \uu \leq \underline{w}} \int_{\C} r^{2q} \frac{|v_{\uu}|^{q}}{|v_t|^{q}} |f| |v_{\uu}| \dr \m_{\C} \dr \m_{\N_{\tau}}  + \mathbf{F}^{2q,q}_{\tau,\underline{w}}
 \lesssim^{\mathstrut}_q \int_{\mathcal{D}(\tau,\underline{w})} \int_{\C} r^{2q-1} \frac{|v_{\uu}|^{q-1}}{|v_t|^q} \frac{|\slashed{v}|^2}{r^2} |f| \dr \m_{\C} \dr \m_{\mathcal{D}(\tau,\underline{w})}+\mathbf{I}_{\mathcal{S}}  .$$
The goal now is to bound sufficiently well the first term on the right hand side of the previous inequality. For this, we exploit the hierarchy in $p$ given by the $r$-weighted energy estimates. Applying it for $p = 2q -n$, with $n \in \llbracket 1, \lfloor 2q \rfloor \rrbracket$, so that $2q-p \geq 1$ and $3q-p \leq 3q$, we get
$$ \int_{\mathcal{D}(\tau,\underline{w})} \int_{\C} r^{p-1} \frac{|v_{\uu}|^{q-1}}{|v_t|^q} \frac{|\slashed{v}|^2}{r^2} |f| \dr \m_{\C} \dr \m_{\mathcal{D}(\tau,\underline{w})} \lesssim^{\mathstrut}_q \int_{\mathcal{D}(\tau,\underline{w})} \int_{\C} r^{p-2} \frac{|v_{\uu}|^{q-1}}{|v_t|^q} \frac{|\slashed{v}|^2}{r^2} |f| \dr \m_{\C} \dr \m_{\mathcal{D}(\tau,\underline{w})}+ \mathbf{I}_{\mathcal{S}} $$
and applying it for $p=0$, we obtain, since $r^{2q-\lfloor 2q \rfloor-2} \lesssim r^{-1}$,
$$\int_{\mathcal{D}(\tau,\underline{w})}  \int_{\C} r^{2q-\lfloor 2q \rfloor-2} \frac{|v_{\uu}|^{q-1}}{|v_t|^q} \frac{|\slashed{v}|^2}{r^2} |f| \dr \m_{\C} \dr \m_{\mathcal{D}(\tau,\underline{w})} \lesssim \int_{\mathcal{D}(\tau,\underline{w})}  \int_{\C} r^{-1} \frac{|v_{\uu}|^{q-1}}{|v_t|^q} \frac{|\slashed{v}|^2}{r^2} |f| \dr \m_{\C} \dr \m_{\mathcal{D}(\tau,\underline{w})}  \lesssim^{\mathstrut}_q  \mathbf{I}_{\mathcal{S}} .$$
The combination of the last estimates yields to
\begin{equation*}
\int_{\N_{\tau}} \mathds{1}_{ \uu \leq \underline{w}} \int_{\C} r^{2q} \frac{|v_{\uu}|^{q}}{|v_t|^{q}} |f| |v_{\uu}| \dr \m_{\C} \dr \m_{\N_{\tau}}+\mathbf{F}^{p,q}_{\tau,\underline{w}} \, \lesssim^{\mathstrut}_q \mathbf{I}_{\mathcal{S}} .
\end{equation*}
An application of Beppo-Levi's theorem provides us the estimate of the proposition. As this inequality holds for all $\underline{w} \in \R$, we obtain the estimate of Remark \ref{energydecayextRq} for $\tau' \leq 0$. One can prove it for $0 < \tau' \leq 1$ by applying the results proved here to the foliation $(\widetilde{\N}_{\widetilde{\tau}})_{\widetilde{\tau} \leq 0}$, defined similarly as $(\N_{\tau})_{\tau \leq 0}$ but where $\widetilde{u}_0=u_0+1$.
\end{proof}
We now turn to the pointwise decay estimates.
\begin{Pro}\label{pointwiseext}
Let $(d,q) \in \R_+^2$, $z >3$, $\delta>0$ and $f : \widehat{\Rm}_{-\infty}^{+\infty} \rightarrow \R$ be a sufficiently regular function. Then, for all $\tau < 0$ and $(t,r^*,\omega) \in \Sigma_{\tau}$,
\begin{align*}
& \int_{\C} \frac{|v_{\uu}|^q}{|v_t|^q} |f| |v^{\mathstrut}_{\mathrm{N}}|^2 \dr \m_{\C}\Big\vert_{(t,r^*,\omega)} \, \lesssim^{\mathstrut}_{q,z,\delta} \\
 &  \frac{1}{r^{2+2q}(1+|\tau|)^{d}}  \sum_{\substack{n+|I| \leq 3 \\ n \leq 1}}   \left| \int_{\mathcal{S}}  \! \int_{\C}  r^{2zq+4} \frac{|v_{\uu}|^{zq+2}}{|v_t|^{zq+2}}(1+u_-)^{zd} \left| \partial_t^n \widehat{\Omega}^I f \right|^z \! \big\langle |v_t|+|  \slashed{v}| \rangle^{3(z-3+\delta)}  |v^{\mathstrut}_{\mathrm{N}}|^{z+6}  | v \cdot \n_{\mathcal{S}}| \dr \m_{\C}  \dr \m_{\mathcal{S}} \right|^{\frac{1}{z}} \! .
 \end{align*}
\end{Pro}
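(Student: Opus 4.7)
The argument mirrors that of Proposition~\ref{pointwise}, with the substitution that $\Sigma_\tau=\N_\tau$ and $r\geq R_0$ throughout the region $\tau<0$ (so $v^{\mathstrut}_{\mathrm{N}}=v_t$ by \eqref{redshiftdef}), and that the roles of Proposition~\ref{decayint} and its underlying energy estimates are played here by Proposition~\ref{energydecayexter} and Remark~\ref{energydecayextRq}. Since $r\geq R_0>3M$ throughout, neither the red-shift nor the trapping at the photon sphere will enter the argument.

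First, I apply Lemma~\ref{decayangu} with $s=z$ to $f$. This produces the spatial prefactor $r^{(6-2z)/z}$ and reduces the task to the pointwise control of $\int_{\C}\frac{|v_{r^*}|^2}{|v_t|^2}\frac{|v_{\uu}|^{zq+2}}{|v_t|^{zq+2}}|F|^z|v_t||v^{\mathstrut}_{\mathrm{N}}|^{2(z+1)}\dr\m_{\C}$, where $F:=f\,|v_t|^{(4-z)/z}\langle v_t\rangle^{(z-3+\delta)/z}\langle\slashed{v}\rangle^{2(z-3+\delta)/z}$. By Lemma~\ref{calculvstar}, the vector fields $\partial_t$ and $\widehat{\Omega}_i$ annihilate $|v_t|$, $\langle v_t\rangle$ and $\langle\slashed{v}\rangle$, hence $\partial_t^n\widehat{\Omega}^I F=(\partial_t^n\widehat{\Omega}^I f)\cdot|v_t|^{(4-z)/z}\langle v_t\rangle^{(z-3+\delta)/z}\langle\slashed{v}\rangle^{2(z-3+\delta)/z}$.

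Next, since $r^*\geq R_0^*$ everywhere on $\Sigma_\tau$ for $\tau<0$, I invoke the second estimate of Lemma~\ref{decayvrstar} applied to $F$ with $s=z$, $a=2(z+1)$, $p=2zq+4$, and with the $q$ of that lemma replaced by $zq+2$. The composite spatial prefactor collapses to $r^{-(2+2q)}$, exactly as in Proposition~\ref{pointwise}, and I am left with two boundary terms: a flux through $\N_\tau$ and a flux through $\underline{\N}_{t+r^*}\cap\{\tau\leq u-u_0\leq\tau+1\}$, both involving the nonnegative $\T$-invariant density $\langle v_t\rangle^{z-3+\delta}\langle\slashed{v}\rangle^{2(z-3+\delta)}|\partial_t^n\widehat{\Omega}^I f|^z$ (invariance follows from Proposition~\ref{Comuprop} combined with Lemma~\ref{calculvstar}), weighted by $|v_t|^{z+6}=|v^{\mathstrut}_{\mathrm{N}}|^{z+6}$ on $r\geq R_0$.

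Finally, I bound the $\N_\tau$-flux by Proposition~\ref{energydecayexter} applied with $d$ replaced by $zd$, noting that on the portion of $\mathcal{S}$ with $u-u_0\leq\tau\leq 0$ one has $(1+|u|)\lesssim(1+|u_0|)(1+u_-)$, and the $\underline{\N}_{t+r^*}$-flux by Remark~\ref{energydecayextRq} with $\tau'=\tau+1\leq 1$. Both deliver the factor $(1+|\tau|)^{-zd}$, which becomes $(1+|\tau|)^{-d}$ after the $z$-th root prescribed by Lemma~\ref{decayangu}. Combined with $\langle v_t\rangle\langle\slashed{v}\rangle^2\lesssim\langle|v_t|+|\slashed{v}|\rangle^3$ and $v^{\mathstrut}_{\mathrm{N}}=v_t$ on $\mathcal{S}\cap\{r\geq R_0\}$, this assembles the stated inequality. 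The only mild obstacle is bookkeeping: one must propagate the various weights correctly along the chain Lemma~\ref{decayangu}$\to$Lemma~\ref{decayvrstar}$\to$Proposition~\ref{energydecayexter}/Remark~\ref{energydecayextRq} and verify that the composite density remains a solution of $\T(\cdot)=0$ at each step.
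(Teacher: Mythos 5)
Your proposal is correct and follows essentially the same route as the paper: Lemma \ref{decayangu} with $s=z$, then the second estimate of Lemma \ref{decayvrstar} applied to $f\,|v_t|^{(4-z)/z}\langle v_t\rangle^{(z-3+\delta)/z}\langle\slashed{v}\rangle^{2(z-3+\delta)/z}$ with $a=2(z+1)$, $p=2zq+4$ and $q\mapsto zq+2$, and finally Proposition \ref{energydecayexter} (with $d\mapsto zd$) for the $\N_{\tau}$-flux and Remark \ref{energydecayextRq} for the $\underline{\N}_{t+r^*}$-flux. Your bookkeeping of the exponents ($r^{-(2+2q)}$, $|v_t|^{z+6}=|v^{\mathstrut}_{\mathrm{N}}|^{z+6}$ on $r\geq R_0$) and the small clarifications about $(1+|u|)\lesssim(1+|u_0|)(1+u_-)$ and $\tau'=\tau+1\leq 1$ are consistent with what the paper leaves implicit.
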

\begin{Rq}
By a slight modification of our proof, we could control the velocity average of $f$ by weighted $L^s$ norms of $f$ over the domain $\mathcal{S} \cap \{ r \geq R_0 \}$.
\end{Rq}
\begin{proof}
Fix $\tau <0$ and consider $(t,r^*,\omega) \in  \Sigma_{\tau}=\N_{\tau}$. Apply first Lemma \ref{decayangu} to $f$ with $s=z$ and then Proposition \ref{decayvrstar} to $f |v_t|^{\frac{z-4}{z}} \langle v_t \rangle^{\frac{z-3+\delta}{z}} \langle \slashed{v} \rangle^{2\frac{z-3+\delta}{z}}$, for the parameters $s=z$, $a=2(z+1)$, $p=2zq+4$ and, by an abuse of notation, $q=zq+2$. Since $\langle v_t \rangle \langle \slashed{v} \rangle^2 \lesssim \langle |v_t|+|\slashed{v}| \rangle^3$, we can then estimate $r^{2+2q}\int_{\C}  \frac{|v_{\uu}|^q}{|v_t|^q} |f| |v^{\mathstrut}_{\mathrm{N}}|^2 \dr \m_{\C}\big\vert_{(t,r^*,\omega)}$ by
\begin{align*}
&\sum_{\substack{n+|I| \leq 3 \\ n \leq 1}} \left| \int_{\Sigma_{\tau}} \rho \left[ r^{2zq+4} \frac{|v_{\uu}|^{zq+2}}{|v_t|^{zq+2}}  \left|\partial_t^n \widehat{\Omega}^I f \right|^z  |v_t|^{4-z} \big\langle |v_t|+| \slashed{v}| \big \rangle^{3(z-3+\delta)}  |v_t|^{2(z+1)} \right] \dr \m_{\Sigma_{\tau}} \right|^{\frac{1}{z}} \\ 
&+ \!\sum_{\substack{n+|I| \leq 3 \\ n \leq 1}}  \int_{\underline{\N}_{t+r^*\!}} \! \mathds{1}_{\tau \leq u-u_0 \leq \tau+1} \! \int_{\C}  r^{2zq+4} \frac{|v_{\uu}|^{zq+2}}{|v_t|^{zq+2}}  \left|\partial_t^n \widehat{\Omega}^I f \right|^z  |v_t|^{4-z} \big\langle |v_t|+| \slashed{v}| \big \rangle^{3(z-3+\delta)}  |v_t|^{2(z+1)} |v_u| \dr \m_{\C} \dr \m_{\underline{\N}_{t+r^*\!}}.
 \end{align*}
Then, we bound these two terms as follows.
 \begin{itemize}
 \item For the first one, note that for $\tau < 0$, $\Sigma_{\tau} = \N_{\tau}$ and $v \cdot \n_{\Sigma_{\tau}} = v_{\uu}$. It remains to apply Proposition \ref{energydecayexter} to $\big|\partial_t^n \widehat{\Omega}^I f \big|^z \langle |v_t|+| \slashed{v}|  \rangle^{3(z-3+\delta)}$, for $p =2zq+4$, $a=2(z+1)+4-z$ and, by an abuse of notation, $d=zd$.
 \item The second one can be controlled similarly, using Remark \ref{energydecayextRq} instead of Proposition \ref{energydecayexter}.
 \end{itemize}
\end{proof}

\section{Proof of Theorem \ref{theorem}}\label{sec7}

Let $f: \widehat{\Rm}_{-\infty}^{+\infty} \rightarrow \R$ be a solution to the massless Vlasov equation $\T(f)=0$. We introduce for any sufficiently regular function $h : \widehat{\Rm}_{-\infty}^{+\infty} \rightarrow \R$, $(a,q) \in \R_+^2$ and $s \geq 1$, the initial energy norms
\begin{align*}
\mathbf{E}^{a,q}_s[h] \, := \, & \int_{\mathcal{S}} \int_{\C} \Big( r^q \frac{|v_{\uu}|^{q/2}}{|v_t|^{q/2}}+(1+u_-)^q \Big) |h| |v^{\mathstrut}_{\mathrm{N}}|^a |v \cdot \n_{\mathcal{S}} | \dr \m_{\C} \dr \m_{\mathcal{S}} \\
& + \left| \int_{\mathcal{S}} \int_{\C} \Big( r^q \frac{|v_{\uu}|^{q/2}}{|v_t|^{q/2}}+(1+u_-)^q \Big) |h|^{s^{\lceil q \rceil}} \langle v_t \rangle^{(4+a)(s^{\lceil q \rceil} -1)} |v^{\mathstrut}_{\mathrm{N}}|^a |v \cdot \n_{\mathcal{S}} | \dr \m_{\C} \dr \m_{\mathcal{S}} \right|^{s^{-\lceil q \rceil}}.
\end{align*}
The next lemma is a direct consequence of Propositions \ref{energydecaysec4} and \ref{energydecayexter}.
\begin{Lem}\label{LemforproofTheorem}
Let $(a,q) \in \R_+ \times \R_+$ and $s>1$ such that $s^2 \leq 1+\langle 5p \rangle^{-2}$. Consider further $h : \widehat{\Rm}_{-\infty}^{+\infty} \rightarrow \R$ a solution to $\T(h)=0$ satisfying $\mathbf{E}^{a,q}_s[h] < +\infty$. Then, for any $d \in [0,q]$,
\begin{equation}\label{eq:pevarugent}
 \forall \, \tau \in \R, \qquad \qquad \int_{\Sigma_{\tau}} \rho \Bigg[ r^{q-d} \frac{|v_{\uu}|^{\frac{q-d}{2}}}{|v_t|^{\frac{q-d}{2}}} |h| |v^{\mathstrut}_{\mathrm{N}}|^a \Bigg] \dr \m_{\Sigma_{\tau}} \, \lesssim^{\mathstrut}_{a,q,s} \,   \frac{\mathbf{E}^{a,q}_s[f]}{(1+|\tau|)^d} . 
\end{equation}
\end{Lem}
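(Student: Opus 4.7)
The plan is to split the analysis at $\tau = 0$, applying Proposition \ref{energydecayexter} in the region $\tau < 0$ and Proposition \ref{energydecaysec4} in the region $\tau \ge 0$, with a brief transfer step converting data on $\Sigma_0$ into data on $\mathcal{S}$. The key observation that enables the interpolation in $d$ is the pointwise Young inequality
$$X^{q-d} Y^d \le \tfrac{q-d}{q} X^q + \tfrac{d}{q} Y^q, \qquad X = r\,\frac{|v_{\uu}|^{1/2}}{|v_t|^{1/2}}, \quad Y = (1+|\tau|) \text{ or } (1+u_-),$$
which trades the weight $r^{q-d}|v_{\uu}|^{(q-d)/2}/|v_t|^{(q-d)/2}$ for a combination of the extreme weights $r^q|v_{\uu}|^{q/2}/|v_t|^{q/2}$ and either $(1+|\tau|)^q$ or $(1+u_-)^q$.

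For $\tau < 0$, I would apply Proposition \ref{energydecayexter} directly with $p = q-d$ and decay exponent $d$; since $\Sigma_\tau = \mathcal{N}_\tau$ and $v^{\mathstrut}_{\mathrm{N}} = v_t$ here, the left-hand side of \eqref{eq:pevarugent} is bounded by $(1+|\tau|)^{-d}$ times an integral over $\mathcal{S}\cap\{r\ge R_0\}$ carrying the weight $(1+|u|)^d \, r^{q-d}|v_{\uu}|^{(q-d)/2}/|v_t|^{(q-d)/2}$. Since on $\mathcal{S}$ the retarded coordinate satisfies $|u| \lesssim 1 + u_-$, the Young inequality with $Y = 1+u_-$ produces $\rho$-integrand weights dominated by $r^q|v_{\uu}|^{q/2}/|v_t|^{q/2} + (1+u_-)^q$. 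Together with $|v_t|^a \le |v^{\mathstrut}_{\mathrm{N}}|^a$, this yields the desired bound by $\mathbb{E}_s^{a,q}[h]/(1+|\tau|)^d$.

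For $\tau \ge 0$, I would first transfer the $\Sigma_0$-data to $\mathcal{S}$-data. Since $\Sigma_0 = (\mathcal{S} \cap \{r^*<R_0^*\}) \cup \mathcal{N}_0$, the piece in $\{r^*<R_0^*\}$ already sits inside $\mathcal{S}$ (same induced volume form, same unit normal, and $r$ bounded), while on $\mathcal{N}_0$ I would invoke Proposition \ref{energydecayexter} at $\tau = 0$ with decay exponent $0$: applied with $p = q$ and with $p = 0$, it bounds the corresponding $\mathcal{N}_0$-flux by the analogous integral on $\mathcal{S}\cap\{r\ge R_0\}$, which in turn is bounded by $\mathbb{E}_s^{a,q}[h]$. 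The same transfer applies verbatim to the nonlinear norm of $|h|^{s^{\lceil q \rceil}}\langle v_t\rangle^{4(s^{\lceil q \rceil}-1)}|v_t|^{s^{\lceil q \rceil}a}$, which remains a solution of $\T(\cdot)=0$ by Lemma \ref{Comuprop}. Next I would apply Proposition \ref{energydecaysec4}: its first bullet with $p = q$ gives $\int_{\Sigma_\tau}\rho[r^q|v_{\uu}|^{q/2}/|v_t|^{q/2}\,|h||v^{\mathstrut}_{\mathrm{N}}|^a]\,\dr \m_{\Sigma_\tau} \lesssim \mathbb{E}_s^{a,q}[h]$, and its third bullet—whose hypotheses $p \notin \mathbb{N}$, $\lceil p \rceil$ even, and $\zeta_{\lceil p \rceil}(s)\ge p$ are precisely our standing assumptions with $p = q$—gives $\int_{\Sigma_\tau}\rho[|h||v^{\mathstrut}_{\mathrm{N}}|^a]\,\dr \m_{\Sigma_\tau} \lesssim \mathbb{E}_s^{a,q}[h]/(1+\tau)^q$. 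The Young inequality above with $Y = 1+\tau$ interpolates these two bounds into the required decay rate $(1+\tau)^{-d}$.

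The analysis is essentially bookkeeping once the two propositions are in hand; the only step demanding genuine care is the transfer from $\Sigma_0$ to $\mathcal{S}$, since the two hypersurfaces disagree in the region $r \ge R_0$. This is precisely the reason the initial norm $\mathbb{E}_s^{a,q}[h]$ is defined on $\mathcal{S}$ with the $(1+u_-)^q$ weight: this weight absorbs the penalty $(1+|u|)^d$ produced when Proposition \ref{energydecayexter} is used to control $\mathcal{N}_0$-fluxes by $\mathcal{S}$-integrals, and simultaneously allows the $\tau < 0$ estimate to close against the same data norm.
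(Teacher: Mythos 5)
Your argument is correct and follows essentially the same route as the paper: split at $\tau=0$, apply Proposition \ref{energydecayexter} for $\tau<0$ and to transfer the $\Sigma_0$-data to $\mathcal{S}$-data, then combine the two estimates of Proposition \ref{energydecaysec4} with a Young-type interpolation in $d$ (your pointwise inequality is the same device as the paper's \eqref{eq:unhapevarugent} and Remark \ref{energydecaysec4Rq}). The only cosmetic slip is calling the two estimates of Proposition \ref{energydecaysec4} its ``first'' and ``third bullet''; the content you invoke is nonetheless the right one.
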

\begin{proof}
We will use all along the proof that, in view of Lemma \ref{Comuprop}, $|h|^{s^{ \lceil q \rceil}} \langle v_t \rangle^{(4+a)(s^{\lceil q \rceil} -1)} $ is a solution to the massless Vlasov equation. Note first that
\begin{equation}\label{eq:unhapevarugent}
 \forall \, d \in [0,q], \qquad r^{q-d} \frac{|v_{\uu}|^{\frac{q-d}{2}}}{|v_t|^{\frac{q-d}{2}}}(1+u_-)^d \, \leq \, r^{q} \frac{|v_{\uu}|^{\frac{q}{2}}}{|v_t|^{\frac{q}{2}}}+(1+u_-)^q.
 \end{equation}
As $\Sigma_{\tau}= \N_{\tau}$, $v \cdot \n_{\N_{\tau}} = v_{\uu}$ and $v_t = v^{\mathstrut}_{\mathrm{N}}$ for $\tau <0$, we obtain by applying Proposition \ref{energydecayexter} that \eqref{eq:pevarugent} holds for all $\tau \leq 0$. Moreover, since $\Sigma_0= \big(\mathcal{S}\cap \{r < R_0 \} \big) \sqcup \N_0$, Proposition \ref{energydecayexter}, applied to $h$ and $|h|^{s^{ \lceil q \rceil}} \langle v_t \rangle^{(4+a)(s^{\lceil q \rceil} -1)} $ with $d \in \{ 0,q \}$, also provides us
$$ \int_{\Sigma_0} \! \rho \bigg[ \Big( 1+r^{q} \frac{|v_{\uu}|^{q/2}}{|v_t|^{q/2}} \Big)|h| |v^{\mathstrut}_{\mathrm{N}}|^a \bigg] \dr \m_{\Sigma_{0}}+ \left| \int_{\Sigma_0} \rho \bigg[ \Big( 1+r^{q} \frac{|v_{\uu}|^{q/2}}{|v_t|^{q/2}} \Big)|h|^{s^{ \lceil q \rceil}} \! \langle v_t \rangle^{(4+a)(s^{\lceil q \rceil} -1)}  |v^{\mathstrut}_{\mathrm{N}}|^a \bigg] \dr \m_{\Sigma_{0}} \right|^{s^{-\lceil q \rceil}} \! \lesssim^{\mathstrut}_{q} \! \mathbf{E}^{a,q}_s[h].  $$
According to Proposition \ref{energydecaysec4} and Remark \ref{energydecaysec4Rq}, we then obtain that \eqref{eq:pevarugent} also holds for all $\tau \geq 0$.
\end{proof}
We now start the proof of Theorem \ref{theorem}. Let $s>1$ and remark that we have $\T (|f|^s \langle v_t \rangle^{5(s-1)} ))=0$ by Lemma \ref{Comuprop}. Hence, as $\Sigma_0=\big( \mathcal{S} \cap \{ r < R_0 \} \big) \sqcup \N_0$ and $v \cdot \n_{\N_0}=v_{\uu}$, we obtain from Proposition \ref{energydecayexter} that
\begin{align*}
& \int_{\Sigma_0} \rho \Big[ |f| |v^{\mathstrut}_{\mathrm{N}}| \Big] \dr \m_{\Sigma_0}+ \left| \int_{\Sigma_{0}}  \rho \left[\left(1+ r^{2(s-1)}\frac{|v_{\underline{u}}|}{|v_t|} \right) \langle v_t \rangle^{5(s-1)}  |f|^s  |v^{\mathstrut}_{\mathrm{N}}| \right] \dr \m_{\Sigma_{0}} \right|^{\frac{1}{s}} \\
& \qquad \qquad  \lesssim_s \int_{\mathcal{S}} \int_{\C}  |f| |v^{\mathstrut}_{\mathrm{N}}| |v \cdot \n_{\mathcal{S}} | \dr \m_{\C} \dr \m_{\mathcal{S}}  + \left| \int_{\mathcal{S}} \int_{\C} \left(1+ r^{2(s-1)} \frac{|v_{\uu}|^{s-1}}{|v_t|^{s-1}} \right) \langle v_t \rangle^{5(s -1)} |f|^s |v^{\mathstrut}_{\mathrm{N}}| |v \cdot \n_{\mathcal{S}} | \dr \m_{\C} \dr \m_{\mathcal{S}} \right|^{\frac{1}{s}} \! .
\end{align*}
Then, the integrated local energy decay estimate follows from Proposition \ref{ILED}.

For $p \in \R_+$, the boundedness of the $r$-weighted energy norm
$$  \int_{\Sigma_{\tau}} \int_{\C} r^{p} \frac{| v_{\uu}|^{p/2}}{| v_t|^{p/2}}|f| |v^{\mathstrut}_{\mathrm{N}}| |v \cdot \n_{\Sigma_{\tau}} | \dr \m_{\C} \dr \m_{\Sigma_{\tau}} \, \lesssim^{\mathstrut}_p \, \int_{\mathcal{S}} \int_{\C} \bigg(1+ r^{p} \frac{|v_{\uu}|^{p/2}}{|v_t|^{p/2}} \bigg)  |f| |v^{\mathstrut}_{\mathrm{N}}| |v \cdot \n_{\mathcal{S}} | \dr \m_{\C} \dr \m_{\mathcal{S}}$$
is a direct consequence of Propositions \ref{energydecayexter}, \ref{energydecaysec4} and that $v^{\mathstrut}_{\mathrm{N}}=v_t$ for all $r \geq R_0$.

Assume now that $s>1$ satisfies $s^2 \leq 1+\langle 5 p \rangle^{-2}$. If $\mathbf{E}_s^{1,p}[f] < + \infty$, we obtain from Lemma \ref{LemforproofTheorem} the following decay estimate for the energy flux
$$ \forall \, \tau \in \R, \qquad  \int_{\Sigma_{\tau}} \rho \Big[  |f| |v^{\mathstrut}_{\mathrm{N}}| \Big] \dr \m_{\Sigma_{\tau}} \, \lesssim^{\mathstrut}_{p,s}  \frac{\mathbf{E}^{1,p}_s[f]}{(1+|\tau|)^p}.$$
It remains to prove the pointwise decay estimates. We suppose now that $1<s^2 \leq 1+\langle 20(p+2)\rangle^{-2}$. The main idea consists in proving decay estimates for well-chosen energy norms of $f$ and then to apply Propositions \ref{pointwise} and \ref{pointwiseext}. In order to reduce the number of parameters, we will apply these last two results with $z=2+s$ and $\delta=s-1$ but these restrictions are not necessary. Introduce then the energy norm
\begin{equation}\label{mathcalE}
 \mathcal{E}^p_s[f] \, := \,  \sum_{n \leq 1} \, \sum_{n+|I| \leq 3}  \left| \mathbb{E}^{8+s, \,(2+s)p+4}_s \left[ \big| \partial_t^n \widehat{\Omega}^I f \big|^{2+s} \big \langle |v_t|+|\slashed{v}| \big \rangle^{6(s-1) } \right] \right|^{\frac{1}{2+s}} 
 \end{equation}
and assume that $\mathcal{E}_s^p[f] < +\infty$. Start by noticing that, according to Lemma \ref{Comuprop}, we have 
$$ \forall \, n \leq 1, \; |I| \leq 3-n, \qquad  \T \big( \, \big| \partial_t^n \widehat{\Omega}^I f \big|^{2+s} \big \langle |v_t|+|\slashed{v}| \big \rangle^{6(s-1) } \, \big)=0.$$ 
Now, we prove 
\begin{equation}\label{eq:daoumil}
\forall \, \tau \in \R, \; (t,r^*,\omega) \in \Sigma_{\tau},  \quad \qquad \int_{\C} |f| |v^{\mathstrut}_{\mathrm{N}}|^2 \dr \m_{\C} \Big\vert_{(t,r^*,\omega)} \lesssim_{p,s} \frac{\mathcal{E}^p_s[f]}{r^2(1+|\tau|)^p} .
\end{equation}
\begin{itemize}
\item Indeed, if $(t,r^*,\omega) \in \Sigma_{\tau}$ with $\tau <0$, this directly follows from Proposition \ref{pointwiseext}, applied with $z=2+s$, $\delta = s-1$, $q = 0$ and $d=(2+s)p$, together with \eqref{eq:unhapevarugent}, applied with $q=(2+s)p+4$ and $d=(2+s)p$.
\item Otherwise $(t,r^*,\omega) \in \Rm_{0}^{+\infty}$ and use first Proposition \ref{pointwise}, with $z=2+s$, $\delta = s-1$ and $q = 0$. It then remains to bound $|v_t|^{2-s}|v^{\mathstrut}_{\mathrm{N}}|^{2s+6}$ by $|v^{\mathstrut}_{\mathrm{N}}|^{8+s}$ and to apply Lemma \ref{LemforproofTheorem} to $\big| \partial_t^n \widehat{\Omega}^I f \big|^{2+s} \big \langle |v_t|+|\slashed{v}| \big\rangle^{6(s-1)}$, with $a=8+s$, $q=(2+s)p+4$ and $d=(2+s)p$.
 \end{itemize}
Similarly, we have
$$\forall \, \tau \in \R, \; (t,r^*,\omega) \in \Sigma_{\tau},  \quad \qquad \int_{\C} \frac{|v_{\uu}|^{p/2}}{|v_t|^{p/2}} |f| |v^{\mathstrut}_{\mathrm{N}}|^2 \dr \m_{\C} \Big\vert_{(t,r^*,\omega)} \lesssim_{p,s} \frac{\mathcal{E}_s^p[f]}{r^2(r+|\tau|)^p} .$$
As $|v_{\uu}| \leq |v_t|$ and since \eqref{eq:daoumil} holds, it remains to bound the left hand side by $r^{-2-p} \mathcal{E}_s^p[f]$. For this,
\begin{itemize}
\item If $(t,r^*,\omega) \in \Sigma_{\tau}$, with $\tau <0$, we apply Proposition \ref{pointwiseext} for $z=2+s$, $\delta = s-1$, $q = p/2$ and $d=0$.
\item Otherwise $(t,r^*,\omega) \in \Rm_0^{+\infty}$ and use first Proposition \ref{pointwise}, with $z=2+s$, $\delta = s-1$ and $q = p/2$. Then, apply Lemma \ref{LemforproofTheorem} to $\big| \partial_t^n \widehat{\Omega}^I f \big|^{2+s} \big \langle |v_t|+|\slashed{v}| \big\rangle^{6(s-1)}$, with $a=8+s$, $q=(2+s)p+4$ and $d=0$.
 \end{itemize}
As $1+|t+r^*| \lesssim r+|\tau|$ by Lemma \ref{vol}, this concludes the proof of Theorem \ref{theorem}.

\subsection*{Acknowledgements} 
I would like to thank Dejan Gajic and Mihalis Dafermos for insightful discussions. This material is based upon work supported by the Swedish Research Council under grant no. 2016-06596 while I was in residence at Institut Mittag-Leffler in Djursholm, Sweden during the fall semester 2019. I also acknowledge the support of partial funding by the ERC grant MAFRAN 2017-2022.

\appendix

\section{Basic properties of the foliation $(\Sigma_{\tau})_{\tau \in \R}$}\label{ape}

The purpose of this section is to prove Lemma \ref{vol}. As the Regge-Wheeler coordinates degenerates at the horizon, it will be convenient to use the coordinate system $(\underline{u} , r , \theta , \varphi) \in \R \times \R_+^* \times ]0,\pi[ \times ]0,2 \pi [$, which covers the region $\mathscr{B} \cup \mathcal{H}^+ \cup \mathscr{D}$ (see Figure \ref{Penrose1}) and which is then regular on the event horizon $\mathcal{H}^+$, where $r=2M$. Recall that the metric takes the following form
$$ g \, = \, -\left( 1 - \frac{2M}{r} \right) \dr \underline{u}^2+2 \dr \underline{u} \dr r+r^2 \dr \m_{\mathbb{S}^2}$$
and that, denoting by $\partial_{\underline{u}}'$ and $\partial_{r}'$ the differentiation with respect to $\underline{u}$ and $r$ in the coordinate system $(\underline{u} , r , \theta , \varphi)$, we have in $\mathscr{D}$,
\begin{equation}\label{linkderiv}
\partial_t \, = \,  \partial_{\underline{u}}'  ,  \qquad \partial_{\underline{u}} \, = \, \partial_{\underline{u}}'+\frac{1}{2}\left( 1 - \frac{2M}{r} \right) \partial_r', \qquad \partial_u \, = \, -\frac{1}{2}\left( 1 - \frac{2M}{r} \right) \partial_r' .
\end{equation}
In particular, $ \frac{ 1}{1-\frac{2M}{r}} \partial_u$ can be extended as a smooth vector field on $\mathscr{B} \cup \mathcal{H}^+ \cup \mathscr{D}$. The invariant volume element $\dr \m_{\mathcal{M}}$ induced by $g$ on $\mathcal{M}$ can be expressed, in the region $\Rm_{-\infty}^{+\infty} \subset \mathscr{D}$, in the following three different ways
$$\dr \m_{\Rm^{+\infty}_{-\infty}} \,:= \, \dr \m_{\mathcal{M}} \Big\vert_{\Rm_{-\infty}^{+\infty}} = \, \left( 1 -\frac{2M}{r} \right)r^2 \dr t \wedge \dr r^* \wedge \dr \m_{\mathbb{S}^2} = r^2 \dr \underline{u} \wedge \dr r \wedge \dr \m_{\mathbb{S}^2} \, = \, \left( 1 -\frac{2M}{r} \right)\frac{r^2}{2} \dr u \wedge \dr \underline{u} \wedge \dr \m_{\mathbb{S}^2}, $$
where $ \dr \m_{\mathbb{S}^2}$ is the standard volume form on the unit sphere $\mathbb{S}^2$. As the hypersurfaces $\N_{\tau}$ are null, there is no canonical choice of normal vector $\n_{\N_{\tau}}$. Since
$$ \dr \m_{\Rm_{-\infty}^{+\infty}} \, = \, r^2g(\partial_{\uu} , \cdot ) \wedge \dr \uu \wedge \dr \mu^{\mathstrut}_{\mathbb{S}^2},$$
we choose $\n_{\N_{\tau}}:=\partial_{\uu}$ and the induced volume form on $\N_{\tau}$ is $\dr \m_{\N_{\tau}} := r^2 \dr \uu \wedge \dr \mu^{\mathstrut}_{\mathbb{S}^2}$. As $\tau=u-u_0$ for $r \geq R_0$ and $\Sigma_{\tau} \cap \{ r\geq R_0 \} =\N_{\tau}$, we have
$$\forall \, r \geq R_0, \qquad |\tau-u|=|u_0| \qquad \text{and} \qquad \dr \m_{\Rm^{+\infty}_{-\infty}} \, = \, \gamma_0(r) \dr \tau \wedge \dr \m_{\Sigma_{\tau}}, \quad \gamma_0(r) :=\frac{1}{2} \left( 1- \frac{2M}{r} \right).$$
We have then obtained all the results of Lemma \ref{vol} which concern the region $r \geq R_0$ and we now focus on the domain $2M < r < R_0$, where $\Sigma_{\tau}$ is spacelike. We start by proving the following result.
\begin{Lem}
There exists a smooth function $\underline{U} : [2M, R_0 ] \rightarrow \R$ such that, for any $\tau \geq 0$, $\Sigma_{\tau} \cap \{ r < R_0 \}$ can be parameterized, in the system of coordinates $(\underline{u},r,\theta, \varphi)$, by
$$ (r,\omega ) \in \, ]2M,R_0[ \times \mathbb{S}^2 \mapsto ( \underline{U}(r)+\tau , r, \omega).$$
Moreover, the vector field $\vec{n}:= g^{-1} ( \dr \uu - \underline{U}'(r) \dr r, \cdot )$ is normal to $\Sigma_{\tau} \cap \{ r < R_0 \}$ and timelike in the region $\{ 2M \leq r \leq R_0 \}$.
\end{Lem}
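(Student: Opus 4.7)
The plan is to use the fact that the ingoing Eddington--Finkelstein coordinates $(\underline{u},r,\theta,\varphi)$ extend smoothly across the future event horizon $\mathcal{H}^+$, and that $\mathcal{S}$ is spherically symmetric and spacelike, in order to write $\mathcal{S}\cap\{r\le R_0\}$ as a graph over $r$. More precisely, I would first invoke these regularity and symmetry properties of $\mathcal{S}$: because $\mathcal{S}$ crosses $\mathcal{H}^+$ to the future of the bifurcation sphere and is spacelike everywhere (hence nowhere tangent to $\{r=\mathrm{const}\}$), in a neighborhood of $r=2M$ the coordinate $r$ is strictly monotone along any radial curve of $\mathcal{S}$. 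Combined with spherical symmetry, this means that $\mathcal{S}\cap\{r<R_0\}$ is expressible in $(\underline{u},r,\theta,\varphi)$-coordinates as a smooth graph $\underline{u}=\underline{U}(r)$ with $\underline{U}\in\mathcal{C}^\infty([2M,R_0])$ and independent of $(\theta,\varphi)$.

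For the second step, I would use \eqref{linkderiv}: in the region $\mathscr{D}$, $\partial_t=\partial'_{\underline{u}}$. Consequently the flow $\varphi_\tau$ defined in Subsection \ref{subsec1} acts in the $(\underline{u},r,\theta,\varphi)$ chart simply by $(\underline{u},r,\omega)\mapsto(\underline{u}+\tau,r,\omega)$. Since $\Sigma_\tau\cap\{r<R_0\}=\varphi_\tau(\mathcal{S}\cap\{r<R_0\})$ for $\tau\ge 0$, this immediately gives the parameterization $(r,\omega)\mapsto(\underline{U}(r)+\tau,r,\omega)$.

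The third step is purely computational. The hypersurface $\Sigma_\tau\cap\{r<R_0\}$ is the zero set of $F(\underline{u},r):=\underline{u}-\underline{U}(r)-\tau$, so $\mathrm{d}F=\mathrm{d}\underline{u}-\underline{U}'(r)\mathrm{d}r$ is conormal to it and $\vec{n}=g^{-1}(\mathrm{d}F,\cdot)$ is a normal vector. Using the components of $g^{-1}$ in the $(\underline{u},r,\theta,\varphi)$ chart, namely $g^{\underline{u}\underline{u}}=0$, $g^{\underline{u}r}=1$, $g^{rr}=1-\tfrac{2M}{r}$, one computes
\begin{equation*}
g^{-1}(\mathrm{d}F,\mathrm{d}F)\,=\,-2\underline{U}'(r)+(\underline{U}'(r))^{2}\bigl(1-\tfrac{2M}{r}\bigr).
\end{equation*}
The key observation to finish the proof is that this quantity is precisely the negative of the squared norm of the radial tangent vector $\partial'_r+\underline{U}'(r)\partial'_{\underline{u}}$ to $\Sigma_\tau$, namely $2\underline{U}'(r)-(\underline{U}'(r))^{2}(1-\tfrac{2M}{r})$. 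Hence the timelike character of $\vec n$ is equivalent to $\Sigma_\tau$ being spacelike at that point, which is the assumption on $\mathcal{S}$ (and is preserved by $\varphi_\tau$, an isometry). The only delicate point, which I would treat as the main obstacle, is the behavior at $r=2M$: there the coordinate $r^*$ degenerates and the condition reduces to $\underline{U}'(2M)>0$. One has to invoke the assumption that $\mathcal{S}$ crosses $\mathcal{H}^+$ transversally and remains spacelike there; by the same identification of norms, transversality and the spacelike property force $\underline{U}'(2M)>0$, which yields $g^{-1}(\mathrm{d}F,\mathrm{d}F)|_{r=2M}=-2\underline{U}'(2M)<0$. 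By continuity and compactness of $[2M,R_0]$, $\vec{n}$ is then uniformly timelike on the whole region $\{2M\le r\le R_0\}$.
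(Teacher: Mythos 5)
Your proof is correct and follows essentially the same route as the paper's: pass to the regular $(\underline{u},r,\theta,\varphi)$ chart, obtain the graph $\underline{u}=\underline{U}(r)$ from spacelikeness plus spherical symmetry via the implicit function theorem, reduce to general $\tau$ using $\partial_t=\partial'_{\underline{u}}$, and deduce that $\vec{n}$ is timelike because the hypersurface is spacelike. The only difference is cosmetic: the paper gets the graph from $\partial'_{\underline{u}}F\neq 0$ (i.e.\ $\partial_t\notin T_x\mathcal{S}$ for $r\geq 2M$) and simply invokes the general fact that the normal of a spacelike hypersurface is timelike, whereas you use $\mathrm{d}r|_{T_x\mathcal{S}}\neq 0$ and verify the last point by an explicit (and correct) computation of $g^{-1}(\mathrm{d}F,\mathrm{d}F)$, including the degenerate case $r=2M$.
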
 
 \begin{proof}
It will be convenient to work here with the coordinate system $(\underline{u},r,\theta , \varphi)$. Since $\Sigma_{\tau} \cap \{ r < R_0 \} = \varphi_{\tau} \left( \Sigma_{0} \cap \{ r < R_0 \} \right)$, where $\varphi_{\tau}$ is the flow generated by the Killing vector field $\partial_t=\partial_{\uu}'$, it suffices to prove the result for $\tau=0$. Then, recall that by construction, $\Sigma_0 \cap \{ 2M< r < R_0 \}= \accentset{\circ}{\mathcal{S}} \cap \{ 2M < r < R_0 \}$, where $\accentset{\circ}{\mathcal{S}}$ is a spacelike hypersurface crossing $\mathcal{H}^+$ to the future of the bifurcation sphere. Hence, there exists $\delta < 0$ such that $\widetilde{\mathcal{S}}:=\accentset{\circ}{\mathcal{S}} \cap \{ 2M - \delta < r \} \subset \mathscr{B} \cup \mathcal{H}^+ \cup \mathscr{D}$. As $\widetilde{\mathcal{S}}$ is  a smooth spherically symmetric hypersurface, for any $x \in \widetilde{\mathcal{S}}$, there exist an open set $O$ containing $x$ and a smooth function $F_O$ such that 
 $$ \widetilde{\mathcal{S}} \cap O  \, = \, \left\{ (\underline{u},r , \omega ) \in \R \times \R_+^* \times \mathbb{S}^2 \, / \, F_O(\underline{u},r) \, = \, 0 \right\}.$$
 Since $\widetilde{\mathcal{S}}$ is spacelike, $\partial_t=\partial_{\underline{u}}' \notin T_x \widetilde{\mathcal{S}}$ for any $x$ in the region $r \geq 2M$, so that $\partial'_{\underline{u}} F_O$ does not vanish on $\widetilde{\mathcal{S}} \cap O \cap \{ r \geq 2M \}$. According to the implicit function theorem, we obtain that for any $x \in \widetilde{\mathcal{S}} \cap \{ r \geq 2M \}$, there exist an open set $O'$ containing $x$ and a smooth function $\underline{U}_{O'}$ such that 
 $$ \widetilde{\mathcal{S}} \cap O'  \, = \, \left\{ (\underline{u},r , \omega ) \in \R \times \R_+^* \times \mathbb{S}^2 \, / \, \underline{u}=\underline{U}_{O'}(r) \right\}.$$
 By a connexity argument, this implies, if $\delta >0$ is chosen small enough, that there exists a smooth function $\underline{U} : \, ]2M-\delta,+\infty[ \rightarrow \R$ such that
 $$ \widetilde{\mathcal{S}}  \, = \, \left\{ (\underline{u},r , \omega ) \in \R \times ]2M-\delta , +\infty [ \times \mathbb{S}^2 \, / \, \underline{u}=\underline{U}(r) \right\}.$$
We then deduce that the $1$-form $\dr \uu - \underline{U}'(r) \dr r$ is normal to the spacelike hypersurface $\widetilde{\mathcal{S}}$. This implies that the $\varphi_{\tau}$-invariant vector field $\vec{n}$ is normal to $\Sigma_{\tau} \cap \{ r^* < R_0^* \}$ and timelike on $\mathcal{H}^+ \cup \mathscr{D}$.
 \end{proof}
As we have $t=\underline{u} -r^*$ in the region $\mathscr{D}$, this implies that the parameterization of $\Sigma_{\tau} \cap \{ r < R_0 \}$ in the Regge-Wheeler coordinates given in Lemma \ref{vol} holds. In order to prove the remaining four properties, notice first that, as $\vec{n}$ is $\varphi_{\tau}$-invariant, $SO_3(\R)$-invariant and timelike on $\{ 2M \leq r \leq R_0 \}$, we can define the smooth function
$$ \gamma : [2M , R_0] \rightarrow \R_+^*, \quad \gamma(r) = |g(\vec{n},\vec{n})|^{\frac{1}{2}}, \qquad \text{ which satisfies} \quad \exists \, C \geq 1, \; \forall \, 2M \leq r \leq R_0, \quad \frac{1}{C} \leq \frac{1}{\gamma(r)} \leq C.$$
Note now that the future oriented normal vector along $\Sigma_{\tau} \cap \{ r < R_0\}$ is equal either to $\frac{\vec{n}}{\gamma(r)}$ or to $-\frac{\vec{n}}{\gamma(r)}$. So, as the induced volume form on $\Sigma_{\tau}$ satisfies
$$ \dr \m_{\Rm^{+\infty}_{-\infty}} \, = \, -g ( \n_{\Sigma_{\tau}} , \cdot ) \wedge \dr \m_{\Sigma_{\tau}} , \qquad \text{we have} \quad \dr \m_{\Sigma_{\tau}} \Big\vert_{\{2M < r < R_0\}} \, = \, \gamma(r) r^2 \dr r \wedge \dr \m_{\mathbb{S}^2}.$$
Since $ \tau =  \uu -\underline{U}(r)$ for $2M < r < R_0$, we have
$$ \forall \, 2M < r < R_0, \qquad |\tau - \uu| \leq \|\underline{U}\|^{\mathstrut}_{L^{\infty}} \qquad \text{and} \qquad  \dr \m_{\Rm^{+\infty}_{-\infty}} \, = \, \gamma_0(r) \dr \tau \wedge \dr \m_{\Sigma_{\tau}}, \quad \gamma_0(r) = \frac{1}{\gamma(r)}.  $$
Finally, in view of the properties of $\vec{n}$, there exist smooth functions $\xi, \, \zeta :[2M,R_0] \rightarrow \R$ such that $\n_{\Sigma_{\tau}}= \xi(r)\partial_{\uu}'+\zeta(r) \partial_r'$. We then deduce from \eqref{linkderiv} that there exist smooth functions $\alpha, \, \beta :[2M,R_0] \rightarrow \R$ satisfying $\n_{\Sigma_{\tau}}= \alpha(r)\partial_{\uu}+\frac{\beta(r)}{1-\frac{2M}{r}} \partial_u$. As $\n_{\Sigma_{\tau}}$ is unitary and timelike, we have using \eqref{metricuuu} that $\alpha(r) \beta(r)=1$ for all $2M < r < R_0$. By continuity, the relation holds for all $r \in [2M,R_0]$ and this implies, since $\n_{\Sigma_{\tau}}$ is future oriented, that $\beta$ and $\alpha=1/\beta$ are both strictly positives on $[2M,R_0]$. This concludes the proof of Lemma \ref{vol}.
\section{Controlling all the components of the energy-momentum tensor}

We prove here a general result which motivates the introduction of the red-shift vector field $\mathrm{N}$. More precisely, we prove that if $T$ and $\widetilde{T}$ are strictly timelike vector fields, then, on any compact set, $\mathbb{T}[f](T,\widetilde{T})$ controls uniformly all the components of the energy-momentum tensor $\mathbb{T}[f]$ of the Vlasov field $f$.

Let $(\mathcal{M},g)$ be a smooth time-oriented and oriented $4$-dimensional Lorentzian manifold and consider the bundle of future light cones
$$ \mathcal{P} \, := \, \bigcup_{x \in \mathcal{M}} \mathcal{P}_x, \qquad \mathcal{P}_x\, := \, \{ (x,v) \, / \, v  \in T^{\star}_x \mathcal{M} , \quad g^{-1}_x(v,v)=0, \quad \text{$v$ future oriented} \}.$$
Given a coordinate system $(U,x^0,x^1,x^2, x^3)$ on $\mathcal{M}$, for any $y \in U \subset \mathcal{M}$, we can decompose any $v \in T^{\star}_y \mathcal{M}$ as $v= v_{\alpha} \dr {x^{\alpha}}\vert_{ y}$. Consequently, $(x^{\alpha},v_{\alpha})$ is a coordinate system on $T^{\star} \mathcal{M}$, called conjugates to $(x^0,x^1,x^2 , x^3)$. The metric $g$ induces the invariant volume element $\dr \m_{T^{\star}_x \mathcal{M}}= |\det g^{-1}_x|^{\frac{1}{2}} \dr v_0 \wedge \dr v_1 \wedge \dr v_2 \wedge \dr v_3$ on $T^{\star}_x \mathcal{M}$. It induces a volume form on $\mathcal{P}_x$, satisfying $\dr \m_{\mathcal{P}_x} = \dr q \wedge \dr \m_{T^{\star}_x \mathcal{M}}$ with $q(v):= \frac{1}{2}g^{-1}_x(v,v)$. We can then define the energy-momentum tensor of any sufficiently regular function $f : \mathcal{P} \rightarrow \R$ by
\begin{equation}\label{defTen}
 \mathbb{T}[f]_{\alpha \beta} \, := \, \int_{\mathcal{P}} f \, v_{\alpha} v_{\beta} \dr \m_{\mathcal{P}}, \qquad \text{where} \quad \int_{\mathcal{P}} f \, v_{\alpha} v_{\beta} \dr \m_{\mathcal{P}} := x \mapsto \int_{\mathcal{P}_x} f v_{\alpha} v_{\beta} \dr \m_{\mathcal{P}_x}.
\end{equation} If $x^0$ is a temporal function, then $(x^0,x^1,x^2 , x^3,v_1,v_2,v_3)$ are smooth coordinates on $\mathcal{P}$ since we have
$$v_0 =-\frac{1}{g^{00}} \left( g^{0j}v_j- \sqrt{ (g^{0j}v_j)^2-g^{00}g^{ij} v_iv_j }\right). \qquad \text{Moreover,} \quad \dr \m_{\mathcal{P}_x} = \frac{ |\det g^{-1}_x|^{\frac{1}{2}}}{ v_{\alpha} g^{\alpha 0}} \dr v_1 \wedge \dr v_2 \wedge \dr v_3.$$
\begin{Lem}\label{Lemannex}
Consider a smooth nonnegative function $f : \mathcal{P} \rightarrow \R_+$ and a compact subset $\mathcal{K} \subset \mathcal{M}$. Let $X, \,\widetilde{X}, \, T, \, \widetilde{T}$ be four smooth vector fields such that $T$ and $\widetilde{T}$ are strictly timelike and future oriented on $\mathcal{K}$. Then, there exists $D >0$ such that
$$ \forall \, x \in \mathcal{K}, \; \forall \, v \in \mathcal{P}_x, \qquad |v(X)| \, \leq \, - D \, v(T).$$
Moreover, there exists $C>0$ such that $|\mathbb{T}|f](X,\widetilde{X})| \leq C \cdot \mathbb{T}[f](T,\widetilde{T})$ holds uniformly on $\mathcal{K}$.
\end{Lem}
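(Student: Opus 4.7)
The key point is that the first assertion is a pointwise statement about null covectors which, by bi-linearity and homogeneity, reduces to a statement on a compact slice of the cotangent bundle. Once the pointwise bound $|v(X)| \leq -D\,v(T)$ is in hand, the integral bound $|\mathbb{T}[f](X,\widetilde{X})| \leq C\,\mathbb{T}[f](T,\widetilde{T})$ follows immediately by applying the pointwise bound twice inside the integral and using $f \geq 0$.

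To prove the first assertion I would introduce the set
\[
\mathcal{N}^{\mathstrut}_{\mathcal{K}} \, := \, \{ (x,v) \, / \, x \in \mathcal{K}, \; v \in \mathcal{P}_x, \; v(T) = -1 \}.
\]
Recall that for $v \in \mathcal{P}_x$ we have $v(T) = g\big(g^{-1}(v,\cdot),T\big) \leq 0$, with strict inequality since $g^{-1}(v,\cdot)$ is a nonzero future directed null vector and $T$ is timelike and future oriented. Hence the ray $\R_+^* \cdot v \subset \mathcal{P}_x$ meets the affine hyperplane $\{v(T) = -1\}$ exactly once, and by the homogeneity of both $v \mapsto v(X)$ and $v \mapsto v(T)$ it suffices to establish
\[
D \, := \, \sup_{(x,v) \in \mathcal{N}^{\mathstrut}_{\mathcal{K}}} |v(X)| \, < \, +\infty.
\]
For this I would verify that $\mathcal{N}^{\mathstrut}_{\mathcal{K}}$ is compact in $T^{\star}\mathcal{M}$. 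Closedness follows from $\mathcal{K}$ being closed and from $\mathcal{N}^{\mathstrut}_{\mathcal{K}}$ being cut out by the continuous equations $g_x^{-1}(v,v)=0$, $v(T)=-1$ together with the closed future-orientation condition. For boundedness I would argue by contradiction: if $(x_n, v_n) \in \mathcal{N}^{\mathstrut}_{\mathcal{K}}$ with $|v_n| \to +\infty$, then after extracting a subsequence $x_n \to x_\infty \in \mathcal{K}$ and $v_n / |v_n| \to \widehat{v} \neq 0$, which is future oriented and null at $x_\infty$, while $\widehat{v}(T) = \lim (-1)/|v_n| = 0$; this contradicts the strict inequality $\widehat{v}(T) < 0$ noted above. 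Continuity of $v \mapsto |v(X)|$ on the compact set $\mathcal{N}^{\mathstrut}_{\mathcal{K}}$ then yields the finite supremum $D$, and rescaling gives $|v(X)| \leq -D\,v(T)$ for every $v \in \mathcal{P}_x$, $x \in \mathcal{K}$.

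For the second assertion, apply the first one to the pairs $(X,T)$ and $(\widetilde{X},\widetilde{T})$ to obtain constants $D, \widetilde{D} > 0$ with $|v(X)| \leq -D\,v(T)$ and $|v(\widetilde{X})| \leq -\widetilde{D}\,v(\widetilde{T})$ on $\mathcal{P}_x$ for every $x \in \mathcal{K}$. Since $f \geq 0$ and $v(T), v(\widetilde{T}) \leq 0$, the triangle inequality for integrals gives
\[
 \big| \mathbb{T}[f](X,\widetilde{X}) \big| \, \leq \, \int_{\mathcal{P}} f\, |v(X)||v(\widetilde{X})| \, \dr \m_{\mathcal{P}} \, \leq \, D\widetilde{D} \int_{\mathcal{P}} f\, v(T)\, v(\widetilde{T}) \, \dr \m_{\mathcal{P}} \, = \, D\widetilde{D}\cdot \mathbb{T}[f](T,\widetilde{T}),
\]
so one can take $C = D\widetilde{D}$. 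The only subtle point of the argument is the compactness of $\mathcal{N}^{\mathstrut}_{\mathcal{K}}$, and more precisely the above sequential-compactness contradiction which encodes the Lorentzian geometric fact that a future null covector can degenerate only by going to zero once its pairing with a uniformly timelike vector is fixed.
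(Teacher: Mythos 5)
Your proof is correct and follows essentially the same strategy as the paper: reduce the pointwise bound by homogeneity to a compact normalized slice of the null cone bundle over $\mathcal{K}$, then conclude by continuity of $v \mapsto v(X)$ and $v \mapsto v(T)$ on that slice. The only difference is in the choice of normalization — the paper fixes $|v_1|^2+|v_2|^2+|v_3|^2=1$ in local coordinate charts and covers $\mathcal{K}$ by finitely many of them, whereas you fix $v(T)=-1$ globally and supply the compactness of that slice via your blow-up contradiction — and your passage from the pointwise bound to the estimate on $\mathbb{T}[f]$ is the same as the paper's.
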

\begin{proof}
We start by the first estimate. Let $x \in \mathcal{K}$ and consider a coordinate system $(x^0,\dots x^3)$, defined on an open set $U_x$ containing $x$, such that $x^0$ is a smooth temporal function. Let further $O_x$ be an open set such that $x \in O_x$ and $\overline{O_x} \subset U_x$. If the result holds on $O_x \cap \mathcal{K}$ for any $x \in \mathcal{K}$, then it holds on the compact $\mathcal{K}$ since it can be covered by a finite number of such open sets $O_x$. Consequently, it suffices to prove the result on $O_x \cap \mathcal{K}$. Introduce now the compact set
$$ \mathcal{Q} \, := \, \{ (y,v) \, / \, y \in \overline{O_x} \cap \mathcal{K}, \quad v \in \mathcal{P}_y, \quad |v|^2 := |v_1|^2+|v_2|^2+|v_3|^2=1 \}.$$
Note that $v(T) <0$ on $\mathcal{K}$ for any $v \in \mathcal{P}$. Indeed, $v$ is causal, $T$ is strictly timelike on $\mathcal{K}$ and they are both future oriented. Consequently, we have $S:=\sup_{\mathcal{Q}} |v(X)| < +\infty$, $I:=\inf_{\mathcal{Q}} -v(T) >0$ and the first inequality of the lemma holds on $O_x \cap \mathcal{K}$, with $D = \frac{S}{I}$. Together with the definition \eqref{defTen} of $\mathbb{T}[f]$, this directly  implies the second estimate.
\end{proof}

\renewcommand{\refname}{References}
\bibliographystyle{abbrv}
\bibliography{biblio}

\end{document}